\newtheorem{theorem}{Theorem}[section]
\newtheorem{prop}[theorem]{Proposition}
\newtheorem{lemma}[theorem]{Lemma}
\newtheorem{definition}[theorem]{Definition}
\newtheorem{remark}[theorem]{Remark}
\newtheorem{def-thm}[theorem]{Definition-Theorem}
\newtheorem{claim}[theorem]{Claim}
\newtheorem{convention}[theorem]{Convention}
\numberwithin{equation}{section}
\begin{document}

\title{Space of nilpotent orbits and extension of period maps (I): The weight $3$ Calabi-Yau types}
\author{Haohua Deng}
\address{Department of mathematics at Duke University, 120 Science Drive, 117 Physics Building,
Campus Box 90320, Durham, North Carolina, 27708-0320}
\email{haohua.deng@duke.edu}
\date{}

\maketitle              


\begin{center}
\textbf{Abstract}
\end{center}
In Kato-Nakayama-Usui's theory, a certain space of nilpotent orbits can be constructed and serve as a completion of a given period map. This can be regarded as a generalization of Mumford's toroidal compactification for locally symmetric varieties. 

Kato-Nakayama-Usui's construction requires the existence of a weak fan, which is not known in general for non-classical cases. In this paper, we show after some slight modifications, such weak fans exist for a large class of period maps of weight $3$ Calabi-Yau type. In particular, for these cases a Kato-Nakayama-Usui type completion can be constructed.

\tableofcontents
\normalsize

\section{Introduction}\label{introsection}
\subsection{Overview}
Theories for compactifying locally symmetric varieties have been developed for decades and are now very rich. For example, in Satake-Baily-Borel's theory \cite{Sat60} and \cite{BB66}, for a locally symmetric variety $\Gamma\backslash D$ they constructed a projective but highly singular compact model by adding cusps along the boundary. This is known as the Satake-Baily-Borel compactification $\overline{\Gamma\backslash D}^{\mathrm{BB}}$. To improve the quality of the compactification, Mumford et. al. introduced the theory of toroidal compactification in \cite{AMRT10} which is not canonical in general, but will provide a smooth projective birational model for $\Gamma\backslash D$ with a decent choice of $\Gamma$-admissable polyhedral decomposition.

As an application of theories on compactifying locally symmetric varietes, Completing period mappings for which the Hodge varieties $\Gamma\backslash D$ are locally symmetric has also been well-investigated. These are called period maps of classical types in Hodge theory, which have been widely used to study the moduli space of polarized abelian varieties (weight $1$ case) and polarized K3 surfaces (weight $2$ case with $h^{2,0}=1$). 

One of the major obstructions of using Hodge theory to study families of general algebraic varieties, especially the degenerating behaviors, is the absence of theories completing period maps of non-classical types. Some recent works, for example, \cite{BKT20}, \cite{BBT22}, showed some critical abstract properties of general period maps, but we still need theories suitable for applications in other fields like algebraic geometry or mirror symmetry.

In recent years, Kato-Usui, or Kato-Nakayama-Usui have developed new theories of extending period maps of general types in \cite{KU08}, \cite{KNU10}. Their theory can be regarded as a generalization of Mumford's toroidal compacitification, but the resulting space is essentially different from the classical cases. Generally speaking, for a period mapping $S\xrightarrow[]{\varphi}\Gamma\backslash D$, the resulting enlarged space, which is written as $\Gamma\backslash D_{\Sigma}$, lies in a certain category called logarithmic manifolds, and its construction depends on a choice of a fan, or weak fan $\Sigma$ with certain distinguished properties. This enlarged space is called the space of nilpotent orbits or logarithmic polarized variation of Hodge structures.

One of the critical ingredients, also technical difficulties to apply Kato-Nakayama-Usui's theory, is to construct, or even prove the existence of $\Sigma$. The construction of $\Sigma$ for single-variable period maps is evident. In classical cases, Kato-Usui's theory agree with Mumford's toroidal compactification and $\Sigma$ can be chosen as \cite{AMRT10} using polyhedral reduction theory. Besides these known cases, the existence of such an object is an open problem in general. Some recent works, for example \cite{Den22} and \cite{Chen23} have shown the existence of such a fan or weak fan beyond trivial and classical cases.

\subsection{Purpose and organization of the paper}

The purpose of this paper is to show that Kato-Nakayama-Usui's theory can be applied to a large class of non-classical (multivariable) period maps. The main result can be summarized as follows: \\

\noindent \textbf{Main Result (Theorem \ref{formalmainthm}):} \textit{For a period map $\varphi: S\rightarrow \Gamma\backslash D$ of weight $3$ Calabi-Yau type, if along the boundary divisors of $S$ we only acquire type I or type IV degenerations in the sense of \cite[Example 5.8]{KPR19}, then there exists a finite modification of the period map, called $\tilde{\varphi}: \tilde{S}\rightarrow \Gamma\backslash D$, such that this modified period map admits a Kato-Usui type extension.}\\

A weight $3$ Calabi-Yau type limiting mixed Hodge structure $(W(N), F^{\bullet}, N)$ is called type I if:
\begin{equation}
    N\neq 0, \ N^2=0, \ h^{3,0}=1,
\end{equation}
or called type IV if:
\begin{equation}
    N^3\neq 0, N^4=0.
\end{equation}
They can also be characterized by their Hodge-Deligne diagrams, see Figure \ref{typeonelmhs} and Figure \ref{typeIVlmhs}. 

The meaning of finite modification will be explained in Section 6. Generally speaking, it suffices to prove the existence of $\Sigma$ which can provide such an extension. Indeed, $\Sigma$ can be obtained by taking all local monodromy nilpotent cones arising from the period map $\varphi$, then proceed a finitely generated subdivision. One of the central observations is the finiteness condition on subdivisions we need can be induced from the Siegel properties in the theory of arithmetic groups.

Our methods needs a slight modification on Kato-Nakayama-Usui's theory. Indeed, the object $\Sigma$ we will obtain is neither a fan in the sense of \cite{KU08} nor a weak fan in the sense of \cite{KNU10}, but a weak fan of restricted type. This modified object will provide a completion which is slighlty different from Kato-Usui's original theorems. For example, the resulting Kato-Usui type space constructed from a weak fan of restricted type is in general not a logarithmic manifold, but only a locally analytically constructible space (Definition \ref{deflocallyanalyticallyconstr}).

In Section 2, we review some necessary Hodge theory background. In Section 3 a summary of Kato-Usui's theory with the minimum requirements will be presented. In Section 4, we will show how Kato-Nakayama-Usui's theory works for the classical weight $1$ case. In Section 5, we will show the main result: For type I and type IV degenerations in the weight $3$ Calabi-Yau case, the construction can be derived from the weight $1$ case, which proves our main result. In Section 6, we conclude the main result by explaining the relation between logarithmic modification and period mappings. We will also review the main results from \cite{Den22} as an example. In Section 7 we will take a closer look on the combinatorial properties of weak fan. Finally in the appendix, we will give a more concrete overview on Kato-Nakayama-Usui's theory. We will also establish the validity of our modified definition on weak fan of restricted types.

\bigskip

\noindent \textbf{Acknowledgement:} The author is grateful to Matt Kerr and Colleen Robles for sharing a lot of motivating ideas. The author also thanks Ben Bakker, Chongyao Chen, Charles Doran, Patricio Gallardo, Philip Griffiths, Kazuya Kato, Bruno Klingler, Yilong Zhang and many others for related discussions.
\section{Preliminaries on Hodge theory}

Let $S$ be a smooth quasi-projective variety, $\bar{S}$ be projective with $\bar{S}\backslash S$ a simple normal crossing divisor. We assume there is a $\mathbb{Z}$-local system $\mathcal{V}$ on $S$ which induces a polarized variation of Hodge structures. This induces a period map:
\begin{equation}\label{periodmapgeneralform}
    \varphi: S\rightarrow \Gamma \backslash D
\end{equation}
 where $D$ is the classifying space of polarized Hodge structures defined on an integral lattice $H_{\mathbb{Z}}$ with integral polarization form $Q$. Moreover, denote $G=\mathrm{Aut}(H, Q)$, then $\Gamma\leq G_{\mathbb{Z}}$ is just the monodromy group of the local system $\mathcal{V}\rightarrow S$.

 Such an abstract model can be realized in algebraic geometry: Suppose 
 \begin{equation}
     \pi: \mathcal{X}\rightarrow S
 \end{equation}
is a smooth projective family where $\mathcal{X}$ is smooth and $\pi$ is a holomorphic proper submersion. Moreover suppose for any $s\in S$, the fiber $X_s:=\pi^{-1}(s)$ is a smooth projective variety of fixed dimension $\mathrm{dim}_{\mathbb{C}}X_s=l$. Denote $\mathbb{Z}_{\mathcal{X}}$ as the $\mathbb{Z}$-constant sheaf on $\mathcal{X}$, then 
\begin{equation}
    \mathcal{V}:=R^l\pi_*(\mathbb{Z}_{\mathcal{X}})
\end{equation}
is a $\mathbb{Z}$-local system over $S$ whose fiber at $s\in S$ is $H:=H^l(X_s, \mathbb{Z})$. Take the Hodge decomposition 
\begin{equation}
    H_{\mathbb{C}}=H^l(X_s, \mathbb{Z})\otimes \mathbb{C}=\oplus_{p+q=l}H^{p,q}(X_s)
\end{equation}
on each fiber into consideration, this gives a variation of Hodge structure of weight $l$ on $S$ as well as a period map of the form \eqref{periodmapgeneralform}.

Moreover for $0\leq p\leq l$, let $\mathcal{F}^p$ be the subsheaves of $\mathcal{V}\otimes O_S$ which is known as Hodge bundles. Under the Gauss-Manin connection $\nabla$ of $\mathcal{V}$, we have the Griffiths transversality condition:
\begin{equation}\label{griffithstrans}
    \nabla (\mathcal{F}^p) \subset \Omega^1_S\otimes \mathcal{F}^{p-1}, \ 0\leq p\leq l.
\end{equation}

In this paper we will try to extend $\varphi$ over $\bar{S}$ with image in a properly enlarged space of $\Gamma\backslash D$. We first consider the local model of period map, i.e., suppose the period map is defined over a product of puntured disks:
\begin{equation}\label{periodmaplocalform}
    \varphi: (\Delta^*)^n\rightarrow \Gamma \backslash D
\end{equation}
where the local monodromy group $\Gamma$ is abelian and generated by $T_1,...,T_n$ where $T_j$ is obtained by the analytic continuation over a loop around the $j$-th coordinate divisor. If locally $S$ is isomorphic to $(\Delta^*)^k\times \Delta^{n-k}$, we can normalize it to the case $(\Delta^*)^n$ by setting $T_j=\mathrm{Id}$ for $j\geq k+1$. The following classical result is known as the Monodromy theorem.
\begin{theorem}[\cite{Gri70}, Theorem 3.1]
All $T_i$'s are quasi-unipotent, which means there exist positive integers $w_i, v_i$ such that $(T_i^{w_i}-I)^{v_i}= 0$ for every $i$. Moreover, $v_i\leq l+1$ for each $i$.
\end{theorem}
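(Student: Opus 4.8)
\bigskip
\noindent\textbf{Proof proposal.} The plan is to reduce to one variable, extract an analytic ``shrinking'' estimate from the negative curvature of the horizontal directions in $D$, and then convert it into an arithmetic statement about the $T_i$ by exploiting compactness of the isotropy subgroup of $D$. Since the $T_i$ pairwise commute it suffices to treat a single $T:=T_i$, and one can arrange this by restricting $\varphi$ to a one-dimensional punctured disk $\Delta^*\hookrightarrow(\Delta^*)^n$ along which only the $i$-th coordinate varies (freezing the others at an interior point, or along the $T_j=\mathrm{Id}$ directions when $S$ is locally $(\Delta^*)^k\times\Delta^{n-k}$); the restricted map is still a period map, hence horizontal and locally liftable. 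So assume $\varphi:\Delta^*\to\Gamma\backslash D$ with local monodromy generated by $T$. Lifting through the universal cover $\mathbb{H}\to\Delta^*$, $z\mapsto e^{2\pi iz}$, produces a horizontal holomorphic map $\tilde\varphi:\mathbb{H}\to D$ satisfying $\tilde\varphi(z+1)=T\cdot\tilde\varphi(z)$.

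Next I would invoke Griffiths and Schmid's observation that $D$ carries a $G_{\mathbb R}$-invariant Hermitian metric whose holomorphic sectional curvature is bounded above by a negative constant along horizontal tangent directions; by the Ahlfors--Schwarz lemma every horizontal holomorphic map from the Poincar\'e upper half-plane into $D$ is then distance-decreasing up to a fixed constant $C$ depending only on the normalization. Since $d_{\mathbb H}(iy,iy+1)\to 0$ as $y\to\infty$ (indeed $\cosh d_{\mathbb H}(iy,iy+1)=1+\tfrac{1}{2y^2}$), choosing $p_k:=\tilde\varphi(iy_k)$ with $y_k\to\infty$ gives points of $D$ with
\begin{equation}
    d_D(p_k,\,T\cdot p_k)=d_D\big(\tilde\varphi(iy_k),\,\tilde\varphi(iy_k+1)\big)\le C\,d_{\mathbb H}(iy_k,\,iy_k+1)\longrightarrow 0,
\end{equation}
so that $T$ displaces points of $D$ by arbitrarily small amounts.

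Now comes the arithmetic step, a lemma of Borel. Write $D=G_{\mathbb R}/V$ with $V$ compact (the isotropy group of a polarized Hodge structure preserves a positive definite Hermitian form, hence is compact); by $G_{\mathbb R}$-invariance of $d_D$ the estimate above places suitable $G_{\mathbb R}$-conjugates of $T$ in arbitrarily small neighborhoods of $V$, and since $V$ is compact and eigenvalues depend continuously on the matrix, every eigenvalue of $T$ lies on the unit circle. Because $T\in G_{\mathbb Z}\le GL(H_{\mathbb Z})$ its characteristic polynomial lies in $\mathbb Z[x]$, so each eigenvalue together with all its Galois conjugates lies on the unit circle; by Kronecker's theorem the eigenvalues are roots of unity. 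This is precisely quasi-unipotence: there is $w_i$ with $T_i^{w_i}$ unipotent, whence $(T_i^{w_i}-I)^{\dim_{\mathbb C}H_{\mathbb C}}=0$. To upgrade the exponent to the sharp bound $v_i\le l+1$, I would feed $N_i:=\log(T_i^{w_i})$ into Schmid's nilpotent orbit theorem, producing a limiting mixed Hodge structure of weight $l$; since the variation has Hodge numbers concentrated in weight $l$, the monodromy weight filtration $W(N_i)$ centered at $l$ is supported in degrees $[0,2l]$, and as $N_i$ lowers the weight by $2$ we obtain $N_i^{l+1}H_{\mathbb C}=N_i^{l+1}W_{2l}\subseteq W_{-2}=0$, i.e.\ $(T_i^{w_i}-I)^{l+1}=0$.

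The reduction to one variable, the lift, and the Ahlfors--Schwarz estimate are soft. The genuine content is Borel's compactness lemma, which is where positivity of the polarization enters (through compactness of $V$) and which I expect to be the conceptual heart of the argument. For the optimal bound $v_i\le l+1$ one must control the limiting Hodge filtration rather than merely the underlying lattice automorphism---via the nilpotent orbit theorem, or, in the geometric case, an equivalent Picard--Lefschetz and Clemens--Schmid analysis---and this is the step most resistant to a fully self-contained treatment.
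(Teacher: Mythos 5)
The paper gives no proof of this statement: it is the classical Monodromy Theorem, quoted as a black box from \cite{Gri70} (with the quasi-unipotence going back to Borel and the bound $v_i\le l+1$ to Landman/Schmid), so there is no ``paper proof'' to match yours against. Your reconstruction is the standard classical argument and is essentially correct: the reduction to one variable, the lift to $\mathfrak{H}$, the Griffiths--Schmid negatively curved horizontal metric plus the Ahlfors--Schwarz lemma giving $d_D(\tilde\varphi(iy),T\tilde\varphi(iy))\to 0$, Borel's compactness lemma (compactness of the isotropy group $V$, which preserves the positive definite Hodge form, plus properness of closed balls in $D=G_{\mathbb R}/V$), and then integrality of the characteristic polynomial together with Kronecker's theorem -- this is exactly how quasi-unipotence is proved in \cite{Sch73}. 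Two small points deserve care. First, the nilpotent orbit theorem by itself does not ``produce a limiting mixed Hodge structure''; it produces the limit filtration $\psi(0)\in\check D$ and the nilpotent-orbit conditions, and the fact that $(W(N_i)[-l],\psi(0))$ is then a mixed Hodge structure polarized by $N_i$ is the content of Schmid's $\mathrm{SL}_2$-orbit theorem, recorded in this paper as Theorem \ref{nilporbittolmhs} (cited to \cite{CK89}); with that reference your weight argument closes, since $\psi(0)\in\check D$ forces $F^{l+1}=0$ and $F^0=H_{\mathbb C}$, hence $\mathrm{Gr}^{W}_k=0$ for $k\notin[0,2l]$ and $N_i^{l+1}W_{2l}\subset W_{-2}=0$, and $(T_i^{w_i}-I)^{l+1}=N_i^{l+1}(\text{unit})=0$. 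Second, there is no circularity: quasi-unipotence is established by the Borel argument before Schmid's theorems are invoked, and those theorems are applied only after passing to the unipotent power $T_i^{w_i}$, which is consistent with the paper's own order of presentation. Historically the sharp bound can also be obtained geometrically (Landman, Clemens--Schmid), as you note, but the route through the limiting mixed Hodge structure is the one compatible with the abstract Hodge-theoretic setting used here.
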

Consider the Jordan decomposition $T_j=T_j^{\mathrm{ss}}T_j^{\mathrm{u}}$, the monodromy theorem implies $T_j^{\mathrm{ss}}$ has finite order. Therefore after a base change we can assume all $T_j$'s are unipotent. 
Denote $\mathfrak{g}=\mathrm{Lie}(G)$ and $N_j=\log(T_j)$, then $N_j\in \mathfrak{g}_{\mathbb{Q}}$ are nilpotent elements defined over $\mathbb{Q}$, and $\{N_1,...,N_n\}$ generates a nilpotent abelian subalgebra of $\mathfrak{g}_{\mathbb{Q}}$. 

Consider the universal covering map:
\begin{equation}
\mathrm{exp}(2\pi i\bullet): \mathfrak{H}^n\rightarrow (\Delta^*)^n,
\end{equation}
where $\mathfrak{H}:=\{z\in \mathbb{C} \ | \ \mathfrak{Im}(z)>0\}$ is the upper-half plane. We have the following commutative diagram:
\begin{equation}
\begin{tikzcd}
\mathfrak{H}^n \arrow[d, "\mathrm{exp}(2\pi i\bullet)"] \arrow[r, "\tilde{\varphi}"] & D \arrow[d] \\
B=(\Delta^*)^n \arrow[r, "\varphi"] & \Gamma \backslash D
\end{tikzcd}
\end{equation}
and the action of $T_1,\ldots,T_n$ on $D$ can be interpreted as:
\begin{equation}
   \tilde{\varphi}(z_1,...,z_j+1,...,z_n)=T_j\tilde{\varphi}(z_1,...,z_n).
\end{equation}
Let $\Psi: \mathfrak{H}^n\rightarrow D$ and $\psi: (\Delta^*)^n\rightarrow D$ be defined as:
\begin{align}
    \Psi(z_1,...,z_n)&:= \mathrm{exp}(-\sum_{1\leq j\leq n}z_jN_j)\tilde{\varphi}(z_1,...,z_n),\\
    \psi(e^{2\pi iz_1},...,e^{2\pi iz_n})&:= \Psi(z_1,...,z_n).
\end{align}
Clearly $\psi$ is a well-defined map, and we have the following classical result which is known as Schmid's nilpotent orbit theorem.
\begin{theorem}[\cite{Sch73}, Sec. 4] 
The map $\psi$ can be extended holomorphically as:
\begin{equation}
    \psi: \Delta^n\rightarrow \check{D}
\end{equation}
where $\check{D}$ is the compact dual of $D$. Denote $\psi(0)=:F_0^{\bullet}\in \check{D}$, the following properties hold:
\begin{description}
\item[(i)] $N_jF_0^{p}\subset F_0^{p-1}$ for any $1\leq j\leq n$ and $0\leq p\leq l$;
\item[(ii)] $\mathrm{exp}(\sum_{j=1}^{n}z_jN_j)F_0^{\bullet}\in D$ for $z_j\in \mathbb{C}$ and $\mathfrak{Im}(z_j)>>0$ for any $j$.
\end{description}
\end{theorem}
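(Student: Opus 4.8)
The plan is to study the period map through Deligne's canonical extension of the underlying flat bundle, controlling its behaviour at the boundary divisors by means of the negative curvature of $D$ in horizontal directions. Since the $T_j$ are unipotent, the flat bundle $(\mathcal{V}\otimes\mathcal{O},\nabla)$ on $(\Delta^*)^n$ extends to a holomorphic vector bundle $\widetilde{\mathcal{V}}$ on $\Delta^n$, spanned near the origin by the single-valued sections $\exp(-\sum_j z_jN_j)v$ with $v$ flat, the operators $N_j$ acting on the central fibre $\widetilde{\mathcal{V}}_0\cong H_{\mathbb{C}}$. In this picture $\psi$ is exactly the Hodge filtration of $\tilde{\varphi}(z)$ transported into $H_{\mathbb{C}}$ by $\exp(-\sum_j z_jN_j)$, so that the assertion that $\psi$ extends holomorphically over $\Delta^n$ with values in $\check{D}$ is equivalent to the statement that each Hodge bundle $\mathcal{F}^p$ extends to a holomorphic subbundle of $\widetilde{\mathcal{V}}$.

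First I would check that $\Psi$ is invariant under each translation $z_j\mapsto z_j+1$; this is immediate from $\tilde{\varphi}(\dots,z_j+1,\dots)=T_j\tilde{\varphi}(\dots)$, the identity $T_j=\exp(N_j)$, and the commutativity of the $N_j$, so that $\psi$ descends to a well-defined holomorphic map $(\Delta^*)^n\to\check{D}$; it then remains to bound $\psi$ near the divisors $q_j=0$. The heart of the matter is a growth estimate. Equipping $D$ with a $G_{\mathbb{R}}$-invariant Hermitian metric, one uses the classical fact that its holomorphic sectional curvatures along horizontal tangent directions are negative and bounded away from $0$, so that by the Ahlfors--Schwarz lemma $\tilde{\varphi}$ is distance non-increasing, up to a fixed constant, from the Poincar\'e metric on $\mathfrak{H}^n$ into $D$. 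Computing Poincar\'e distances as $\mathrm{Im}(z_j)\to\infty$ and feeding the result into the $\mathrm{SL}_2$-orbit theorem — applied one variable at a time and then by induction on $n$ — yields the norm estimates: the Hodge norm of a flat multivalued section of $\mathcal{V}$ grows at most like a fixed power of $\prod_j\log|q_j|^{-1}$, with matching lower bounds along the appropriate monodromy weight filtrations. These estimates bound the period coordinates of $\psi$ in an affine chart of $\check{D}$, and the removable singularity theorem forces $\psi$ to extend holomorphically across each $q_j=0$; one sets $F_0^{\bullet}:=\psi(0)\in\check{D}$.

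Granting the extension, property (i) is a consequence of Griffiths transversality. Let $\sigma(q)$ be a local holomorphic frame near $q=0$ of the extended Hodge subbundle, i.e.\ of the $p$-th step $\psi(q)^p$; then $\exp(\sum_k z_kN_k)\sigma(q)$ frames the Hodge filtration $\tilde{\varphi}(z)^p$ upstairs. Differentiating in $z_j$, using \eqref{griffithstrans}, and multiplying by $\exp(-\sum_k z_kN_k)$ gives $N_j\sigma(q)+2\pi i\,q_j\,\partial_{q_j}\sigma(q)\in\psi(q)^{p-1}$; letting $q\to 0$, the second term vanishes because $\sigma$ is now holomorphic at the origin, leaving $N_jF_0^p\subset F_0^{p-1}$. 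Property (ii) — together with the (standard, though here unstated) approximation estimate that $d_D\big(\tilde{\varphi}(z),\exp(\sum_j z_jN_j)F_0^{\bullet}\big)\to 0$ as $\min_j\mathrm{Im}(z_j)\to\infty$ — is extracted from the finer $\mathrm{SL}_2$-orbit analysis: reducing to the associated $\mathrm{SL}_2$-orbit one verifies directly that the \emph{open} Hodge--Riemann positivity conditions cutting $D$ out of $\check{D}=G_{\mathbb{C}}\cdot F_0^{\bullet}$ hold along $\exp(\sum_j z_jN_j)F_0^{\bullet}$ once all the imaginary parts are large, and the comparison with the genuine period map is supplied by the norm estimates of the previous step.

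The main obstacle is the middle step, the norm estimates in several variables. Already the one-variable case requires the full strength of the $\mathrm{SL}_2$-orbit theorem, and the passage to $n\geq 2$ demands iterating it while maintaining uniform control as the remaining parameters also tend to the boundary; carrying out the right bookkeeping — organizing the commuting nilpotents $N_1,\dots,N_n$ through the relative monodromy weight filtrations of the partial sums $\sum_{j\in I}N_j$ so that the one-variable estimates patch consistently over the polydisk — is the technically heaviest part of the argument.
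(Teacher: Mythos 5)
The paper does not prove this statement at all --- it is quoted verbatim from Schmid and used as a black box --- so the only meaningful comparison is with Schmid's original argument, and against that your proposal has a genuine ordering problem in its central step. You derive the extendability of $\psi$ from the Hodge-norm estimates for flat sections, which you in turn obtain by ``feeding'' the distance-decreasing property ``into the $\mathrm{SL}_2$-orbit theorem''; but the $\mathrm{SL}_2$-orbit theorem takes as its \emph{hypothesis} a nilpotent orbit $(N,F_0^{\bullet})$, i.e.\ precisely the limit filtration whose existence (and properties (i), (ii)) you are trying to establish, and the norm estimates are downstream corollaries of it. In the abstract VHS setting there is no independent source for those estimates, so as written the argument is circular --- your remark that ``already the one-variable case requires the full strength of the $\mathrm{SL}_2$-orbit theorem'' is exactly where it goes wrong. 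In Schmid's proof the extension of $\psi$ is obtained \emph{without} $\mathrm{SL}_2$ or norm estimates: the distance-decreasing property alone shows that $\tilde\varphi(iy)$ moves at most logarithmically in $y$ in the $G_{\mathbb{R}}$-invariant metric, hence can be written as $g(y)\cdot F_{\mathrm{base}}$ with $\|g(y)\|$ growing at most polynomially in $y\sim\log|q|^{-1}$; multiplying by $\exp(-\sum z_jN_j)$, whose entries are polynomial in $z$, bounds the chart coordinates of $\psi$ by a power of $\log|q|^{-1}$, which is $o(|q|^{-\epsilon})$ for every $\epsilon$, and the removable singularity theorem applies. Only after this (and after the one-variable $\mathrm{SL}_2$-orbit theorem, which the one-variable nilpotent orbit theorem makes available) does $\mathrm{SL}_2$ legitimately enter, namely in verifying the positivity statement (ii) in several variables by induction.

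Your treatment of (i) via differentiating a holomorphic frame of the extended Hodge bundle and killing the $2\pi i\,q_j\partial_{q_j}\sigma$ term at $q=0$ is correct and is the standard argument, and your overall skeleton (Deligne canonical extension, curvature/Ahlfors--Schwarz, removable singularities, approximation for (ii)) is the right one. To repair the proposal you should decouple the growth bound on $\psi$ from the norm estimates --- proving the extension directly from the distance-decreasing property as above --- and then restructure the logic as: one-variable extension and (i), then one-variable $\mathrm{SL}_2$-orbit theorem, then the several-variable positivity (ii) by induction, with the norm estimates appearing as consequences rather than inputs.
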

In general, we have the following definition:
\begin{definition}\label{definitionnilporbit}
Suppose $N_1,...,N_n\in \mathfrak{g}_{\mathbb{Q}}$ are commuting nilpotent elements, and $F^{\bullet}\in \check{D}$. We call $\sigma:=\sum_{j=1}^{n}\mathbb{Q}_{\geq 0}N_j$ a nilpotent cone\footnote{Strictly speaking, $\sigma$ should be defined as the convex hull of $\{N_1,...,N_n\}$. For simplicity we assume $\sigma$ is not degenerate, equivalently $N_1,...,N_n$ are linearly independent.} and $Z:=\exp(\sigma_{\mathbb{C}})F^{\bullet}\subset \check{D}$ (resp. $Z^{\#}:=\exp(i\sigma_{\mathbb{R}})F^{\bullet}\subset \check{D}$) a nilpotent orbit (resp. nilpotent $i$-orbit) if the following conditions hold:
\begin{description}
\item[(i)] $NF^{p}\subset F^{p-1}$ for any $N\in \sigma$ and $0\leq p\leq l$;
\item[(ii)] $\mathrm{exp}(\sum_{j=1}^{n}z_jN_j)F^{\bullet}\in D$ for $z_j\in \mathbb{C}$ and $\mathfrak{Im}(z_j)>>0$ for any $j$.
\end{description}
\end{definition}
\begin{remark}
    In Definition \ref{definitionnilporbit} (i) is known as the Griffiths transversality condition and (ii) is called the positivity condition.
\end{remark}
For any nilpotent element $N\in \mathfrak{g}_{\mathbb{Q}}$, there is a naturally associated Jacobson-Morozov weight filtration $W(N)_{\bullet}$ which is increasing and defined on $H_{\mathbb{Q}}$, see for example \cite[Sec. 2.4]{Rob15}. In particular, if the nilpotency index of $N$ is $\mu$, then $W(N)_{-\mu}=\mathrm{Img}(N^{\mu})$ and $W(N)_{\mu}=H_{\mathbb{Q}}$. In \cite{CK82} Cattani and Kaplan proved the following result: 
\begin{theorem}\label{nilpconeweightfiltration}
    If $(\sigma, F^{\bullet})$ is a nilpotent orbit, then there is a weight filtration $W_{\bullet}$ on $H_{\mathbb{Q}}$ such that $W_{\bullet}=W(N)_{\bullet}$ for any $N$ in the relative interior of $\sigma$.  
\end{theorem}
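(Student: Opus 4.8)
The plan is to show that $N\mapsto W(N)$ is locally constant on the relative interior $\sigma^{\circ}$ of $\sigma$; since $\sigma^{\circ}$ is convex, hence connected, this yields the assertion. Concretely, I would fix a reference point $N_{0}\in\sigma^{\circ}$, put $W_{0}:=W(N_{0})$, and prove that $W_{0}$ is itself the Jacobson-Morozov filtration of every $N\in\sigma^{\circ}$ sufficiently close to $N_{0}$. The only characterization of $W(\cdot)$ needed is the standard one (see \cite[Sec.~2.4]{Rob15}): $W(N)$ is the \emph{unique} increasing filtration $A_{\bullet}$ of $H_{\mathbb{Q}}$ with $NA_{k}\subseteq A_{k-2}$ and $N^{k}\colon\mathrm{Gr}^{A}_{k}\xrightarrow{\ \sim\ }\mathrm{Gr}^{A}_{-k}$ for all $k$. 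Applied to $N_{0}$ this is just the statement that $W_{0}$ has these two properties with respect to $N_{0}$.

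The single Hodge-theoretic ingredient the argument rests on, and the step I expect to be the main obstacle, is the following.
\begin{description}
\item[$(\star)$] Every $N'\in\sigma$ lowers $W_{0}$ by two steps, that is, $N'W_{0,k}\subseteq W_{0,k-2}$ for all $k$.
\end{description}
Together with the Griffiths transversality condition $N'F^{p}\subseteq F^{p-1}$, valid for every element of $\sigma$ by Definition \ref{definitionnilporbit}, $(\star)$ says that each $N'\in\sigma$ acts on the pair $(W_{0},F^{\bullet})$ as a morphism of mixed Hodge structures of type $(-1,-1)$. That $N'$ merely \emph{preserves} $W_{0}$ is pure linear algebra: $N'$ commutes with every power of $N_{0}$, hence preserves the kernels and images of those powers, and the steps of $W_{0}$ are built out of such subspaces. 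The genuine content of $(\star)$ is that the weight drops by two rather than by one; this is precisely where the positivity condition of the nilpotent orbit is used in an essential way, and it is the technical heart of \cite{CK82}. It can also be extracted from the $SL_{2}$-orbit theorem of Schmid \cite{Sch73} and its several-variable refinement, which locate a commuting nilpotent with respect to the $\mathfrak{sl}_{2}$-triple associated to $(N_{0},W_{0})$.

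Granting $(\star)$, the rest is formal. For $N\in\sigma^{\circ}$ close to $N_{0}$, write $N=N_{0}+M$ with $M=\sum_{j}s_{j}N_{j}$, where the real numbers $s_{j}$ (possibly negative) are small enough that $N$ still lies in $\sigma^{\circ}$. From $N_{0}W_{0,k}\subseteq W_{0,k-2}$ and $(\star)$ applied to each $N_{j}$ occurring in $M$ we get $NW_{0,k}\subseteq W_{0,k-2}$; hence $N$ respects $W_{0}$ and induces maps $N^{k}\colon\mathrm{Gr}^{W_{0}}_{k}\to\mathrm{Gr}^{W_{0}}_{-k}$. On these finitely many maps between finite-dimensional spaces $N$ acts as $N_{0}+M$, and since $N_{0}^{k}$ is an isomorphism there for every $k$ while invertibility is an open condition, $N^{k}$ remains an isomorphism once the $s_{j}$ are small. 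By the uniqueness characterization above this forces $W(N)=W_{0}$, proving local constancy and hence the theorem. A variant organizes the argument as an induction on the number of generators, identifying $W(N'+cN_{n})$ — for $N'$ interior to a facet and $c>0$ — with the weight filtration of $cN_{n}$ relative to $W(N')$, and using scaling-invariance of relative weight filtrations; but the essential difficulty remains the same $(\star)$-type statement about how the extra nilpotent sits relative to the limiting mixed Hodge structure.
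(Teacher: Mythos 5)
The paper does not prove this statement: Theorem \ref{nilpconeweightfiltration} is quoted from \cite{CK82}, so the only fair comparison is with that reference. Your formal skeleton is correct and is indeed the standard way to pass from the key input to the conclusion: granting $(\star)$, every $N\in\sigma^{\circ}$ near $N_{0}$ satisfies $NW_{0,k}\subseteq W_{0,k-2}$, the induced maps $N^{k}\colon \mathrm{Gr}^{W_{0}}_{k}\to\mathrm{Gr}^{W_{0}}_{-k}$ depend continuously on $N$ and are invertible at $N_{0}$, so by openness of invertibility and the uniqueness characterization of the Jacobson--Morozov filtration one gets $W(N)=W_{0}$ locally, and convexity of $\sigma^{\circ}$ finishes the argument. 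The observation that commutation with $N_{0}$ only gives preservation of $W_{0}$, while the drop by two is the genuinely Hodge-theoretic point, is also accurate.

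The genuine gap is $(\star)$ itself, and it cannot be discharged by citation in the way you propose without circularity. The statement that every element of $\sigma$ (in particular every generator, which lies only on the boundary of the open cone) shifts $W(N_{0})$ by $-2$ is not an off-the-shelf lemma independent of the theorem: in \cite{CK82} the constancy of the weight filtration and the fact that cone elements act as $(-1,-1)$-morphisms of the limiting mixed Hodge structure are established simultaneously, by induction on the number of generators, using the one-variable $\mathrm{SL}_{2}$-orbit theorem of \cite{Sch73} together with relative weight filtrations (your closing ``variant'' is in fact the actual proof, not an alternative). Appealing instead to the several-variable $\mathrm{SL}_{2}$-orbit theorem of \cite{CKS86} is also circular, since that theorem presupposes the Cattani--Kaplan result. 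So as written your argument reduces Theorem \ref{nilpconeweightfiltration} to a statement of essentially the same depth and then cites for it the very paper the theorem is quoted from; the reduction is sound, but no proof of the substantive positivity input has been supplied.
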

As a consequence, For a nilpotent cone $\sigma$, it make sense to define the weight filtration associated to $\sigma$ as $W(\sigma)_{\bullet}$. Notice if $\tau\leq \sigma$ is a proper face, $W(\sigma)_{\bullet}$ and $W(\tau)_{\bullet}$ are different in general. In the rest of this paper, we should use the following convention:
\begin{convention}
    By a convex polyhedral cone $\sigma$, we should mean $\sigma$ is relatively open. In other words, $\sigma=\overline{\sigma}\backslash \{\hbox{Union of all proper faces of }\sigma\}$.
\end{convention}

\begin{definition}
    Let $W_{\bullet}$ be an increasing (weight) filtration defined on $H_{\mathbb{Q}}$ and $F^{\bullet}\in \check{D}$. We say $(W_{\bullet}, F^{\bullet})$ is a mixed Hodge structure (of weight $l$) polarized by $N$ on $(H,Q)$ if:
  \begin{description}
    \item[(i)] $W_{\bullet} = W(N)_{\bullet}[-l]$.
    \item[(ii)] $N(F^p)\subset F^{p-1}$.
    \item[(iii)] Writing $P_{l+k}:=\mathrm{ker}\{N^{k+1}:\mathrm{Gr}^W_{l+k}\rightarrow\mathrm{Gr}^W_{l-k-2}\}$ and the non-degenerate bilinear form $Q_k(v,w):=Q(v,N^kw)$ on $P_{l+k}$, $F^{\bullet}$ induces a weight $l+k$ Hodge structure on $P_{l+k}$ polarized by $Q_k$.
  \end{description}
\end{definition}
The following theorem regarding the relation between nilpotent orbits and limiting mixed Hodge structures is classical, see for example \cite[Thm. 3.13]{CK89}
\begin{theorem}\label{nilporbittolmhs}
      $(\sigma, F^{\bullet})$ gives a nilpotent orbit if and only if $(W(\sigma)[-l]_{\bullet}, F^{\bullet})$ is a mixed Hodge structure polarized by any $N\in \sigma$.  
\end{theorem}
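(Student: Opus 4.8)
The plan is to reduce everything to the one-variable case, where the statement is essentially Schmid's $SL_2$-orbit theorem, and then to spread the conclusion over all of $\sigma$ using Theorem \ref{nilpconeweightfiltration}. The first step I would record is an elementary rescaling. If $\exp(\sum_j z_jN_j)F^{\bullet}\in D$ whenever all $\mathfrak{Im}(z_j)$ are large, then for $N=\sum_j a_jN_j$ in the (relatively open) cone $\sigma$, so all $a_j>0$, the substitution $z_j=a_jz$ gives $\exp(zN)F^{\bullet}\in D$ for $\mathfrak{Im}(z)$ large; together with $NF^{p}\subset F^{p-1}$ this exhibits $(\mathbb{Q}_{\geq 0}N,F^{\bullet})$ as a one-variable nilpotent orbit. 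Conversely, any $\sum_j z_jN_j$ with all $\mathfrak{Im}(z_j)$ large equals $\exp(\sum_j\mathfrak{Re}(z_j)N_j)\,\exp(it\tilde N)F^{\bullet}$, where the real-part factor lies in $G_{\mathbb{R}}$ and preserves $D$, the scalar $t$ is large, and $\tilde N$ lies in a fixed slice of $\sigma$; so the positivity condition for $\sigma$ is a version of the one-variable positivity made uniform in $\tilde N$.

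For the forward implication I would fix an arbitrary $N\in\sigma$; by the rescaling $(\mathbb{Q}_{\geq 0}N,F^{\bullet})$ is a one-variable nilpotent orbit, so the one-variable $SL_2$-orbit theorem of \cite{Sch73} produces a mixed Hodge structure $(W(N)_{\bullet}[-l],F^{\bullet})$ polarized by $N$. By Theorem \ref{nilpconeweightfiltration} the filtration $W(N)_{\bullet}$ is independent of the choice of $N\in\sigma$, equal to $W(\sigma)_{\bullet}$, and hence $(W(\sigma)_{\bullet}[-l],F^{\bullet})$ is a mixed Hodge structure polarized by every $N\in\sigma$.

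For the converse I would start from a filtration $W(\sigma)_{\bullet}$ such that $(W(\sigma)_{\bullet}[-l],F^{\bullet})$ is a mixed Hodge structure polarized by every $N\in\sigma$. Condition (i) of Definition \ref{definitionnilporbit}, namely $NF^{p}\subset F^{p-1}$ for $N\in\sigma$, is then immediate from the transversality axiom of a polarized mixed Hodge structure. For condition (ii), the rescaling above reduces the positivity to the statement that an $N$-polarized mixed Hodge structure satisfies $\exp(iyN)F^{\bullet}\in D$ for $y\gg 0$, with the needed uniformity as $\tilde N$ ranges over a slice of $\sigma$; the one-variable case is once more the $SL_2$-orbit theorem, and the uniformity is exactly what the several-variable $SL_2$-orbit theorem of Cattani-Kaplan-Schmid provides, using Cattani-Kaplan's treatment of commuting $N_j$'s in \cite{CK82}. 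These are the ingredients packaged in \cite[Thm. 3.13]{CK89}, which one may simply cite.

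The main obstacle, I expect, is not in the bookkeeping but entirely in those two analytic black boxes: the one-variable $SL_2$-orbit theorem, which turns the differential-geometric positivity of a nilpotent orbit into the algebraic polarization relations on the primitive parts of the weight graded pieces of $W(N)_{\bullet}[-l]$ and, run backwards, recovers positivity from those relations; and its several-variable refinement, which guarantees a uniform polydisk of positivity. Granting these, what remains — the rescaling trick, the transversality axiom, and the coherence of $W(\sigma)_{\bullet}$ from Theorem \ref{nilpconeweightfiltration} — is routine.
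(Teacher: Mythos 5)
Your proposal is correct and in substance coincides with what the paper does: the paper offers no independent argument for Theorem \ref{nilporbittolmhs}, simply recording it as classical with the citation \cite[Thm. 3.13]{CK89}, and your sketch (rescaling to the one-variable case, Schmid's $SL_2$-orbit theorem for the forward direction, the Cattani--Kaplan--Schmid several-variable results plus Theorem \ref{nilpconeweightfiltration} for the converse) is exactly the standard route packaged in that reference, which you also end up citing.
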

\begin{remark}
    The triple $(W(\sigma)[-l]_{\bullet}, F^{\bullet}, \sigma)$\footnote{In the rest of the paper, we will denote $W(\sigma)_{\bullet}$ as $W(\sigma)$ for convenience.} associated to a nilpotent orbit $(\sigma, F^{\bullet})$ is called the associated limiting mixed Hodge structure (LMHS).
\end{remark}
\section{Weak fan of restricted type and Kato-Usui spaces}


In this section we briefly overview Kato-Usui's theory \cite{KU08}, and introduce a modification of the theory which is suitable for applications.

\subsection{Kato-Usui's main theorems}\label{KUmainthms}
Let $\Sigma$ be a collection of rational nilpotent cones in $\mathfrak{g}_{\mathbb{Q}}$, and $|\Sigma|$ be its support in $\mathfrak{g}_{\mathbb{Q}}$. For $\sigma\in \Sigma$, set $\Gamma(\sigma):=\Gamma\cap \mathrm{exp}(\sigma_{\mathbb{R}})$. This is a multiplicative monoid, let $\Gamma(\sigma)^{\mathrm{gp}}$ be its associated group.

\begin{definition}
We say $\Sigma$ is strongly compatible with $\Gamma\leq G_{\mathbb{Z}}$ if the following holds:
\begin{description}
\item[(i)] $\Sigma$ is closed under the action by $\mathrm{Ad}_{\Gamma}$;
\item[(ii)] $\sigma_{\mathbb{R}}= \mathbb{R}_{\geq 0}\langle \mathrm{log}(\Gamma(\sigma))\rangle$ for any $\sigma\in \Sigma$.
\end{description}
\end{definition}

\begin{definition}[\cite{KU08}, \cite{KNU10}]\label{deffanweakfan}
We say $\Sigma$ is a fan (resp. weak fan) if the following (i) and (ii) (resp. (i) and (ii')) hold:
\begin{description}
\item[(i)] $\Sigma$ is closed under taking faces;
\item[(ii)] For any $\sigma, \tau\in \Sigma$, either $\sigma\cap \tau=\emptyset$ or $\sigma=\tau$;
\item[(ii')] For any $\sigma, \tau\in \Sigma$, suppose $\sigma\cap \tau$ is not empty, and there exists $F^{\bullet}\in \check{D}$ such that both $(\sigma, F^{\bullet})$ and $(\tau, F^{\bullet})$ are nilpotent orbits, then $\sigma=\tau$.
\end{description}
\end{definition}

Now we suppose $\Sigma$ is a collection of rational nilpotent cones strongly compatible with $\Gamma$. Define the spaces of nilpotent orbits and nilpotent $i$-orbits:
\begin{align}\label{defspaceofnilpotentorbits}
    D_{\Sigma}:&=\{(\sigma, Z) \ | \ \sigma\in \Sigma, Z \ \text{is a} \ \sigma-\text{nilpotent orbit} \ \}\\
   \nonumber &=\sqcup_{\sigma\in \Sigma}B(\tau)
\end{align}
where
\begin{center}
$B(\sigma):=\{\text{$\sigma$-nilpotent orbits}\}$,
\end{center}
and
\begin{align}\label{defspaceofnilpotentiorbits}
    D_{\Sigma}^{\#}:&=\{(\sigma, Z^{\#}) \ | \ \sigma\in \Sigma, Z^{\#} \ \text{is a} \ \sigma-\text{nilpotent $i$-orbit} \ \}\\
   \nonumber &=\sqcup_{\sigma\in \Sigma}B^{\#}(\tau),
\end{align}
where
\begin{center}
$B^{\#}(\sigma):=\{\text{$\sigma$-nilpotent $i$-orbits}\}$.
\end{center}
For $\sigma\in \Sigma$, we define $D_{\sigma}$ and $D_{\sigma}^{\#}$ by regarding \{all faces of $\sigma$\} as a collection of rational nilpotent cones strongly compatible with $\Gamma(\sigma)^{\mathrm{gp}}$. Notice that $D$ embeds into both $D_{\sigma}$ and $D_{\sigma}^{\#}$ by $F^{\bullet}\rightarrow (\{0\}, F^{\bullet})$, and $\Gamma$ acts on $D_{\Sigma}$ by:
\begin{equation}\label{gammaactiononnilporbit}
    \gamma: (\sigma, F^{\bullet})\rightarrow (\mathrm{Ad}_{\gamma}\sigma, \gamma F^{\bullet}).
\end{equation}

The main result of \cite{KU08} can be summarized as follows.
\begin{theorem}[\cite{KU08}, Thm. A]\label{katousuimainthmA}
Suppose $\Sigma$ is a fan which is strongly compatible with $\Gamma\leq G_{\mathbb{Z}}$ which is assumed to be neat, then: 
 \begin{description}
 \item[(i)] $\Gamma \backslash D_{\Sigma}$ admits a structure of logarithmic manifold which is also Hausdorff.
 \item[(ii)] For any $\sigma\in \Sigma$, the natural map $\Gamma(\sigma)^{\mathrm{gp}}\backslash D_{\sigma}\rightarrow \Gamma\backslash D_{\Sigma}$ is locally a homeomorphism.
 \end{description}
\end{theorem}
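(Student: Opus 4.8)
The statement being local on $\Gamma\backslash D_{\Sigma}$, the plan is to build an explicit local model around each boundary stratum and then glue. Fix $\sigma\in\Sigma$; strong compatibility makes $\Gamma(\sigma)$ a finitely generated monoid, so one may form the toric variety $\mathrm{toric}_{\sigma}$ attached to $\sigma$ and the lattice $\log\Gamma(\sigma)^{\mathrm{gp}}$ --- it carries a canonical fs logarithmic structure, a distinguished point $\mathbf{0}_{\sigma}$ whose underlying face is $\sigma$, and an open orbit $\mathrm{torus}_{\sigma}$ obtained as the quotient of $\sigma_{\mathbb{C}}$ by the exponential map $\mathbf{e}\colon\sigma_{\mathbb{C}}\to\mathrm{toric}_{\sigma}$ with kernel $\log\Gamma(\sigma)^{\mathrm{gp}}$. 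Put $\check{E}_{\sigma}:=\mathrm{toric}_{\sigma}\times\check{D}$, which is log smooth over $\mathbb{C}$, and let $E_{\sigma}\subset\check{E}_{\sigma}$ consist of the pairs $(q,F)$ that ``are a nilpotent orbit along the face $\sigma(q)\preceq\sigma$ cut out by $q$'': writing $\tilde{F}:=\exp(a)F$ for any lift $a\in\sigma_{\mathbb{C}}$ of the torus-coordinate of $q$ (the ambiguity being absorbed below), one requires $(\sigma(q),\exp(\sigma(q)_{\mathbb{C}})\tilde{F})$ to be a nilpotent orbit in the sense of Definition~\ref{definitionnilporbit}. The first substantial task is to prove that $E_{\sigma}$ is a logarithmic manifold, i.e.\ locally the zero locus, inside a log smooth space, of finitely many holomorphic functions with logarithmic differentials. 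Here the positivity condition of Definition~\ref{definitionnilporbit}(ii) is open, while the Griffiths transversality condition (i) and the mixed-Hodge-structure shape (via Theorem~\ref{nilporbittolmhs}) are controlled using the Lie-theoretic model of $\check{D}$ near a limiting mixed Hodge structure together with Schmid's nilpotent orbit theorem; the resulting equations have exactly the ``log smooth with a slit'' form, which is both why $E_{\sigma}$ is a logarithmic manifold and why one must leave the category of complex-manifolds-with-log-structure.

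Next I would set up the structure maps. The vector group $\sigma_{\mathbb{C}}$ acts on $E_{\sigma}$ --- through $\mathbf{e}$ on the toric factor and through the exponential representation on $\check{D}$ --- and recording the underlying nilpotent orbit of a pair gives a $\Gamma(\sigma)^{\mathrm{gp}}$-equivariant surjection $E_{\sigma}\to D_{\sigma}$; after passing to the $\Gamma(\sigma)^{\mathrm{gp}}$-quotient, $\mathrm{torus}_{\sigma}$ acts with orbits the fibres over $\Gamma(\sigma)^{\mathrm{gp}}\backslash D_{\sigma}$. One then \emph{defines} the topology and log structure on $D_{\Sigma}$, hence on $\Gamma\backslash D_{\Sigma}$, to be the strong topology induced through these charts, so that $E_{\sigma}$ being a logarithmic manifold forces $\Gamma(\sigma)^{\mathrm{gp}}\backslash D_{\sigma}$ --- and locally $\Gamma\backslash D_{\Sigma}$ --- to be one as well, giving part~(i) once gluing is checked. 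For the local homeomorphism of part~(ii), I would show $\Gamma(\sigma)^{\mathrm{gp}}\backslash D_{\sigma}\to\Gamma\backslash D_{\Sigma}$ is injective near the $\sigma$-stratum: an element of $\Gamma$ matching two such points must respect the induced face and weight-filtration data (Theorem~\ref{nilpconeweightfiltration}), and neatness of $\Gamma$ --- the eigenvalue subgroup of each element being torsion-free, whence $\Gamma$ is torsion-free --- then forces it into $\Gamma(\sigma)^{\mathrm{gp}}$; openness of the map comes from the strong-topology construction, so it is a local homeomorphism. The fan condition~(ii) of Definition~\ref{deffanweakfan} --- or its weak-fan replacement~(ii$'$) --- is precisely what makes the charts glue: distinct cones cannot share a nilpotent orbit, so the identifications on overlaps are unambiguous and the charts cover $\Gamma\backslash D_{\Sigma}$.

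The genuinely hard part, and the main obstacle, is Hausdorffness of $\Gamma\backslash D_{\Sigma}$. I would reduce it to a separation statement: any two points of $D_{\Sigma}$ that occur as limits of a common sequence from $\Gamma\backslash D$ are $\Gamma$-equivalent; equivalently, two non-$\Gamma$-equivalent nilpotent orbits $(\sigma,Z)$, $(\sigma',Z')$ admit neighbourhoods with disjoint $\Gamma$-saturations. The essential input is the several-variable $\mathrm{SL}_{2}$-orbit theorem of Cattani--Kaplan--Schmid (resting on Schmid's \cite{Sch73}), which near a nilpotent orbit compares an approaching family of Hodge structures with the model nilpotent orbit and yields a quantitative estimate for the $G_{\mathbb{R}}$-invariant (Hodge) metric on $D$. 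It is convenient to carry this out first on the space $D_{\Sigma}^{\#}$ of nilpotent $i$-orbits and on a valuative refinement where the estimates are cleanest, and then transfer Hausdorffness back to $\Gamma\backslash D_{\Sigma}$. Feeding a convergent sequence through the estimate determines the limiting mixed Hodge structure and its monodromy weight filtration, which via Theorem~\ref{nilporbittolmhs} and the (weak) fan condition forces $\sigma=\sigma'$; a second application, together with discreteness and neatness of $\Gamma$, pins down $Z$ and $Z'$ up to $\Gamma(\sigma)^{\mathrm{gp}}$. I expect this step --- in particular making the $\mathrm{SL}_{2}$-orbit estimates interact correctly with the combinatorics of $\Sigma$ and with the passage to the $\Gamma$-quotient --- to be by far the most delicate, with the logarithmic-manifold structure of the charts $E_{\sigma}$ a clear but distant second.
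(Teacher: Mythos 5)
Your proposal follows essentially the same route as the paper's appendix sketch of Kato--Usui's proof: local charts $E_{\sigma}\subset\mathrm{toric}_{\sigma}\times\check{D}$ with the $\sigma_{\mathbb{C}}$-torsor map onto $\Gamma(\sigma)^{\mathrm{gp}}\backslash D_{\sigma}$, the strong topology, the fan condition and neatness (via the stabilizer statement) for gluing and for part (ii), and Hausdorffness obtained on $D^{\#}_{\Sigma}$ and its valuative refinement from the several-variable $\mathrm{SL}_2$-orbit theorem. The only cosmetic difference is that the paper (following [KU08]) packages the $\mathrm{SL}_2$-estimates as properness of the $\Gamma$-action pulled back through the continuous map $\psi\colon D^{\#}_{\Sigma,\mathrm{val}}\rightarrow D_{\mathrm{SL}_2}$, whereas you invoke the metric estimates directly; this is the same mechanism.
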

 
 \begin{theorem}[\cite{KU08}, Thm. B]\label{katousuimainthmB}
Let $\Gamma$ and $\Sigma$ be the same as above. For any period map $\varphi: S\rightarrow \Gamma \backslash D$ over a quasi-projective variety $S$ whose monodromy group is contained in $\Gamma$, $\varphi$ extends uniquely to:
\begin{center}
    $\overline{\varphi}: \bar{S}\rightarrow \Gamma\backslash D_{\Sigma}$
\end{center}
with $\varphi$ a morphism between logarithmic manifolds.
\end{theorem}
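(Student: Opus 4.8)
The plan is to deduce uniqueness formally and then build the extension chart by chart along the boundary divisor. Uniqueness is immediate: $S$ is dense in $\bar S$ and, by Theorem \ref{katousuimainthmA}(i), $\Gamma\backslash D_{\Sigma}$ is Hausdorff, so any two continuous extensions of $\varphi$ coincide; thus it suffices to produce one continuous extension $\bar\varphi$ that is a morphism of logarithmic manifolds. Since $\bar S\setminus S$ is a simple normal crossing divisor, one works in a coordinate polydisk $\Delta^{n}\subset\bar S$ with $S\cap\Delta^{n}=(\Delta^{*})^{k}\times\Delta^{n-k}$, so that $\varphi$ restricts to a local period map of the form \eqref{periodmaplocalform}. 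After a finite base change in the first $k$ coordinates — harmless for checking continuity since the base-change map is a proper surjection — the monodromy theorem lets us assume the $T_{j}$ unipotent; put $N_{j}=\log T_{j}$ and, for $I\subseteq\{1,\dots,k\}$, let $\sigma_{I}=\sum_{j\in I}\mathbb{R}_{\ge 0}N_{j}$. Here we use that $\Sigma$ is strongly compatible with $\Gamma$ and — as must be assumed for $\bar\varphi(p)$ below to land in $D_{\Sigma}$ — contains the monodromy cones of $\varphi$, so that every $\sigma_{I}\in\Sigma$.

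Next, lift $\varphi$ to $\tilde\varphi\colon\mathfrak{H}^{k}\times\Delta^{n-k}\to D$ and set $\psi=\exp\!\big(-\sum_{j\le k}z_{j}N_{j}\big)\tilde\varphi$; this descends to a holomorphic map on $(\Delta^{*})^{k}\times\Delta^{n-k}$ and, by Schmid's nilpotent orbit theorem, extends holomorphically to $\psi\colon\Delta^{n}\to\check{D}$. For a point $p$ in the stratum $\{q_{j}=0:j\in I\}$, the nilpotent orbit theorem applied along that stratum shows $(\sigma_{I},\psi(p))$ is a nilpotent orbit, so we define $\bar\varphi(p)$ to be the class in $\Gamma\backslash D_{\Sigma}$ of $\big(\sigma_{I},\exp((\sigma_{I})_{\mathbb{C}})\psi(p)\big)$; over $S$ this recovers $\varphi$.

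It remains to check that $\bar\varphi$ is well defined, continuous, and a morphism of logarithmic manifolds. Independence of the chart follows from the (weak) fan condition of Definition \ref{deffanweakfan}: the monodromy cone attached to a boundary stratum is intrinsic, and within $\Sigma$ a cone is determined by any nilpotent orbit it supports, so distinct charts give the same point of $\Gamma\backslash D_{\Sigma}$; the local pieces then glue because the global monodromy lies in $\Gamma$. Continuity at a boundary point is the core of the argument: a sequence approaching $p$ corresponds to $z$-coordinates with $\mathrm{Im}(z_{j})\to\infty$ for $j\in I$ and the remaining coordinates converging, and the quantitative estimates of the nilpotent orbit theorem say precisely that $\tilde\varphi$ is asymptotic to $\exp(\sum z_{j}N_{j})\psi(\cdot)$; unwinding the definition of the strong topology on $D_{\Sigma}$ — whose local model near $(\sigma_{I},Z)$ is assembled from the toric-type coordinates $q^{\ell}$, $\ell$ in the dual monoid of $\sigma_{I}$, together with the ``finite'' variables coming from $\psi$ — this is exactly the statement that the limit is $(\sigma_{I},Z)$. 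The same local coordinates show $\bar\varphi$ is a morphism of logarithmic manifolds: the pullback of each monomial $q^{\ell}$ is a monomial in the disk coordinates of $\bar S$, hence compatible with the log structure of $(\bar S,\bar S\setminus S)$, while Griffiths transversality \eqref{griffithstrans} matches the differential condition cutting out the logarithmic manifold structure on $D_{\Sigma}$.

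I expect the main obstacle to be exactly this last paragraph — proving continuity along the boundary and the morphism-of-log-manifolds property. It requires matching the fine asymptotics of the nilpotent orbit theorem (which, via Theorem \ref{katousuimainthmA}, ultimately rest on the $\mathrm{SL}_{2}$-orbit theorem) against the rather intricate definition of the strong topology and the logarithmic structure on $D_{\Sigma}$, and one must handle with care both the finite base change that removed the semisimple part of the monodromy and the transversal $\Delta^{n-k}$-directions, which do not contribute monodromy but still move in $\check{D}$.
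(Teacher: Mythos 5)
Your overall strategy (uniqueness from density plus Hausdorffness, local construction via Schmid's nilpotent orbit theorem on each boundary chart, gluing through the local charts $\Gamma(\sigma)^{\mathrm{gp}}\backslash D_{\sigma}\rightarrow\Gamma\backslash D_{\Sigma}$) is the standard route of \cite{KU08}, which is also all this paper does for this statement: it quotes Theorem B and, in the appendix, only indicates how such extension statements are checked on local charts using Theorem \ref{katousuimainthmA}(ii). However, there are two concrete problems with your write-up. First, your boundary value is wrong as written. You untwist \emph{all} punctured coordinates, setting $\psi=\exp\bigl(-\sum_{j\le k}z_jN_j\bigr)\tilde{\varphi}$, and then assign to a point $p$ of the stratum $\{q_j=0:j\in I\}$ the class of $\bigl(\sigma_I,\exp((\sigma_I)_{\mathbb{C}})\psi(p)\bigr)$. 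For $I=\emptyset$ this gives $\psi(p)\bmod\Gamma$, not $\varphi(p)$ (the factor $\exp(\sum_j z_jN_j)$ is not in $\Gamma$), so the map does not restrict to $\varphi$ on $S$; on intermediate strata it omits the residual twist by the non-vanishing coordinates. The correct assignment is the class of
\begin{equation*}
\Bigl(\sigma_I,\ \exp\bigl((\sigma_I)_{\mathbb{C}}\bigr)\exp\Bigl(\sum_{j\le k,\ j\notin I}\tfrac{\log q_j(p)}{2\pi i}N_j\Bigr)\psi(p)\Bigr),
\end{equation*}
whose branch ambiguity lies in $\exp(\mathbb{Z}N_j)\subset\Gamma$ and is therefore harmless, but which cannot be dropped since $N_j\notin(\sigma_I)_{\mathbb{C}}$ for $j\notin I$. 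This is precisely the role of the factor $\exp(\mathrm{log}_{\sigma}q)$ in the definition of $E_{\sigma}$ in the appendix.

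Second, the step you yourself flag as the main obstacle --- continuity at the boundary and the morphism-of-logarithmic-manifolds property --- is the actual content of Theorem B, and your paragraph asserts rather than proves it. Saying that the nilpotent orbit asymptotics are ``exactly the statement that the limit is $(\sigma_I,Z)$'' hides the real work: the topology near a boundary point is the quotient of the \emph{strong} topology on $E_{\sigma}\subset\check{E}_{\sigma}$ by the $\sigma_{\mathbb{C}}$-torsor map $\Theta_{\sigma}$, and that strong topology is strictly finer than the subspace topology, so continuity of the lifted map $\Delta^n\rightarrow E_{\sigma}$ (and compatibility with the log structure) is where the quantitative $\mathrm{SL}_2$-orbit/CKS estimates and the torsor and properness results of \cite[Sec.~7]{KU08} enter; none of this is supplied. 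Two smaller points: the finite base change is unnecessary here, since $\Gamma$ is assumed neat and quasi-unipotent local monodromies in a neat group are unipotent, and if a base change were genuinely needed you would still owe an argument that the extension descends through it; and the hypothesis you add (that $\Sigma$ contains the local monodromy cones, or at least that all boundary nilpotent orbits lie in $D_{\Sigma}$) is indeed required --- it is part of the fuller statement in \cite{KU08} to which the paper's remark refers.
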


\begin{remark}
\
\begin{description}
    \item[(i)] We refer readers to \cite{KU08} for the more general statement. The theorems stated here are special cases;
    \item[(ii)] In \cite{KNU10}, $\Sigma$ is assumed to be a $\Gamma$-strongly compatible weak fan (instead of a fan), and the same results still hold.
\end{description}
\end{remark}

Logarithmic manifold is defined in \cite[Sec. 3.5.7]{KU08}. Roughly speaking, it is locally the vanishing locus of a (finitely generated) ideal of logarithmic differential $1$-forms corresponds to some fs logarthmic analytic space. One can also understand it as a space locally stratified by complex analytic spaces without concerning the logarithmic structure.

We will give a more detailed introduction to Kato-Usui's theory as well as a sketch of proof for Theorem \ref{katousuimainthmA} in the appendix. 

\subsection{An alternative definition of fan}

Besides the space of $\sigma$-nilpotent orbits (resp. $i$-orbits) $B(\sigma)$ (resp. $B^{\#}(\sigma)$), we also define:
\begin{center}
    $\tilde{B}(\sigma):=\{F^{\bullet}\in \check{D} | (\sigma, F^{\bullet}) \ \hbox{is a nilpotent orbit}\}$.
\end{center}
By Theorem \ref{nilpconeweightfiltration} and Theorem \ref{nilporbittolmhs}, we have:
\begin{center}
    $\tilde{B}(\sigma)=\{F^{\bullet}\in \check{D} | (W(\sigma)[-l], F^{\bullet}, \sigma) \ \hbox{is a limiting mixed Hodge structure}\}$.
\end{center}
Regarding the structure of $\tilde{B}(\sigma)\subset \check{D}$, \cite[Chap. 7]{KP16} says for $F^{\bullet}\in \tilde{B}(\sigma)$:
\begin{equation}\label{tangentspaceofbn}
    T_{F^{\bullet}}\tilde{B}(\sigma)=\mathfrak{z}(\sigma)\cap T_{F^{\bullet}}\check{D}
\end{equation}
where
\begin{equation}
    \mathfrak{z}(\sigma):=\cap_{N\in \sigma_{\mathbb{R}}}\mathrm{Ker}(\mathrm{ad}_N)\subset \mathfrak{g}_{\mathbb{C}}.
\end{equation}
We also denote
\begin{equation}
    Z(\sigma):=\{g\in G_{\mathbb{C}}|\mathrm{Ad}(g)N=N, \ \forall N\in \sigma_{\mathbb{Q}}\}\subset G_{\mathbb{C}}.
\end{equation}
as the centralizer of $\sigma$. Indeed, $Z(\sigma)\subset P_{W(\sigma)}(\mathbb{C})$ where $P_{W(\sigma)}$ is the $\mathbb{Q}$-parabolic subgroup of $G_{\mathbb{Q}}$ stabilizing $W(\sigma)$. Clearly $Z(\sigma)$ acts locally transitively on $\tilde{B}(\sigma)$.

The global structure of $\tilde{B}(\sigma)$ is more complicated. Fix $F^{\bullet}\in \tilde{B}(\sigma)$, and let $\tilde{B}(\sigma)^{\circ}$ be the $Z(\sigma)$-orbit containing $F^{\bullet}$. \cite[Chap. 7]{KP16} says:
\begin{equation}\label{structureofbn}
    \tilde{B}(\sigma)^{\circ}\cong Z(\sigma)\cap(U_{W(\sigma)}(\mathbb{C})\rtimes P_{W(\sigma)}(\mathbb{R}))/Z(\sigma)\cap \mathrm{Stab}_{F^{\bullet}}G_{\mathbb{C}}.
\end{equation}
However as \cite[Sec. 1.4]{KP16} has pointed out, $\tilde{B}(\sigma)$ may not be connected, and may have more than one (but finitely many) $Z(\sigma)$-orbits. These orbits can be classified by the type of limiting mixed Hodge structures (LMHS) induced by the corresponding nilpotent orbit.


We give an example to illustrate the point. Suppose $D$ is the period domain parametrizing polarized Hodge structures of type $(1,a+b,a+b,1)$ on $(H_{\mathbb{Q}}, Q)$ (hence the dimension of $H$ is $2(1+a+b)$), then for a nilpotent element $N\in \mathfrak{g}_{\mathbb{Q}}$ such that
\begin{equation}
    N\neq 0,  \ N^2=0, \ \mathrm{rank}(N)=a,
\end{equation}
$\tilde{B}(N)$ may contain $2$ disjoint $Z(\sigma)$-orbits parametrizing the following two types of LMHS described by their Hodge-Deligne diagrams, see Figure \ref{lmhstypefigure}.
\begin{figure}\label{lmhstypefigure}
\centering

    \begin{tikzpicture}[scale=1]
    \draw[-,line width=1.0pt] (0,0) -- (0,3.5);
    \draw[-,line width=1.0pt] (0,0) -- (3.5,0);
    \fill (0,3) circle (2pt);
    \node at (-0.3,3) {$1$};
    
    \fill (1,1) circle (2pt);
    \node at (0.7,1) {$a$};

     \fill (1,2) circle (2pt);
    \node at (0.7,2.3) {$b$};
    
    \fill (3,0) circle (2pt);
    \node at (3,-0.3) {$1$};
    
    \fill (2,1) circle (2pt);
    \node at (2.3,0.7) {$b$};
    
    \fill (2,2) circle (2pt);
    \node at (2.3,2) {$a$};
    
    \draw [-stealth](1.8,1.8) -- (1.2,1.2) [line width = 1.0pt];
  
\end{tikzpicture}
$
\mspace{50mu}
$
\begin{tikzpicture}[scale=1]
    \draw[-,line width=1.0pt] (0,0) -- (0,3.5);
    \draw[-,line width=1.0pt] (0,0) -- (3.5,0);
    \fill (1,3) circle (2pt);
    \node at (1,3.3) {$1$};

    \fill (0,2) circle (2pt);
    \node at (-0.3,2) {$1$};
    
    \fill (1,1) circle (2pt);
    \node at (1,0.7) {$a-2$};

     \fill (1,2) circle (2pt);
    \node at (1,2.3) {$b+1$};
    
    \fill (3,1) circle (2pt);
    \node at (3.3,1) {$1$};

     \fill (2,0) circle (2pt);
     \node at (2,-0.3) {$1$};
    
    \fill (2,1) circle (2pt);
    \node at (2,0.7) {$b+1$};
    
    \fill (2,2) circle (2pt);
    \node at (2,2.3) {$a-2$};
    
    \draw [-stealth](1.8,1.8) -- (1.2,1.2) [line width = 1.0pt];
    \draw [-stealth](0.8,2.8) -- (0.2,2.2) [line width = 1.0pt];
    \draw [-stealth](2.8,0.8) -- (2.2,0.2) [line width = 1.0pt];
    \end{tikzpicture}
    \caption{Type I and II LMHS for $(1,a+b,a+b,1)$} 
\end{figure}
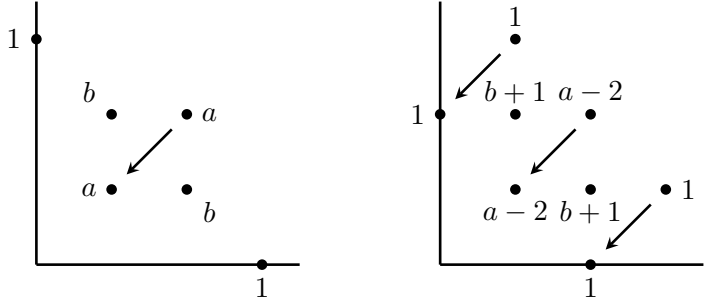
Here we use the type of LMHS defined by \cite[Chap. 5]{KPR19}. Particularly for the $(1,a+b,a+b,1)$ period domain, type $X$ means the Hodge numbers $\{h^{p,q}\}_{p,q\in \mathbb{Z}}$ satisfy $h^{4-X, 0}=1$. A quick way to distinguish these $2$ types of LMHS is checking whether $N$ annihilates $F^{3}$.
\begin{definition}
    Let $\Phi$ be an index set of limiting mixed Hodge structure types (corresponds to some fixed Hodge numbers $h^{p,q}$). We say a nilpotent orbit $(\sigma, F^{\bullet})$ is of type $\Phi$ if the LMHS induced by the nilpotent orbit has type included by $\Phi$.
\end{definition}
For example, in the example above, if a nilpotent orbit $(\sigma, F^{\bullet})$ gives type I LMHS, we say the nilpotent orbit is of type I. In this case if we let the index set $\Phi=\{\text{Pure LMHS}, \text{Type I LMHS}\}$, then $(\sigma, F^{\bullet})$ is of type $\Phi$.

Next we introduce alternative definitions to \eqref{defspaceofnilpotentorbits} and \eqref{defspaceofnilpotentiorbits}. Define:
\begin{align}
    D_{\Sigma, \Phi}:&=\{(\sigma, Z) \ | \ \sigma\in \Sigma, Z \ \text{is a} \ \sigma-\text{nilpotent orbit of type $\Phi$} \ \}\\
   \nonumber &=\bigsqcup_{\sigma\in \Sigma}B(\sigma, \Phi),
\end{align}
where
\begin{center}
$B(\sigma, \Phi):=\{\text{$\sigma$-nilpotent orbits of type $\Phi$}\}$,
\end{center}
and
\begin{align}
    D_{\Sigma, \Phi}^{\#}:&=\{(\sigma, Z^{\#}) \ | \ \sigma\in \Sigma, Z^{\#} \ \text{is a} \ \sigma-\text{nilpotent $i$-orbit of type $\Phi$} \ \}\\
   \nonumber &=\bigsqcup_{\sigma\in \Sigma}B^{\#}(\sigma, \Phi),
\end{align}
where
\begin{center}
$B^{\#}(\sigma, \Phi):=\{\text{$\sigma$-nilpotent $i$-orbits of type $\Phi$}\}$.
\end{center}
The action of $\Gamma$ on nilpotent orbits preserves the associated LMHS type, therefore the natural inclusions
\begin{align}
    D_{\Sigma, \Phi} &\hookrightarrow D_{\Sigma},\\
    \nonumber D^{\#}_{\Sigma, \Phi} &\hookrightarrow D^{\#}_{\Sigma}
\end{align}
descend to inclusions
\begin{align}
    \Gamma\backslash D_{\Sigma, \Phi} &\hookrightarrow \Gamma\backslash D_{\Sigma},\\
    \nonumber \Gamma\backslash D^{\#}_{\Sigma, \Phi} &\hookrightarrow \Gamma\backslash D^{\#}_{\Sigma}.
\end{align}
\begin{definition}\label{typeweakfan}
In Definition \ref{deffanweakfan}, if we keep condition (i) but replace condition (ii) or (ii') by the following:
\begin{description}
    \item[(ii'')]  For any $\sigma, \tau\in \Sigma$, suppose $(\sigma\cap \tau)^{\circ}$ is not empty, and there exists $F^{\bullet}\in \check{D}$ such that both $(\sigma, F^{\bullet})$ and $(\tau, F^{\bullet})$ are nilpotent orbits \underline{with type $\Phi$ LMHS}, then $\sigma=\tau$.
\end{description}
then we say $\Sigma$ is a type $\Phi$ weak fan. When there is no need to emphasize the set $\Phi$, we also say $\Sigma$ is a weak fan of restricted type.
\end{definition}

\begin{remark}
The condition (ii'') in Definition \ref{typeweakfan} is generally weaker than (ii) and (ii') in Definition \ref{deffanweakfan} unless $\Phi$ contains all possible LMHS types polarized by any $\sigma\in \Sigma$.
\end{remark}
Now suppose for a given period map
\begin{center}
$\varphi: S\rightarrow \Gamma \backslash D$,
\end{center}
$\Phi$ denotes all LMHS types arising via Schmid's nilpotent orbit theorem. We present the following variation of Kato-(Nakayama)-Usui's theorems. 

\begin{theorem}[Theorem A']\label{kumainthmA'}
Suppose $\Sigma$ is a type-$\Phi$ weak fan strongly compatible with $\Gamma\leq G_{\mathbb{Z}}$ which is assumed to be neat, then: 
 \begin{description}
 \item[(i)] $\Gamma \backslash D_{\Sigma, \Phi}$ admits a structure of locally analytically constructible space.
 \item[(ii)] For any $\sigma\in \Sigma$, the canonical projection $\Gamma(\sigma)^{\mathrm{gp}}\backslash D_{\sigma, \Phi}\rightarrow \Gamma\backslash D_{\Sigma, \Phi}$ is locally an isomorphism of locally analytically constructible spaces.
 \end{description}
\end{theorem}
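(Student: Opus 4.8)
The plan is to adapt the proof of Theorem \ref{katousuimainthmA} from \cite{KU08} (and its weak-fan refinement in \cite{KNU10}), keeping careful track of the effect of restricting to nilpotent orbits of a fixed type $\Phi$. The starting observation is that $D_{\Sigma,\Phi}$ sits inside $D_{\Sigma}$ as the locus where the associated LMHS has type in $\Phi$, and that by \eqref{tangentspaceofbn} and \eqref{structureofbn} the set $\tilde{B}(\sigma)$ is a complex submanifold of $\check{D}$ which decomposes into finitely many $Z(\sigma)$-orbits, one for each LMHS type polarized by $\sigma$, each of which is a locally closed complex-analytic subset. Hence $\tilde{B}(\sigma,\Phi):=\{F^{\bullet}\in\check{D}\mid(\sigma,F^{\bullet})\text{ is a nilpotent orbit of type }\Phi\}$ is a finite union of locally closed analytic subsets, i.e. analytically constructible; it is in general neither open nor closed in $\tilde{B}(\sigma)$, which is precisely the reason the resulting space will only be locally analytically constructible in the sense of Definition \ref{deflocallyanalyticallyconstr} rather than a logarithmic manifold.

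For part (i), I would first establish the statement locally at a single cone $\sigma\in\Sigma$. Recall from \cite{KU08} the local description of $D_{\sigma}$ as a subset of $\mathrm{toric}_{\sigma}\times\check{D}$, where $\mathrm{toric}_{\sigma}$ is the analytic toric variety attached to $\sigma$ and $\Gamma(\sigma)^{\mathrm{gp}}$: a point $(q,F^{\bullet})$ lies in $D_{\sigma}$ iff, writing $\sigma(q)\leq\sigma$ for the face corresponding to the stratum containing $q$, one has $F^{\bullet}\in\tilde{B}(\sigma(q))$ together with the transversality and positivity conditions of Definition \ref{definitionnilporbit}. Intersecting with the additional constructible condition $F^{\bullet}\in\tilde{B}(\sigma(q),\Phi)$ cuts out $D_{\sigma,\Phi}$; since the logarithmic-manifold charts of $D_{\sigma}$ thereby restrict to analytically constructible subsets, $D_{\sigma,\Phi}$ carries a structure of locally analytically constructible space. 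Neatness of $\Gamma$, hence of $\Gamma(\sigma)^{\mathrm{gp}}$, gives as in \cite{KU08} that $\Gamma(\sigma)^{\mathrm{gp}}$ acts freely and properly discontinuously on $D_{\sigma,\Phi}$, so $\Gamma(\sigma)^{\mathrm{gp}}\backslash D_{\sigma,\Phi}$ is again locally analytically constructible and Hausdorff.

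Next I would glue the charts $\Gamma(\sigma)^{\mathrm{gp}}\backslash D_{\sigma,\Phi}$, $\sigma\in\Sigma$, along the open pieces coming from common faces, following the gluing procedure of \cite{KU08}, \cite{KNU10}. Here the type-$\Phi$ weak-fan axiom (ii'') of Definition \ref{typeweakfan} plays exactly the role that (ii) or (ii') plays in the original proofs: it guarantees that whenever two charts are identified along a point represented by a nilpotent orbit $(\sigma,F^{\bullet})$ with $(\tau,F^{\bullet})$ also a nilpotent orbit of type $\Phi$ and $(\sigma\cap\tau)^{\circ}\neq\emptyset$, one has $\sigma=\tau$, so the transition maps are consistent and the glued space $\Gamma\backslash D_{\Sigma,\Phi}$ is well defined, with assertion (ii) holding by construction. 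For Hausdorffness I would run the valuative-criterion argument of \cite{KU08} using the $\mathrm{SL}_2$-orbit theorem and the Cattani--Kaplan--Schmid estimates: a hypothetical pair of non-separated points of $\Gamma\backslash D_{\Sigma,\Phi}$ would, after passing to the $i$-orbit spaces $D_{\sigma,\Phi}^{\#}$ and applying the asymptotic analysis, yield a single $F^{\bullet}\in\check{D}$ for which $(\sigma,F^{\bullet})$ and $(\tau,F^{\bullet})$ are both nilpotent orbits of type $\Phi$ on cones with $(\sigma\cap\tau)^{\circ}\neq\emptyset$, which (ii'') forbids unless $\sigma=\tau$, forcing the two points to coincide.

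The main obstacle I anticipate is that $D_{\Sigma,\Phi}$ is only analytically constructible, not locally closed, inside $D_{\Sigma}$, so one cannot simply invoke Theorem \ref{katousuimainthmA} for $D_{\Sigma}$ and restrict: the local-chart construction and the Hausdorffness argument must be carried out directly in the constructible category. Concretely, one must check that analytic constructibility of $\tilde{B}(\sigma,\Phi)$ is preserved under all the operations in \cite{KU08} --- the projection from $D_{\sigma}$ to a chart, the $\Gamma(\sigma)^{\mathrm{gp}}$-action, and the gluing --- and that the separation estimates of \cite{KU08}, which are organized around the full $\tilde{B}(\sigma)$, remain valid after restricting to the type-$\Phi$ stratum. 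I expect no new analytic input beyond \cite{KU08}, \cite{KNU10} and \cite{KP16}, only the bookkeeping needed to confirm that the constructible structure is respected throughout; this verification is the content of the appendix.
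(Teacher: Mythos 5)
Your proposal follows essentially the same route as the paper's appendix: realize the type-$\Phi$ restriction as analytically constructible strata in the toric local model (the paper does this by cutting the strata $B_{\tau,\Phi}$ of $E_{\sigma,\Phi}$ with analytic conditions on the graded Hodge numbers, then quotienting by the $\sigma_{\mathbb{C}}$-torsor), use axiom (ii'') of Definition \ref{typeweakfan} to replace the fan/weak-fan intersection identity $D^{\#}_{\sigma,\Phi}\cap D^{\#}_{\tau,\Phi}=\bigcup_{\alpha}D^{\#}_{\alpha,\Phi}$ in the gluing, and obtain Hausdorffness and the local-homeomorphism statement via the restricted valuative spaces and the continuous map to $D_{\mathrm{SL}_2}$ with the properness of the $\Gamma$-action there. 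Apart from minor bookkeeping (the local structure is carried by $E_{\sigma,\Phi}$ and its $\sigma_{\mathbb{C}}$/$i\sigma_{\mathbb{R}}$ torsors rather than by a direct $\Gamma(\sigma)^{\mathrm{gp}}$-action on $D_{\sigma,\Phi}$), your outline matches the paper's proof.
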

 
 \begin{theorem}[Theorem B']\label{kumainthmB'}
Let $\Gamma$ and $\Sigma$ be the same as before. For a period map $\varphi: S\rightarrow \Gamma \backslash D$ over a quasi-projective variety $S$ whose monodromy group is contained in $\Gamma$. If all local monodromy nilpotent orbits along $\partial S$ have types contained in $\Phi$, then $\varphi$ extends uniquely to:
\begin{center}
    $\overline{\varphi}: \bar{S}\rightarrow \Gamma\backslash D_{\Sigma, \Phi}$
\end{center}
with $\overline{\varphi}$ a morphism between locally analytically constructible spaces.
\end{theorem}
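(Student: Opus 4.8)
The plan is to reduce the statement to the already-available Kato–Nakayama–Usui machinery (Theorem \ref{katousuimainthmB} together with its weak-fan version) by working one boundary point of $\bar S$ at a time and then glueing. First I would fix a point $p\in\bar S$ and choose a polydisc neighborhood in which $S$ looks like $(\Delta^*)^k\times\Delta^{n-k}$, so that the period map is given locally by the data of Schmid's nilpotent orbit theorem: commuting nilpotents $N_1,\dots,N_k$, a limiting filtration $F_0^\bullet\in\check D$, and the local lift $\Psi$. By the monodromy theorem and the neatness of $\Gamma$ the $N_j$ generate a monodromy cone $\sigma_p=\sum_j\mathbb Q_{\ge0}N_j$, and by hypothesis every nilpotent orbit $(\tau,F_0^\bullet)$ with $\tau$ a face of $\sigma_p$ has LMHS type in $\Phi$. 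The key point is that on the category side nothing changes from \cite{KU08}: the \emph{set} $D_{\sigma_p,\Phi}\subset D_{\sigma_p}$ is an open, closed and $\Gamma(\sigma_p)^{\mathrm{gp}}$-stable union of connected components of the $Z(\sigma)$-orbit stratification described in \eqref{structureofbn}, because $\Gamma$-action and the degeneration of filtrations in a one-parameter family preserve the LMHS type. Hence $\Gamma(\sigma_p)^{\mathrm{gp}}\backslash D_{\sigma_p,\Phi}$ is an open (and closed) sub-locally-analytically-constructible-space of $\Gamma(\sigma_p)^{\mathrm{gp}}\backslash D_{\sigma_p}$, and the local extension $\overline\varphi$ near $p$ produced by Theorem \ref{katousuimainthmB} (applied to the fan of faces of $\sigma_p$) automatically lands inside it, giving a local morphism $\Delta^n\to\Gamma(\sigma_p)^{\mathrm{gp}}\backslash D_{\sigma_p,\Phi}$.

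Next I would assemble these local extensions into a global map to $\Gamma\backslash D_{\Sigma,\Phi}$. Since $\Sigma$ contains (up to $\mathrm{Ad}_\Gamma$ and faces) every monodromy cone $\sigma_p$ — this is exactly the strong compatibility of $\Sigma$ with $\Gamma$ together with the way $\Sigma$ was built from the period map — the composite of the local extension with the canonical projection $\Gamma(\sigma_p)^{\mathrm{gp}}\backslash D_{\sigma_p,\Phi}\to\Gamma\backslash D_{\Sigma,\Phi}$ of Theorem \ref{kumainthmA'}(ii) gives, for each $p$, a morphism of locally analytically constructible spaces on a polydisc neighborhood of $p$ extending $\varphi$. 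On overlaps these agree because they agree on the dense open $S$ and the target is Hausdorff (this Hausdorffness is part of what Theorem \ref{kumainthmA'}(i) must deliver, via the appendix), so the valued points are determined by their restriction to $S$. Glueing yields a continuous $\overline\varphi:\bar S\to\Gamma\backslash D_{\Sigma,\Phi}$, and it is a morphism in the relevant category because being a morphism of locally analytically constructible spaces is a local condition that we have just verified in each chart.

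Finally, uniqueness: any two extensions restrict to $\varphi$ on the dense $S$ and map into a Hausdorff space, so they coincide; more precisely one invokes the same rigidity used in \cite{KU08} for Theorem \ref{katousuimainthmB}, namely that the extension of a period map over a punctured polydisc to the origin is determined by the nilpotent orbit data, which here is unchanged. The main obstacle I expect is verifying cleanly that $D_{\sigma,\Phi}$ is \emph{open and closed} in $D_\sigma$ — i.e., that the LMHS type is a locally constant function on the stratified space $D_\sigma$ and is preserved under the partial closure relations encoded by the faces of $\sigma$ — and hence that the induced structure on $\Gamma\backslash D_{\Sigma,\Phi}$ really is that of a locally analytically constructible space rather than something worse; this is precisely the content that has to be established in the appendix, where the modified definition of weak fan of restricted type is validated, and Theorem B' is then a formal consequence of Theorem A' plus the local model exactly as above.
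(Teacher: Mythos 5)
Your overall route coincides with the paper's: reduce to local charts $(\Delta^{*})^{k}\times\Delta^{n-k}$, apply the unrestricted Kato--Usui extension (Theorem \ref{katousuimainthmB}, for the fan consisting of $\sigma_p$ and its faces) to get a morphism of logarithmic manifolds into $\Gamma(\sigma_p)^{\mathrm{gp}}\backslash D_{\sigma_p}$, note that the hypothesis on the local monodromy LMHS types forces the image into $\Gamma(\sigma_p)^{\mathrm{gp}}\backslash D_{\sigma_p,\Phi}$, compose with the map of Theorem \ref{kumainthmA'}(ii), and finish by density of $S$ for compatibility on overlaps and for uniqueness. That is precisely the argument in the appendix.

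However, the justification you offer for the restricted target having the right structure is wrong as stated: $D_{\sigma,\Phi}$ is in general neither open nor closed in $D_{\sigma}$, and the LMHS type is not a locally constant function on $D_{\sigma}$. The type jumps between the strata $B(\tau)$ (interior points are pure, boundary points are not), and even within a single $\tilde{B}(\sigma)$ there may be several $Z(\sigma)$-orbits of different types (cf.\ Figure \ref{lmhstypefigure}); moreover $D\subset D_{\sigma,\Phi}$ has every boundary orbit in its closure, so closedness fails as soon as some face of $\sigma$ polarizes a type outside $\Phi$. What is actually needed, and what the appendix establishes, is weaker and of a different nature: $E_{\sigma,\Phi}$ (hence $D_{\sigma,\Phi}$ and $\Gamma(\sigma)^{\mathrm{gp}}\backslash D_{\sigma,\Phi}$) is a \emph{locally analytically constructible} subset, because on each torus-orbit stratum $A_{\tau}$ the condition that the type lies in $\Phi$ is cut out by the numerical conditions on the graded Hodge numbers $\mathrm{Gr}_{W(\tau)_k}f^{p}$, which are analytic conditions defining the modified strata $B_{\tau,\Phi}$. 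With constructibility replacing your open-and-closed claim, the inclusion $\Gamma(\sigma)^{\mathrm{gp}}\backslash D_{\sigma,\Phi}\hookrightarrow\Gamma(\sigma)^{\mathrm{gp}}\backslash D_{\sigma}$ is a morphism of locally analytically constructible spaces, and the rest of your argument goes through exactly as in the paper.
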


Here we follow \cite[Sec. 3.1.4]{KU08} to define the analytically constructibility. Notice that a logarithmic manifold is locally analytically constructible, but the converse is not true in general.

\begin{definition}\label{deflocallyanalyticallyconstr}
    A subset $S$ of a complex analytic space $X$ is called analytically constructible if there exists a finite family $(A_j)_{j\in J}$ of closed analytic subspaces $A_j$ of $X$ and a closed analytic subspace $B_j\subset A_j$ for each $j$, called the strata of $S$, such that
    \begin{equation}
        S=\{x\in X \ | \ x\in A_j\Rightarrow x\in B_j\}.
    \end{equation}
    Suppose $S\subset A$ and $T\subset B$ are analytically constructible sets. A morphism $f: S\rightarrow T$ is a restriction of some analytic map $\hat{f}: A\rightarrow B$ which preserves the strata.
\end{definition}

The proof will be left to the appendix. The motivation for this definition is the existence of type $\Phi$ weak fan relies on $\Phi$. This feature is not captured by the original definitions in \cite{KU08} or \cite{KNU10}. Also, by passing to a weak fan of restricted types, we will not get a logarithmic manifold as the original Kato-Usui's theory in general.


\section{Weak fan: The classical weight $1$ case}

\subsection{Basic settings} We consider the period domain $D_{(g,g)}$ parametrizing weight $1$ polarized Hodge structures on $(H_{\mathbb{Z}}, Q)$ with rank$(H_{\mathbb{Z}})=2g$. We have:
\begin{equation}
    D_{(g,g)}\cong \mathrm{Sp}(2g, \mathbb{R})/\mathrm{U}(g)
\end{equation}
Regarding the Hodge-theoretic degenerations, the limiting mixed Hodge structures $H_{\mathbb{C}}=\oplus_{p,q}H^{p,q}$ are characterized by $0\leq a=h^{1,1}\leq g$. 

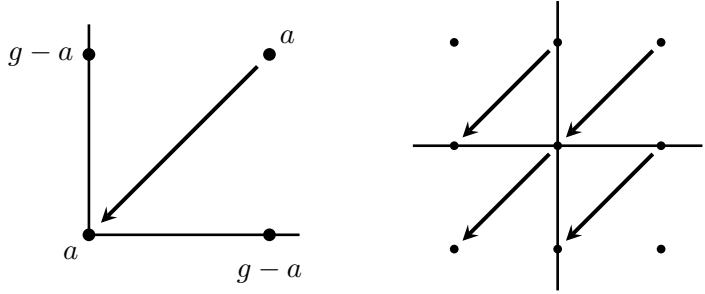
\begin{figure}\label{lmhsweight1}
\centering
    
    \begin{tikzpicture}[scale=0.8]
    \draw[-,line width=1pt] (0,0) -- (0,3.5);
    \draw[-,line width=1pt] (0,0) -- (3.5,0);
    \fill (0,3) circle (3pt);
    \node at (-0.8,3) {$g-a$};
    
    \fill (0,0) circle (3pt);
    \node at (-0.3,-0.3) {$a$};
    
    \fill (3,0) circle (3pt);
    \node at (3,-0.6) {$g-a$};
    
    \fill (3,3) circle (3pt);
    \node at (3.3,3.3) {$a$};
    
    \draw [-stealth](2.8,2.8) -- (0.2,0.2) [line width = 1.5pt];

    \end{tikzpicture}
    $
    \mspace{50mu}
    $
    \begin{tikzpicture}[scale=0.55]
    \draw[-,line width=1.0pt] (-3.5,0) -- (3.5,0);
    \draw[-,line width=1.0pt] (0,-3.5) -- (0,3.5);
    
    \fill (0,0) circle (3pt);
    \fill (2.5,0) circle (3pt);
    \fill (-2.5,0) circle (3pt);
    \fill (2.5,-2.5) circle (3pt);
    \fill (0,-2.5) circle (3pt);
    \fill (-2.5,-2.5) circle (3pt);
    \fill (2.5,2.5) circle (3pt);
    \fill (0,2.5) circle (3pt);
    \fill (-2.5,2.5) circle (3pt);
    
    \draw [-stealth](-0.2,2.3) -- (-2.3,0.2) [line width = 1.5pt];
    \draw [-stealth](2.3,2.3) -- (0.2,0.2) [line width = 1.5pt];
    
    \draw [-stealth](2.3,-0.2) -- (0.2,-2.3) [line width = 1.5pt];
    \draw [-stealth](-0.2,-0.2) -- (-2.3,-2.3) [line width = 1.5pt];

    \end{tikzpicture}
    \caption{Type $\mathrm{I}_a$ LMHS for $(g,g)$}
\end{figure}

Suppose $\varphi: S\rightarrow \Gamma\backslash D$ is a period map of type $(g,g)$ where $S$ is quasi-projective with normal crossing boundary divisors, and $\Gamma\leq \mathrm{Sp}(2g, \mathbb{Z})$ is assumed to be neat.

Let $\Sigma$ be the collection of all local monodromy nilpotent cones arising from $\varphi$. A description of $\Sigma$ is as follows: Since $\bar{S}$ is compact and $\bar{S}\backslash S$ is a normal crossing divisor, there exists a finite open cover $\{U_m\}_{1\leq m\leq M}$ such that $\cup_{1\leq m\leq M}U_s=\bar{S}$, and for each $1\leq m\leq M$, there exists $0\leq k_m\leq n$ such that
\begin{equation}
    U_m\cap S\cong (\Delta^{*})^{k_m}\times \Delta^{n-k_m}.
\end{equation}
Fix a base point $s_0\in S$, each of $U_m$ gives a $k_m$-dimensional monodromy nilpotent cone $\sigma_m$ and all of its faces, and for any point $s\in U_M\cap (\bar{S}\backslash S)$, by Schmid's nilpotent orbit theorem there is a nilpotent orbit associated to $s$ well-defined up to monodromy. 

By definition, $\Sigma$ is obtained by taking all $\mathrm{Ad}_{\Gamma}$-orbits of $\{\sigma_m \ | \ 1\leq m\leq M\}$ and all of their faces. Since changing the base point $s_0$ is equivalent to conjugating by an element in $G_{\mathbb{Q}}$, which does not change the topology of $\Sigma$, we may assume $\Sigma$ is canonically associated to the period map.

In this chapter we will prove the following theorem.
\begin{theorem}\label{mainthmclassical}
    There exists a polyhedral subdivision of $\Sigma$, denoted $\hat{\Sigma}$, such that $\hat{\Sigma}$ is a weak fan strongly compatible with $\Gamma$ and finitely generated under the action by $\mathrm{Ad}_{\Gamma}$. In other words, for any $\sigma, \tau\in \hat{\Sigma}$, $\tilde{B}(\sigma)\cap \tilde{B}(\tau)\neq \emptyset$ and $\sigma\cap \tau\neq \emptyset$ imply $\sigma=\tau$.
\end{theorem}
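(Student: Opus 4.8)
The plan is to reduce the statement to the classical reduction theory of the self‑adjoint homogeneous cone attached to the Siegel parabolic, where it becomes the standard existence statement for $\Gamma$‑admissible rational polyhedral decompositions in the sense of \cite{AMRT10}. The first step is to set up the dictionary between monodromy nilpotent cones and cones of positive semidefinite symmetric forms. Since the weight is $1$ and each $N_j$ is (after the unipotent reduction) of the form $T_j-I$, every nonzero $N$ in a monodromy cone satisfies $N^2=0$; for such an $N$ the Jacobson–Morozov filtration centered at $1$ is $0\subset W_0=\mathrm{Img}(N)\subset W_1=W_0^{\perp}=\ker N\subset W_2=H$, with $W_0$ isotropic, and the induced isomorphism $H/W_0^{\perp}\xrightarrow{\sim}W_0$ corresponds, via $Q$ together with the polarization condition, to a positive definite symmetric form. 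Because a weight‑$1$ polarized LMHS on $(H,Q)$ is determined up to type by $\dim W_0=\mathrm{rank}(N)$ (the $(g,g)$ period domain has a single LMHS type $\mathrm{I}_a$ for each $a$; see Figure \ref{lmhsweight1}), the set $\tilde B(\sigma)$ attached to a monodromy cone is irreducible and the ``restricted type'' subtleties of Section 3 do not arise. By Cattani–Kaplan (Theorem \ref{nilpconeweightfiltration}) the filtration $W(N)$, hence $W_0(N)$, is constant on the relative interior of each cone, and the subspaces $W_0(\tau)$ of the faces $\tau$ of a monodromy cone $\sigma$ are contained in $W_0(\sigma)$. Consequently $\sigma$, together with all its faces, is identified with a rational polyhedral cone inside $C_L^{\mathrm{rc}}\subset\mathfrak{u}_L$, the cone of rationally‑radical positive semidefinite symmetric forms in the abelian unipotent radical of the Siegel parabolic $P_L$, for any rational Lagrangian $L\supseteq W_0(\sigma)$.

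Next I would organize the global picture. There is a single $\mathrm{Sp}(2g,\mathbb{Z})$‑orbit, hence finitely many $\Gamma$‑orbits, of rational Lagrangian sublattices; fix representatives $L_1,\dots,L_t$, and let $\overline{\Gamma_{L_j}}\le \mathrm{GL}(L_j)(\mathbb{Z})\cong\mathrm{GL}(g,\mathbb{Z})$ be the image in the Levi quotient of $\Gamma_{L_j}:=\mathrm{Stab}_\Gamma(L_j)$, an arithmetic group acting on $C_{L_j}^{\mathrm{rc}}$. Since $\Sigma$ is generated under $\mathrm{Ad}_{\Gamma}$ by finitely many cones $\sigma_1,\dots,\sigma_M$ and their faces, after replacing these by suitable $\Gamma$‑translates I may assume each $W_0(\sigma_m)$ lies in one of the $L_j$; then, for each $j$, the cones $\sigma_m$ with $W_0(\sigma_m)\subseteq L_j$ and their faces form a finite family $\mathcal{F}_j$ of rational polyhedral cones in $C_{L_j}^{\mathrm{rc}}$. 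The heart of the argument is then an appeal to reduction theory for the self‑adjoint homogeneous cone $C_{L_j}^{\mathrm{rc}}$: there exists a $\overline{\Gamma_{L_j}}$‑admissible rational polyhedral decomposition $\hat\Sigma_{L_j}$ of $C_{L_j}^{\mathrm{rc}}$ — finitely many cones modulo $\overline{\Gamma_{L_j}}$, closed under faces and the group action, and with any two cones meeting along a common face — which moreover \emph{refines} $\mathcal{F}_j$ (each cone of $\mathcal{F}_j$ is a union of cones of $\hat\Sigma_{L_j}$). Such a compatible refinement of a prescribed finite family by an admissible decomposition is classical in the theory of toroidal compactifications of Siegel modular varieties (Siegel sets, perfect‑cone / central‑cone decompositions; \cite{AMRT10}). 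Choosing the $\hat\Sigma_{L_j}$ compatibly across $j$ and across the inclusions of boundary components, making all choices rational with respect to the lattice determined by $\Gamma$, and transporting back by $\mathrm{Ad}_\Gamma$, I obtain $\hat\Sigma$.

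Finally I would verify the required properties. Refinement of $\Sigma$ and closure under faces are built into the $\hat\Sigma_{L_j}$; closure under $\mathrm{Ad}_\Gamma$ and finite generation modulo $\mathrm{Ad}_\Gamma$ follow from the $\overline{\Gamma_{L_j}}$‑equivariance and finiteness in reduction theory, combined with the finiteness of the set of Lagrangian orbits. Strong compatibility with $\Gamma$ amounts to (i) $\mathrm{Ad}_\Gamma$‑closure, just noted, and (ii) $\sigma_{\mathbb{R}}=\mathbb{R}_{\ge 0}\langle\log(\Gamma(\sigma))\rangle$, which holds because every cone of $\hat\Sigma$ is rational (Gordan's lemma) and the original monodromy cones satisfy (ii) tautologically. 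For the weak‑fan property I note that in fact $\hat\Sigma$ is an honest fan: if $\sigma,\tau\in\hat\Sigma$ and $\sigma\cap\tau\neq\emptyset$, pick $N_0$ in the intersection; then $W_0(\sigma)=W_0(N_0)=W_0(\tau)$, so $\sigma$ and $\tau$ lie in the same copy $\mathfrak{u}_L$, where $\hat\Sigma$ restricts to a single fan, forcing $\sigma=\tau$ — hence distinct cones of $\hat\Sigma$ are disjoint and the hypothesis $\tilde B(\sigma)\cap\tilde B(\tau)\neq\emptyset$ is not even used. (We only record the weak‑fan property in the statement for uniformity with the non‑classical sections.) The main obstacle is the reduction‑theory input of the second step: producing a single admissible decomposition that simultaneously refines the given monodromy cones with the necessary local finiteness near the rational boundary, and doing so compatibly over all Lagrangian orbits. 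The Hodge‑theoretic side is easy here precisely because weight‑$1$ degenerations carry a unique LMHS type in each rank $a$; this is exactly the feature that fails in weight $3$ and is what will force the type‑$\Phi$ weak fans of Section 3 in the sequel.
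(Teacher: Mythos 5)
Your proposal is correct in outline but takes a genuinely different route from the paper. You convert weight-$1$ monodromy cones into rational polyhedral cones in the rational closure of the positive semidefinite cone attached to a Siegel parabolic, and then invoke classical polyhedral reduction theory: existence of a $\overline{\Gamma_{L_j}}$-admissible rational decomposition refining the finitely many (mod $\Gamma$) monodromy cones, compatibly over the finitely many $\Gamma$-orbits of rational Lagrangians. That input is indeed available for the Siegel case (and the introduction of the paper concedes that classically $\Sigma$ can be chosen as in \cite{AMRT10}); your observations that the radical, hence $W_0$, is constant on relative interiors of polyhedral subcones of the PSD cone (so the output is an honest fan and the $\tilde B$-hypothesis is vacuous) and that each rank $a$ carries a single LMHS type match what the paper records in Lemma \ref{opensubconesamefan} --- more precisely Lemma \ref{opensubconesamebn} and Proposition \ref{fanweakfanequivalent}. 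The paper instead proves the required finiteness from scratch: it uniformizes the base point (Proposition \ref{uniformizationofbasepointHT}), reduces type $\mathrm{I}_a$ to the Hodge--Tate case on the sublattice $H_2$ (Section \ref{sectionallclassicaldegenerations}), covers real-bounded pieces of nilpotent orbits by finitely many Siegel sets of $D$ using \cite[Thm.~1.5]{BKT20}, deduces finiteness of the $\gamma_i$ modulo $Z(\sigma)$ (Propositions \ref{gammafiniteclassstronger}, \ref{gammafiniteclass}, \ref{finiteintersectionclassical}), and then builds the subdivision by the elementary hyperplane-arrangement Lemma \ref{simplicialfanconstruction}. Your route buys brevity and a direct bridge to toroidal compactifications of $\mathcal{A}_g$; the paper's buys a self-contained argument whose ingredients (base-point uniformization, Siegel-set finiteness on $D$, double-coset bookkeeping, combinatorial subdivision) transfer to the weight-$3$ Calabi--Yau case of Theorem \ref{maintheoremCY3}, where no symmetric-cone reduction theory exists. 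The one step you should make precise is the one you flag yourself: the simultaneous, $\Gamma$-compatible refinement over all Lagrangian boundary components with the necessary local finiteness is exactly the Siegel property for rational polyhedral cones in $C^{\mathrm{rc}}$ --- the same finiteness the paper isolates as Proposition \ref{gammafiniteclass} --- so either quote the precise statement from \cite{AMRT10} or prove it; as written it is cited rather than established, but it is not a wrong step. (One caveat on the reference I inserted above: the relevant labels are \ref{opensubconesamebn} and \ref{fanweakfanequivalent}; there is no separate lemma beyond these.)
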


\begin{remark}
    In Section 7, we will see $\hat{\Sigma}$ constructed in Theorem \ref{mainthmclassical} is actually a fan. Moreover, by Proposition \ref{fanweakfanequivalent}, fan and weak fan are equivalent in the classical cases.
\end{remark}
We begin with analyzing cones in $\Sigma$ which could break the weak fan condition.
Let $\sigma, \tau\in \Sigma$ be two nilpotent cones. Suppose there exists a sequence $\{\gamma_i\}_{i\geq 0}\subset \Gamma$ such that there exists $0\neq N_i\in \sigma\cap \mathrm{Ad}_{\gamma_i}\tau$ and $\{F_i^{\bullet}\}\subset \check{D}$, we have
\begin{equation}\label{busterclassical}
    (\sigma, F_i^{\bullet}), (\mathrm{Ad}_{\gamma_i}\tau, F_i^{\bullet}), (N_i, F_i^{\bullet})
\end{equation}
are all nilpotent orbits. Theorem \ref{nilpconeweightfiltration} implies $\sigma$ and all $\mathrm{Ad}_{\gamma_i}\tau$ generate the same Jacobson-Morozov weight filtration $W(\sigma)$, therefore $\gamma_i\in \Gamma\cap P_{W(\sigma)}$ for any $i$. Moreover, we can assume $\gamma_0=\mathrm{Id}$, otherwise we shift the sequence $\{\gamma_i\}$ by $1$.

The following proposition can be regarded as the local version of Theorem \ref{mainthmclassical}:
\begin{prop}\label{finiteintersectionclassical}
For any triple \eqref{busterclassical} there exists a finite polyhedral decomposition $\Sigma(\sigma)$ of $\sigma$ such that $\sigma \cap \mathrm{Ad}_{\gamma_i}\tau$ is the union of some $\{\sigma_k\}\subset \Sigma(\sigma)$ for any $i$.
\end{prop}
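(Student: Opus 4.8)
The plan is to reduce the statement to a finiteness property coming from the theory of arithmetic groups (the Siegel property for $\Gamma\cap P_{W(\sigma)}$ acting on the appropriate boundary component), using the weight-$1$ structure to linearize everything. First I would fix the weight filtration $W=W(\sigma)$; by the observation just before the proposition, every $\gamma_i$ lies in $\Gamma\cap P_W$, and since $\sigma$ and all $\mathrm{Ad}_{\gamma_i}\tau$ polarize the same $W$, all the cones in question live inside the fixed vector space $\mathfrak{g}_{-1,-1}^W$ (the $(-1,-1)$-graded piece for $W$), where in the weight-$1$ Siegel case $N$ is determined by a symmetric bilinear form on $\mathrm{Gr}^W_0$ of a fixed rank $a$. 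Concretely, via the standard identification of the $\mathbb{I}_a$-boundary component of $D_{(g,g)}$, the nilpotent $N_i$ corresponds to a positive semidefinite rational symmetric matrix, $\sigma$ and $\mathrm{Ad}_{\gamma_i}\tau$ to rational polyhedral cones in this space of symmetric matrices, and $\Gamma\cap P_W$ acts through $\mathrm{GL}$ of the graded piece by congruence.

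Next I would invoke the Siegel/rational polyhedral reduction theory (as in \cite{AMRT10}) for this arithmetic action: the cone $\sigma$, being a single monodromy cone arising from $\varphi$, is rational polyhedral, and only finitely many of the $\mathrm{Ad}_{\gamma_i}\tau$ can meet a fixed compact slice of $\sigma$ transversally once one works modulo the stabilizer; more precisely, the intersections $\sigma\cap\mathrm{Ad}_{\gamma_i}\tau$ are rational polyhedral cones contained in $\sigma$, and the Siegel property forces the set $\{\sigma\cap\mathrm{Ad}_{\gamma_i}\tau\}_{i\ge 0}$ to consist of only finitely many distinct cones. Granting that, let $\{C_1,\dots,C_r\}$ be these finitely many distinct rational polyhedral cones; I would then take a common polyhedral refinement $\Sigma(\sigma)$ of $\sigma$ along the (finitely many) hyperplanes spanned by the faces of all the $C_j$'s, so that each $C_j$ — and hence each $\sigma\cap\mathrm{Ad}_{\gamma_i}\tau$ — is a union of cones of $\Sigma(\sigma)$. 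This $\Sigma(\sigma)$ is finite by construction, which is exactly the assertion.

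I expect the main obstacle to be establishing the finiteness of $\{\sigma\cap\mathrm{Ad}_{\gamma_i}\tau\}_i$ rigorously: one has to translate "$N_i\in\sigma\cap\mathrm{Ad}_{\gamma_i}\tau$ with a common polarizing $F_i^\bullet$" into a statement to which Siegel-type finiteness applies, controlling the $\gamma_i$ up to the centralizer $Z(\sigma)\cap\Gamma$ (equivalently, the stabilizer of the relevant flag), and ruling out the possibility that infinitely many distinct $\mathrm{Ad}_{\gamma_i}\tau$ chop $\sigma$ into genuinely different pieces. The existence of the common $F_i^\bullet$ — i.e. $\tilde B(\sigma)\cap\tilde B(\mathrm{Ad}_{\gamma_i}\tau)\neq\emptyset$ — should be what pins $\gamma_i$ into $P_W$ and ultimately into a set that is finite modulo the stabilizer, and the weight-$1$ hypothesis is what makes $\tilde B(\sigma)$ a single $Z(\sigma)$-orbit (no LMHS-type ambiguity), so that the abstract reduction theory can be applied cleanly; carrying this through is the technical heart of the argument, while the refinement step at the end is routine.
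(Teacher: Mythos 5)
Your overall strategy is the same as the paper's: reduce the statement to a finiteness assertion about the $\gamma_i$ modulo centralizers (this is exactly Proposition \ref{gammafiniteclass}), obtain that finiteness from Siegel-type reduction theory, and then cut $\sigma$ by the finitely many resulting hyperplanes (the paper's Lemma \ref{simplicialfanconstruction}). However, the step you explicitly defer --- ``translating $N_i\in\sigma\cap\mathrm{Ad}_{\gamma_i}\tau$ with a common polarizing $F_i^{\bullet}$ into a statement to which Siegel-type finiteness applies'' --- is not a routine verification; it is the entire content of the paper's proof, and your sketch does not supply the mechanism. Knowing only that the cones $\sigma$ and $\mathrm{Ad}_{\gamma_i}\tau$ meet inside $\mathfrak{g}^{-1,-1}$ and admit some common polarizing filtration does not by itself produce an incidence of the form $\mathfrak{S}\cap\gamma\mathfrak{T}\neq\emptyset$ inside $D$ to which the Siegel property can be applied. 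The paper manufactures such an incidence in three nontrivial moves: (a) uniformization of the base point (Propositions \ref{uniformizationofbasepointHT} and \ref{uniformizationofbasepoint}), replacing all $F_i^{\bullet}$ by a single $F_0^{\bullet}$ using the $Z(\sigma)$-transitivity on $\tilde{B}(\sigma)$ --- you correctly note this transitivity is available in weight $1$, but you never use it to normalize the filtrations; (b) thickening $\sigma,\tau$ to open simplicial rational cones $\mathcal{C}(\sigma),\mathcal{C}(\tau)\subset\mathfrak{g}^{-1,-1}$ (Proposition \ref{openconvexcone}) and correcting each $\gamma_i$ by an integral element $\mu_i\in\Gamma\cap\exp(\mathfrak{g}^{-1,-1})$ of the centralizer so that the bounded-real-part neighborhoods $U(\mathcal{C}(\sigma))$ and $g_iU_0(\mathcal{C}(\tau))$, $g_i=\mu_i\gamma_i$, actually share a nilpotent $i$-orbit; and (c) covering these neighborhoods by finitely many Siegel sets via \cite[Thm.~1.5]{BKT20} before invoking the classical Siegel property, yielding Proposition \ref{gammafiniteclassstronger}. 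Without (a)--(c), the asserted finiteness of $\{\sigma\cap\mathrm{Ad}_{\gamma_i}\tau\}_i$ is not established, and the concluding refinement step has nothing to refine.

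A second, smaller gap: your linearization ``$N$ corresponds to a positive semidefinite symmetric form on $\mathrm{Gr}^W_0$ with congruence action,'' in the spirit of \cite[Chap.~2]{AMRT10}, is essentially adequate for the Hodge--Tate stratum ($a=g$), but for general type $\mathrm{I}_a$ with $a<g$ one must actually realize the data on a smaller symplectic lattice and check that the cones, the filtration, and the integral group elements all descend compatibly. The paper does this by splitting $H=H_1\oplus H_2$, constructing an integral symplectic structure on $H_2$ (Proposition \ref{integralstructure}), passing to the Levi part $\gamma_i^l$ of each $\gamma_i$, and verifying that the induced triple is a Hodge--Tate nilpotent orbit on $H_2$ (Proposition \ref{inducedlmhsclassical}); your appeal to ``the standard identification of the boundary component'' gestures at this but does not address the integrality of the induced lattice and of the induced $\gamma_i$, which is what makes the Siegel property applicable downstairs. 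So the architecture of your proposal is right, but the two reductions that constitute the proof --- the Siegel-set incidence and the passage to $H_2$ --- are missing.
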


We claim that to prove Proposition \ref{finiteintersectionclassical}, it suffices to prove the following: 
\begin{prop}\label{gammafiniteclass}
For the set $\{\gamma_i\}\in \Gamma$ given in Proposition \ref{uniformizationofbasepointHT}, its image under the projection
\begin{equation}
    g\in G_{\mathbb{C}} \rightarrow \overline{g}\in  Z(\sigma)\backslash G_{\mathbb{C}}/ Z(\tau)
\end{equation}
is \underline{finite}, where $Z(\sigma)\leq G_{\mathbb{C}}$ is the centralizer of $\sigma$.
\end{prop}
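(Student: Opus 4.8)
The plan is to deduce the finiteness of the double-coset image from a Siegel-type finiteness statement for the arithmetic group $\Gamma\cap P_{W(\sigma)}$. First I would record the structural fact, already noted in the excerpt, that every $\gamma_i$ lies in $\Gamma\cap P_{W(\sigma)}$, since $\sigma$ and all $\mathrm{Ad}_{\gamma_i}\tau$ induce the same weight filtration $W(\sigma)$. This puts the whole sequence inside a single arithmetic group acting on the $\mathbb{Q}$-parabolic $P_{W(\sigma)}$. Next I would unwind the hypothesis \eqref{busterclassical}: for each $i$ there is a common $F_i^\bullet\in\check D$ with $(\sigma,F_i^\bullet)$ and $(\mathrm{Ad}_{\gamma_i}\tau,F_i^\bullet)$ both nilpotent orbits, so $F_i^\bullet\in\tilde B(\sigma)\cap\gamma_i\tilde B(\tau)$. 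Using the local transitivity of $Z(\sigma)$ on $\tilde B(\sigma)$ together with the orbit description \eqref{structureofbn} — and, crucially, in the weight $1$ case the fact that $\tilde B(\sigma)$ consists of a single $Z(\sigma)$-orbit (there is only the type $\mathrm{I}_a$ LMHS, so no disconnectedness issue) — I would pin down the position of $\gamma_i$ up to left multiplication by $Z(\sigma)$; symmetrically the choice of $F_i^\bullet$ in $\tilde B(\tau)$ pins it down up to right multiplication by $Z(\tau)$. Hence the double coset $Z(\sigma)\gamma_i Z(\tau)$ is determined by the $Z(\sigma)$-orbit structure of $\tilde B(\sigma)$, which in the weight $1$ case is a single orbit.

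The heart of the argument is then a Siegel finiteness input. I would invoke reduction theory for the arithmetic group $\Gamma\cap P_{W(\sigma)}$ acting on the symmetric space (or equivalently on the Levi quotient) associated to the reductive part of $P_{W(\sigma)}$: the relevant data of a nilpotent orbit with weight filtration $W(\sigma)$ is, after quotienting by the unipotent radical $U_{W(\sigma)}$, a point of a smaller classifying space of Hodge structures on the graded pieces $\mathrm{Gr}^{W(\sigma)}$, and the polarizations $Q_k$ together with the integral lattice force only finitely many $\Gamma$-orbits of the relevant discrete invariants (the splitting-type data of $\mathrm{Ad}_{\gamma_i}\tau$ relative to the fixed $\sigma$). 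Concretely, $\mathrm{Ad}_{\gamma_i}\tau$ is a rational nilpotent cone sitting in the same $W(\sigma)$-filtered setting as $\sigma$ and meeting it; the image $N_i$ of each cone in the graded pieces, being compatible with the integral structure and the polarization forms, ranges over a set on which $\Gamma\cap P_{W(\sigma)}$ — in fact its image in the Levi — acts with finitely many orbits, by the Borel–Harish-Chandra / Siegel property. Translating this back, the image of $\{\gamma_i\}$ in $Z(\sigma)\backslash G_{\mathbb{C}}/Z(\tau)$ takes finitely many values.

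For the concrete bookkeeping I would set up the following chain: the map sending $\gamma_i$ to the pair $\bigl(W(\mathrm{Ad}_{\gamma_i}\tau),\ (\text{relative position of the lattice }\gamma_i H_{\mathbb{Z}}\text{-induced data on }\mathrm{Gr}^{W(\sigma)})\bigr)$ factors through $Z(\sigma)\backslash G_{\mathbb{C}}/Z(\tau)$, because $Z(\sigma)$ fixes $\sigma$ pointwise and stabilizes $W(\sigma)$, while right multiplication by $Z(\tau)$ does not change $\mathrm{Ad}_{\gamma_i}\tau$; then finiteness of the target of this map follows from the Siegel property applied inside $P_{W(\sigma)}$. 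I expect the main obstacle to be precisely the verification that this factorization is injective enough — i.e. that the double coset really is determined by the finite combinatorial/arithmetic data and not by a continuous parameter hiding in $\tilde B(\sigma)$ or $\tilde B(\tau)$. This is where connectedness of $\tilde B(\sigma)$ in the weight $1$ case is essential: without it (as the excerpt's digression on type I/II LMHS warns) one would need to track the finite set of $Z(\sigma)$-orbits as additional discrete data, which is exactly why the general statement later is phrased in terms of weak fans of restricted type. A secondary technical point will be ensuring the Siegel property is applied to the correct arithmetic group: $\Gamma\cap P_{W(\sigma)}$ is arithmetic in $P_{W(\sigma)}$, its image in the Levi $\mathrm{GL}(\mathrm{Gr}^{W(\sigma)})\cap G$ is arithmetic, and the set of $N_i$'s modulo this image is a subset of a single $G_{\mathbb{Q}}$-orbit of nilpotents cut out by integrality and polarization constraints, hence a finite union of arithmetic-group orbits — this is the standard input I would cite rather than reprove.
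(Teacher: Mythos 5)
There is a genuine gap, and it sits exactly where you locate "the main obstacle". The finiteness input you propose to cite — that the Levi image of $\Gamma\cap P_{W(\sigma)}$ acts with finitely many orbits on the relevant nilpotent data, "by the Borel--Harish-Chandra / Siegel property" — is not an available theorem. The elements $N_i\in\sigma\cap\mathrm{Ad}_{\gamma_i}\tau$ and the cones $\mathrm{Ad}_{\gamma_i}\tau$ are only rational, with no control on denominators, so there is no integral set for an arithmetic group to act on; and even for honestly integral nilpotents, finiteness of arithmetic classes inside one $G_{\mathbb{Q}}$-orbit fails in general, because the Borel--Harish-Chandra finiteness theorem requires closed orbits (reductive stabilizers), which adjoint nilpotent orbits never are: already for $\mathrm{SL}_2$ the integral matrices $n^2E_{12}$ lie in a single $\mathrm{SL}_2(\mathbb{Q})$-adjoint orbit but in pairwise distinct $\mathrm{SL}_2(\mathbb{Z})$-orbits (the content of the matrix is an integral invariant). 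Moreover, even granting finitely many values of some discrete invariant, the passage from "same invariant" to "same class in $Z(\sigma)\backslash G_{\mathbb{C}}/Z(\tau)$" is precisely what has to be proved, and your first paragraph's inference — that the double coset of $\gamma_i$ "is determined by the $Z(\sigma)$-orbit structure of $\tilde{B}(\sigma)$, which is a single orbit" — is a non sequitur: taken literally it would force a single double coset and make any Siegel input superfluous. What transitivity of $Z(\sigma)$ on $\tilde{B}(\sigma)$ actually buys is only the normalization $F_i^{\bullet}=F_0^{\bullet}$ of Proposition \ref{uniformizationofbasepointHT}; it does not pin $\gamma_i$ down modulo $Z(\sigma)\times Z(\tau)$, and the set of translates $\mathrm{Ad}_{\gamma}\tau$ is a single $\Gamma$-orbit anyway, so no orbit-count on invariants can detect which translates meet $\sigma$ with a common nilpotent-orbit point.

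For comparison, the paper's mechanism after the normalization \eqref{busterclassical} with $F_i^{\bullet}=F_0^{\bullet}$ is: enlarge $\sigma,\tau$ to full-dimensional rational simplicial cones $\mathcal{C}(\sigma),\mathcal{C}(\tau)$ inside the homogeneous self-adjoint cone in $\mathfrak{g}^{-1,-1}$ (Proposition \ref{openconvexcone}); correct each $\gamma_i$ by an \emph{integral} element $\mu_i\in\Gamma\cap\exp(\mathfrak{g}^{-1,-1})\subset Z(\sigma)$ so that, with $g_i=\mu_i\gamma_i$, the bounded-real-part regions $U(\mathcal{C}(\sigma))$ and $g_iU_0(\mathcal{C}(\tau))$ both contain the nilpotent $i$-orbit $\mathcal{Z}^{\#}_i(N_i)$; then invoke \cite[Thm.~1.5]{BKT20} to put these regions into finitely many Siegel sets of $D$ and the Siegel property \cite[Prop.~III.2.19]{BJ06} to conclude that the $g_i$ form a finite set, whence the $\gamma_i=\mu_i^{-1}g_i$ give finitely many $Z(\sigma)$-cosets (even stronger than double cosets). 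The step producing $\mu_i$ — converting the continuous $\exp(\mathfrak{g}^{-1,-1})$-ambiguity hiding in the choice of $F_0^{\bullet}$ into an integral ambiguity \emph{inside} $Z(\sigma)$ — is the ingredient your outline is missing, and it is what lets a finiteness statement about elements of $\Gamma$ descend to cosets. If you want to argue at the cone/Levi level as you propose, the correct Siegel-type input is not orbit-finiteness on nilpotents but the AMRT Siegel property for rational polyhedral cones in the self-adjoint cone $\mathcal{C}\subset\mathfrak{g}^{-1,-1}$, i.e.\ finiteness of $\{\gamma \mid \gamma\pi_1\cap\pi_2\neq\emptyset\}$ for the relevant arithmetic group \cite[Chap.~2]{AMRT10}; with that substitution, and with the integral correction step supplied, your route becomes essentially the paper's (cf.\ the remark following Proposition \ref{openconvexcone} that polyhedral cones and Siegel sets are cofinal in $\mathcal{C}$).
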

The reason is the intersection $\sigma \cap \mathrm{Ad}_{\gamma_i}\tau\subset \sigma$ is invariant under left multiplication on $\gamma_i$ by $Z(\sigma)$ and right multiplication on $\gamma_i$ by $Z(\tau)$, therefore the choice of subdivision $\Sigma(\sigma)$ is determined only by the class of $\gamma_i$ in $Z(\sigma)\backslash G_{\mathbb{C}}/Z(\tau)$, and we only have to consider a finite cell complex as a consequence of Proposition \ref{gammafiniteclass}. In Section 7 the way to obtain such a finite polyhedral decomposition will be explained.

We will prove Proposition \ref{finiteintersectionclassical} by studying different cases based on the type of LMHS generated by the triples \eqref{busterclassical}.

\subsection{The Hodge-Tate degeneration}\label{sectionHTdegenerationclassical}

In this subsection we shall consider the Hodge-Tate degenerations, i.e. In Figure \ref{lmhsweight1} we set $a=g$. 

Let $\Sigma_{\mathrm{HT}}\subset \Sigma$ be the subcollection generated by all local monodromy nilpotent cones arising from $\varphi$ whose corresponding LMHS are of Hodge-Tate type. Note that under the assumption cones in $\Sigma$ are relatively open, $\Sigma_{\mathrm{HT}}\subset \Sigma$ is not closed under taking faces in general. Now we should assume in \eqref{busterclassical}, $\sigma, \tau\in \Sigma_{\mathrm{HT}}$.

Next, we will prove the following proposition.
\begin{prop}\label{uniformizationofbasepointHT}
With the assumption $\sigma, \tau\in \Sigma_{\mathrm{HT}}$, in \eqref{busterclassical} we can choose all $F_i^{\bullet}=F_0^{\bullet}$. In other words,
\begin{center}\label{normalizedbusterclassicalHT}
    $(\sigma, F_0^{\bullet}), (\mathrm{Ad}_{\gamma_i}\tau, F_0^{\bullet}), (N_i, F_0^{\bullet})$
    \end{center}
    are all nilpotent orbits producing Hodge-Tate type LMHS.
\end{prop}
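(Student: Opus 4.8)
The plan is to exploit the special structure of Hodge–Tate limiting mixed Hodge structures: in a Hodge–Tate LMHS all Hodge pieces $H^{p,q}$ of the associated graded are concentrated in bidegrees $(p,p)$, so that $\tilde{B}(\sigma)$ consists of the filtrations $F^{\bullet}$ that split the weight filtration $W(\sigma)$ into a sum of (Tate) pieces compatibly. First I would record that, since $\sigma,\tau\in\Sigma_{\mathrm{HT}}$ and all the orbits in \eqref{busterclassical} produce Hodge–Tate LMHS, Theorem \ref{nilpconeweightfiltration} forces $W(\sigma)=W(\mathrm{Ad}_{\gamma_i}\tau)=W(N_i)$ for every $i$ (using that $N_i$ lies in the relative interior of the one-dimensional cone it spans and that the Hodge–Tate condition propagates — a face of a Hodge–Tate cone is again Hodge–Tate in weight $1$, since the only LMHS types for $(g,g)$ are the $\mathrm{I}_a$ and the extreme case $a=g$ is detected by $W$ alone). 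Consequently all the $F_i^{\bullet}$ lie in the single set $\tilde{B}(\sigma)$, and by the remark following \eqref{busterclassical} we may take $\gamma_0=\mathrm{Id}$, so $F_0^{\bullet}\in\tilde{B}(\sigma)$ is a genuine basepoint.

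The key step is then to show $\tilde{B}(\sigma)$ is a single $Z(\sigma)$-orbit when $\sigma$ is Hodge–Tate, so that \eqref{structureofbn} applies globally and any two points of $\tilde{B}(\sigma)$ differ by an element of $Z(\sigma)$. For this I would invoke the classification recalled after \eqref{structureofbn}: the $Z(\sigma)$-orbits in $\tilde{B}(\sigma)$ are indexed by LMHS types polarized by $\sigma$, and in the weight-$1$ Hodge–Tate situation there is exactly one such type (the Hodge–Tate type $\mathrm{I}_g$ itself) because the weight filtration is already maximally nontrivial and the Hodge structures on all the graded pieces $\mathrm{Gr}^W_k$ are Tate, hence unique. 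Thus for each $i$ there is $g_i\in Z(\sigma)$ with $F_i^{\bullet}=g_i F_0^{\bullet}$. Because $g_i$ centralizes $\sigma$, the triple $(\sigma, F_i^{\bullet})=(g_i\sigma, g_i F_0^{\bullet})=g_i\cdot(\sigma,F_0^{\bullet})$ is a nilpotent orbit iff $(\sigma,F_0^{\bullet})$ is; likewise $(\mathrm{Ad}_{\gamma_i}\tau, F_i^{\bullet}) = g_i\cdot(\mathrm{Ad}_{g_i^{-1}\gamma_i}\tau, F_0^{\bullet})$ and $(N_i,F_i^{\bullet})=g_i\cdot(\mathrm{Ad}_{g_i^{-1}}N_i, F_0^{\bullet})$. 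Replacing $\gamma_i$ by $g_i^{-1}\gamma_i$ (which lies in $Z(\sigma)\backslash G_{\mathbb C}$ in the same right class, so does not affect the double-coset statement of Proposition \ref{gammafiniteclass}) and $N_i$ by $\mathrm{Ad}_{g_i^{-1}}N_i$, we obtain that $(\sigma,F_0^{\bullet}),(\mathrm{Ad}_{\gamma_i}\tau,F_0^{\bullet}),(N_i,F_0^{\bullet})$ are all nilpotent orbits of Hodge–Tate type, which is the assertion.

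I expect the main obstacle to be the transitivity claim, i.e.\ verifying that $\tilde{B}(\sigma)$ really is a single $Z(\sigma)$-orbit in the Hodge–Tate case rather than merely locally homogeneous as in \eqref{structureofbn}. The subtle points are (a) that a Hodge–Tate nilpotent cone can have non-Hodge–Tate faces, so one must be careful that it is $W(\sigma)$ — the weight filtration of the \emph{top} cone — that governs $\tilde{B}(\sigma)$, and the Hodge–Tate hypothesis on the orbits in \eqref{busterclassical} is precisely what keeps all three filtrations in $\tilde B(\sigma)$; and (b) identifying the connected components / $Z(\sigma)$-orbits of $\tilde B(\sigma)$ with LMHS types, which uses \cite[Sec.~1.4, Chap.~7]{KP16}. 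Once transitivity is in hand, the rest is the bookkeeping of translating the triple by $g_i$ and checking that this translation is invisible to the double coset $Z(\sigma)\backslash G_{\mathbb C}/Z(\tau)$, which is routine.
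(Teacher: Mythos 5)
Your overall strategy (use homogeneity of $\tilde B(\sigma)$ in the Hodge--Tate case to move $F_i^{\bullet}$ to $F_0^{\bullet}$) is the right one, but the final bookkeeping step has a genuine gap. You only arrange $g_i\in Z(\sigma)$ with $F_i^{\bullet}=g_iF_0^{\bullet}$, and then you translate the whole triple by $g_i^{-1}$, which forces you to replace $\gamma_i$ by $g_i^{-1}\gamma_i$. That element lies in $G_{\mathbb{C}}$ but in general not in $\Gamma$, so what you prove is not the statement of the proposition: the proposition asserts that $(\mathrm{Ad}_{\gamma_i}\tau,F_0^{\bullet})$ is a nilpotent orbit for the \emph{original} arithmetic $\gamma_i$ of \eqref{busterclassical}, and this is exactly what the next lemma consumes --- its proof starts from $F_0^{\bullet},\gamma_iF_0^{\bullet}\in\tilde B(\mathrm{Ad}_{\gamma_i}\tau)$ and uses integrality of $\gamma_i$ to build $\mu_i\in\mathrm{Sp}(2g,\mathbb{Z})\cap\exp(\mathfrak{g}^{-1,-1})$ and to invoke the Siegel property \eqref{siegelpropertyarithmeticgroup}, which is a finiteness statement about elements of the arithmetic group $\Gamma$, not of $G_{\mathbb{C}}$. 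Preserving the double coset in $Z(\sigma)\backslash G_{\mathbb{C}}/Z(\tau)$ is therefore not enough; the "routine bookkeeping" you defer is precisely the missing content. (Your replacement of $N_i$ by $\mathrm{Ad}_{g_i^{-1}}N_i$ is actually vacuous, since $N_i\in\sigma$ is fixed pointwise by $Z(\sigma)$; the problem is only with $\gamma_i$.)

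The paper closes this gap by choosing $g_i$ not merely in $Z(\sigma)$ but in the common centralizer $Z(\sigma)\cap Z(\mathrm{Ad}_{\gamma_i}\tau)\cap Z(N_i)$, so that translating by $g_i$ moves the filtration while fixing all three cones, and hence the original $\gamma_i$ and $N_i$ survive unchanged. This is possible in the Hodge--Tate case because $\sigma$, $\mathrm{Ad}_{\gamma_i}\tau$ and $N_i$ all lie in the abelian piece $\mathfrak{g}_i^{-1,-1}$ of the Deligne splitting at $F_i^{\bullet}$, and by \eqref{tangentspaceofbn} the three boundary sets $\tilde B(\sigma)$, $\tilde B(\mathrm{Ad}_{\gamma_i}\tau)$, $\tilde B(N_i)$ have the same tangent space $\mathfrak{g}_i^{-1,-1}$ at $F_i^{\bullet}$, so by \eqref{structureofbn} the connecting element can be taken to centralize all three cones simultaneously. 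To repair your argument you would either need this common-centralizer refinement, or a separate argument that $F_0^{\bullet}\in\tilde B(\mathrm{Ad}_{\gamma_i}\tau)$ for the original $\gamma_i$ (e.g.\ an argument in the spirit of Proposition \ref{fanweakfanequivalent}, using $\sigma\cap\mathrm{Ad}_{\gamma_i}\tau\neq\emptyset$ to identify $\tilde B(\sigma)$ with $\tilde B(\mathrm{Ad}_{\gamma_i}\tau)$); as written, neither is supplied.
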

\begin{proof}
    Denote 
    \begin{align}
        H_{\mathbb{C}}&=\oplus H_i^{p,q}=H_i^{1,1}\oplus H_i^{0,0}, \\ 
        \nonumber \mathfrak{g}_{\mathbb{C}}&=\oplus \mathfrak{g}_i^{p,q}=\mathfrak{g}_i^{1,1}\oplus \mathfrak{g}_i^{0,0}\oplus \mathfrak{g}_i^{-1,-1}
    \end{align}
    as the Deligne splittings of the LMHS $(W(\sigma)[-1], F_i^{\bullet}, \sigma)$ and its adjoint. In this case \eqref{tangentspaceofbn} is read as: 
    \begin{equation}\label{localtangentspaceareequal}
        T_{F_i^{\bullet}}\tilde{B}(\sigma)= T_{F_i^{\bullet}}\tilde{B}(\mathrm{Ad}_{\gamma_i}\tau)= T_{F_i^{\bullet}}\tilde{B}(N_i)\cong \mathfrak{g}_i^{-1,-1}.
    \end{equation}
    We also notice that $F_0^{\bullet}\in \tilde{B}(\sigma)$. Therefore \eqref{structureofbn} and \eqref{localtangentspaceareequal} imply there exists $g_i\in Z(\sigma)\cap Z(\mathrm{Ad}_{\gamma_i}\tau)\cap Z(N_i)$ such that $F_0^{\bullet}=g_iF_i^{\bullet}$, hence we can replace $F_i^{\bullet}$ by $F_0^{\bullet}$ without destroying the properties of the triple \eqref{busterclassical}.
\end{proof}
From now on we denote:
   \begin{align}
        H_{\mathbb{C}}&=\oplus H^{p,q}=H^{1,1}\oplus H^{0,0}\\
       \nonumber \mathfrak{g}_{\mathbb{C}}&=\oplus \mathfrak{g}^{p,q}=\mathfrak{g}^{1,1}\oplus \mathfrak{g}^{0,0}\oplus \mathfrak{g}^{-1,-1}
    \end{align}
as the Hodge-Deligne splittings for $(W(\sigma)[-1], F_0^{\bullet}, \sigma)$ and its adjoint LMHS. Since for any $i$ we have $W(\mathrm{Ad}_{\gamma_i}\tau)=W(\sigma)$, we have $\mathrm{Ad}_{\gamma_i}\tau \subset \mathfrak{g}^{-1,-1}$. 


The remainder of this section will be dedicated to proving Proposition \ref{gammafiniteclass} for the case $\sigma, \tau\in \Sigma_{\mathrm{HT}}$.
Denote
\begin{align}
    &\mathcal{Z}(\sigma):=\mathrm{exp}(\sigma_{\mathbb{C}})F_0^{\bullet}=:(\sigma, F_0^{\bullet})\\
  \nonumber  &\mathcal{Z}^{\#}(\sigma):=\mathrm{exp}(i\sigma_{\mathbb{R}})F_0^{\bullet}=:(\sigma, F_0^{\bullet})^{\#}
\end{align}
as the nilpotent orbit and nilpotent $i$-orbit associated to $(W(\sigma)[-1], F_0^{\bullet}, \sigma)$. For different LMHS's we use the similar notation. The triplet of nilpotent orbit:
\begin{align}
    &\mathcal{Z}(\sigma):=(\sigma, F_0^{\bullet}),\\
   \nonumber &\mathcal{Z}_i(\tau):=(\mathrm{Ad}_{\gamma_i}\tau, F_0^{\bullet}),\\
   \nonumber &\mathcal{Z}_i(N_i):=(0\neq N_i\in \sigma\cap \mathrm{Ad}_{\gamma_i}\tau, F_0^{\bullet}) 
\end{align}
induces the triplet of nilpotent $i$-orbit:
\begin{align}\label{gangstatriple2}
    &\mathcal{Z}^{\#}(\sigma):=(\sigma, F_0^{\bullet})^{\#},\\
   \nonumber &\mathcal{Z}_i^{\#}(\tau):=(\mathrm{Ad}_{\gamma_i}\tau, F_0^{\bullet})^{\#},\\
   \nonumber &\mathcal{Z}_i^{\#}(N_i):=(0\neq N_i\in \sigma\cap \mathrm{Ad}_{\gamma_i}\tau, F_0^{\bullet})^{\#}   
\end{align}
which satisfies the third is contained in the intersection of the first two. 

\begin{prop}\label{openconvexcone}
There exist open convex polyhedral cones $\mathcal{C}(\sigma), \mathcal{C}(\tau)\subset \mathfrak{g}_{\mathbb{Q}}\cap\mathfrak{g}^{-1,-1}$ such that 
\begin{align}
    \sigma&\subset \mathcal{C}(\sigma),\\
   \nonumber \tau&\subset \mathcal{C}(\tau),\\
   \nonumber F_0^{\bullet}&\in \tilde{B}(\mathcal{C}(\sigma))\cap \tilde{B}(\mathcal{C}(\tau)).
\end{align}
\end{prop}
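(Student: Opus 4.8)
The plan is to produce $\mathcal{C}(\sigma)$ and $\mathcal{C}(\tau)$ essentially by ``fattening'' the cones $\sigma$ and $\tau$ inside the linear space $\mathfrak{g}_{\mathbb{Q}}\cap\mathfrak{g}^{-1,-1}$ while keeping the positivity condition at $F_0^{\bullet}$ intact. Recall that since $(W(\sigma)[-1], F_0^{\bullet},\sigma)$ is a Hodge--Tate LMHS, every element of $\mathfrak{g}^{-1,-1}$ automatically satisfies the Griffiths transversality condition $N F_0^p\subset F_0^{p-1}$ with respect to $F_0^{\bullet}$ (this is just the definition of $\mathfrak{g}^{-1,-1}$ as a Hodge piece of the adjoint LMHS), so condition (i) of Definition \ref{definitionnilporbit} is free on all of $\mathfrak{g}^{-1,-1}$. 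Thus the only constraint we must preserve is the positivity condition (ii): $\exp(\sum z_j N_j)F_0^{\bullet}\in D$ for $\mathfrak{Im}(z_j)\gg 0$. The key point I would invoke is that this positivity condition is an \emph{open} condition on the generators: the set
\begin{equation}
    \mathcal{U}:=\{N\in \mathfrak{g}_{\mathbb{Q}}\cap\mathfrak{g}^{-1,-1} \ | \ (N, F_0^{\bullet}) \text{ is a nilpotent orbit}\}
\end{equation}
is an open cone, because the ``$i$-orbit'' description $Z^{\#}=\exp(i\sigma_{\mathbb{R}})F_0^{\bullet}\subset D$ and the openness of $D$ inside $\check D$ together imply that a small perturbation of a polarizing $N$ stays polarizing.

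First I would make this openness precise: take $N$ in the relative interior of $\sigma$ (so $(N,F_0^{\bullet})$ is a nilpotent orbit by hypothesis, as $(\sigma,F_0^{\bullet})$ is one and $N\in\sigma$), and observe that $\mathfrak{Im}(zN)\cdot F_0^{\bullet}$ lands in $D$ for $\mathfrak{Im}(z)$ large; by continuity of the $G_{\mathbb{R}}$-action and openness of $D\subset\check D$, there is an open neighborhood $V$ of $N$ in $\mathfrak{g}^{-1,-1}_{\mathbb{R}}$ with the same property. Since $\mathfrak{g}^{-1,-1}$ is abelian as a Lie subalgebra (it sits in a single Hodge piece, and $[\mathfrak{g}^{-1,-1},\mathfrak{g}^{-1,-1}]\subset\mathfrak{g}^{-2,-2}=0$), sums of elements of $\mathcal{U}$ still lie in $\mathcal{U}$ — the positivity condition for a cone $\sum\mathbb{R}_{\geq0}N_j$ with all $N_j\in\mathcal{U}$ is automatic because $\exp(\sum z_jN_j)=\prod\exp(z_jN_j)$ and one can take the imaginary parts large one at a time. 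Hence $\mathcal{U}$ is an open convex cone in $\mathfrak{g}_{\mathbb{R}}\cap\mathfrak{g}^{-1,-1}$ containing $\sigma$, and similarly a translate/analogue $\mathcal{U}'$ containing $\tau$ (here using that $(\tau,F_0^{\bullet})$ is also a nilpotent orbit after Proposition \ref{uniformizationofbasepointHT}, applied with $\gamma_0=\mathrm{Id}$).

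Next I would pass from the open cone $\mathcal{U}$ to an open \emph{polyhedral} cone. Since $\sigma$ is a rational polyhedral cone with finitely many edge generators $N_1,\dots,N_k$, each lying in the open set $\mathcal{U}$, I can choose finitely many additional rational vectors $N_1',\dots,N_m'\in\mathcal{U}$ so that $\{N_j\}\cup\{N_j'\}$ positively spans an open rational polyhedral cone $\mathcal{C}(\sigma)$ with $\overline{\mathcal{C}(\sigma)}\subset\mathcal{U}\cup\{0\}$ — concretely, take the $N_j'$ to be a small rational perturbation of the $N_j$ pushing slightly ``outward'' in each coordinate of a basis of $\mathfrak{g}^{-1,-1}$, so that $\sigma$ lies in the interior of $\mathcal{C}(\sigma)$; one just needs $\mathcal{U}$ open and convex for this to work. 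Then $\mathcal{C}(\sigma)\subset\mathcal{U}\subset\mathfrak{g}_{\mathbb{Q}}\cap\mathfrak{g}^{-1,-1}$, $\sigma\subset\mathcal{C}(\sigma)$, and $F_0^{\bullet}\in\tilde{B}(\mathcal{C}(\sigma))$ by construction of $\mathcal{U}$; the same for $\mathcal{C}(\tau)$, giving $F_0^{\bullet}\in\tilde B(\mathcal{C}(\sigma))\cap\tilde B(\mathcal{C}(\tau))$.

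The main obstacle I anticipate is verifying cleanly that $\mathcal{U}$ is genuinely open and convex — i.e., that the positivity condition (ii) is preserved under small perturbations and under taking sums. Openness should follow from the $i$-orbit reformulation and the fact that $D$ is open in $\check D$, but one must be a little careful that ``$\mathfrak{Im}(z_j)\gg0$'' can be chosen uniformly on a neighborhood; this is standard (it's implicit in Cattani--Kaplan--Schmid) but deserves a sentence. Convexity (closure under sums) hinges on $\mathfrak{g}^{-1,-1}$ being abelian, which makes $\exp(\sum z_jN_j)$ factor, so that the positivity of a sum reduces to iterating the positivity of the summands; this is where the Hodge--Tate hypothesis is really used. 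Everything else — rationality, polyhedrality, the inclusions — is routine linear algebra once $\mathcal{U}$ is understood.
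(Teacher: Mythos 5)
Your overall strategy is the same as the paper's: identify the set of elements of $\mathfrak{g}^{-1,-1}$ that polarize at $F_0^{\bullet}$, argue it is an open convex cone, and then fatten $\sigma$ (and $\tau$) to a rational open polyhedral cone inside it. However, the two key facts you try to establish by hand are exactly where the paper leans on \cite[Chap.\ 2]{AMRT10} (the Hodge--Tate polarizing locus $\mathcal{C}\subset\mathfrak{g}^{-1,-1}$ is a homogeneous self-adjoint, hence open convex, cone), and your substitutes have genuine gaps. First, your set $\mathcal{U}=\{N \mid (N,F_0^{\bullet})\ \hbox{is a nilpotent orbit}\}$ (with no restriction on the LMHS type) is \emph{not} open in general: take $H$ a direct sum of two symplectic planes, $\sigma=\langle N_1,N_2\rangle$ with $N_i$ degenerating the $i$-th factor, and $F_0^{\bullet}$ the limit filtration; if the second-factor coordinate of $F_0^{\bullet}$ happens to lie in the upper half plane then $N_1\in\mathcal{U}$, while $N_1-\epsilon N_2\notin\mathcal{U}$ for every $\epsilon>0$. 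Openness holds only on the maximal-rank (Hodge--Tate) stratum, which is the paper's $\mathcal{C}$. The same example (with that coordinate in the closed lower half plane) shows that the edge generators of $\sigma$ need not lie in $\mathcal{U}$ at all --- a face of a nilpotent orbit is generally not a nilpotent orbit with the same filtration --- so your requirement that $\{N_j\}\cup\{N_j'\}$ all lie in the open set $\mathcal{U}$ and that $\overline{\mathcal{C}(\sigma)}\subset\mathcal{U}\cup\{0\}$ cannot be met as stated; the paper only asks the auxiliary vertices to lie in $\overline{\mathcal{C}}$ and then takes the interior of their hull.

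Second, and more seriously, the convexity step is not a proof. Factoring $\exp(\sum z_jN_j)=\prod\exp(z_jN_j)$ and ``taking the imaginary parts large one at a time'' does not reduce positivity of the cone to positivity of the rays: knowing $\exp(z_1N_1)F_0^{\bullet}\in D$ for $\mathfrak{Im}(z_1)\gg0$ gives no control over $\exp(z_1N_1)$ applied to the \emph{different} point $\exp(z_2N_2)F_0^{\bullet}\in D$ (the operator $\exp(z_2N_2)$ with $z_2$ complex does not preserve $D$), nor is the required bound on $\mathfrak{Im}(z_1)$ uniform. Indeed, for general commuting nilpotents the implication ``each generator polarizes with $F^{\bullet}$ $\Rightarrow$ the cone polarizes'' is false, and your formal argument never uses anything that distinguishes the present situation, so it proves too much. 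What actually makes the weight-$1$ Hodge--Tate case work is the identification of $\mathcal{C}$ with an open self-adjoint homogeneous cone (essentially positive-definite symmetric forms), i.e.\ precisely the input from \cite{AMRT10} that the paper cites; alternatively one can verify the nilpotent-orbit property of the fattened cone via Theorem \ref{nilporbittolmhs}, since all its interior elements have the fixed weight filtration $W(\sigma)$ and the polarization condition there is an open definiteness condition. With that input supplied, the remaining steps of your construction (rationality, polyhedrality, arranging $\sigma$ in the interior) are routine and agree with the paper's proof.
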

\begin{proof}
 For a fixed $F_0^{\bullet}$, the Griffiths transversality condition in Definition \ref{definitionnilporbit} is trivial in the classical case.
 Let $\mathcal{C}\subset \mathfrak{g}^{-1,-1}$ be the subset containing all $N\in \mathfrak{g}^{-1,-1}$ such that $(N, F_0^{\bullet})$ is a nilpotent orbit of Hodge-Tate type. Moreover,  $\mathcal{C}$ is open convex according to \cite[Chap. 2.1]{AMRT10}\footnote{$\mathcal{C}$ is a homogeneous self-adjoint cone in the sense of \cite[Chap. 2]{AMRT10}.}. It is possible to choose a set of vectors $\{v_i\}\subset \overline{\mathcal{C}}$ containing all dimension $1$ faces of $\sigma$, and whose convex hull is open in $\overline{\mathcal{C}}$, hence we can just take $\mathcal{C}(\sigma)$ to be the interior of this convex hull. $\mathcal{C}(\tau)$ can be constructed in the same way. 
\end{proof}
\begin{remark}
    Another way to see Proposition \ref{openconvexcone} is using the fact that in $\mathcal{C}$, polyhedral cones and Siegel sets are cofinal. See \cite[Chap. 2]{AMRT10}.
\end{remark}
Moreover, we may assume $\mathcal{C}(\sigma)$ and $\mathcal{C}(\tau)$ are both simplicial. Otherwise suppose $\mathcal{C}(\sigma)$ is not simplicial, we proceed a finite simplicial decomposition on $\mathcal{C}(\sigma)$\footnote{Such a simplicial decomposition can be chosen as a star subdivision introduced in \cite[Chap. 11]{CLS11}}. If the sequence $\{N_i\}$ in Proposition \ref{uniformizationofbasepointHT} is infinite, there exists an open simplicial cone $\mathcal{C}(\sigma)^{'}$ in the simplicial decomposition of $\mathcal{C}(\sigma)$ and an infinite subsequence of $\{N_i\}$ lie in $\mathcal{C}(\sigma)^{'}$, then we can replace $\sigma$ by $\sigma\cap \mathcal{C}(\sigma)^{'}$ without destroying the properties of triples in \eqref{busterclassical}. The argument for $\mathcal{C}(\tau)$ is similar.

Now we suppose the open simplicial cones $\mathcal{C}(\sigma), \mathcal{C}(\tau)\subset \mathfrak{g}_{\mathbb{Q}}\cap\mathfrak{g}^{-1,-1}$ are generated by:
\begin{align}
    \mathcal{C}(\sigma)&=\langle N_1,...,N_k\rangle_{\mathbb{Q}>0}\\
   \nonumber \mathcal{C}(\tau)&=\langle M_1,...,M_k\rangle_{\mathbb{Q}>0}.
\end{align}
with $k=\mathrm{dim}_{\mathbb{Q}}(\mathfrak{g}^{-1,-1})$. Denote
\begin{align}
    U(\mathcal{C}(\sigma))&:=\{\mathrm{exp}(\sum_{j=1}^k z_jN_j)F_0^{\bullet} \ | \ 0\leq \mathfrak{Re}(z_j)\leq 1\}\\
    \nonumber U_i(\mathcal{C}(\tau))&:=\{\mathrm{exp}(\sum_{j=1}^k z_j(\gamma_iM_j\gamma_i^{-1}))F_0^{\bullet} \ | \ 0\leq \mathfrak{Re}(z_j)\leq 1\}
\end{align}
as the neighborhood of $\mathcal{Z}^{\#}(\mathcal{C}(\sigma))$ (resp. $\mathcal{Z}^{\#}_i(\mathcal{C}(\tau))$) in $\mathcal{Z}(\mathcal{C}(\sigma))$ (resp. $\mathcal{Z}_i(\mathcal{C}(\tau))$) bounded in the real directions. Clearly $\mathcal{Z}^{\#}_i(N_i)\subset U(\mathcal{C}(\sigma))$. 


\begin{lemma}
There exists 
\begin{equation}
    g_i\in \mathrm{Sp}(2g, \mathbb{Z})\cap \mathrm{exp}(\mathfrak{g}^{-1,-1})
\end{equation}
such that
\begin{equation}
    \mathcal{Z}_i^{\#}(N_i)\subset U(\mathcal{C}(\sigma))\cap g_iU_0(\mathcal{C}(\tau)).
\end{equation}
\end{lemma}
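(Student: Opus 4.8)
The plan is to reduce everything to the action of $\Gamma$ on the space of nilpotent $i$-orbits, exploiting the Hodge-Tate hypothesis. Since $\sigma, \tau \in \Sigma_{\mathrm{HT}}$, after Proposition \ref{uniformizationofbasepointHT} we work with the fixed filtration $F_0^\bullet$, and all the cones $\mathrm{Ad}_{\gamma_i}\tau$, $\sigma$, and $N_i$ sit inside the single abelian space $\mathfrak{g}^{-1,-1}$. In this situation the nilpotent orbit $\mathcal{Z}(\mathcal{C}(\sigma)) = \exp(\mathcal{C}(\sigma)_{\mathbb{C}})F_0^\bullet$ is essentially a tube domain over the self-adjoint cone $\mathcal{C}$ of Proposition \ref{openconvexcone}, and its ``bounded-real-part'' part $U(\mathcal{C}(\sigma))$ is a fundamental-domain-type neighborhood of the $i$-orbit $\mathcal{Z}^\#(\mathcal{C}(\sigma))$. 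First I would observe that $N_i \in \sigma \cap \mathrm{Ad}_{\gamma_i}\tau$ forces the $i$-orbit $\mathcal{Z}^\#_i(N_i) = \exp(i\,\mathbb{R}_{>0}N_i)F_0^\bullet$ to lie both in $\mathcal{Z}^\#(\mathcal{C}(\sigma))$ (since $N_i \in \mathcal{C}(\sigma)$, already noted as $\mathcal{Z}^\#_i(N_i) \subset U(\mathcal{C}(\sigma))$) and in $\mathcal{Z}^\#_i(\mathcal{C}(\tau)) = \gamma_i\,\mathcal{Z}^\#(\mathcal{C}(\tau))$, because $N_i = \gamma_i M_i' \gamma_i^{-1}$ for some $M_i' \in \mathcal{C}(\tau)$.

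Next I would translate the relation $\gamma_i M_i' \gamma_i^{-1} \in \mathcal{C}(\sigma)$ into a statement purely inside $\mathfrak{g}^{-1,-1}$: the condition that $\gamma_i$ carries a ray of $\mathcal{C}(\tau)$ into $\mathcal{C}(\sigma)$ while fixing $W(\sigma) = W(\gamma_i \cdot \tau$-filtration$)$ means that $\gamma_i$, acting on the vector space $\mathfrak{g}^{-1,-1}$ via $\mathrm{Ad}$, respects the lattice $\mathfrak{g}_{\mathbb{Z}}^{-1,-1}$ and maps the cone $\overline{\mathcal{C}(\tau)}$ so that its image meets $\overline{\mathcal{C}(\sigma)}$ along a ray through an integral point. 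Concretely, both $U(\mathcal{C}(\sigma))$ and $U_0(\mathcal{C}(\tau))$ are neighborhoods of the compact $i$-orbits bounded in the real directions by the unit cube in the $z_j$-coordinates; their images in $\Gamma \backslash D$ are relatively compact. I would then use the fact that $\mathcal{Z}^\#_i(N_i)$ lands in both $U(\mathcal{C}(\sigma))$ and in $\gamma_i \cdot (\text{image of } U_0(\mathcal{C}(\tau)))$ to produce, via the $\mathbb{Z}$-structure, an element $g_i \in \mathrm{Sp}(2g,\mathbb{Z}) \cap \exp(\mathfrak{g}^{-1,-1})$ — namely the ``integral part'' of $\gamma_i$ after subtracting an appropriate integer translation in each $z_j$-coordinate so as to land back in the unit cube — with $\mathcal{Z}^\#_i(N_i) \subset U(\mathcal{C}(\sigma)) \cap g_i\,U_0(\mathcal{C}(\tau))$. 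Here one uses that $\exp(\mathfrak{g}^{-1,-1}) \cap \mathrm{Sp}(2g,\mathbb{Z})$ is a full lattice in the abelian unipotent group $\exp(\mathfrak{g}^{-1,-1})$, so any element of that unipotent group can be moved into the unit cube by a unique integral translation.

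The main obstacle, and the step I would devote most care to, is checking that the correction element $g_i$ is genuinely integral, i.e. lies in $\mathrm{Sp}(2g,\mathbb{Z})$ and not merely in $\exp(\mathfrak{g}_{\mathbb{Q}}^{-1,-1})$. This is where the Hodge-Tate hypothesis is essential: because the LMHS is of Hodge-Tate type, the weight filtration $W(\sigma)$ has graded pieces concentrated in even degrees, the parabolic $P_{W(\sigma)}$ has unipotent radical whose Lie algebra is exactly $\mathfrak{g}^{-1,-1}$, and the monodromy $\Gamma \cap P_{W(\sigma)}$ projects onto an integral structure compatible with $\exp(\mathfrak{g}^{-1,-1})$; thus $\gamma_i$ itself, being in $\Gamma \cap P_{W(\sigma)}$ and acting trivially on the graded pieces (this needs to be argued — it follows from $\gamma_i$ preserving both nilpotent orbits with the same $F_0^\bullet$, forcing $\gamma_i$ into $Z(\sigma) \cap \Gamma$ up to a unipotent piece), decomposes as a product of an element of $Z(\sigma)$ and an element of $\exp(\mathfrak{g}_{\mathbb{Z}}^{-1,-1})$, and one reads off $g_i$ from the latter factor. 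I would also need to confirm that modifying $\gamma_i$ by the integral cube-translation does not destroy the inclusion $\mathcal{Z}^\#_i(N_i) \subset g_i U_0(\mathcal{C}(\tau))$ — but this is automatic since translating the real parts $z_j$ by integers is a symmetry of $U_0(\mathcal{C}(\tau))$ that preserves the $i$-orbit and hence the relevant piece of it containing $\mathcal{Z}^\#_i(N_i)$. Once $g_i$ is obtained, this lemma feeds directly into Proposition \ref{gammafiniteclass}: the $g_i$ range over a set of integral elements of the unipotent group whose images stay in the relatively compact overlap $U(\mathcal{C}(\sigma)) \cap g_i U_0(\mathcal{C}(\tau))$, which by discreteness of the lattice forces finiteness of $\{g_i\}$, hence finiteness of the double cosets.
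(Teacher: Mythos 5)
Your guiding idea---translate real parts into the unit cube using the lattice $\exp(\mathfrak{g}^{-1,-1})\cap\mathrm{Sp}(2g,\mathbb{Z})$ inside the abelian unipotent group---is indeed the mechanism of the paper's proof, but the way you set it up has a genuine gap. The real content of the lemma is the comparison of the two base points $F_0^{\bullet}$ and $\gamma_iF_0^{\bullet}$: the paper observes that both lie in $\tilde{B}(\mathrm{Ad}_{\gamma_i}\tau)$, and then uses \eqref{localtangentspaceareequal} and \eqref{structureofbn} in the Hodge--Tate case (where $\mathrm{Ad}_{\gamma_i}\mathcal{C}(\tau)$ is open in $\mathfrak{g}^{-1,-1}$ and $\exp(\mathfrak{g}^{-1,-1}_{\mathbb{C}})$ connects the two filtrations) to write $F_0^{\bullet}=\exp\bigl(\sum_{j} z^i_j\,\gamma_iM_j\gamma_i^{-1}\bigr)\gamma_iF_0^{\bullet}$; only after this does flooring the real parts give an integral $\mu_i\in\mathrm{Sp}(2g,\mathbb{Z})\cap\exp(\mathfrak{g}^{-1,-1})$ and $g_i:=\mu_i\gamma_i$ with $\mathcal{Z}_i^{\#}(N_i)\subset g_iU_0(\mathcal{C}(\tau))$. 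You bypass this step by identifying $\mathcal{Z}^{\#}_i(\mathcal{C}(\tau))$ with $\gamma_i\mathcal{Z}^{\#}(\mathcal{C}(\tau))$ and asserting that $\mathcal{Z}^{\#}_i(N_i)$ already lies in $\gamma_i$ translated copies of $U_0(\mathcal{C}(\tau))$: these sets have different base points ($F_0^{\bullet}$ versus $\gamma_iF_0^{\bullet}$), and the inclusion you assert is exactly what the integral correction is needed to arrange, so at this point the argument is circular.

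Your proposed source of the correction is also not viable. You want to decompose $\gamma_i$ itself as (element of $Z(\sigma)$)$\cdot$(element of $\exp(\mathfrak{g}^{-1,-1}_{\mathbb{Z}})$) and read $g_i$ off the second factor, justified by ``$\gamma_i$ acting trivially on the graded pieces.'' Nothing in the hypotheses forces this: we only know $(\tau,F_0^{\bullet})$ and $(\mathrm{Ad}_{\gamma_i}\tau,F_0^{\bullet})$ are nilpotent orbits, not that $\gamma_i$ preserves one, and $\gamma_i\in\Gamma\cap P_{W(\sigma)}$ can have an arbitrary Levi component. Worse, in the Hodge--Tate case $\mathfrak{g}^{-1,-1}$ is abelian and contains $\sigma$, so $\exp(\mathfrak{g}^{-1,-1})\subset Z(\sigma)$; your decomposition would therefore force $\gamma_i\in Z(\sigma)$, which is false in general and would make Proposition \ref{gammafiniteclass} vacuous. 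In the paper the integral unipotent element is $\mu_i$ (the cube translation), while $g_i=\mu_i\gamma_i$ is merely integral; what is used later is only that $\mu_i\in Z(\sigma)$, so Siegel-set finiteness of $\{g_i\}$ descends to finiteness of the $Z(\sigma)$-cosets of $\{\gamma_i\}$. Finally, your closing finiteness heuristic rests on the claim that the images of $U(\mathcal{C}(\sigma))$ and $U_0(\mathcal{C}(\tau))$ in $\Gamma\backslash D$ are relatively compact; this is not true (the imaginary directions are unbounded and the sets approach the boundary), and the paper instead truncates to $U(\cdot)_{>K}$, covers by finitely many Siegel sets via \cite[Thm.~1.5]{BKT20}, and invokes the Siegel property \eqref{siegelpropertyarithmeticgroup}.
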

\begin{proof}
According to the assumptions, we have:
\begin{equation}
    F_0^{\bullet}, \ \gamma_iF_0^{\bullet}\in \tilde{B}(\mathrm{Ad}_{\gamma_i}\tau).
\end{equation}
Since $\mathrm{Ad}_{\gamma_i}\mathcal{C}(\tau)$ is open in $\mathfrak{g}^{-1,-1}=T_{\gamma_iF_0^{\bullet}}\tilde{B}(\mathrm{Ad}_{\gamma_i}\tau)$ (see \eqref{localtangentspaceareequal}), we can assume 
\begin{equation}
    F_0^{\bullet}=\mathrm{exp}(\sum_{j=1}^{k} z^i_j(\gamma_iM_j\gamma_i^{-1}))\gamma_iF_0^{\bullet}
\end{equation}
for some $\{z^i_j\}\subset \mathbb{C}$, hence we can just take 
\begin{equation}
    \mu_i:=\mathrm{exp}(-\sum_{j=1}^{k} \lfloor\mathfrak{Re}(z^i_j)\rfloor (\gamma_iM_j\gamma_i^{-1}))\in \mathrm{Sp}(2g, \mathbb{Z})\cap \mathrm{exp}(\mathfrak{g}^{-1,-1}),
\end{equation}
if we set 
\begin{equation}
    g_i:=\mu_i\gamma_i,
\end{equation}
since $\mu_i\in \mathrm{exp}(\mathfrak{g}^{-1,-1})$ centralizes $\mathfrak{g}^{-1,-1}$, we have 
\begin{equation}
    F_0^{\bullet}=\mathrm{exp}(\sum_{j=1}^{k} w^i_j(\gamma_iM_j\gamma_i^{-1}))g_iF_0^{\bullet}
\end{equation}
with $\{w^i_j\}\subset \mathbb{C}$ and $0\leq \mathfrak{Re}(w^i_j)\leq 1$, this implies we have
\begin{equation}
   \mathcal{Z}^{\#}_i(N_i)\subset g_iU_0(\mathcal{C}(\tau))
\end{equation}
as $F_0^{\bullet}\in g_iU_0(\mathcal{C}(\tau))$ and $N_i\in \mathrm{Ad}_{\gamma_i}\tau\subset \mathrm{Ad}_{\gamma_i}\mathcal{C}(\tau)$.
\end{proof}
To sum up, we must have for any $i$,
\begin{equation}\label{intersection2}
    U(\mathcal{C}(\sigma))\cap g_iU_0(\mathcal{C}(\tau))\neq \emptyset
\end{equation}
and it contains a nilpotent $i$-orbit (i.e., $(N_i, F_0^{\bullet})^{\#}$).

On the other hand, for any $K>0$, define:
\begin{align}
    U(\mathcal{C}(\sigma))_{>K}&:=\{\mathrm{exp}(\sum_{j=1}^{k} z^i_jN_j)F_0^{\bullet} \ | \ 0\leq \mathfrak{Re}(z^i_j)\leq 1, \mathfrak{Im}(z^i_j)>K\},\\
    \nonumber U_0(\mathcal{C}(\tau))_{>K}&:=\{\mathrm{exp}(\sum_{j=1}^{k} z^i_jM_j)F_0^{\bullet} \ | \ 0\leq \mathfrak{Re}(z^i_j)\leq 1, \mathfrak{Im}(z^i_j)>K\}.
\end{align}
\normalsize
We fix a $K_0>0$ such that $U(\mathcal{C}(\sigma))_{>K_0}$, $U_0(\mathcal{C}(\tau))_{>K_0}\subset D$. We know from \cite[Thm. 1.5]{BKT20} for any $\epsilon>0$, there exist two finite collections of Siegel sets $\{\mathfrak{S}_p\}$, $\{\mathfrak{T}_q\}$ of $D$
such that 
\begin{align}
    U(\mathcal{C}(\sigma))_{> K_0+\epsilon}&\subset \cup \mathfrak{S}_p,\\
    \nonumber U_0(\mathcal{C}(\tau))_{> K_0+\epsilon}&\subset \cup \mathfrak{T}_q.
\end{align}
Also by the basic Siegel properties\footnote{See for example, \cite[Prop. III.2.19]{BJ06}}, for any arithmetic subgroup $\Gamma\leq \mathrm{Sp}(2g,\mathbb{Q})$, the set 
\begin{equation}\label{siegelpropertyarithmeticgroup}
    S(\sigma, \tau):=\{\gamma\in \Gamma|(\cup \mathfrak{S}_p)\cap (\cup \gamma\mathfrak{T}_q)\neq \emptyset\}
\end{equation}
is finite. Since the intersection in \eqref{intersection2} contains a non-trivial nilpotent $i$-orbit $\mathcal{Z}^{\#}_i(N_i)$, combine with the evident fact that
\begin{align}
    \mathcal{Z}^{\#}_i(N_i)&\cap U(\mathcal{C}(\sigma))_{> K_0+\epsilon}\neq \emptyset\\
   \nonumber \mathcal{Z}^{\#}_i(N_i)&\cap g_iU_0(\mathcal{C}(\tau))_{> K_0+\epsilon}\neq \emptyset,
\end{align}
we conclude the sequence $\{g_i=\mu_i\gamma_i\}$ must lie in the set $S(\sigma, \tau)$ thus contain only finitely many different elements, therefore $\{\gamma_i\}$ only gives finitely many $Z(\sigma)$-right cosets (as $\gamma_i=\mu_i^{-1}g_i$ and all $\mu_i\in Z(\sigma)$). To sum up, we get the following proposition, which is stronger than Proposition \ref{gammafiniteclass}:
\begin{prop}\label{gammafiniteclassstronger}
For the set $\{\gamma_i\}\in \Gamma$ given in Proposition \ref{uniformizationofbasepointHT}, its image under the projection
\begin{equation}
    g\in G_{\mathbb{C}} \rightarrow \overline{g}\in  Z(\sigma)\backslash G_{\mathbb{C}}
\end{equation}
is \underline{finite}, where $Z(\sigma)\leq G_{\mathbb{C}}$ is the centralizer of $\sigma$.
\end{prop}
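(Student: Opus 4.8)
The plan is to read the assertion off from the analysis just completed, the only genuinely new point being the bookkeeping that lets us discard the quotient by $Z(\tau)$ and work with the single-sided cosets in $Z(\sigma)\backslash G_{\mathbb{C}}$.

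First I would recall what the last Lemma produced: for every $i$ an element $g_i=\mu_i\gamma_i$ lying in $\mathrm{Sp}(2g,\mathbb{Z})\cap\mathrm{exp}(\mathfrak{g}^{-1,-1})$, with $\mu_i\in\mathrm{exp}(\mathfrak{g}^{-1,-1})$, such that the nilpotent $i$-orbit $\mathcal{Z}^{\#}_i(N_i)$ lies in $U(\mathcal{C}(\sigma))\cap g_iU_0(\mathcal{C}(\tau))$. Since in the Hodge-Tate case $\mathfrak{g}^{-1,-1}$ is abelian, $\mathrm{exp}(\mathfrak{g}^{-1,-1})$ centralizes $\sigma\subset\mathfrak{g}^{-1,-1}$ elementwise, so $\mu_i\in Z(\sigma)$ and hence $\gamma_i=\mu_i^{-1}g_i$ represents the same coset in $Z(\sigma)\backslash G_{\mathbb{C}}$ as $g_i$. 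Thus the image of $\{\gamma_i\}$ in $Z(\sigma)\backslash G_{\mathbb{C}}$ coincides with the image of $\{g_i\}$, and it suffices to show that $\{g_i\}$ is a finite set.

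To prove $\{g_i\}$ is finite I would invoke the Siegel property. Fixing $K_0>0$ with $U(\mathcal{C}(\sigma))_{>K_0}$ and $U_0(\mathcal{C}(\tau))_{>K_0}$ contained in $D$, apply \cite[Thm. 1.5]{BKT20} to cover $U(\mathcal{C}(\sigma))_{>K_0+\epsilon}$ and $U_0(\mathcal{C}(\tau))_{>K_0+\epsilon}$ by finitely many Siegel sets $\{\mathfrak{S}_p\}$ and $\{\mathfrak{T}_q\}$ respectively. The basic finiteness property of Siegel pairs for the arithmetic group $\mathrm{Sp}(2g,\mathbb{Z})$ (\cite[Prop. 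III.2.19]{BJ06}) makes the set $S(\sigma,\tau)=\{\gamma\mid(\cup\mathfrak{S}_p)\cap(\cup\gamma\mathfrak{T}_q)\neq\emptyset\}$ finite. Since $\mathcal{Z}^{\#}_i(N_i)$ is an honest nilpotent $i$-orbit through $N_i\in\mathcal{C}(\sigma)\cap\mathrm{Ad}_{\gamma_i}\mathcal{C}(\tau)$, the positivity condition lets one move arbitrarily far in its imaginary direction while staying in $D$ and inside the bounded-real-part boxes defining $U(\mathcal{C}(\sigma))$ and $g_iU_0(\mathcal{C}(\tau))$; hence $\mathcal{Z}^{\#}_i(N_i)$ meets both $U(\mathcal{C}(\sigma))_{>K_0+\epsilon}$ and $g_iU_0(\mathcal{C}(\tau))_{>K_0+\epsilon}$, forcing $g_i\in S(\sigma,\tau)$. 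Therefore $\{g_i\}$, and with it the $Z(\sigma)$-coset image of $\{\gamma_i\}$, is finite.

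The main obstacle, and the one spot requiring care, is this last point: verifying that $\mathcal{Z}^{\#}_i(N_i)$ genuinely penetrates the truncated neighborhoods $U(\cdot)_{>K_0+\epsilon}$ on \emph{both} sides simultaneously and uniformly in $i$. This rests on two facts already in hand — that $F_0^{\bullet}\in\tilde{B}(\mathcal{C}(\sigma))\cap\tilde{B}(\mathcal{C}(\tau))$ (Proposition \ref{openconvexcone}), keeping the relevant exponential directions inside $D$, and that $\mathcal{C}(\sigma)$ and $\mathrm{Ad}_{\gamma_i}\mathcal{C}(\tau)$ are open in $\mathfrak{g}^{-1,-1}$, so a high-imaginary ray through $N_i$ remains in both bounded-real boxes. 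Granting this, the conclusion is immediate; and since a finite set of cosets in $Z(\sigma)\backslash G_{\mathbb{C}}$ maps to a finite set of double cosets $Z(\sigma)\backslash G_{\mathbb{C}}/Z(\tau)$, Proposition \ref{gammafiniteclass} drops out as a corollary in the Hodge-Tate case.
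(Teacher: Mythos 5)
Your proposal is correct and follows essentially the same route as the paper: reduce to the elements $g_i=\mu_i\gamma_i$ produced by the preceding Lemma, note that $\mu_i\in\mathrm{exp}(\mathfrak{g}^{-1,-1})\subset Z(\sigma)$ so that $\gamma_i$ and $g_i$ define the same class in $Z(\sigma)\backslash G_{\mathbb{C}}$, and then use the finite Siegel-set coverings of the truncated neighborhoods together with the Siegel property of the arithmetic group to force $g_i$ into the finite set $S(\sigma,\tau)$. Your closing remark that the single-coset statement implies the double-coset Proposition \ref{gammafiniteclass} also matches the paper's intent.
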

\begin{remark}
    The necessity of stating Proposition \ref{gammafiniteclass} with $(Z(\sigma), Z(\tau))$-double cosets instead of only $Z(\sigma)$-right cosets will be clear in future works where we will treat more general cases.
\end{remark}

\subsection{Arbitrary degenerations}\label{sectionallclassicaldegenerations}

We will generalize results in the last subsection to all cones in $\Sigma$. The strategy is to translate general cases into the Hodge-Tate case. 

Like before, let $\sigma, \tau\in \Sigma$ be two monodromy nilpotent cones. Any triples in \eqref{busterclassical} are now producing type $\mathrm{I}_a$ LMHS with $0\leq a\leq g$, when $a=g$ this is the Hodge-Tate degeneration, see Figure \ref{lmhsweight1}, and again we assume $\gamma_0=\mathrm{Id}$. We should now prove Proposition \ref{finiteintersectionclassical} for any triples in \eqref{busterclassical} regardless of the associated LMHS types.

\begin{prop}\label{uniformizationofbasepoint}
Without the restriction of Hodge-Tate LMHS, in \eqref{busterclassical} we can still choose all $F_i^{\bullet}=F_0^{\bullet}$. In other words,
\begin{center}\label{normalizedbusterclassical}
    $(\sigma, F_0^{\bullet}), (\mathrm{Ad}_{\gamma_i}\tau, F_0^{\bullet}), (N_i, F_0^{\bullet})$
    \end{center}
    are all nilpotent orbits.
\end{prop}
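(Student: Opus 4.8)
The plan is to reduce the general type $\mathrm{I}_a$ situation to the Hodge-Tate case already handled in Proposition~\ref{uniformizationofbasepointHT}. The key structural fact in weight $1$ is that a type $\mathrm{I}_a$ LMHS $(W(\sigma)[-1], F^\bullet, \sigma)$ has a pure part: writing $H_{\mathbb{C}} = H^{1,1}\oplus H^{1,0}\oplus H^{0,1}\oplus H^{0,0}$ for the Deligne splitting, the weight-$1$ graded piece $\mathrm{Gr}^W_1$ carries a pure polarized Hodge structure of weight $1$ and rank $2(g-a)$, on which every $N\in\sigma$ acts trivially, while $\sigma$ acts only on the weight-$2$/weight-$0$ part. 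Equivalently, $\sigma$ (and all $\mathrm{Ad}_{\gamma_i}\tau$, which share the weight filtration $W(\sigma)$ by Theorem~\ref{nilpconeweightfiltration}) lies in $\mathfrak{g}^{-1,-1}$ with respect to $F_0^\bullet$.

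First I would record that, exactly as in the proof of Proposition~\ref{uniformizationofbasepointHT}, the tangent space identification \eqref{tangentspaceofbn} gives $T_{F_i^\bullet}\tilde{B}(\sigma) = T_{F_i^\bullet}\tilde B(\mathrm{Ad}_{\gamma_i}\tau) = T_{F_i^\bullet}\tilde B(N_i) = \mathfrak{z}(\sigma)\cap T_{F_i^\bullet}\check D$, and that $\mathfrak{z}(\sigma)$ is the same subalgebra for $\sigma$, $\mathrm{Ad}_{\gamma_i}\tau$ and $N_i$ since they all generate the filtration $W(\sigma)$ and hence have the same centralizer in $\mathfrak{g}_{\mathbb{C}}$ up to the relevant intersection with $T\check D$. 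The only genuinely new feature compared with the Hodge-Tate case is that $\tilde B(\sigma)$ is no longer a single $\mathfrak{g}^{-1,-1}$-homogeneous affine space but also moves the pure weight-$1$ sub-Hodge-structure; however, all three nilpotent orbits $(\sigma,F_i^\bullet)$, $(\mathrm{Ad}_{\gamma_i}\tau,F_i^\bullet)$, $(N_i,F_i^\bullet)$ induce type $\mathrm{I}_a$ LMHS \emph{with the same} $W(\sigma)$, hence lie in the \emph{same} $Z(\sigma)$-orbit $\tilde B(\sigma)^\circ$ by \eqref{structureofbn} (the $Z(\sigma)$-orbits of $\tilde B(\sigma)$ being indexed by LMHS type, and the type here being constant). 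Therefore, since $F_0^\bullet\in\tilde B(\sigma)$ lies in that same orbit, there is $g_i\in Z(\sigma)\cap Z(\mathrm{Ad}_{\gamma_i}\tau)\cap Z(N_i)$ with $F_0^\bullet = g_i F_i^\bullet$; replacing $F_i^\bullet$ by $F_0^\bullet$ changes none of $\sigma$, $\mathrm{Ad}_{\gamma_i}\tau$, $N_i$ and preserves all three nilpotent-orbit conditions, which is the assertion.

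The main obstacle is justifying that $F_0^\bullet$ and $F_i^\bullet$ lie in a \emph{single} $Z(\sigma)$-orbit rather than merely in $\tilde B(\sigma)$. This requires (i) that $\tilde B(\sigma)$ has only finitely many $Z(\sigma)$-orbits, classified by LMHS type, which is the content of \cite[Chap.~7]{KP16} and \eqref{structureofbn}, and (ii) that $(\sigma,F_0^\bullet)$ and $(\sigma,F_i^\bullet)$ genuinely have the same LMHS type --- here one uses that $F_i^\bullet$ arises (up to monodromy) from the period map's nilpotent orbit theorem along the stratum attached to $\sigma$, so its LMHS type is fixed, and that the Griffiths transversality condition in Definition~\ref{definitionnilporbit}(i) is automatic in the classical weight-$1$ case, so the only datum distinguishing orbits is the weight filtration together with the discrete type. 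Once (i) and (ii) are in place, the remainder of the argument is the verbatim centralizer manipulation from Proposition~\ref{uniformizationofbasepointHT}.
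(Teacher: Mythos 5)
Your argument is correct and is essentially the paper's own proof: the paper disposes of Proposition \ref{uniformizationofbasepoint} with the single line ``an analog of Proposition \ref{uniformizationofbasepointHT},'' and your write-up carries out exactly that analog, using \eqref{tangentspaceofbn}, \eqref{structureofbn} and the constancy of the LMHS type (which in weight $1$ is forced by $W(\sigma)$ alone, so that $Z(\sigma)$ in fact acts transitively on $\tilde{B}(\sigma)$, cf.\ the proof of Lemma \ref{opensubconesamebn}) to produce a transporter $g_i\in Z(\sigma)\cap Z(\mathrm{Ad}_{\gamma_i}\tau)\cap Z(N_i)$ with $F_0^{\bullet}=g_iF_i^{\bullet}$.
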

\begin{proof}
    An analog of Proposition \ref{uniformizationofbasepointHT}. 
\end{proof}

Now the splitting of $(W(\sigma)[-1], F_0^{\bullet}, \sigma)$ becomes:
\begin{align}
        H_{\mathbb{C}}&=H^{1,1}\oplus H^{1,0}\oplus H^{0,1}\oplus H^{0,0},\\
        \nonumber \mathfrak{g}_{\mathbb{C}}&=\oplus_{-1\leq p,q\leq 1} \mathfrak{g}^{p,q}.
 \end{align}
with $\sigma$, $\mathrm{Ad}_{\gamma_i}\tau\subset \mathfrak{g}^{-1,-1}$. Consider the splitting:
\begin{equation}
    H_{\mathbb{C}}=H_1\oplus H_2
\end{equation}
where
\begin{align}
    H_1&:=H^{1,0}\oplus H^{0,1};\\
   \nonumber H_2&:=H^{0,0}\oplus H^{1,1}.
\end{align}
Clearly $H_2$ is defined over $\mathbb{R}$ and $\mathfrak{g}^{-1,-1}$ acts on $H_1$ trivially. For any $N\in \mathfrak{g}^{-1,-1}$ denote $\bar{N}\in \mathrm{End}(H_2)$ as an nilpotent endomorphism of $H_2$.

We use matrix representations to illustrate. Let
\begin{equation}\label{intbasisforclassical}
\mathcal{B}:=\{e_1,...,e_a,f_1,...,f_{h-a},f^{h-a},...,f^1,e^a,...,e^1\}
\end{equation}
be an integral basis of $(H_{\mathbb{Z}}, Q)$ such that:
\begin{align}
    &W(\sigma)_{-1}\cong \mathrm{span}\{e_1,...,e_a\};\\
   \nonumber &W(\sigma)_0\cong \mathrm{span}\{e_1,...,e_a, f_1,f^1,...,f_{g-a},f^{g-a}\};\\
   \nonumber  &W(\sigma)_1\cong H_{\mathbb{Q}},
\end{align}
and the alternating polarization form $Q$ satisfies:
\begin{align}
    &Q(e^i,e_j)=\delta^i_j, 1\leq i,j\leq a\\
  \nonumber  &Q(f^i,f_j)=\delta^i_j, 1\leq i,j\leq g-a.
\end{align}
and $Q(\alpha, \beta)=0$ for other cases when $\alpha, \beta\in \mathcal{B}$. Such a basis always exists, if we are allowed to replace $G_{\mathbb{Z}}$ by an arithmetic subgroup of $G_{\mathbb{Q}}$.

Under the ordered basis \eqref{intbasisforclassical}, We have:

\begin{equation}\label{polarizationformclassical}
    Q=\left[\begin{array}{c|c|c|c}
& & &-E_a\\ \hline

& &-E_{g-a}&\\ \hline

 &E_{g-a} & &\\ \hline
E_a& & &\\
\end{array}\right]
\end{equation}
where $E_n$ is the $n\times n$ matrix defined by
$\begin{pmatrix}
    & & 1\\
    & ... & \\
    1 & &
\end{pmatrix}$
, and 
\begin{equation}\label{matrixformofnandgamma}
    N=
\left[\begin{array}{c|c|c|c}
& & & *\\ \hline

& & &\\ \hline

 & & &\\ \hline
& & &\\
\end{array}\right],\;\;\;
\gamma_i=
\left[\begin{array}{c|c|c|c}
A& * & *& B\\ \hline

& *& *& *\\ \hline

 & *&* &*\\ \hline
& & &A^*\\
\end{array}\right].
\end{equation}
for any $N\in \mathfrak{g}^{-1,-1}$ and $\gamma_i$ in the triples provided by Proposition \ref{uniformizationofbasepoint}. The form of $\gamma_i$ is obtained from the evident fact $\gamma_i\in \Gamma\cap P_{W(\sigma)}$. With respect to the basis $\mathcal{B}$, consider the corresponding $\mathbb{Q}$-Levi decomposition:
\begin{equation}
    P_{W(\sigma)}=L_{W(\sigma)}\ltimes  U_{W(\sigma)},
\end{equation}
hence for any $\gamma_i$ we have the unique Levi decomposition:
\begin{equation}
    \gamma_i=\gamma_i^{l}\cdot \gamma_i^u,
\end{equation}
in which $\gamma_i^l, \gamma_i^u$ have the matrix forms as:

\begin{equation}
    \gamma_i^l=
\left[\begin{array}{c|c|c|c}
A&  & & \\ \hline

& *&* & \\ \hline

 & *&* &\\ \hline
& & &A^*\\
\end{array}\right],\;\;\;
\gamma_i^u=
\left[\begin{array}{c|c|c|c}
1& * & *& B\\ \hline

& 1& & *\\ \hline

 & &1 &*\\ \hline
& & &1\\
\end{array}\right].
\end{equation}
Note that it is clear we have:
\begin{align}
    &\mathrm{Ad}_{\gamma_i^u}N=N\\
   \nonumber &\mathrm{Ad}_{\gamma_i}N=\mathrm{Ad}_{\gamma_i^l}N.
\end{align}

Let $\tilde{Q}:=Q|_{H_2}$ be the induced polarization form on the $\mathbb{R}$-vector space $H_2$. We will associate $(H_2,\tilde{Q})$ an integral structure as follows. There is a ($\tilde{Q}$-isotropic) basis for $H^{2,2}$ of the following form:
\begin{equation}\label{standardbasisH2}
    \alpha_i=e^i+g_i+\sum_{j=1}^{a}z_{ij}e_j, 1\leq i\leq a.
\end{equation}
where $g_i\in \mathrm{span}\{f_k,f^k\}_{\mathbb{R}}$ and $z_{ij}\in \mathbb{C}$. 
\begin{prop}\label{integralstructure}
    The set $\mathcal{E}:=\{\beta_i:=\frac{\alpha_i+\bar{\alpha_i}}{2}, e_j\}_{1\leq i,j\leq a}$ is a basis of $(H_2)_{\mathbb{R}}$, and $\tilde{Q}$ is integral under the basis $\mathcal{E}$.
\end{prop}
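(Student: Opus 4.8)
The plan is to produce an explicit integral structure on $(H_2)_{\mathbb{R}}$ by exhibiting a basis and checking that $\tilde Q$ takes integer values on it. First I would verify that $\mathcal E = \{\beta_i, e_j\}$ is indeed an $\mathbb{R}$-basis of $(H_2)_{\mathbb{R}}$. Recall $(H_2)_{\mathbb{R}} = (H^{0,0}\oplus H^{1,1})_{\mathbb{R}}$ has real dimension $2a$, and $\{e_1,\dots,e_a\}$ spans $H^{1,1}$ (which equals $W(\sigma)_{-1}\otimes\mathbb{R}$ in this Hodge--Tate--on--$H_2$ picture). From \eqref{standardbasisH2}, $\alpha_i = e^i + g_i + \sum_j z_{ij}e_j$, so $\beta_i = \tfrac{\alpha_i+\bar\alpha_i}{2} = e^i + \tfrac{g_i+\bar g_i}{2} + \sum_j \mathfrak{Re}(z_{ij})e_j$; modulo $\mathrm{span}\{e_j\} = H^{1,1}$ the classes $\bar\beta_i$ project to the classes $\bar{e^i}$, which form a basis of $(H_2)_{\mathbb{R}}/H^{1,1}_{\mathbb{R}}$ since $\{e^i\}$ together with $\{e_j\}$ spans $H_2$. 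Hence $\{\beta_i\}$ together with $\{e_j\}$ is linearly independent and has the right cardinality $2a$, so it is a basis.

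Next I would compute the Gram matrix of $\tilde Q = Q|_{H_2}$ in the basis $\mathcal E$. The key inputs are: (a) $Q(e^i, e_j) = \delta^i_j$ and $Q(e_i,e_j)=0$ from the normalization of $\mathcal B$; (b) $Q$ vanishes on $\mathrm{span}\{f_k,f^k\}$ paired against $\mathrm{span}\{e_j,e^j\}$, so the $g_i$ terms contribute nothing to pairings with the $e_j$ or with each other once we also use that the $\alpha_i$ are $\tilde Q$-isotropic; (c) the isotropy relations $Q(\alpha_i,\alpha_j)=0$ and the weight-$1$ Hodge--Riemann bilinear relations for the pure Hodge structure on $\mathrm{Gr}$, which force $Q(\alpha_i,\bar\alpha_j)$ to be purely imaginary — in fact it equals $2i\,\mathfrak{Im}(z_{ij})$ up to a sign after expanding — and more importantly force the real part of the relevant pairings to land in a controlled place. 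Expanding $\beta_i = \tfrac12(\alpha_i+\bar\alpha_i)$ bilinearly, $Q(\beta_i,\beta_j) = \tfrac14\big(Q(\alpha_i,\alpha_j) + Q(\alpha_i,\bar\alpha_j) + Q(\bar\alpha_i,\alpha_j) + Q(\bar\alpha_i,\bar\alpha_j)\big)$; the first and last terms vanish by isotropy, and the middle two are complex conjugates of each other (since $Q$ is defined over $\mathbb{R}$ and alternating), so their sum is $2\,\mathfrak{Re}\,Q(\alpha_i,\bar\alpha_j)$, and by the above this is actually $0$ — thus $Q(\beta_i,\beta_j)=0$. Similarly $Q(\beta_i, e_j) = \tfrac12\big(Q(\alpha_i,e_j) + Q(\bar\alpha_i,e_j)\big) = \tfrac12\big(Q(e^i,e_j) + \overline{Q(e^i,e_j)}\big) = Q(e^i,e_j) = \delta^i_j$, using that the $g_i$ part pairs trivially with $e_j$ and the $\sum z_{ik}e_k$ part is annihilated by $Q(\cdot, e_j)$. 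Finally $Q(e_i,e_j)=0$. So the Gram matrix is $\begin{pmatrix} 0 & E_a \\ -E_a & 0\end{pmatrix}$ (with $E_a$ the antidiagonal identity), which is manifestly integral — in fact unimodular.

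The main obstacle I anticipate is justifying precisely why $\mathfrak{Re}\,Q(\alpha_i,\bar\alpha_j) = 0$, i.e.\ why the pairing $Q(\alpha_i,\bar\alpha_j)$ is purely imaginary; this is where the Hodge--Riemann relations enter and one must be careful that the $g_i\in\mathrm{span}\{f_k,f^k\}_{\mathbb{R}}$ components — which live in the weight-$0$ part $H^{1,0}\oplus H^{0,1}$ — genuinely do not interfere. Here one uses that $Q$ pairs $H^{1,1}$ with $H^{0,0}$ and pairs $\mathrm{span}\{f_k\}$ with $\mathrm{span}\{f^k\}$, but is zero between the $\{e\}$-block and the $\{f\}$-block, so $Q(g_i, \bar g_j)$ and $Q(\alpha_i,\bar\alpha_j)$ decouple appropriately; combined with $\bar\alpha_i$ being the complex conjugate and $\alpha_i$ spanning the isotropic $F$-type piece, the standard computation gives that the real contribution cancels. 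Once this is pinned down the rest is the routine bilinear bookkeeping sketched above, and the conclusion that $\tilde Q$ is integral (indeed unimodular symplectic) on the lattice $\mathbb{Z}\mathcal E$ follows immediately.
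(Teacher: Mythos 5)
Your proposal is correct and takes essentially the same route as the paper: after the easy pairings $\tilde Q(\beta_i,e_j)=\delta^i_j$ and $\tilde Q(e_i,e_j)=0$, everything reduces to $\tilde Q(\beta_i,\beta_j)=0$, which both arguments deduce from the isotropy $Q(\alpha_i,\alpha_j)=0$ together with the realness of $Q$ and of the $g_i$ (the paper phrases this as $z_{ij}-z_{ji}=-\tilde Q(g_i,g_j)\in\mathbb{R}$ and then recomputes with real parts; your reduction to $\mathfrak{Re}\,Q(\alpha_i,\bar{\alpha}_j)=0$ is the same fact, since $\bar{\alpha}_j-\alpha_j$ is a purely imaginary combination of the $e_k$ and $Q(\alpha_i,e_k)=\delta^i_k$ is real). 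Two cosmetic points: no Hodge--Riemann input beyond the stated isotropy of the $\alpha_i$ is actually needed for that step, and in the Deligne splitting the $e_j$ span $H^{0,0}$ while the $\alpha_i$ span the other summand of $H_2$, which does not affect your quotient argument for linear independence.
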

\begin{proof}
    $\mathcal{E}$'s realness as well as $\tilde{Q}(e^i, \beta_j)=\delta^i_j$ is clear. We only have to check the isotropic condition $\tilde{Q}(\beta_i,\beta_j)=0$.
    We have:
    \begin{align}
        0&=\tilde{Q}(\alpha_i,\alpha_j)\\
        \nonumber &=\tilde{Q}(e^i+g_i+\sum_{k=1}^{a}z_{ik}e_k, e^j+g_j+\sum_{k=1}^{a}z_{jk}e_k)\\
       \nonumber &=z_{ij}-z_{ji}+\tilde{Q}(g_i,g_j).
    \end{align}
    Therefore $z_{ij}-z_{ji}=-\tilde{Q}(g_i,g_j)$ is real. Moreover we have:
    \begin{align}
        0&=z_{ij}-z_{ji}+\tilde{Q}(g_i,g_j)\\
       \nonumber &=\frac{z_{ij}+\bar{z_{ij}}}{2}-\frac{z_{ji}+\bar{z_{ji}}}{2}+\tilde{Q}(g_i,g_j)\\
       \nonumber &=\tilde{Q}(e^i+g_i+\sum_{k=1}^{a}\frac{z_{ik}+\bar{z_{ik}}}{2}e_k, e^j+g_j+\sum_{k=1}^{a}\frac{z_{jk}+\bar{z_{jk}}}{2}e_k)\\
      \nonumber  &=\tilde{Q}(\beta_i, \beta_j).
    \end{align}
\end{proof} 
\begin{remark}
     As a consequence of Proposition \ref{integralstructure}, $\mathbb{Z}\mathcal{E}$ realizes $((H_2)_{\mathbb{Z}}, \tilde{Q})$ as an integral symplectic lattice, under which $\tilde{Q}$ has the matrix form:
\begin{equation}\label{inducedformforclassical}
    \tilde{Q}:=Q|_{H_2}=\left[\begin{array}{c|c}
 & E_a\\ \hline

 -E_a&  
\end{array}\right].
\end{equation}
\end{remark}
\noindent Denote 
\begin{equation}
    H_2^{'}:=\mathrm{span}_{\mathbb{Z}}\{e_1,...,e_a,e^a,...,e^1\}\subset H_{\mathbb{Z}},
\end{equation}
there is a natural isomorphism of integral symplectic lattices $(H_2^{'}, Q|_{H_2^{'}})\xrightarrow{\sim} ((H_2)_{\mathbb{Z}}, \tilde{Q})$ given by 
\begin{equation}\label{identificationintegrallattice}
    e_i\rightarrow e_i, \ e^i\rightarrow \beta_i=e^i+g_i+\sum_{j=1}^{a}\mathfrak{Re}(z_{ij})e_j, \ 1\leq i\leq a.
\end{equation}

Under this identification, for any $N\in \mathfrak{g}^{-1,-1}$ and $\gamma_i^l\in \Gamma_L:=\Gamma\cap L_{W(\sigma)}$, both viewed as operators on $(H_2^{'})_{\mathbb{Q}}$, we have the induced $\tilde{N},\tilde{\gamma_i^l}$ as operators on $H_2$ by $N, \gamma_i^l$. If $N$ and $\gamma$ has the matrix forms given by \eqref{matrixformofnandgamma}, under the integral structure $((H_2)_{\mathbb{Z}}, \tilde{Q})$ and basis $\mathcal{E}$, $\tilde{N}$ and $\tilde{\gamma_i^l}$ has the matrix form:
\begin{equation}
    \tilde{N}=
\left[\begin{array}{c|c}
 & B\\ \hline

 &  
\end{array}\right],\;\;\;
\tilde{\gamma_i^l}=
\left[\begin{array}{c|c}
A & \\ \hline

 & A^*
\end{array}\right].
\end{equation}
Clearly under this identification, $\tilde{N}\in \mathfrak{sp}(H_2,\mathbb{Q})$ and $\tilde{\gamma_i^l}\in \mathrm{Sp}(H_2,\mathbb{Z})$\footnote{Strictly speaking, this should be an arithmetic subgroup of $\mathrm{Sp}(H_2, \mathbb{Q})$, but we still write it as the $\mathbb{Z}$-group for convenience.}.

We now turn to the Hodge filtration $F_0^{\bullet}$. On $H$ we have:
\begin{equation}
    F_0^{\bullet}(p)=\oplus_{s\geq p}H^{s,q}.
\end{equation}
Denote the induced Hodge filtration on $H_2$ as:
\begin{align}
    &\tilde{F}_0^2:=\mathrm{img}\{H^{1,1}\hookrightarrow H_2\},\\
    \nonumber &\tilde{F}_0^1:=H_{2, \mathbb{C}}.
\end{align}
Which can be identified as an element in $\check{D}_{(a,a)}$, the compact dual of the period domain $D_{(a,a)}$ parametrizing weight $1$ polarized $\mathbb{Z}$-Hodge structures on $((H_2)_{\mathbb{Z}}, \tilde{Q})$. 

The weight filtration $W(\sigma)$ on $H$ gives a natural weight filtration $W(\tilde{\sigma})$ on $H_{2, \mathbb{Q}}$ by:
\begin{align}
    W(\tilde{\sigma})_{-1}=W(\tilde{\sigma})_0&=\mathrm{span}_{\mathbb{Q}}\{e_i\}_{1\leq i\leq a}\\
    \nonumber W(\tilde{\sigma})_1&=(H_2)_{\mathbb{Q}}.
\end{align}
which agrees with the Jacobson-Morozov filtration associated to $(H_2, \tilde{\sigma}\subset \mathrm{End}_{\mathbb{Q}}(H_2))$. Therefore we have the following:

\begin{prop}\label{inducedlmhsclassical}
$(W(\sigma)[-1]|_{H_2}, \tilde{F}_0^{\bullet},  \sigma|_{H_2})$ gives a limiting mixed Hodge structure on $((H_2)_{\mathbb{Z}}, \tilde{Q})$ of Hodge-Tate type.
\end{prop}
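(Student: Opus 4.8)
The plan is to verify the three defining conditions of a limiting mixed Hodge structure (Definition immediately preceding Theorem \ref{nilporbittolmhs}) for the triple $(W(\sigma)[-1]|_{H_2}, \tilde{F}_0^{\bullet}, \sigma|_{H_2})$ on $((H_2)_{\mathbb{Z}}, \tilde{Q})$, and then observe that the only nontrivial Hodge numbers are $h^{1,1}=h^{0,0}=a$, which is exactly the Hodge--Tate type as displayed in Figure \ref{lmhsweight1} with $g$ replaced by $a$ and $a$ replaced by $a$. First I would record that $H_2 = H^{0,0}\oplus H^{1,1}$ as a sub-Hodge structure of the Deligne splitting of $(W(\sigma)[-1], F_0^{\bullet}, \sigma)$: indeed $H_2$ is $Q$-orthogonal to $H_1 = H^{1,0}\oplus H^{0,1}$, it is defined over $\mathbb{R}$, and it is preserved by every $N\in\mathfrak{g}^{-1,-1}$ since such $N$ kills $H_1$ (shown just before the matrix representations). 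Hence the restriction of the weight filtration, the Hodge filtration, and the cone all make sense on $H_2$, and $\tilde{F}_0^{\bullet}$ as defined ($\tilde F_0^2 = H^{1,1}$, $\tilde F_0^1 = H_{2,\mathbb{C}}$) lies in $\check{D}_{(a,a)}$.

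Next I would check condition (i), that $W(\tilde\sigma)_{\bullet} = W(N)_{\bullet}[-1]$ for $N$ in the interior of $\sigma|_{H_2}$: this is asserted in the paragraph preceding the proposition, where it is noted that $W(\tilde\sigma)$ (concentrated so that $W_{-1}=W_0=\mathrm{span}\{e_i\}$, $W_1 = (H_2)_{\mathbb{Q}}$) agrees with the Jacobson--Morozov filtration of $\tilde\sigma\subset\mathrm{End}_{\mathbb{Q}}(H_2)$; after the shift $[-1]$ this becomes the standard symplectic weight-$1$ filtration on $(H_2)_{\mathbb{Q}}$. Condition (ii), Griffiths transversality $N(\tilde F_0^p)\subset \tilde F_0^{p-1}$, is automatic: $N(\tilde F_0^2) = N(H^{1,1})\subset H^{0,0}\subset H_{2,\mathbb{C}} = \tilde F_0^1$, and the weight-$1$ period domain has trivial Griffiths transversality anyway. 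For condition (iii) I would identify $\mathrm{Gr}^W_2 \cong H^{1,1}$ and $\mathrm{Gr}^W_0 \cong H^{0,0}$, note $P_2 = \ker\{N^2: \mathrm{Gr}^W_2\to\mathrm{Gr}^W_{-2}\} = \mathrm{Gr}^W_2 = H^{1,1}$ since $N^2=0$ on $H_2$ (as $N$ has rank at most $a$ with image in $H^{0,0}$ and $N(H^{0,0})=0$), with $Q_1(v,w) = \tilde Q(v,Nw)$, and that $\tilde F_0^{\bullet}$ induces on $P_2$ a weight-$2$ Hodge structure of Hodge--Tate type polarized by $Q_1$; this last polarization statement follows because $(W(\sigma)[-1], F_0^{\bullet}, \sigma)$ is already a polarized LMHS (Theorem \ref{nilporbittolmhs}) and the induced form $Q_1$ on the summand $H^{1,1}$ is just the restriction of the polarization form for the full LMHS, using the orthogonal decomposition $H = H_1\oplus H_2$ and the fact that $\tilde Q = Q|_{H_2}$ is nondegenerate (Proposition \ref{integralstructure}).

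Finally I would note that the resulting mixed Hodge structure has $h^{1,1} = \dim H^{1,1} = a$ and $h^{0,0} = \dim H^{0,0} = a$ and all other $h^{p,q}$ zero, which by the classification of weight-$1$ degenerations in Figure \ref{lmhsweight1} is precisely the Hodge--Tate type $\mathrm{I}_a$ for the $(a,a)$ period domain. I expect the main obstacle to be condition (iii) --- specifically, checking that the polarization $Q_1$ on $P_2$ inherited from $\tilde Q$ coincides with (a positive multiple of) the one coming from the ambient LMHS, so that the positivity of the induced Hodge structure on $H^{1,1}$ is genuinely inherited rather than needing to be reproved; the subtlety is that the integral structure on $H_2$ defined via the basis $\mathcal{E}$ (Proposition \ref{integralstructure}) differs from the naive one on $H_2'$, but the isomorphism \eqref{identificationintegrallattice} is an isomorphism of polarized lattices, so $Q_1$ transports correctly and no positivity is lost. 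Everything else is bookkeeping with the Deligne splitting and the explicit matrix forms already set up in the section.
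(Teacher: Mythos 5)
Your verification is correct and follows the same route the paper takes implicitly: the paper states Proposition \ref{inducedlmhsclassical} as an immediate consequence of the preceding setup (the splitting $H=H_1\oplus H_2$, the integral symplectic structure of Proposition \ref{integralstructure}, the rational matrix forms of $\tilde{N}$, and the identification of $W(\tilde{\sigma})$ with the Jacobson--Morozov filtration), giving no separate proof. Your explicit check of the three polarized-MHS conditions, with the polarization on $P_2\cong H^{1,1}$ inherited from the ambient LMHS via $\tilde{Q}(v,\tilde{N}w)=Q(v,Nw)$, is exactly the routine verification the paper leaves to the reader.
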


From now we denote
\begin{equation}
    (H_2)_{\mathbb{C}}=\oplus_{p,q}H_2^{p,q}
\end{equation}
and
\begin{equation}
    \tilde{\mathfrak{g}}_{\mathbb{C}}=\oplus_{p,q}\tilde{\mathfrak{g}}^{p,q}.
\end{equation}
as the splittings for LMHS $(W(\sigma)[-1]|_{H_2}, \tilde{F}_0^{\bullet},  \sigma|_{H_2})$ and its adjoint.

Denote $\tilde{\sigma}, \tilde{\tau}$ as the image of $\sigma, \tau$ under $\mathfrak{g}^{-1,-1}\cong \tilde{\mathfrak{g}}^{-1,-1}$. This isomorphism identifies the topology of $\sigma\cap \mathrm{Ad}_{\gamma_i}\tau$ and $\tilde{\sigma}\cap \mathrm{Ad}_{\tilde{\gamma}_i^l}\tilde{\tau}$. Therefore, the condition of Proposition \ref{uniformizationofbasepoint} implies $\tilde{\sigma}, \tilde{\tau}\in \tilde{\Sigma}$ are two monodromy nilpotent cones, and $\{\tilde{\gamma}_i^l\}\subset  P_{W(\tilde{\sigma})}\leq \mathrm{Sp}(H_2, \mathbb{Z})$ such that there exists 
\begin{equation}
    0\neq \tilde{N_i}\in \tilde{\mathfrak{g}}^{-1,-1}\cong \mathfrak{g}^{-1,-1},
\end{equation}
and the triple: 
\begin{center}\label{inducedbuster}
    $(\tilde{\sigma}, \tilde{F}_0^{\bullet}), (\mathrm{Ad}_{\tilde{\gamma}_i^l}\tilde{\tau}, \tilde{F}_0^{\bullet}), (\tilde{N_i}, \tilde{F}_0^{\bullet})$
\end{center}
are all nilpotent orbits rendering Hodge-Tate type LMHS. This completes the transition, and hence Proposition \ref{finiteintersectionclassical} for $\sigma, \tau\in \Sigma_{\mathrm{HT}}$ implies Proposition \ref{finiteintersectionclassical} for any $\sigma, \tau\in \Sigma$.

\subsection{Concluding remarks}

To complete the proof of Theorem \ref{mainthmclassical}, we still need to fill the gap between the local statement (i.e., Proposition \ref{finiteintersectionclassical}) and the global one. We make the following claim:
\begin{claim}\label{basicclaim}
    Proposition \ref{finiteintersectionclassical} implies Theorem \ref{mainthmclassical}.
\end{claim}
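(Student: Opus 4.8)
**Proof proposal for Claim \ref{basicclaim} (Proposition \ref{finiteintersectionclassical} implies Theorem \ref{mainthmclassical}).**

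The plan is to build $\hat\Sigma$ from $\Sigma$ by a simultaneous, $\mathrm{Ad}_\Gamma$-equivariant polyhedral subdivision, and then to verify the weak-fan condition (ii') using Proposition \ref{finiteintersectionclassical} to control the finitely many ways two cones can overlap. First I would reduce to finitely many cones up to $\mathrm{Ad}_\Gamma$: by construction $\Sigma$ is the union of the $\mathrm{Ad}_\Gamma$-orbits of the finitely many top-dimensional cones $\sigma_1,\dots,\sigma_M$ coming from the chart cover $\{U_m\}$, together with all their faces. So it suffices to produce, for each $m$, a finite polyhedral subdivision $\Sigma(\sigma_m)$ of $\overline{\sigma_m}$ that is compatible with all the intersections $\sigma_m\cap\mathrm{Ad}_\gamma\tau$ that can arise (for $\tau$ among the $\sigma_{m'}$ and $\gamma\in\Gamma\cap P_{W(\sigma_m)}$), and then to spread these subdivisions out over the $\mathrm{Ad}_\Gamma$-orbits in a well-defined way. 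The key input is that, by Proposition \ref{finiteintersectionclassical} (via Proposition \ref{gammafiniteclass} / \ref{gammafiniteclassstronger}), for each pair $(\sigma,\tau)$ the set of distinct intersections $\sigma\cap\mathrm{Ad}_{\gamma_i}\tau$ is finite — it depends only on the class of $\gamma_i$ in $Z(\sigma)\backslash G_{\mathbb{C}}/Z(\tau)$, which ranges over a finite set — so the collection of "walls" to be resolved inside each $\overline{\sigma_m}$ is a finite arrangement of rational polyhedral subcones.

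Concretely, for a fixed $\sigma=\sigma_m$, I would take the finite family $\{\sigma\cap\mathrm{Ad}_{\gamma}\tau\}$ over all relevant $\tau$ and coset representatives $\gamma$, form the common refinement of these subcones (a finite fan supported on $\overline\sigma$), and pass to a simplicial refinement if desired; call the result $\Sigma(\sigma)$. Doing this for each $m$, and taking faces, gives a finite set of cones $\hat\Sigma_0$; I then define $\hat\Sigma$ to be the $\mathrm{Ad}_\Gamma$-orbit of $\hat\Sigma_0$. For this to be well defined one must check that if $\gamma\in\Gamma$ stabilizes some $\sigma_m$ (or maps $\sigma_m$ to $\sigma_{m'}$ used in the original chart data), then $\mathrm{Ad}_\gamma$ carries $\Sigma(\sigma_m)$ to the subdivision chosen on the image; this is arranged by choosing the subdivisions $\Sigma(\sigma_m)$ compatibly from the start, i.e.\ applying the refinement construction $\Gamma$-equivariantly using the finiteness of the relevant stabilizers/transporters (again a consequence of the Siegel finiteness in Proposition \ref{gammafiniteclassstronger}). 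Since $\Gamma$ is neat the stabilizer of each cone acts without fixed points issues, and $\hat\Sigma$ is automatically strongly compatible with $\Gamma$ because the $N_j$ generating each original $\sigma_m$ lie in $\log\Gamma(\sigma_m)$ and subdivision does not affect this. Finite generation under $\mathrm{Ad}_\Gamma$ is immediate since $\hat\Sigma$ is the $\Gamma$-orbit of the finite set $\hat\Sigma_0$.

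It remains to verify condition (ii') of Definition \ref{deffanweakfan}: if $\sigma,\tau\in\hat\Sigma$ with $\sigma\cap\tau\neq\emptyset$ and some $F^\bullet\in\check D$ makes both $(\sigma,F^\bullet)$ and $(\tau,F^\bullet)$ nilpotent orbits, then $\sigma=\tau$. Writing $\tau=\mathrm{Ad}_\gamma\tau'$ with $\sigma,\tau'$ faces of cones in $\hat\Sigma_0$, the existence of such an $F^\bullet$ forces $W(\sigma)=W(\tau)$ (Theorem \ref{nilpconeweightfiltration}), hence $\gamma\in\Gamma\cap P_{W(\sigma)}$, and then one is exactly in the situation controlled by Proposition \ref{finiteintersectionclassical}: the intersection $\sigma\cap\tau$ is a union of cells of the common-refinement subdivision that was built into $\hat\Sigma_0$, so a nonempty intersection of two cells of a subdivision that are each themselves cells forces them to coincide. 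The main obstacle I anticipate is the bookkeeping of equivariance — making the subdivisions $\Sigma(\sigma_m)$ simultaneously compatible across all of $\{\sigma_m\}$ and across the $\Gamma$-action so that $\hat\Sigma$ is genuinely a set of cones closed under faces and $\mathrm{Ad}_\Gamma$, rather than just a collection that works cone-by-cone; this is where the finiteness statements (Propositions \ref{gammafiniteclass}, \ref{gammafiniteclassstronger}) and the neatness of $\Gamma$ do the real work, and where I would be most careful in the write-up. The promised explicit construction of the finite polyhedral decomposition is deferred to Section 7, consistent with the remark following Proposition \ref{gammafiniteclass}.
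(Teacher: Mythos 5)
Your overall strategy coincides with the paper's: use the double-coset finiteness (Propositions \ref{gammafiniteclass}, \ref{gammafiniteclassstronger}, extracted from the proof of Proposition \ref{finiteintersectionclassical}) to reduce to finitely many $\Gamma$-translates, refine these into a complex with the fan property, take the $\mathrm{Ad}_{\Gamma}$-orbit, and then reduce the weak-fan check for an arbitrary $\gamma$ to the finite model because $\sigma\cap\mathrm{Ad}_{\gamma}\tau$ depends only on the class of $\gamma$ in $Z(\sigma)\backslash G_{\mathbb{C}}/Z(\tau)$. The one place your write-up does not close the argument is the step you yourself flag: compatibility of the cone-by-cone common refinements. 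As you describe it, $\Sigma(\sigma_m)$ is built from the walls seen from $\sigma_m$ and $\mathrm{Ad}_{\gamma}\Sigma(\sigma_{m'})$ from the walls seen from $\sigma_{m'}$; on the overlap $\sigma_m\cap\mathrm{Ad}_{\gamma}\sigma_{m'}$ the two induced decompositions need not agree, so a cell of one family may partially overlap a cell of the other, and the conclusion ``two meeting cells of a subdivision coincide'' does not follow; asserting that the subdivisions can be ``chosen compatibly from the start'' is precisely what has to be proved. The paper avoids this by not refining cone-by-cone: it collects all $(K+1)^2(L+1)$ cones \eqref{allpossiblebusters} at once and applies Lemma \ref{simplicialfanconstruction}, cutting the ambient space by the finitely many rational hyperplanes supporting all of these cones simultaneously; the chambers of this single arrangement automatically satisfy the dichotomy ``equal or disjoint,'' so compatibility across the different $\sigma_m$ and across the chosen coset representatives is built in, and equivariance is imposed afterwards simply by taking the $\mathrm{Ad}_{\Gamma}$-orbit of this one finite complex $\mathcal{T}$. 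Replacing your per-cone common refinement by this global hyperplane-arrangement refinement makes your argument complete.

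Two smaller points. First, the finiteness of Proposition \ref{gammafiniteclass} is stated only for $\gamma$'s occurring in triples \eqref{busterclassical}, i.e.\ accompanied by nilpotent orbits; quantifying over all $\gamma\in\Gamma\cap P_{W(\sigma_m)}$ with nonempty cone intersection, as you do when listing the walls, requires the additional input that in the classical weight $1$ case $\sigma\cap\tau\neq\emptyset$ already forces $\tilde{B}(\sigma)=\tilde{B}(\tau)\neq\emptyset$ (Lemma \ref{opensubconesamebn} and Proposition \ref{fanweakfanequivalent} in the paper). Second, because of that same proposition the paper actually obtains a fan, not merely a weak fan; your direct verification of condition (ii') of Definition \ref{deffanweakfan} using the common $F^{\bullet}$ is correct but yields the slightly weaker conclusion, which is still sufficient for Theorem \ref{mainthmclassical} as stated.
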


The proof of this claim will be left to Section 7. This completes the proof of Theorem \ref{mainthmclassical}.

\section{Weak fan of restricted types: The weight $3$ Calabi-Yau case}

In this section we move to non-classical cases. Suppose $\varphi: S\rightarrow \Gamma\backslash D$ is a period map of type $(1,h,h,1)$ where $\Gamma\leq \mathrm{Sp}(2h+2, \mathbb{Z})$ is assumed to be neat\footnote{Any arithmetic subgroup contains a neat subgroup of finite index.}.
 
 As before, let $\Sigma$ be the collection of all local monodromy nilpotent cones arising from $\varphi$. We also assume any nilpotent orbit coming from $\varphi$ via Schmid's nilpotent orbit theorem is of type $\Phi$, where
 \begin{equation}
     \Phi:=\{\hbox{Pure LMHS}, \ \hbox{Type I LMHS}, \ \hbox{Type IV LMHS} \}.
 \end{equation}
Here the LMHS types are defined by \cite[Example 5.8]{KPR19}. This is equivalent to say any divisorial degeneration arising from $\varphi$ is not of type II or type III. The family analyzed in \cite{Den22} is such an example, see Section 6 for a summary. In this section we will prove the following theorem.

\begin{theorem}\label{maintheoremCY3}
For the period map $\varphi$ with the above properties, there exists a polyhedral subdivision of $\Sigma$, denoted as $\hat{\Sigma}$, such that $\hat{\Sigma}$ is a type $\Phi$ weak fan strongly compatible with $\Gamma$ and finitely generated under the action by $\mathrm{Ad}_{\Gamma}$. In other words, for any $\sigma, \tau\in \hat{\Sigma}$, $\tilde{B}(\sigma, \Phi)\cap \tilde{B}(\tau, \Phi)\neq \emptyset$ and $\sigma\cap \tau\neq \emptyset$ imply $\sigma=\tau$.
\end{theorem}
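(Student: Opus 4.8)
The plan is to reduce Theorem \ref{maintheoremCY3} to the already-proved weight $1$ statement, Theorem \ref{mainthmclassical}, by attaching to each relevant nilpotent orbit a genuine weight $1$ polarized variation, exactly in the spirit of Section 4.3 but now adapted to the two allowed LMHS types. As in the classical case, I would first isolate the pairs $\sigma,\tau\in\Sigma$ that threaten the type-$\Phi$ weak fan condition: a sequence $\{\gamma_i\}\subset\Gamma$ with $0\neq N_i\in\sigma\cap\mathrm{Ad}_{\gamma_i}\tau$ and $F_i^{\bullet}\in\check D$ such that $(\sigma,F_i^{\bullet})$, $(\mathrm{Ad}_{\gamma_i}\tau,F_i^{\bullet})$, $(N_i,F_i^{\bullet})$ are all nilpotent orbits \emph{whose LMHS has type in $\Phi$}. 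By Theorem \ref{nilpconeweightfiltration} all the $\mathrm{Ad}_{\gamma_i}\tau$ share the weight filtration $W(\sigma)$, so $\gamma_i\in\Gamma\cap P_{W(\sigma)}$, and (shifting the index) $\gamma_0=\mathrm{Id}$. Then I would prove the analogue of Proposition \ref{uniformizationofbasepoint}: since $\tilde B(\sigma,\Phi)$ is a union of finitely many $Z(\sigma)$-orbits, passing to a subsequence we may assume all the triples give the \emph{same} LMHS type, and then using \eqref{tangentspaceofbn}, \eqref{structureofbn} together with the fact that $F_0^{\bullet}$ and $F_i^{\bullet}$ lie in the same orbit component, we can normalize $F_i^{\bullet}=F_0^{\bullet}$. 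It then suffices, as in Section 4, to bound the image of $\{\gamma_i\}$ in $Z(\sigma)\backslash G_{\mathbb C}/Z(\tau)$ (Proposition \ref{gammafiniteclass}), whence a finite polyhedral subdivision of $\sigma$ follows by the Section 7 machinery, and Claim \ref{basicclaim} (suitably restated for type-$\Phi$ weak fans) globalizes it.

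The core is the case analysis on the LMHS type of the triple, handled separately for type I and type IV. For \textbf{type I} ($N\neq0$, $N^2=0$, $h^{3,0}=1$): here the Hodge--Deligne diagram of the LMHS forces $\mathrm{Gr}^W$ to be concentrated so that the "interesting" part of $N$ acts on a weight $1$ sub-Hodge structure, with the $h^{3,0}=1$ piece inert under $N$. I would split $H_{\mathbb C}$ off the $(3,0)$/$(0,3)$ summand and the inert part, producing a sub-lattice $H_2\subset H_{\mathbb Z}$ on which $\sigma|_{H_2}$, $\mathrm{Ad}_{\gamma_i}\tau|_{H_2}$, $N_i|_{H_2}$ induce weight $1$ nilpotent orbits of some type $\mathrm{I}_a$, mirroring \eqref{standardbasisH2}--\eqref{identificationintegrallattice}: the polarization $Q$ restricts to a symplectic form (after suitable integral normalization), the relevant $\gamma_i^l$ in the Levi of $P_{W(\sigma)}$ map to symplectic integral matrices, and the topology of $\sigma\cap\mathrm{Ad}_{\gamma_i}\tau$ is preserved under $\mathfrak g^{-1,-1}\cong\tilde{\mathfrak g}^{-1,-1}$. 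Then Theorem \ref{mainthmclassical}'s local input (Proposition \ref{finiteintersectionclassical}) applied to $(H_2,\tilde Q)$ gives the finiteness we need. For \textbf{type IV} ($N^3\neq0$, $N^4=0$): the LMHS is Hodge--Tate-like with a full length-$4$ monodromy filtration $W_{-3}\subset\cdots\subset W_3$ each graded piece one-dimensional in the totally degenerate strand, so $N$ on the relevant rank-$2$ strand behaves like the maximal degeneration of a weight $1$ (elliptic-curve-type) or rather like an $\mathrm{SL}_2$-string; I would again extract a weight $1$ (indeed Hodge--Tate, $a=g$) sub-structure from the $N$-string and run the Hodge--Tate argument of Section 4.2 verbatim, the key point being that the Siegel-set/Siegel-property argument around \eqref{siegelpropertyarithmeticgroup} (via \cite[Thm.\ 1.5]{BKT20} and \cite[Prop.\ III.2.19]{BJ06}) transfers to the induced symplectic data.

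The main obstacle I anticipate is verifying that the reduction to weight $1$ is \emph{clean} in the type IV case and, more delicately, that the two types \emph{interact} correctly: a face $\tau\leq\sigma$ of a type IV cone can have a type I (or even pure) LMHS, so when $\sigma$ is type IV and $\tau$ is type I the induced weight $1$ data on the two sides live a priori on different sub-lattices, and one must check that the intersection $\sigma\cap\mathrm{Ad}_{\gamma_i}\tau$, which lies in $\mathfrak g^{-1,-1}$ for $W(\sigma)$, is still faithfully seen after projecting to the weight $1$ quotient determined by $W(\sigma)$ alone — i.e., that no information about the intersection is lost by passing to $H_2$. This is where the precise shape of the Hodge--Deligne diagrams in Figure \ref{typeonelmhs} and Figure \ref{typeIVlmhs} is used: one must confirm $\mathfrak g^{-1,-1}$ acts faithfully on the relevant $H_2$ in both cases, so that linear independence of the $N_j$ (our non-degeneracy convention for cones) is preserved. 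A secondary technical point is the normalization step (analogue of Proposition \ref{uniformizationofbasepoint}): in the non-classical setting the Griffiths transversality constraint in Definition \ref{definitionnilporbit}(i) is no longer vacuous, so one cannot simply cite \cite[Chap.\ 2.1]{AMRT10} to get an open homogeneous cone $\mathcal C$; instead one works inside $\tilde B(\sigma)^{\circ}$ using \eqref{tangentspaceofbn} and \eqref{structureofbn}, and I expect that once the weight $1$ sub-structure is fixed the transversality condition becomes equivalent to the classical (vacuous) one on $H_2$, restoring the availability of polyhedral-vs-Siegel cofinality. Granting these two checks, Theorem \ref{maintheoremCY3} follows by the same finite-subdivision-plus-globalization argument as Theorem \ref{mainthmclassical}.
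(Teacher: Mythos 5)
Your type I reduction matches the paper's (split $H_{\mathbb C}=H_1\oplus H_2$ with $H_2=H^{2,2}\oplus H^{1,1}$, transport the triple to a weight $1$ Hodge--Tate triple on $(H_2,\tilde Q)$, then quote Proposition \ref{finiteintersectionclassical}), and your global outline (finiteness of double cosets, then the Section 7 subdivision) is also the paper's. The genuine gap is in your type IV step. First, the weight $1$ structure one actually gets there is not Hodge--Tate: the paper passes to the \emph{quotient} $\tilde H=W(\sigma)_1/W(\sigma)_{-2}$ as in \eqref{quotientvectorspaceH}, and $(W(\tilde\sigma)[-1],\tilde F_i^{\bullet},\tilde\sigma)$ is a classical weight $1$ LMHS of type $\mathrm I_a$ with $a$ arbitrary (Proposition \ref{typeIVquotient}), so one needs the full weight $1$ result of Section 4.3, not Section 4.2 run "verbatim" on an extracted Hodge--Tate strand. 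Second, and more seriously, the verification you single out as the thing to check --- that $\mathfrak g^{-1,-1}$ acts \emph{faithfully} on the weight $1$ piece so that no information about $\sigma\cap\mathrm{Ad}_{\gamma_i}\tau$ is lost --- is simply false for type IV: the kernel of the projection \eqref{projectionoftypeIVcones} contains every $N\in\mathfrak g^{-1,-1}$ whose only nonzero components are $H^{3,3}\to H^{2,2}$ and $H^{1,1}\to H^{0,0}$, and the paper explicitly warns that $\sigma\cap\mathrm{Ad}_{\gamma_i}\tau$ and $\tilde\sigma\cap\mathrm{Ad}_{\tilde\gamma_i}\tilde\tau$ may have different topologies. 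So your strategy, as stated, cannot produce Proposition \ref{finiteintersectionCY3} for type IV pairs: the hypothesis you plan to "confirm" does not hold.

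What the paper supplies instead, and what your proposal is missing, is Lemma \ref{typeIVtricklemma}: if $\sigma,\tau\in\Sigma_{\mathrm{IV}}$ satisfy $\tilde B(\sigma,\Phi)\cap\tilde B(\tau,\Phi)\neq\emptyset$, $\sigma\cap\tau\neq\emptyset$ and $\tilde\sigma=\tilde\tau$, then $\sigma=\tau$. Its proof is a short commutator/polarization argument: if $N_1\in\sigma\setminus\tau$ and $N_2\in\tau$ had $\tilde N_1=\tilde N_2$, then $N_1-N_2\neq0$ annihilates $\tilde H$, and choosing $N_3\in\sigma\cap\tau$ (type IV, so $N_3^3\neq0$) and $0\neq\mathbf v\in H^{3,3}$ one gets $(N_1-N_2)N_3\mathbf v=0$ while $N_3(N_1-N_2)\mathbf v\neq0$, contradicting $[N_1-N_2,N_3]=0$. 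With this lemma the subdivision of $\sigma$ is obtained by pulling back the weight $1$ subdivision of $\tilde\sigma$ through the non-injective projection, and the information loss is harmless; without it (or a substitute) your reduction does not close. Two smaller points: the "interaction" you worry about, with $\sigma$ of type IV against $\tau$ of type I, cannot occur inside condition (ii''), since $\sigma\cap\tau\neq\emptyset$ for relatively open cones forces $W(\sigma)=W(\tau)$ and hence the same LMHS type at a common $F^{\bullet}$; and the normalization $F_i^{\bullet}=F_0^{\bullet}$ is proved (and used) only in the type I case (Proposition \ref{uniformizationofbasepointtypeI}) --- for type IV the paper keeps the $F_i^{\bullet}$ and normalizes only after passing to $\tilde H$, which also dissolves your concern about Griffiths transversality obstructing the analogue of Proposition \ref{uniformizationofbasepoint}.
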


This is an analog of Theorem \ref{mainthmclassical}, and we should proceed in similar ways. Assume $\sigma, \tau\in \Sigma$ are two monodromy nilpotent cones. Suppose there exists a countable sequence $\{\gamma_i\}_{i\geq 0}\in \Gamma$ such that there exists $0\neq N_i\in \sigma\cap \mathrm{Ad}_{\gamma_i}\tau$, and there exists $F_i^{\bullet}\in \check{D}$, such that 
\begin{equation}\label{busterCY3}
    (\sigma, F_i^{\bullet}), (\mathrm{Ad}_{\gamma_i}\tau, F_i^{\bullet}), (N_i, F_i^{\bullet})
\end{equation}
are all nilpotent orbits of type $\Phi$. Again we assume $\gamma_0=\mathrm{Id}$. We should check the two non-trivial LMHS types in $\Phi$ separately in order to prove the following analog of Proposition \ref{finiteintersectionclassical}:

\begin{prop}\label{finiteintersectionCY3}
For any triple \eqref{busterCY3} there exists a finite polyhedral decomposition $\Sigma(\sigma)$ of $\sigma$ such that $\sigma \cap \mathrm{Ad}_{\gamma_i}\tau$ is the union of some $\{\sigma_k\}\subset \Sigma(\sigma)$ for any $i$.
\end{prop}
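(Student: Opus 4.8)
The plan is to mimic the strategy of the classical weight $1$ case: reduce the problem, case by case according to the LMHS type in $\Phi$, to a situation where the classical machinery (Siegel sets, finiteness of double cosets, cofinality of polyhedral cones in homogeneous self-adjoint cones) applies. First I would invoke the analog of Proposition \ref{uniformizationofbasepoint}: since $\sigma$ and all $\mathrm{Ad}_{\gamma_i}\tau$ share the same Jacobson--Morozov weight filtration $W(\sigma)$ (by Theorem \ref{nilpconeweightfiltration}), the local transitivity of $Z(\sigma)$ on $\tilde{B}(\sigma)$ together with \eqref{structureofbn} and \eqref{tangentspaceofbn} should let me normalize all $F_i^{\bullet}$ to a single $F_0^{\bullet}$, at the cost of replacing $\gamma_i$ by $g_i\gamma_i$ with $g_i\in Z(\sigma)$ (which does not affect the relevant double cosets, as in the discussion following Proposition \ref{finiteintersectionclassical}). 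After this normalization, $\sigma,\mathrm{Ad}_{\gamma_i}\tau\subset \mathfrak{g}^{-1,-1}$ relative to the Hodge--Deligne splitting of $(W(\sigma)[-3],F_0^{\bullet},\sigma)$.

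Next I would split into the two non-classical LMHS types. For \textbf{type IV} ($N^3\neq 0$, $N^4=0$), the LMHS has $h^{p,q}$ supported on a single $2\times 2$ "Hodge--Tate-like" block, so $\mathfrak{g}^{-1,-1}$ carries a homogeneous self-adjoint cone $\mathcal{C}$ of Hodge-Tate type exactly as in Section \ref{sectionHTdegenerationclassical}; here the argument of Proposition \ref{openconvexcone} and the ensuing Siegel-set argument (using \cite[Thm. 1.5]{BKT20} and the basic Siegel properties \cite[Prop. III.2.19]{BJ06}) transfer essentially verbatim, yielding finiteness of $\{\gamma_i\}$ modulo $Z(\sigma)$ and hence the desired finite polyhedral decomposition. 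For \textbf{type I} ($N\neq 0$, $N^2=0$, $h^{3,0}=1$), the situation is the genuine non-classical one: the LMHS $(W(\sigma)[-3],F_0^{\bullet},\sigma)$ has a $\mathbb{Z}$-Hodge structure of type $(1,\ast,\ast,1)$ on $\mathrm{Gr}^W$ with a weight $1$ piece appearing on $\mathrm{Gr}^W_4$ and $\mathrm{Gr}^W_2$. Here I would imitate Section \ref{sectionallclassicaldegenerations}: write $H_{\mathbb{C}}=H_1\oplus H_2$ where $H_2$ collects the weight pieces on which $\mathfrak{g}^{-1,-1}$ acts nontrivially, show (as in Proposition \ref{integralstructure}) that $H_2$ inherits an integral symplectic structure and that $(W(\sigma)[-3]|_{H_2},\tilde F_0^{\bullet},\sigma|_{H_2})$ is a Hodge--Tate type LMHS for a \emph{weight $1$} period domain $D_{(b,b)}$, and that the topology of $\sigma\cap\mathrm{Ad}_{\gamma_i}\tau$ is identified with that of $\tilde\sigma\cap\mathrm{Ad}_{\tilde\gamma_i^l}\tilde\tau$ on $H_2$. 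Crucially, the Griffiths transversality condition must be checked to be vacuous on this block once the $F^{3,0}$-line is fixed — this is what makes the reduction to the \emph{classical} weight $1$ Hodge-Tate case legitimate, and is the analog of the observation at the start of the proof of Proposition \ref{openconvexcone} that transversality is trivial classically. Then Proposition \ref{finiteintersectionclassical} (already proved for the weight $1$ Hodge-Tate case) finishes type I.

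Finally, I would combine the two cases: for a given triple \eqref{busterCY3}, one of the two types occurs (the pure case being trivial), so in either case I obtain a finite polyhedral decomposition $\Sigma(\sigma)$ of $\sigma$ with $\sigma\cap\mathrm{Ad}_{\gamma_i}\tau$ a union of cells for all $i$, which is exactly the statement of Proposition \ref{finiteintersectionCY3}.

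I expect the \textbf{main obstacle} to be the type I reduction: verifying that, on the $H_2$-block, the full data of a type I nilpotent orbit for the weight $3$ domain descends correctly to a \emph{classical} weight $1$ Hodge-Tate datum — in particular that the polarization form restricts to a nondegenerate symplectic form with the right integral structure (the analog of Proposition \ref{integralstructure} may need the $h^{3,0}=1$ line handled separately, since it contributes to $H_1$ rather than $H_2$), and that no nontrivial transversality constraint survives on $H_2$ to obstruct the openness of the relevant cone in $\mathfrak{g}^{-1,-1}\cap\tilde{\mathfrak g}^{-1,-1}$. A secondary subtlety is ensuring that passing to the arithmetic subgroup needed to realize these integral structures (as in the weight $1$ case) does not disturb the statement, which is harmless since Theorem \ref{maintheoremCY3} already allows subdividing and the finiteness conclusions are insensitive to commensuration.
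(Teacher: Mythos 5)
Your treatment of the type I case is essentially the paper's argument: split $H=H_1\oplus H_2$ with $H_2=H^{2,2}\oplus H^{1,1}$, give $H_2$ an integral symplectic structure, observe that the induced datum is a weight $1$ Hodge--Tate LMHS on a domain $D_{(a,a)}$, identify the topology of $\sigma\cap\mathrm{Ad}_{\gamma_i}\tau$ with that of $\tilde\sigma\cap\mathrm{Ad}_{\tilde\gamma_i^l}\tilde\tau$, and quote Proposition \ref{finiteintersectionclassical}. That part is fine.

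The type IV case contains a genuine gap. A type $\mathrm{IV}_a$ LMHS is \emph{not} supported on a single $2\times 2$ Hodge--Tate-like block (see Figure \ref{typeIVlmhs}: the Deligne bigrading occupies $(0,0),(1,1),(2,2),(3,3)$ together with $(1,2),(2,1)$), and this is precisely the genuinely non-classical degeneration. Consequently the Hodge--Tate machinery of Section \ref{sectionHTdegenerationclassical} does not transfer ``essentially verbatim'': the set of type-IV-polarizing nilpotents in $\mathfrak{g}^{-1,-1}$ is not a homogeneous self-adjoint cone in the sense of \cite[Chap.~2]{AMRT10} (that theory lives on Hermitian symmetric data, which $D_{(1,h,h,1)}$ is not), and $\mathfrak{g}^{-1,-1}$ is not even abelian here since $[\mathfrak{g}^{-1,-1},\mathfrak{g}^{-1,-1}]\subset\mathfrak{g}^{-2,-2}\neq 0$ when $N^3\neq 0$; so both Proposition \ref{openconvexcone} and the integral translation trick (choosing $\mu_i\in\exp(\mathfrak{g}^{-1,-1})\cap G_{\mathbb{Z}}$ by taking floors of real parts) break down. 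Your initial normalization step is also unavailable in this case: the argument of Proposition \ref{uniformizationofbasepointHT} uses that $\tilde B(\sigma)$, $\tilde B(\mathrm{Ad}_{\gamma_i}\tau)$ and $\tilde B(N_i)$ have the same tangent space $\mathfrak{g}^{-1,-1}$ at $F_i^\bullet$, which fails for type IV, and indeed the paper never normalizes $F_i^\bullet$ there.

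What the paper actually does for type IV is different in kind: it passes to the \emph{quotient} $\tilde H=W(\sigma)_1/W(\sigma)_{-2}$, on which the induced triples $(\tilde\sigma,\tilde F_i^\bullet)$, $(\mathrm{Ad}_{\tilde\gamma_i}\tilde\tau,\tilde F_i^\bullet)$, $(\tilde N_i,\tilde F_i^\bullet)$ are classical weight $1$ nilpotent orbits of general type $\mathrm{I}_a$ (not Hodge--Tate), applies Proposition \ref{finiteintersectionclassical} there, and defines $\Sigma(\sigma)$ as the pullback of the resulting subdivision $\Sigma(\tilde\sigma)$ under the projection \eqref{projectionoftypeIVcones}. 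Because this projection kills the components of $N$ hitting the one-dimensional pieces $W(\sigma)_{-2}$ and $H/W(\sigma)_1$, the topologies of $\sigma\cap\mathrm{Ad}_{\gamma_i}\tau$ and $\tilde\sigma\cap\mathrm{Ad}_{\tilde\gamma_i}\tilde\tau$ may differ, and one needs the additional Lemma \ref{typeIVtricklemma} (using a vector $\mathbf{v}\in H^{3,3}$ and commutativity with some $N_3\in\sigma\cap\tau$) to conclude that cones with equal projections and a common polarizing $F^\bullet$ coincide. Your proposal has no substitute for this step, so as written the type IV half of the proposition is not proved.
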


\subsection{The type I case}\label{type1}

The type $\mathrm{I}_a \ (0\leq a\leq h)$ LMHS for the $(1,h,h,1)$ is characterized by Hodge-Deligne diagrams in Figure \ref{typeonelmhs}.\\

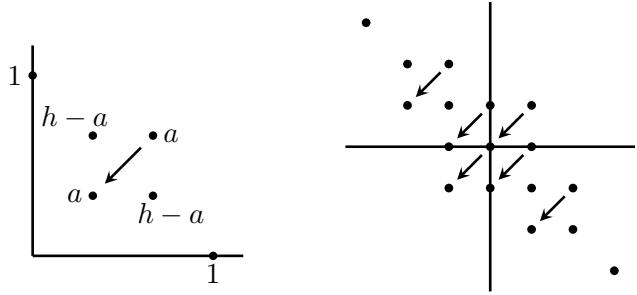
\begin{figure}\label{typeonelmhs}
\centering
    
    \begin{tikzpicture}[scale=0.8]
    \draw[-,line width=1.0pt] (0,0) -- (0,3.5);
    \draw[-,line width=1.0pt] (0,0) -- (3.5,0);
    \fill (0,3) circle (2pt);
    \node at (-0.3,3) {$1$};
    
    \fill (1,1) circle (2pt);
    \node at (0.7,1) {$a$};

     \fill (1,2) circle (2pt);
    \node at (0.7,2.3) {$h-a$};
    
    \fill (3,0) circle (2pt);
    \node at (3,-0.3) {$1$};
    
    \fill (2,1) circle (2pt);
    \node at (2.3,0.7) {$h-a$};
    
    \fill (2,2) circle (2pt);
    \node at (2.3,2) {$a$};
    
    \draw [-stealth](1.8,1.8) -- (1.2,1.2) [line width = 1.0pt];

    \end{tikzpicture}
    $
    \mspace{50mu}
    $
    \begin{tikzpicture}[scale=0.55]
    \draw[-,line width=1.0pt] (-3.5,0) -- (3.5,0);
    \draw[-,line width=1.0pt] (0,-3.5) -- (0,3.5);
    
    \fill (0,0) circle (3pt);
    \fill (1,0) circle (3pt);
    \fill (1,1) circle (3pt);
    \fill (1,-1) circle (3pt);
    \fill (0,1) circle (3pt);
    \fill (0,-1) circle (3pt);
    \fill (-1,0) circle (3pt);
    \fill (-1,1) circle (3pt);
    \fill (-1,-1) circle (3pt);
    
    \fill (2,-1) circle (3pt);
    \fill (1,-2) circle (3pt);
    \fill (2,-2) circle (3pt);
    \fill (3,-3) circle (3pt);
    
    \fill (-2,1) circle (3pt);
    \fill (-2,2) circle (3pt);
    \fill (-1,2) circle (3pt);
    \fill (-3,3) circle (3pt);
    
    \draw [-stealth](-1.2,1.8) -- (-1.8, 1.2) [line width = 1.0pt];
    \draw [-stealth](1.8,-1.2) -- (1.2, -1.8) [line width = 1.0pt];
    
    \draw [-stealth](-0.2,0.8) -- (-0.8,0.2) [line width = 1.0pt];
    \draw [-stealth](0.8,-0.2) -- (0.2,-0.8) [line width = 1.0pt];
    
    \draw [-stealth](0.8,0.8) -- (0.2,0.2) [line width = 1.0pt];
    \draw [-stealth](-0.2,-0.2) -- (-0.8,-0.8) [line width = 1.0pt];
   
    \end{tikzpicture}
    \caption{Type $\mathrm{I}_a$ LMHS for $(1,h,h,1)$}
\end{figure}
    
Suppose $(W(N)[-3], F^{\bullet}, N)$ where $N\in \mathfrak{g}_{\mathbb{Q}}$, $F^{\bullet}\in \check{D}$ gives a type I LMHS. Applying \eqref{tangentspaceofbn} and \eqref{structureofbn} on type I LMHS, we have 
\begin{equation}
(P_N)_{\mathbb{C}}\backslash \mathrm{Stab}_{F^{\bullet}}\cong Z(N)\backslash \mathrm{Stab}_{F^{\bullet}}
\end{equation}
which implies for any monodromy nilpotent cones $\sigma$ and $\tau$, if $W(\sigma)=W(\tau)$ and $\tilde{B}(\sigma, \Phi)\cap \tilde{B}(\tau, \Phi)\neq \emptyset$, then $\tilde{B}(\sigma, \Phi)=\tilde{B}(\tau, \Phi)$. 

Let $\Sigma_{\mathrm{I}}\subset \Sigma$ be the subcollection containing all nilpotent cones which could polarize type I LMHS. Now regarding the triples in \ref{busterCY3}, we consider the case when $\sigma, \tau\in \Sigma_{\mathrm{I}}$ and all of 
\begin{center}
    $(\sigma, F_i^{\bullet}), (\mathrm{Ad}(\gamma_i)\tau, F_i^{\bullet}), (N_i, F_i^{\bullet})$
\end{center}
are nilpotent orbits with type I LMHS. Indeed, we have an analog of Proposition \ref{uniformizationofbasepointHT} for type I degenerations for which the proof is similar:
\begin{prop}\label{uniformizationofbasepointtypeI}
We can choose all $F_i^{\bullet}=F_0^{\bullet}$. In other words,
\begin{center}\label{normalizedbusterone}
    $(\sigma, F_0^{\bullet}), (\mathrm{Ad}_{\gamma_i}\tau, F_0^{\bullet}), (N_i, F_0^{\bullet})$
    \end{center}
    are all nilpotent orbits with type I LMHS.
\end{prop}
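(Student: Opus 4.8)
The plan is to mimic the proof of Proposition \ref{uniformizationofbasepointHT}, the only new input being the geometry of $\tilde{B}(\sigma,\Phi)$ for type I LMHS that was recorded just above the statement. The starting observation is that by Theorem \ref{nilpconeweightfiltration}, since each of $(\sigma,F_i^\bullet)$, $(\mathrm{Ad}_{\gamma_i}\tau,F_i^\bullet)$ and $(N_i,F_i^\bullet)$ is a nilpotent orbit and $0\neq N_i\in \sigma\cap\mathrm{Ad}_{\gamma_i}\tau$, the three weight filtrations $W(\sigma)$, $W(\mathrm{Ad}_{\gamma_i}\tau)$ and $W(N_i)$ all coincide; call this common filtration $W$. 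In particular $\gamma_i\in P_W(\mathbb{Q})$, and all three cones lie in $\mathfrak{g}^{-1,-1}$ with respect to the Deligne splitting of any LMHS in $\tilde{B}(\sigma,\Phi)$ (the Hodge–Deligne diagram of a type I LMHS, Figure \ref{typeonelmhs}, shows the nilpotent operators move everything by $(-1,-1)$).

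First I would fix $i$ and note that $F_0^\bullet\in\tilde B(\sigma,\Phi)$ by the case $\gamma_0=\mathrm{Id}$, while $F_i^\bullet\in\tilde B(\sigma,\Phi)\cap\tilde B(\mathrm{Ad}_{\gamma_i}\tau,\Phi)\cap\tilde B(N_i,\Phi)$. Since $W(\sigma)=W(\mathrm{Ad}_{\gamma_i}\tau)=W(N_i)=W$ and all three nilpotent orbits produce type I LMHS, the identity $(P_N)_{\mathbb C}\backslash\mathrm{Stab}_{F^\bullet}\cong Z(N)\backslash\mathrm{Stab}_{F^\bullet}$ displayed above — together with \eqref{structureofbn} applied to each of $\sigma$, $\mathrm{Ad}_{\gamma_i}\tau$, $N_i$ — shows that their $\tilde B(\cdot,\Phi)^\circ$ components through $F_i^\bullet$ all coincide and are $Z(\sigma)$-orbits (equivalently $Z(N_i)$-orbits, equivalently $Z(\mathrm{Ad}_{\gamma_i}\tau)$-orbits). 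Hence $F_0^\bullet$ and $F_i^\bullet$, lying in the same connected component of the same type, differ by an element $g_i\in Z(\sigma)\cap Z(\mathrm{Ad}_{\gamma_i}\tau)\cap Z(N_i)$ with $F_0^\bullet=g_iF_i^\bullet$. Because $g_i$ centralizes all three cones, replacing $F_i^\bullet$ by $F_0^\bullet=g_iF_i^\bullet$ preserves the Griffiths transversality and positivity conditions in Definition \ref{definitionnilporbit} for each of $\sigma$, $\mathrm{Ad}_{\gamma_i}\tau$ and $N_i$, and preserves the LMHS type; so the new triple $(\sigma,F_0^\bullet),(\mathrm{Ad}_{\gamma_i}\tau,F_0^\bullet),(N_i,F_0^\bullet)$ still consists of type I nilpotent orbits. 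Doing this for every $i$ completes the proof.

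There is one subtlety I should address carefully, which is why this is slightly more than a verbatim copy of Proposition \ref{uniformizationofbasepointHT}: unlike the Hodge–Tate case, $\tilde B(\sigma,\Phi)$ need not be connected, so a priori $F_0^\bullet$ and $F_i^\bullet$ could sit in different $Z(\sigma)$-orbits even though both are type I. The point is that the different $Z(\sigma)$-orbits in $\tilde B(\sigma)$ are distinguished precisely by the LMHS type (as recorded in Section 3, following \cite[Sec. 1.4]{KP16}), and having restricted to type $\Phi$ — indeed here to the single type I — we are in a single $Z(\sigma)$-orbit; this is exactly the content of "$\tilde B(\sigma,\Phi)\cap\tilde B(\tau,\Phi)\neq\emptyset\Rightarrow\tilde B(\sigma,\Phi)=\tilde B(\tau,\Phi)$" observed just before the statement. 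The main (mild) obstacle is therefore bookkeeping: verifying that the common-orbit element $g_i$ really can be taken to centralize all three cones simultaneously — this uses that $Z(\sigma)$, $Z(\mathrm{Ad}_{\gamma_i}\tau)$, $Z(N_i)$ all contain the relevant stabilizer direction $\tilde{\mathfrak g}$-type piece by \eqref{tangentspaceofbn}, \eqref{structureofbn} — and that the replacement does not disturb positivity, which follows because $g_i$ fixes the cones pointwise so $\exp(\sum z_jN_j)F_0^\bullet=g_i\exp(\sum z_jN_j)F_i^\bullet$ lands in $D$ exactly when the $F_i^\bullet$ version does.
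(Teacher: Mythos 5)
Your proposal is correct and follows essentially the route the paper intends: the paper gives no separate proof, declaring the statement an analog of Proposition \ref{uniformizationofbasepointHT}, and you reproduce exactly that argument (common weight filtration via Theorem \ref{nilpconeweightfiltration}, the orbit description \eqref{structureofbn} to produce $g_i$ with $F_0^{\bullet}=g_iF_i^{\bullet}$, and the replacement preserving transversality, positivity and type), supplemented by the type-I observation $(P_N)_{\mathbb{C}}\backslash \mathrm{Stab}_{F^{\bullet}}\cong Z(N)\backslash \mathrm{Stab}_{F^{\bullet}}$ recorded just before the statement to rule out $F_0^{\bullet}$ and $F_i^{\bullet}$ lying in different $Z(\sigma)$-orbits. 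This matches the paper's approach, so no further changes are needed.
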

Consider the LMHS $(W(\sigma)[-3], F_0^{\bullet}, \sigma)$. The corresponding Deligne splittings are denoted as:
\begin{equation}
    H_{\mathbb{C}}=\oplus_{p,q}H^{p,q}
\end{equation}
and
\begin{equation}
    \mathfrak{g}_{\mathbb{C}}=\oplus_{p,q}\mathfrak{g}^{p,q}.
\end{equation}
with $\sigma$, $\mathrm{Ad}_{\gamma_i}\tau\subset \mathfrak{g}^{-1,-1}$. Consider the splitting:
\begin{equation}
    H_{\mathbb{C}}=H_1\oplus H_2
\end{equation}
where
\begin{align}
    H_1&:=H^{3,0}\oplus H^{2,1}\oplus H^{1,2}\oplus H^{0,3}\\
    \nonumber H_2&:=H^{2,2}\oplus H^{1,1}.
\end{align}
Clearly $H_2$ is defined over $\mathbb{R}$ and $N$ only acts on $H_2$ non-trivially, we denote $\bar{N}\in \mathrm{End}(H_2)$ as an nilpotent endomorphism of $H_2$.

Applying methods in Section \ref{sectionallclassicaldegenerations} in a similar way, let
\begin{equation}\label{intbasis}
\mathcal{B}:=\{e_1,...,e_a,f_0,...,f_{h-a},f^{h-a},...,f^0,e^a,...,e^1\}
\end{equation}
be an integral basis of $(H_{\mathbb{Q}}, Q)$ such that:
\begin{align}
    &W(\sigma)_{-1}\cong \mathrm{span}\{e_1,...,e_a\};\\
   \nonumber &W(\sigma)_0\cong \mathrm{span}\{e_1,...,e_a,f_0,f^0,...,f_{h-a},f^{h-a}\};\\
   \nonumber &W(\sigma)_1\cong H_{\mathbb{Q}}.
\end{align}
and the (alternating) polarization form $Q$ satisfies:
\begin{align}
    &Q(e^i,e_j)=\delta^i_j, 1\leq i,j\leq a\\
   \nonumber &Q(f^i,f_j)=\delta^i_j, 0\leq i,j\leq h-a.
\end{align}
and $Q(\alpha, \beta)=0$ for other cases when $\alpha, \beta\in \mathcal{B}$.

Writing $\tilde{Q}:=Q|_{H_2}$. We also consider the induced Hodge filtration of $F_0^{\bullet}$ on $H_2$ as:
\begin{align}
    &\tilde{F}_0^3:=\{0\},\\
   \nonumber &\tilde{F}_0^2:=\mathrm{img}\{H^{2,2}\hookrightarrow H_2\},\\
   \nonumber &\tilde{F}_0^1=\tilde{F}_0^0:=H_{2, \mathbb{C}}.
\end{align}

Apply the same procedures in Section \ref{sectionallclassicaldegenerations}, we obtain similar statements as Proposition \ref{integralstructure} and Proposition \ref{inducedlmhsclassical} but for the weight $3$ type I degeneration:
\begin{prop}
    There exists a basis of $H_{2,\mathbb{R}}$ under which $\tilde{Q}$ has the standard form \eqref{inducedformforclassical}, and under this basis $(H_2, \tilde{Q})$ can be regarded as an integral symplectic lattice.
\end{prop}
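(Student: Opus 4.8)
The plan is to mimic the construction carried out in Section \ref{sectionallclassicaldegenerations} for the classical weight $1$ case, using the splitting $H_{\mathbb{C}}=H_1\oplus H_2$ with $H_2=H^{2,2}\oplus H^{1,1}$. First I would note that, by the Hodge-Deligne diagram of a type $\mathrm{I}_a$ LMHS (Figure \ref{typeonelmhs}), the only nonzero Hodge pieces lying in $W(\sigma)_0/W(\sigma)_{-2}$ together with those in $W(\sigma)_{-1}$ that pair nontrivially under $Q$ are $H^{2,2}$ and $H^{1,1}$; moreover $N$ acts as zero on $H_1=H^{3,0}\oplus H^{2,1}\oplus H^{1,2}\oplus H^{0,3}$ since $N^2=0$ forces the single arrow $H^{2,2}\to H^{1,1}$ to be the only one, so $\bar N\in \operatorname{End}(H_2)$ captures all of $\sigma$. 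Working in the integral basis \eqref{intbasis}, the span $H_2':=\operatorname{span}_{\mathbb{Z}}\{e_1,\dots,e_a,e^a,\dots,e^1\}$ carries $Q|_{H_2'}$, and I would produce an explicit $\tilde Q$-isotropic basis $\alpha_i=e^i+g_i+\sum_j z_{ij}e_j$ of $H^{2,2}\hookrightarrow H_2$ exactly as in \eqref{standardbasisH2}, with $g_i\in\operatorname{span}_{\mathbb{R}}\{f_k,f^k\}$.

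Next I would repeat verbatim the computation of Proposition \ref{integralstructure}: set $\beta_i:=\tfrac{\alpha_i+\bar\alpha_i}{2}$ and check that $\mathcal{E}:=\{\beta_i,e_j\}_{1\le i,j\le a}$ is a real basis of $H_{2,\mathbb{R}}$ with $\tilde Q(e^i,\beta_j)=\delta^i_j$ and $\tilde Q(\beta_i,\beta_j)=0$. The isotropy identity is the only nontrivial point and it follows from $0=\tilde Q(\alpha_i,\alpha_j)=z_{ij}-z_{ji}+\tilde Q(g_i,g_j)$, which shows $z_{ij}-z_{ji}$ is real, whence replacing the $z_{ij}$ by their real parts does not change the vanishing — precisely the chain of equalities in the proof of Proposition \ref{integralstructure}. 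Then $\mathbb{Z}\mathcal{E}$ realizes $((H_2)_{\mathbb{Z}},\tilde Q)$ as an integral symplectic lattice of rank $2a$, and in the ordered basis $\mathcal{E}$ the form $\tilde Q$ has the standard block shape \eqref{inducedformforclassical}, namely $\begin{pmatrix} & E_a\\ -E_a & \end{pmatrix}$. The natural map $e_i\mapsto e_i$, $e^i\mapsto \beta_i$ gives an isomorphism of integral symplectic lattices $(H_2',Q|_{H_2'})\xrightarrow{\sim}((H_2)_{\mathbb{Z}},\tilde Q)$ as in \eqref{identificationintegrallattice}, so the claimed integral structure exists.

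I do not expect a genuine obstacle here: the statement is a direct transcription of the weight $1$ construction, the key point being that for type I degenerations the ``interesting'' part of the Hodge structure and of the nilpotent cone lives entirely on the two-row piece $H_2$, which looks exactly like a weight $1$ situation of rank $2a$. The only thing requiring care is the realness/isotropy bookkeeping for $\mathcal{E}$, and that is handled word-for-word by Proposition \ref{integralstructure}. In the same breath one records the induced Hodge filtration $\tilde F_0^{\bullet}$ (with $\tilde F_0^2=\operatorname{img}\{H^{2,2}\hookrightarrow H_2\}$) and the induced weight filtration $W(\tilde\sigma)$ with $W(\tilde\sigma)_{-1}=W(\tilde\sigma)_0=\operatorname{span}_{\mathbb{Q}}\{e_i\}$, which together give a Hodge-Tate LMHS on $((H_2)_{\mathbb{Z}},\tilde Q)$ — the analog of Proposition \ref{inducedlmhsclassical} — reducing the type I case of Proposition \ref{finiteintersectionCY3} to the already-established Hodge-Tate analysis of Section \ref{sectionHTdegenerationclassical}.
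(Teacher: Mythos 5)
Your proposal is correct and follows essentially the same route as the paper, which at this point simply invokes the procedure of Section \ref{sectionallclassicaldegenerations}: choose the isotropic basis $\alpha_i=e^i+g_i+\sum_j z_{ij}e_j$ of $H^{2,2}$, pass to $\beta_i=\tfrac{\alpha_i+\bar\alpha_i}{2}$, and run the realness/isotropy computation of Proposition \ref{integralstructure} verbatim to obtain the lattice $\mathbb{Z}\mathcal{E}$ with $\tilde{Q}$ in the standard form \eqref{inducedformforclassical}. Your preliminary observation that $N$ acts nontrivially only on $H_2$ (the single arrow $H^{2,2}\to H^{1,1}$ in the type $\mathrm{I}_a$ diagram) is exactly the reduction the paper uses, so there is no gap.
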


\begin{prop}
     By setting the compact dual $\check{D}_{(a,a)}$ of the period domain $D_{(a,a)}$ parametrizing weight $1$ polarized $\mathbb{Z}$-Hodge structures on $((H_2)_{\mathbb{Z}}, \tilde{Q})$, we can identify $\tilde{F}_0^{\bullet}$ as an element in $\check{D}_{(a,a)}$, and $(W(\sigma)[-3]|_{H_2}, \tilde{F}_0^{\bullet},  \sigma|_{H_2})$ gives a limiting mixed Hodge structure on $H_2$ of Hodge-Tate type. 
\end{prop}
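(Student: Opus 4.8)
The plan is to carry out, for the weight $3$ type I degeneration, exactly the construction that produced Proposition \ref{integralstructure} and Proposition \ref{inducedlmhsclassical} in the classical setting, feeding in one extra piece of $sl_2$-representation theory attached to a type $\mathrm{I}_a$ limiting mixed Hodge structure. First I would record the linear algebra of $H_2 = H^{2,2}\oplus H^{1,1}$. Since $\sigma$ and all $\mathrm{Ad}_{\gamma_i}\tau$ lie in $\mathfrak{g}^{-1,-1}$ and the Levi factors $\gamma_i^l$ preserve the Deligne bigrading of $(W(\sigma)[-3], F_0^{\bullet}, \sigma)$, the subspace $H_2$ is stable under every operator appearing in \eqref{busterCY3}; moreover in the type $\mathrm{I}_a$ case $H^{3,3}=H^{0,0}=0$, so $\bar N := N|_{H_2}$ is well defined and $\bar N^2 = 0$. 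The Hodge--Deligne diagram of Figure \ref{typeonelmhs} forces $\dim_{\mathbb C} H^{2,2} = \dim_{\mathbb C} H^{1,1} = a$, and since the $sl_2$-strings then have length at most $2$, $N$ in the relative interior of $\sigma$ restricts to an isomorphism $\bar N\colon H^{2,2}\xrightarrow{\sim} H^{1,1}$ with $H^{1,1} = W(\sigma)_{-1}\otimes\mathbb C = \mathrm{span}_{\mathbb C}\{e_i\}$ in terms of the basis \eqref{intbasis}.

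Next I would establish the symplectic structure and the embedding into $\check D_{(a,a)}$. Orthogonality of the Deligne bigrading for $Q$ gives $Q(H^{2,2},H^{2,2}) = Q(H^{1,1},H^{1,1}) = 0$ while $Q$ pairs $H^{2,2}$ with $H^{1,1}$; since $H^{1,1} = \bar N(H^{2,2})$ and, by the level-one polarization condition of the LMHS, the form $Q_1(v,w) := Q(v,Nw)$ is a polarization of the pure type $(2,2)$ Hodge structure on $P_4 = H^{2,2}$ — in particular a definite symmetric form — it follows that $\tilde Q = Q|_{H_2}$ is a non-degenerate alternating form on the $2a$-dimensional space $H_2$. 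This is the content of the preceding proposition: the integral symplectic lattice $((H_2)_{\mathbb Z},\tilde Q)$ is produced exactly as in Proposition \ref{integralstructure}, replacing a $\tilde Q$-adapted basis of the isotropic space $H^{2,2}$ by the real parts of its vectors, so that $\tilde Q$ takes the shape \eqref{inducedformforclassical}. For the present statement, $\tilde F_0^2 = \mathrm{img}(H^{2,2}\hookrightarrow H_2)$ is then $\tilde Q$-isotropic of dimension $a = \tfrac12\dim_{\mathbb C}H_2$, hence Lagrangian, so $\tilde F_0^{\bullet}$ defines a point of the Lagrangian Grassmannian $\check D_{(a,a)}$.

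Finally I would check the three axioms making $(W(\sigma)[-3]|_{H_2}, \tilde F_0^{\bullet}, \sigma|_{H_2})$ a Hodge--Tate limiting mixed Hodge structure. The restriction of $W(\sigma)$ to $H_2$ is the filtration $W_{-1} = W_0 = \mathrm{span}_{\mathbb Q}\{e_i\}$, $W_1 = (H_2)_{\mathbb Q}$, and since $\bar N^2 = 0$ with $\mathrm{img}\,\bar N = \ker\bar N = H^{1,1}$, this is precisely the Jacobson--Morozov filtration of $\sigma|_{H_2}$; Griffiths transversality $\bar N(\tilde F_0^2) = H^{1,1}\subseteq \tilde F_0^1$ is immediate; and the one nontrivial axiom, positivity on the top primitive part $H^{2,2}$, reduces to definiteness of $\tilde Q_1(v,w) = \tilde Q(v,\bar N w) = Q(v,Nw) = Q_1(v,w)$, which is exactly the level-one positivity of the weight $3$ LMHS recorded above. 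The associated graded is then $\mathbb Q(-2)^{\oplus a}$ and $\mathbb Q(-1)^{\oplus a}$, i.e.\ the Hodge--Tate degeneration of Figure \ref{lmhsweight1} with $a=g$ after one Tate twist. I do not expect any hard estimate here; the main obstacle is bookkeeping — keeping straight the three weight normalizations (the weight $3$ LMHS, its restriction to $H_2$ carrying weights $2$ and $4$, and the weight $1$ picture needed to land in $\check D_{(a,a)}$) and verifying that the rational structure, the polarization and its positivity each descend through the restriction to $H_2$ — with the only genuinely Hodge-theoretic input being the extraction of the non-degeneracy of $\tilde Q$ and the definiteness of $\tilde Q_1$ from the $k=1$ polarization datum of the original limiting mixed Hodge structure.
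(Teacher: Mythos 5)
Your proposal is correct and follows essentially the same route as the paper, which establishes this proposition by carrying over the Section 4.3 construction (the splitting $H_{\mathbb{C}}=H_1\oplus H_2$, the real basis built from $H^{2,2}$ giving the standard symplectic form \eqref{inducedformforclassical}, and the matching of the restricted weight filtration with the Jacobson--Morozov filtration of $\sigma|_{H_2}$). Your use of the level-one polarization form $Q_1(v,w)=Q(v,Nw)$ to extract non-degeneracy of $\tilde{Q}$ and the positivity on $\mathrm{Gr}_4\cong H^{2,2}$ simply makes explicit what the paper leaves implicit in the analogy with Propositions \ref{integralstructure} and \ref{inducedlmhsclassical}.
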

Again we denote
\begin{equation}
    (H_2)_{\mathbb{C}}=\oplus_{p,q}H_2^{p,q}
\end{equation}
and
\begin{equation}
    \tilde{\mathfrak{g}}_{\mathbb{C}}=\oplus_{p,q}\tilde{\mathfrak{g}}^{p,q}.
\end{equation}
as the splittings for LMHS $(W(\sigma)[-3]|_{H_2}, \tilde{F}_0^{\bullet},  \sigma|_{H_2})$ and its adjoint.

As in the classical case, denote $\tilde{\sigma}, \tilde{\tau}$ as the image of $\sigma, \tau$ under $\mathfrak{g}^{-1,-1}\cong \tilde{\mathfrak{g}}^{-1,-1}$. This isomorphism identifies the topology of $\sigma\cap \mathrm{Ad}_{\gamma_i}\tau$ and $\tilde{\sigma}\cap \mathrm{Ad}_{\tilde{\gamma}_i^l}\tilde{\tau}$. Therefore, the condition of Proposition \ref{uniformizationofbasepointtypeI} implies $\tilde{\sigma}, \tilde{\tau}$ are two monodromy nilpotent cones, and $\{\tilde{\gamma}_i\}\subset  P_{W(\tilde{\sigma})}\cap \mathrm{Sp}(H_2, \mathbb{Z})$ such that there exists 
\begin{equation}
    0\neq \tilde{N_i}\in \tilde{\mathfrak{g}}^{-1,-1}\cong \mathfrak{g}^{-1,-1},
\end{equation}
and the triple: 
\begin{center}\label{inducedbusterone}
    $(\tilde{\sigma}, \tilde{F}_0^{\bullet}), (\mathrm{Ad}_{\tilde{\gamma}_i}\tilde{\tau}, \tilde{F}_0^{\bullet}), (\tilde{N_i}, \tilde{F}_0^{\bullet})$
\end{center}
are all nilpotent orbits with Hodge-Tate type LMHS. This completes the transition from the type I degeneration of $(1,h,h,1)$ to the Hodge-Tate degeneration of weight $1$ case, hence Proposition \ref{finiteintersectionclassical} implies Proposition \ref{finiteintersectionCY3} when $\sigma, \tau\in \Sigma_{\mathrm{I}}$.

\subsection{The type IV case}

Now we consider the type IV degeneration of $(1,h,h,1)$. Its Hodge-Deligne diagrams are shown in Figure \ref{typeIVlmhs}. In particular, when $a=h$, it gives the Hodge-Tate degeneration (of weight $3$ Calabi-Yau case).
\begin{figure}\label{typeIVlmhs}
\centering
    
    \begin{tikzpicture}[scale=0.8]
    \draw[-,line width=1.0pt] (0,0) -- (0,3.5);
    \draw[-,line width=1.0pt] (0,0) -- (3.5,0);
    \fill (0,0) circle (2pt);
    \node at (-0.3,-0.3) {$1$};
    
    \fill (1,1) circle (2pt);
    \node at (0.7,1) {$a$};

     \fill (1,2) circle (2pt);
    \node at (0.7,2.3) {$h-a$};
    
    \fill (3,3) circle (2pt);
    \node at (3.3,3.3) {$1$};
    
    \fill (2,1) circle (2pt);
    \node at (2.3,0.7) {$h-a$};
    
    \fill (2,2) circle (2pt);
    \node at (2.3,2) {$a$};
    
    \draw [-stealth](2.8,2.8) -- (2.2,2.2) [line width = 1.0pt];
    \draw [-stealth](1.8,1.8) -- (1.2,1.2) [line width = 1.0pt];
    \draw [-stealth](0.8,0.8) -- (0.2,0.2) [line width = 1.0pt];

    \end{tikzpicture}
    $
    \mspace{50mu}
    $
    \begin{tikzpicture}[scale=0.55]
    \draw[-,line width=1.0pt] (-3.5,0) -- (3.5,0);
    \draw[-,line width=1.0pt] (0,-3.5) -- (0,3.5);
    
    \fill (0,0) circle (3pt);
    \fill (1,0) circle (3pt);
    \fill (1,1) circle (3pt);
    \fill (1,-1) circle (3pt);
    \fill (0,1) circle (3pt);
    \fill (0,-1) circle (3pt);
    \fill (-1,0) circle (3pt);
    \fill (-1,1) circle (3pt);
    \fill (-1,-1) circle (3pt);
    
    \fill (2,1) circle (3pt);
    \fill (1,2) circle (3pt);
    \fill (2,2) circle (3pt);
    \fill (3,3) circle (3pt);
    
    \fill (-2,-1) circle (3pt);
    \fill (-2,-2) circle (3pt);
    \fill (-1,-2) circle (3pt);
    \fill (-3,-3) circle (3pt);
    
    \draw [-stealth](-2.2,-2.2) -- (-2.8, -2.8) [line width = 1.0pt];
    \draw [-stealth](2.8,2.8) -- (2.2, 2.2) [line width = 1.0pt];
    \draw [-stealth](1.8,1.8) -- (1.2, 1.2) [line width = 1.0pt];
    \draw [-stealth](-1.2,-1.2) -- (-1.8,-1.8) [line width = 1.0pt];
    \draw [-stealth](0.8,0.8) -- (0.2,0.2) [line width = 1.0pt];
    \draw [-stealth](-0.2,-0.2) -- (-0.8,-0.8) [line width = 1.0pt];
    
    \draw [-stealth](-0.2,0.8) -- (-0.8,0.2) [line width = 1.0pt];
    \draw [-stealth](0.8,1.8) -- (0.2,1.2) [line width = 1.0pt];
    \draw [-stealth](-1.2,-0.2) -- (-1.8,-0.8) [line width = 1.0pt];
    
    \draw [-stealth](1.8,0.8) -- (1.2,0.2) [line width = 1.0pt];
    \draw [-stealth](0.8,-0.2) -- (0.2,-0.8) [line width = 1.0pt];
    \draw [-stealth](-0.2,-1.2) -- (-0.8,-1.8) [line width = 1.0pt];
    
    \end{tikzpicture}
    \caption{Type $\mathrm{IV}_a$ LMHS for $(1,h,h,1)$}
\end{figure}
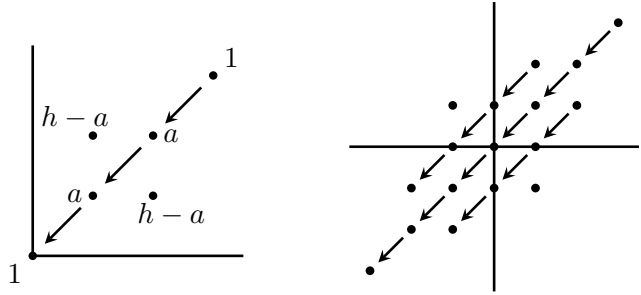

Again the corresponding Deligne splittings are denoted as:
\begin{equation}
    H_{\mathbb{C}}=\oplus_{p,q}H^{p,q}
\end{equation}
and
\begin{equation}
    \mathfrak{g}_{\mathbb{C}}=\oplus_{p,q}\mathfrak{g}^{p,q}.
\end{equation}

Let $\Sigma_{\mathrm{IV}}\subset \Sigma$ be the subcollection containing all nilpotent cones which could polarize type IV LMHS. Notice that $\Sigma_{\mathrm{IV}}$ is in general not closed under taking faces. Now regarding the triples in \eqref{busterCY3}, we consider the case when $\sigma, \tau\in \Sigma_{\mathrm{IV}}$ and all of 
\begin{equation}\label{bustertypeIV}
    (\sigma, F_i^{\bullet}), (\mathrm{Ad}_{\gamma_i}\tau, F_i^{\bullet}), (N_i, F_i^{\bullet})
\end{equation}
are nilpotent orbits with type IV LMHS. In this subsection we prove Proposition \ref{finiteintersectionCY3} when $\sigma, \tau\in \Sigma_{\mathrm{IV}}$.

If $\sigma\subset \mathfrak{g}_{\mathbb{Q}}$ is a monodromy nilpotent cone polarizing type IV LMHS, its Jacobson-Morozov weight filtration $W(\sigma)$ on $H_{\mathbb{Q}}$ can be described as follows. Let $(H_{\mathbb{Z}}, Q)$ be the underlying lattice. Take an integral basis $\mathcal{B}$ under which $Q$ has the same form as \eqref{polarizationformclassical}, where $\mathfrak{B}$ and $W(\sigma)$ are: 
\begin{equation}
\mathcal{B}:=\{\omega_-,e_1,...,e_a,f_1,...,f_{h-a},f^{h-a},...,f^1,e^a,...,e^1,\omega_+\},
\end{equation}
and
\begin{align}
    &W(\sigma)_3\cong H_{\mathbb{Q}}\\
   \nonumber W(\sigma)_2=&W(\sigma)_1\cong \mathrm{span}\{\omega_-,e_1,...,e_a,f_1,...,f_{h-a},f^{h-a},...,f^1,e^a,...,e^1\}\\
   \nonumber &W(\sigma)_0\cong \mathrm{span}\{\omega_-,e_1,...,e_a,f_1,...,f_{h-a},f^{h-a},...,f^1\}\\
   \nonumber &W(\sigma)_{-1}\cong \mathrm{span}\{\omega_-,e_1,...,e_a\}\\
   \nonumber &W(\sigma)_{-2}=W(\sigma)_{-3}\cong \mathrm{span}\{\omega_-\}.
\end{align}

Suppose for any two type IV monodromy nilpotent cones $\sigma, \tau$, there exists a (countable) sequence $\{\gamma_i\}_{i\geq 0}\subset \Gamma$ such that there exists $0\neq N_i\in \sigma\cap \mathrm{Ad}_{\gamma_i}\tau$ and $\{F_i^{\bullet}\}\subset \check{D}$, we have
\begin{center}\label{bustertype4}
    $(\sigma, F_i^{\bullet}), (\mathrm{Ad}_{\gamma_i}\tau, F_i^{\bullet}), (N_i, F_i^{\bullet})$
\end{center}
are all nilpotent orbits.

Denote 
\begin{equation}\label{quotientvectorspaceH}
    \tilde{H}:=W(\sigma)_1/W(\sigma)_{-2},
\end{equation}
that is,
\begin{equation}
    \tilde{H}\cong\mathrm{span}\{e_1,...,e_a,f_1,...,f_{h-a},f^{h-a},...,f^1,e^a,...,e^1\}
\end{equation}
Denote $\tilde{F}_i^{\bullet}$ as the filtration on $\tilde{H}$ induced by $F_i^{\bullet}$. It is straightforward to check if we regard $\check{D}_{(h,h)}$ as the compact dual of $D_{(h,h)}$ which parametrizes $\mathbb{Z}$-polarized Hodge structures on $(\tilde{H}, \tilde{Q}=Q|_{\tilde{H}})$, then $\tilde{F}_i^{\bullet}\in \check{D}_{(h,h)}$\footnote{More precisely, we are identifying $D_{(0,h,h,0)}$ and $D_{(h,h)}$ by forgetting two trivial factors and twisting by $\mathbb{Z}[1]$.}.

Denote 
\begin{equation}\label{projectionoftypeIVcones}
\tilde{\sigma}:= \mathrm{img}\{\sigma\subset \mathfrak{g}_{\mathbb{Q}}\cap \mathfrak{g}^{-1,-1}\rightarrow \mathrm{End}(\tilde{H}_{\mathbb{Q}})\}.
\end{equation}
It is well defined since $\sigma$ annihilates $W(\sigma)_{-2}$. We also denote $W(\tilde{\sigma})$ on $\tilde{H}_{\mathbb{Q}}$ by:
\begin{align}
    &W(\tilde{\sigma})_1\cong \mathrm{span}\{e_1,...,e_a,f_1,...,f_{h-a},f^{h-a},...,f^1,e^a,...,e^1\}\\
    &\nonumber W(\tilde{\sigma})_0\cong \mathrm{span}\{e_1,...,e_a,f_1,...,f_{h-a},f^{h-a},...,f^1\}\\
   &\nonumber W(\tilde{\sigma})_{-1}\cong \mathrm{span}\{e_1,...,e_a\}.
\end{align}
It is now clear that:
\begin{prop}\label{typeIVquotient}
    On $(\tilde{H}, \tilde{Q}=Q|_{\tilde{H}})$, $(W(\tilde{\sigma})[-1], \tilde{F}_i^{\bullet}, \tilde{\sigma})$ gives a limiting mixed Hodge structure (of classical weight $1$ type).
\end{prop}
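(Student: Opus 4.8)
The plan is to deduce the statement from the fact that $(W(\sigma)[-3], F_i^{\bullet}, N)$ is already a polarized mixed Hodge structure on $(H,Q)$ for every $N$ in the relative interior of $\sigma$ (Theorem \ref{nilporbittolmhs}), together with careful bookkeeping for the passage to the subquotient $\tilde H = W(\sigma)_1/W(\sigma)_{-2}$ and a twist by $\mathbb Z(1)$. Indeed, the type IV weight filtration has $\mathrm{Gr}^{W(\sigma)}_{\pm 2}=0$ and one-dimensional $\mathrm{Gr}^{W(\sigma)}_{\pm 3}=\mathbb Q\,\omega_{\pm}$, so passing to $\tilde H$ deletes exactly the top $N$-string $\mathrm{Gr}^{W(\sigma)}_3\xrightarrow{\ \sim\ }\mathrm{Gr}^{W(\sigma)}_{-3}$ and retains only $\mathrm{Gr}^{W(\sigma)}_1,\ \mathrm{Gr}^{W(\sigma)}_0,\ \mathrm{Gr}^{W(\sigma)}_{-1}$; after a Tate twist these become pure Hodge structures of weights $2,1,0$, which is precisely the shape of a classical weight $1$ LMHS.

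First I would check that the linear-algebraic data is well defined. Since $NW(\sigma)_k\subset W(\sigma)_{k-2}$ and $W(\sigma)_{-3}=W(\sigma)_{-2}$, the cone $\sigma$ preserves $W(\sigma)_1$ and annihilates $W(\sigma)_{-2}$; hence $\tilde\sigma$ of \eqref{projectionoftypeIVcones} and the induced filtration $\tilde F_i^{\bullet}$ are well defined on $\tilde H$. The form $\tilde Q:=Q|_{\tilde H}$ is alternating because $Q$ is; and the polarization orthogonality of the weight-$3$ LMHS, which follows from the perfect pairing $\mathrm{Gr}^{W(\sigma)}_j\times\mathrm{Gr}^{W(\sigma)}_{-j}$ induced by $Q$, reads $W(\sigma)_a^{\perp}=W(\sigma)_{-1-a}$, so with $a=1$ the radical of $Q|_{W(\sigma)_1}$ equals $W(\sigma)_1\cap W(\sigma)_{-2}=W(\sigma)_{-2}$. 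Thus $\tilde Q$ descends to a non-degenerate alternating form on $\tilde H$, carrying the standard symplectic Gram matrix in the basis of \eqref{quotientvectorspaceH}, so that $(\tilde H,\tilde Q)$ is a weight $1$ type symplectic lattice of rank $2h$.

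Next I would identify $W(\tilde\sigma)$ with the monodromy (Jacobson--Morozov) filtration of $\tilde N$, where $\tilde N$ is the image of some $N$ in the relative interior of $\sigma$ (so $W(\sigma)=W(N)$ by Theorem \ref{nilpconeweightfiltration}). From $N^2W(\sigma)_1\subset W(\sigma)_{-3}=W(\sigma)_{-2}$ one has $\tilde N^2=0$; and using $NW(\sigma)_k\subset W(\sigma)_{k-2}$ together with the hard Lefschetz isomorphism $N\colon\mathrm{Gr}^{W(\sigma)}_1\xrightarrow{\ \sim\ }\mathrm{Gr}^{W(\sigma)}_{-1}$ and $N^3\neq 0$ (type IV) one checks $NW(\sigma)_1=W(\sigma)_{-1}$ and $N^{-1}(W(\sigma)_{-2})\cap W(\sigma)_1=W(\sigma)_0$. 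Therefore $\mathrm{Im}(\tilde N)=W(\sigma)_{-1}/W(\sigma)_{-2}=W(\tilde\sigma)_{-1}$ and $\ker(\tilde N)=W(\sigma)_0/W(\sigma)_{-2}=W(\tilde\sigma)_0$, which is exactly the monodromy weight filtration of an operator of nilpotency index $\le 2$; by its uniqueness, $W(\tilde\sigma)=W(\tilde N)$. This is condition (i) of being a mixed Hodge structure polarized by $\tilde N$.

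It remains to verify conditions (ii) and (iii) for $(W(\tilde\sigma)[-1], \tilde F_i^{\bullet}, \tilde N)$. Griffiths transversality $\tilde N\tilde F^p\subset\tilde F^{p-1}$ descends from $NF^{p+1}\subset F^p$ on $H$ under the indexing $\tilde F^p:=\mathrm{img}(F^{p+1}\to\tilde H)$, the $\mathbb Z(1)$-twist that places $\tilde F_i^{\bullet}$ in $\check D_{(h,h)}$. For condition (iii): the graded pieces $\mathrm{Gr}^{W(\tilde\sigma)}_k$ ($k=-1,0,1$) coincide as vector spaces with $\mathrm{Gr}^{W(\sigma)}_k$, and the induced Hodge filtrations differ only by the shift $F\mapsto F(1)$; since the weight-$3$ LMHS is polarized, each $\mathrm{Gr}^{W(\sigma)}_k$ is a polarized Hodge structure of weight $k+3$, hence so is $\mathrm{Gr}^{W(\tilde\sigma)}_k$ of weight $k+1$, with the primitive parts $P_{1+m}=\mathrm{Gr}^{W(\tilde\sigma)}_m$ ($m=0,1$) polarized by $\tilde Q(\,\cdot\,,\tilde N^m\,\cdot\,)$; this matches, up to the twist and positive constants, the original polarizing forms $Q(\,\cdot\,,N^m\,\cdot\,)$ because the Lefschetz decomposition of $\mathrm{Gr}^{W(\sigma)}_k$ into $N$-strings is $Q$-orthogonal. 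Assembling (i)--(iii) and invoking Theorem \ref{nilporbittolmhs}, the triple $(W(\tilde\sigma)[-1], \tilde F_i^{\bullet}, \tilde\sigma)$ is a limiting mixed Hodge structure of classical weight $1$ type. The step requiring genuine care is this last uniform tracking of the Tate twists and the check that the $\tilde Q$-built forms really polarize; once the $Q$-orthogonality of the Lefschetz decomposition is used, everything else is routine, which is why the statement is asserted as ``clear''.
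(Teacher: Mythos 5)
Your reduction is sound up through the identification of the weight filtration: well-definedness of $\tilde\sigma$ and $\tilde F_i^{\bullet}$, the fact that $W(\sigma)_1^{\perp}=W(\sigma)_{-2}$ makes $\tilde Q$ a non-degenerate alternating form on $\tilde H$, and the computation $\tilde N^2=0$, $\mathrm{Im}\,\tilde N=W(\sigma)_{-1}/W(\sigma)_{-2}$, $\ker\tilde N=W(\sigma)_0/W(\sigma)_{-2}$ via hard Lefschetz are all correct (and already more than the paper records, which simply asserts the proposition). The genuine gap is in your last step, the polarization condition (iii). You claim that each $\mathrm{Gr}^{W(\sigma)}_k$ is polarized by $Q(\cdot,N^k\cdot)$ and that the forms needed on the weight-one side ``match up to the twist and positive constants'' because the Lefschetz decomposition is $Q$-orthogonal. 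But in a polarized MHS only the \emph{primitive} part $P_{3+k}\subset\mathrm{Gr}^{W(\sigma)}_k$ is polarized by $Q_k$, and here that discrepancy is fatal: since $\tilde N^2=0$, the whole of $\mathrm{Gr}^{W(\tilde\sigma)}_1$ is primitive for the weight-one structure, so condition (iii) requires $\tilde Q(v,\tilde N\bar v)$ to be positive on all of $\mathrm{Gr}^{W(\tilde\sigma)}_1\cong\mathrm{Gr}^{W(\sigma)}_1$; yet on the line $N\,\mathrm{Gr}^{W(\sigma)}_3\subset\mathrm{Gr}^{W(\sigma)}_1$ (nonzero exactly because the LMHS is of type IV, $N^3\neq0$) infinitesimal invariance gives
\begin{equation*}
Q\bigl(Nx,\,N\cdot N\bar x\bigr)\;=\;-\,Q\bigl(x,\,N^{3}\bar x\bigr)\;<\;0,
\end{equation*}
the inequality coming from the polarization of $\mathrm{Gr}^{W(\sigma)}_3$ by $Q(\cdot,N^3\cdot)$. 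So the Hermitian form you need to be definite is positive on the old primitive part $P_4$ but negative on $N\,\mathrm{Gr}^{W(\sigma)}_3$: the Lefschetz pieces are indeed $Q_1$-orthogonal, but on the non-primitive summand the correct polarizing form is $-Q_1$, not a positive multiple of $Q_1$, so the asserted matching is false. Replacing $\tilde Q$ by $-\tilde Q$ (the sign an honest $\mathbb{Z}(1)$-twist would introduce) only moves the failure to $P_4$ or to the middle graded piece $\mathrm{Gr}^{W(\tilde\sigma)}_0$, which is polarized by $Q$ itself. A concrete test case is $\mathrm{Sym}^3$ of a degenerating elliptic curve (rank $4$, type $\mathrm{IV}_1$), where the induced data on $W_1/W_{-2}$ is a nilpotent orbit only after reversing the sign of the form or of $\tilde N$; adding a rank-two $(2,2)\oplus(1,1)$ summand produces a type $\mathrm{IV}_2$ example in which $Q(\cdot,N\cdot)$ is indefinite on $\mathrm{Gr}^{W(\sigma)}_1$, so no overall sign rescues the argument.

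Consequently your verification of (iii) does not go through as written, and this is not a bookkeeping omission: the positivity is precisely what makes the induced triples honest nilpotent orbits on $(\tilde H,\tilde Q)$ via Theorem \ref{nilporbittolmhs}, which is what the paper subsequently feeds into Proposition \ref{finiteintersectionclassical}. Since the paper itself gives no proof here, there is nothing to fall back on; any correct treatment has to deal explicitly with the $\omega_{\pm}$-string (for instance by splitting it off $Q$-orthogonally, or by adjusting the pairing on that Lefschetz piece) rather than claiming that the weight-three polarization descends to $W(\sigma)_1/W(\sigma)_{-2}$ with the restricted form up to positive constants.
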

We denote this LMHS and its adjoint as:
\begin{equation}\label{hodgedecompositiontypeIVquotient}
    \tilde{H}_{\mathbb{C}}=\oplus_{p,q}\tilde{H}^{p,q}
\end{equation}
and
\begin{equation}
    \tilde{\mathfrak{g}}_{\mathbb{C}}=\oplus_{p,q}\tilde{\mathfrak{g}}^{p,q}.
\end{equation}

Another observation is 
\begin{equation}
   \{\gamma_i\}\subset \mathrm{Sp}(2h+2,Q;\mathbb{Z})\cap P_{W(\sigma)}
\end{equation}
 has induced actions
 \begin{equation}
   \{\tilde{\gamma}_i\}\subset \mathrm{Sp}(2h,\tilde{Q};\mathbb{Z})\cap P_{W(\tilde{\sigma})}
\end{equation}
by restricting on $W(\sigma)_2$ and taking quotient by $W(\sigma)_{-2}$. 

\medskip
Let $\tilde{\sigma}, \tilde{\tau}$ be the projection of $\sigma, \tau$ on $\tilde{\mathfrak{g}}^{-1,-1}$ under \eqref{projectionoftypeIVcones}. Notice that unlike the previous cases, $\sigma\cap \mathrm{Ad}_{\gamma_i}\tau$ and $\tilde{\sigma}\cap \mathrm{Ad}_{\tilde{\gamma}_i}\tilde{\tau}$ may have different topologies. Since for any $\sigma, \tau\in \Sigma_{\mathrm{IV}}$ , any $\{\gamma_i\}\subset \Gamma$, and any triples of nilpotent orbits
\begin{center}
    $(\sigma, F_i^{\bullet}), (\mathrm{Ad}_{\gamma_i}\tau, F_i^{\bullet}), (N_i, F_i^{\bullet})$,
\end{center}
with $N_i\in \sigma\cap \mathrm{Ad}_{\gamma_i}\tau$, we have the induced triples
\begin{center}
    $(\tilde{\sigma}, \tilde{F}_i^{\bullet}), (\mathrm{Ad}_{\tilde{\gamma}_i}\tilde{\tau}, \tilde{F}_i^{\bullet}), (\tilde{N}_i, \tilde{F}_i^{\bullet})$.
\end{center}
which are all nilpotent orbits producing classical weight $1$ LMHS on $(\tilde{H}, \tilde{Q})$. Therefore, by applying Proposition \ref{finiteintersectionclassical} to $(\tilde{\sigma}, \tilde{\tau})$ we get a subdivision $\Sigma(\tilde{\sigma})$ of $\tilde{\sigma}$. 

Let $\Sigma(\sigma)$ be the subdivision of $\sigma$ obtained by intersecting $|\sigma|$ with the preimage of $\Sigma(\tilde{\sigma})$ under the projection map \eqref{projectionoftypeIVcones}. The following lemma indicates $\Sigma(\sigma)$ verifies Proposition \ref{finiteintersectionCY3} when $\sigma, \tau\in \Sigma_{\mathrm{IV}}$.

\begin{lemma}\label{typeIVtricklemma}
    Suppose $\sigma, \tau\in \Sigma_{\mathrm{IV}}$ such that $\tilde{B}(\sigma, \Phi)\cap \tilde{B}(\tau, \Phi)\neq \emptyset$, $\sigma\cap \tau\neq \emptyset$ and $\tilde{\sigma}=\tilde{\tau}$ in $\tilde{\Sigma}_{\mathrm{IV}}$, then $\sigma=\tau$.
\end{lemma}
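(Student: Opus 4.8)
The plan is to push the whole question down to linear algebra on the Deligne splitting of one fixed limiting mixed Hodge structure, using in an essential way that $h^{3,0}=1$ and that monodromy logarithms commute. First I would choose $F^{\bullet}\in\tilde{B}(\sigma,\Phi)\cap\tilde{B}(\tau,\Phi)$, so that $(\sigma,F^{\bullet})$ and $(\tau,F^{\bullet})$ are nilpotent orbits, and pick $N_0\in\sigma\cap\tau$. By our convention that cones are relatively open, $N_0$ lies in the relative interior of $\overline{\sigma}$ and of $\overline{\tau}$, so Theorem \ref{nilpconeweightfiltration} applied to both orbits gives $W(\sigma)=W(N_0)=W(\tau)=:W$. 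Hence $\sigma,\tau\subseteq\mathfrak{g}_{\mathbb{Q}}\cap\mathfrak{g}^{-1,-1}$ for the Deligne splitting $H_{\mathbb{C}}=\oplus_{p,q}H^{p,q}$ of $(W,F^{\bullet})$, and the quotient $\tilde{H}=W_{1}/W_{-2}$, the projection $\pi$ of \eqref{projectionoftypeIVcones}, and the cones $\tilde{\sigma}=\pi(\sigma)$, $\tilde{\tau}=\pi(\tau)$ are all formed relative to this same $W$; in particular $\tilde{\sigma}=\tilde{\tau}$ becomes an honest equality of cones in $\mathrm{End}(\tilde{H}_{\mathbb{Q}})$. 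Fix a generator $\omega_{+}$ of the one-dimensional space $H^{3,3}$.

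Next I would identify the kernel $K:=\ker(\pi)\cap\mathfrak{g}^{-1,-1}$. Since $\dim H^{3,3}=h^{3,0}=1$ and dually $\dim H^{0,0}=1$, an $X\in\mathfrak{g}^{-1,-1}$ induces $0$ on $\tilde{H}$ precisely when $X(H^{2,2})=0$, and such an $X$ is a ``corner'' endomorphism carrying $H^{3,3}\to H^{2,2}$ and, by $Q$-compatibility, $H^{1,1}\to H^{0,0}$, while annihilating everything else. Thus every $X\in K$ satisfies $X^{2}=0$ and $X|_{H^{2,2}}=0$, and $X\mapsto X\omega_{+}$ is a linear isomorphism $K\xrightarrow{\ \sim\ }H^{2,2}$; informally, $\pi$ forgets only the extension data of the limiting mixed Hodge structure along its two outermost graded pieces.

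The heart of the argument is the observation that $\pi(N_0)$ acts on $\tilde{H}$ with maximal rank. Indeed $\tilde{N}_0:=\pi(N_0)\in\tilde{\sigma}$, and $\tilde{\sigma}$ polarizes the classical weight $1$ limiting mixed Hodge structure on $\tilde{H}$ of Proposition \ref{typeIVquotient}, whose weight filtration has $W(\tilde{\sigma})_{-1}$ of dimension $a$; since $\tilde{\sigma}$ is relatively open, Theorem \ref{nilpconeweightfiltration} gives $W(\tilde{N}_0)=W(\tilde{\sigma})$, so $\mathrm{rank}(\tilde{N}_0)=a$. Read through $\pi$, this says the ``middle block'' $B_0:=N_0|_{H^{2,2}}\colon H^{2,2}\to H^{1,1}$ is an isomorphism. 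Now take any $N\in\sigma$; since $\pi(\tau)=\tilde{\tau}=\tilde{\sigma}\ni\pi(N)$, choose $N'\in\tau$ with $\pi(N')=\pi(N)$ and set $X:=N-N'\in K$. Because $\sigma$ and $\tau$ are monodromy cones, $\mathrm{span}_{\mathbb{R}}(\sigma)$ and $\mathrm{span}_{\mathbb{R}}(\tau)$ are abelian subalgebras of $\mathfrak{g}$, and $N_0$ lies in both, so $[N,N_0]=[N',N_0]=0$, whence $[X,N_0]=0$. Evaluating on $\omega_{+}$ and using $X(H^{2,2})=0$,
\[
0=[X,N_0]\,\omega_{+}=X(N_0\omega_{+})-N_0(X\omega_{+})=-B_0(X\omega_{+}),
\]
so $X\omega_{+}=0$ by invertibility of $B_0$, hence $X=0$, i.e. $N=N'\in\tau$. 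Thus $\sigma\subseteq\tau$, and exchanging $\sigma$ and $\tau$ gives $\sigma=\tau$.

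The step I expect to require the most care is the maximal-rank statement for $\pi(N_0)$: it is exactly where the rigidity of the type $\mathrm{I}_{a}$ weight filtration on $\tilde{H}$, the relatively-open convention, and Proposition \ref{typeIVquotient} enter, while everything after it is bookkeeping in the Deligne splitting. Granting the lemma, the subdivision $\Sigma(\sigma)$ obtained by pulling back $\Sigma(\tilde{\sigma})$ along \eqref{projectionoftypeIVcones} verifies Proposition \ref{finiteintersectionCY3} for $\sigma,\tau\in\Sigma_{\mathrm{IV}}$, reducing this case as well to the already-settled classical weight $1$ case.
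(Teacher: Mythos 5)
Your proposal is correct and follows essentially the same route as the paper: both reduce to the Deligne splitting of the common limiting mixed Hodge structure, take the difference $X$ of two elements of $\sigma$ and $\tau$ with equal image in $\tilde{\mathfrak{g}}^{-1,-1}$, and evaluate the vanishing commutator $[X,N_0]$ (with $N_0\in\sigma\cap\tau$) on a generator of $H^{3,3}$, using that $N_0\colon H^{2,2}\to H^{1,1}$ is an isomorphism while $X$ kills $H^{2,2}$. The only difference is cosmetic (direct inclusion $\sigma\subseteq\tau$ rather than contradiction), and you usefully spell out the two nondegeneracy facts the paper leaves implicit, namely $X\omega_+ \neq 0$ for $0\neq X\in\ker\pi$ via $Q$-duality and the maximal-rank property of $N_0$ on the middle block.
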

\begin{proof}
    For $F^{\bullet}\in \tilde{B}(\sigma, \Phi)\cap \tilde{B}(\tau, \Phi)$ let $H_{\mathbb{C}}=\oplus H^{p,q}$ be the Hodge decomposition of LMHS $(W(\sigma)[-3], F^{\bullet}, \sigma)$. Suppose the lemma is false, there exist $N_1\in \sigma\backslash \tau$ and $N_2\in \tau$ such that $\tilde{N}_1=\tilde{N}_2$ in $\tilde{\sigma}=\tilde{\tau}$, this implies $N_1-N_2\neq 0$ annihilates $\tilde{H}$. On the other hand, take $0\neq N_3\in \sigma\cap\tau$, we know $[N_1-N_2, N_3]=0$ and $(N_3, F^{\bullet})$ is a nilpotent orbit with type IV LMHS, this leads to a contradiction since for $0\neq \mathbf{v}\in H^{3,3}$, $(N_1-N_2)N_3\mathbf{v}=0$ but $N_3(N_1-N_2)\mathbf{v}\neq 0$.
\end{proof}

\subsection{Concluding remarks}

Like Claim \ref{basicclaim} for the classical case, in this section we should make the following claim.

\begin{claim}\label{basicclaim2}
  Proposition \ref{finiteintersectionCY3} implies Theorem \ref{maintheoremCY3}.
\end{claim}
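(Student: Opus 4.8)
The plan is to run, with the obvious modifications, the argument that establishes Claim~\ref{basicclaim} in the classical case: Proposition~\ref{finiteintersectionCY3} takes the place of Proposition~\ref{finiteintersectionclassical}, and the fan axiom of Definition~\ref{deffanweakfan} is replaced throughout by the type-$\Phi$ weak fan axiom~(ii'') of Definition~\ref{typeweakfan}. First I would reduce the global assertion to finitely many local ones. Since $\bar S$ is compact and $\bar S\setminus S$ is a normal crossing divisor, $\Sigma$ is generated, under $\mathrm{Ad}_\Gamma$ and passage to faces, by the finitely many monodromy cones $\sigma_1,\dots,\sigma_M$ attached to a finite chart cover, so $\Sigma$ has only finitely many $\mathrm{Ad}_\Gamma$-orbits of cones. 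Call $\gamma\in\Gamma$ \emph{obstructive} for a pair $(\sigma,\tau)$ of cones of $\Sigma$ if $\sigma\cap\mathrm{Ad}_\gamma\tau\neq\emptyset$ and some $F^\bullet\in\check D$ polarizes both $\sigma$ and $\mathrm{Ad}_\gamma\tau$ to type-$\Phi$ nilpotent orbits; this is exactly the configuration that obstructs $\Sigma$ from being a type-$\Phi$ weak fan. It then suffices to produce, for each of the finitely many pairs $(\sigma,\tau)$ modulo $\mathrm{Ad}_\Gamma$, a finite polyhedral subdivision of $\sigma$ splitting every obstructive intersection $\sigma\cap\mathrm{Ad}_\gamma\tau$ into cells, and to assemble these $\Gamma$-equivariantly.

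For a fixed pair $(\sigma,\tau)$ the obstructive $\gamma$ form a countable subset $\{\gamma_i\}\subset\Gamma$; for each there are $0\neq N_i\in\sigma\cap\mathrm{Ad}_{\gamma_i}\tau$ and $F_i^\bullet\in\check D$ making the triple \eqref{busterCY3} into type-$\Phi$ nilpotent orbits, so Proposition~\ref{finiteintersectionCY3} supplies one finite polyhedral decomposition of $\sigma$ refining all $\sigma\cap\mathrm{Ad}_{\gamma_i}\tau$ at once. Taking the common refinement over all faces of a representative maximal cone $\sigma_m$ and over all cones $\tau$ of $\Sigma$ (finitely many modulo $\mathrm{Ad}_\Gamma$), carried out by a canonical recipe such as iterated star subdivisions centred at points of the monodromy lattice $\Gamma(\sigma_m)^{\mathrm{gp}}$ (cf.\ \cite[Ch.~11]{CLS11}), yields a finite subdivision $\hat\Sigma(\sigma_m)$ of $\sigma_m$; making the recipe canonical forces these to agree along common faces of distinct $\sigma_m$ and to be invariant under the (finite, as $\Gamma$ is neat) stabilisers of the $\sigma_m$. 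Putting $\hat\Sigma:=\mathrm{Ad}_\Gamma\bigl(\bigcup_m\hat\Sigma(\sigma_m)\bigr)$, completed under taking faces, one gets a subdivision of $\Sigma$ that is closed under $\mathrm{Ad}_\Gamma$ and under faces, finite modulo $\mathrm{Ad}_\Gamma$ hence finitely generated, and strongly compatible with $\Gamma$—the last because subdividing only at lattice points keeps every ray of a cell of the form $\mathbb R_{\ge0}\log\gamma$ (after passing, if necessary, to a finite-index subgroup of $\Gamma$, which is harmless).

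It remains to verify axiom~(ii''). Suppose $\sigma',\tau'\in\hat\Sigma$ satisfy $\sigma'\cap\tau'\neq\emptyset$ and some $F^\bullet$ polarizes both to type-$\Phi$ orbits; choose $0\neq N$ in the common relative interior, so $W(\sigma')=W(N)=W(\tau')$ by Theorem~\ref{nilpconeweightfiltration} and $(N,F^\bullet)$ is itself a type-$\Phi$ orbit. After translating by $\Gamma$, let $\sigma,\tau\in\Sigma$ be the smallest cones with $\sigma'\subseteq\sigma$ and $\tau'\subseteq\mathrm{Ad}_\delta\tau$ for the pertinent $\delta\in\Gamma$; then $\sigma\cap\mathrm{Ad}_\delta\tau\neq\emptyset$. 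The point is to upgrade the Hodge-theoretic compatibility of the \emph{cells} $\sigma',\tau'$ to one of the \emph{monodromy cones} $\sigma,\tau$, i.e.\ to exhibit a filtration polarizing both $\sigma$ and $\mathrm{Ad}_\delta\tau$ to type-$\Phi$ orbits. Granting this, $\delta$ is obstructive for $(\sigma,\tau)$, so the construction above has already split $\sigma\cap\mathrm{Ad}_\delta\tau$ into cells compatibly, and $\sigma',\tau'$ must then both be the unique cell of that subdivision whose relative interior contains $N$, forcing $\sigma'=\tau'$; this yields Theorem~\ref{maintheoremCY3}. To perform the upgrade I would normalise the base point as in Proposition~\ref{uniformizationofbasepointHT}, using that $\tilde B(\sigma,\Phi)$ and $\tilde B(\tau,\Phi)$ are nonempty (the cones being monodromy cones, by Schmid's nilpotent orbit theorem), that a subcone of a nilpotent orbit is again a nilpotent orbit, and the descriptions \eqref{tangentspaceofbn} and \eqref{structureofbn} of the local and global structure of $\tilde B$.

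I expect the main obstacle to be precisely this last upgrade, hand in hand with the compatible $\Gamma$-equivariant assembly of the local decompositions: choosing a canonical subdivision that is invariant under all the stabilisers and agrees on every overlap, and pinning down how the LMHS type and the set $\tilde B(\,\cdot\,,\Phi)$ behave when passing between a cone of $\Sigma$ and the cells of its subdivision, is exactly where the finer combinatorial analysis deferred to Section~\ref{} (Section~7) is needed. The classical Claim~\ref{basicclaim} is the template to imitate; the only genuinely new bookkeeping here is tracking the LMHS types in $\Phi$, which is benign because, under the face and subcone relations that actually occur, type~I and type~IV degenerate only to members of $\Phi$ or to pure Hodge structures.
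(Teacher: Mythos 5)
Your skeleton (finitely many $\mathrm{Ad}_{\Gamma}$-orbits of pairs of monodromy cones, Proposition \ref{finiteintersectionCY3} giving a finite refinement for each pair, a $\Gamma$-equivariant assembly, then verification of axiom (ii'')) agrees with the paper's Section 7, and your worry about a canonical, stabilizer-invariant assembly is handled there more simply: one builds a single hyperplane-arrangement complex $\mathcal{T}$ out of the finitely many translated cones \eqref{allpossiblebusters} (Lemma \ref{simplicialfanconstruction}) and takes its $\mathrm{Ad}_{\Gamma}$-orbits, so no iterated star-subdivision recipe is needed. The genuine gap is in the verification step, which you yourself flag as ``the main obstacle'' and do not carry out: your plan is to upgrade the existence of one $F^{\bullet}$ polarizing the two \emph{cells} $\sigma',\tau'$ to a filtration polarizing the ambient \emph{monodromy cones} $\sigma$ and $\mathrm{Ad}_{\delta}\tau$, via base-point normalization and the structure \eqref{structureofbn} of $\tilde{B}$. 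That upgrade is available exactly in the type I case --- it is Proposition \ref{fanweakfanequivalent2}, i.e.\ Case 1 of the paper's proof --- but it fails for type IV: since $W_{\mathfrak{g}}(\sigma)_{-2}\not\subset F_{\mathfrak{g}}^{-1}$, the analog of Proposition \ref{fanweakfanequivalent2} is false, so intersecting type IV cones need not admit a common polarizing filtration at the level of the full cones, and ``$\delta$ is obstructive for $(\sigma,\tau)$'' cannot be concluded by your route. The case split you suppress (type I versus type IV) is therefore essential, not bookkeeping.

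The paper's actual mechanism in the type IV case is different from anything in your proposal: it projects to the weight-$1$ quotient $\tilde{H}=W(\sigma)_1/W(\sigma)_{-2}$ via \eqref{projectionoftypeIVcones}, applies the already-established classical statement (the proof of Claim \ref{basicclaim}, where fan and weak fan coincide) to the projected complex $\tilde{\mathcal{T}}(\tilde{\sigma},\tilde{\tau_0})$ to force the projected cells to coincide, $\hat{\sigma}_-=\mathrm{Ad}_{\tilde{\gamma}}\hat{\tau}_-$, and then invokes Lemma \ref{typeIVtricklemma} (equal projections, nonempty intersection, and a common type-$\Phi$ polarizing filtration together force equality of the original cones, via the $H^{3,3}$ argument) to conclude $\hat{\sigma}=\mathrm{Ad}_{\gamma_0}\hat{\tau}$. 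Since you defer precisely the missing step to ``Section 7,'' whose content \emph{is} the proof of this claim, your argument is incomplete at the decisive point; to repair it you would need to add the type I/type IV dichotomy, prove (or cite) Proposition \ref{fanweakfanequivalent2} for Case 1, and supply the quotient-plus-Lemma \ref{typeIVtricklemma} device (or an equivalent) for Case 2.
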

The proof is a bit different from the proof of Claim \ref{basicclaim} and we also leave it to section 7.

A comment on the necessity of using weak fan of restricted type is elements in $\Sigma_{\mathrm{I}}$ could also polarize type II LMHS, as Figure \ref{lmhstypefigure} shows. We are not able to show the finiteness results like Theorem \ref{maintheoremCY3} if $\Phi$ includes type II LMHS at this time.

\section{Subdivision, logarithmic modification and example}

\subsection{Overview}
Given a period map $\varphi: S\rightarrow \Gamma\backslash D$ which are of types described above, Theorem \ref{mainthmclassical} and \ref{maintheoremCY3} are combinatorial statements that we can modify the collection of local monodromy nilpotent cones $\Sigma$ in a proper way to make it fit Kato-Nakayama-Usui's theory. To fully establish the main result regarding period maps themselves (see Section \ref{introsection}), we shall explain what do Theorem \ref{mainthmclassical} and \ref{maintheoremCY3} mean in the Hodge-theoretical side.

Indeed, in Theorem \ref{mainthmclassical} and \ref{maintheoremCY3}, since the subdivision on $\Sigma$ is finitely generated under $\mathrm{Ad}_{\Gamma}$-action, it can be generated by a finite subdivision on the set of local monodromy nilpotent cones coming from the period map $\varphi: S\rightarrow \Gamma\backslash D$. This operation is called "logarithmic modification" in Kato-Usui's theory.

The general theory of logarithmic modification is introduced in \cite[Sec. 3.6]{KU08}, in which the base space $S$ is assumed to be an object in an abstract category $\mathcal{B}(\mathrm{log})$. In our settings where $S$ is quasi-projective with normal crossing divisors, logarithmic modification is exactly local toric blow-ups\footnote{See \cite[Chap. 11]{CLS11} for a comprehensive introduction on toric resolutions and polyhedral subdivisions.}. Indeed, a finite (rational) subdivision on local monodromy cones corresponds to a finite blow-up sequence:
\begin{equation}\label{finiteblowupsequence}
    \pi: \overline{S_M}\rightarrow \overline{S_{M-1}}\rightarrow ... \rightarrow \overline{S_1}\rightarrow \overline{S_0}=\overline{S},
\end{equation}
where each arrow is a (weighted) blow-up along some subvariety. As a consequence of finiteness, $S_M:=\pi^{-1}(S)$ is also quasi-projective, and the original period map can be lifted to:
\begin{equation}\label{liftedperiodmap}
\begin{tikzcd}
S_M \arrow[d] \arrow[dr, "\varphi_M"] & \\
S \arrow[r, "\varphi"] & \Gamma \backslash D.
\end{tikzcd}
\end{equation}
Combine this with Theorems \ref{mainthmclassical} and \ref{maintheoremCY3}, the main result of this paper summarized in Section \ref{introsection} can be formally stated as follows:

\begin{theorem}\label{formalmainthm}
    Suppose $\varphi: S\rightarrow \Gamma\backslash D$ is a period map such that:
    \begin{description}
        \item[(i)] $S$ is quasi-projective with normal crossing boundary divisors;
        \item[(ii)] The period map is either of classical weight $1$ type, or weight $3$ Calabi-Yau type which acquires only type I or type IV degenerations,
    \end{description} 
 then there exists a quasi-projective variety $S_M$ and a finite blow-up sequence \eqref{finiteblowupsequence}, such that there exists a $\Gamma$-strongly compatible weak fan (of restricted type) $\Sigma$, and the lifted period map \eqref{liftedperiodmap} admits an Kato-Usui type extension:
 \begin{equation}
     \overline{\varphi_M}: \overline{S_M}\rightarrow \Gamma\backslash D_{\Sigma, \Phi}
 \end{equation}
in which $\Phi$ is a set of LMHS types, $\Gamma\backslash D_{\Sigma, \Phi}$ is a locally analytically constructible space and $\overline{\varphi_M}$ is a morphism of locally analytically constructible spaces.
\end{theorem}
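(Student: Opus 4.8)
The statement is an assembly of the combinatorial output of the previous sections with the modified Kato--Nakayama--Usui package. First I would fix $\Sigma$, the collection of all local monodromy nilpotent cones arising from $\varphi$ (so $\Sigma$ is $\mathrm{Ad}_{\Gamma}$-stable and closed under faces), and $\Phi$, the relevant set of LMHS types occurring via Schmid's nilpotent orbit theorem along $\partial S$ ($\Phi=\{\mathrm{I}_a\}_{0\le a\le g}$ in the weight $1$ case; $\Phi\subseteq\{\text{pure},\ \text{I},\ \text{IV}\}$ in the weight $3$ Calabi--Yau case). Applying Theorem~\ref{mainthmclassical} (weight $1$) or Theorem~\ref{maintheoremCY3} (weight $3$ Calabi--Yau) produces a polyhedral refinement $\hat\Sigma$ of $\Sigma$ that is a (type-$\Phi$) weak fan, strongly compatible with $\Gamma$, and finitely generated under $\mathrm{Ad}_{\Gamma}$. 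This $\hat\Sigma$ serves as the weak fan $\Sigma$ in the conclusion.

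Second, I would realise $\hat\Sigma$ geometrically. As $\overline{S}$ is compact it is covered by finitely many charts with $U_m\cap S\cong(\Delta^{*})^{k_m}\times\Delta^{n-k_m}$, each contributing one top-dimensional monodromy cone $\sigma_m$ together with its faces; since $\hat\Sigma$ is finitely generated under $\mathrm{Ad}_{\Gamma}$, the refinement $\Sigma\rightsquigarrow\hat\Sigma$ restricts to a \emph{finite} rational subdivision of the finite set $\{\sigma_m\ \text{and its faces}\}_{m}$. By the dictionary between fan subdivisions and toric blow-ups recalled above (and \cite[Sec.~3.6]{KU08}), this finite subdivision is realised by a finite blow-up sequence~\eqref{finiteblowupsequence} $\pi\colon\overline{S_M}\to\overline{S}$ with centres in $\overline{S}\setminus S$. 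Consequently $S_M:=\pi^{-1}(S)$ is quasi-projective, $\pi|_{S_M}$ is an isomorphism onto $S$, and $\varphi$ lifts to a period map $\varphi_M\colon S_M\to\Gamma\backslash D$ as in~\eqref{liftedperiodmap}, with monodromy still inside $\Gamma$ and with local monodromy cones along $\partial S_M$ equal to the cones of $\hat\Sigma$ occurring as coordinate cones of the refined boundary.

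Third, I would verify the hypothesis of Theorem~\ref{kumainthmB'} for $\varphi_M$, namely that every local monodromy nilpotent orbit of $\varphi_M$ along $\partial S_M$ has type in $\Phi$. A boundary stratum of $\overline{S_M}$ attached to $\tau\in\hat\Sigma$ maps under $\pi$ into $\partial S$, and, because a toric blow-up does not alter the limiting mixed Hodge structure beyond the action of the centralizer of the approaching cone, the associated nilpotent orbit $(\tau,F^{\bullet})$ satisfies $W(\tau)=W(\sigma)$ for the unique $\sigma\in\Sigma$ whose relative interior $\tau$ meets (here one uses that $\hat\Sigma$ refines $\Sigma$ and that $\Sigma$ is closed under faces), while $(\sigma,F^{\bullet})$ is a nilpotent orbit of the same type as those of $\varphi$ at $\sigma$. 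Since the LMHS type depends only on the pair $(W,F^{\bullet})$, the types of $(\tau,F^{\bullet})$ and $(\sigma,F^{\bullet})$ agree, and the latter lies in $\Phi$ by hypothesis. With this verified, Theorems~\ref{kumainthmA'} and~\ref{kumainthmB'} apply to the $\Gamma$-strongly compatible type-$\Phi$ weak fan $\hat\Sigma$ (with $\Gamma$ neat): $\Gamma\backslash D_{\hat\Sigma,\Phi}$ acquires the structure of a locally analytically constructible space and $\varphi_M$ extends uniquely to a morphism $\overline{\varphi_M}\colon\overline{S_M}\to\Gamma\backslash D_{\hat\Sigma,\Phi}$ of locally analytically constructible spaces, which is the asserted extension. (In the weight $1$ case $\Phi$ exhausts all LMHS types, $\hat\Sigma$ is an honest fan by the remark after Theorem~\ref{mainthmclassical}, and $D_{\hat\Sigma,\Phi}=D_{\hat\Sigma}$ is a logarithmic manifold, so one may invoke Theorems~\ref{katousuimainthmA} and~\ref{katousuimainthmB} directly.)

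The substantive input---finiteness of the subdivision $\hat\Sigma$ under $\mathrm{Ad}_{\Gamma}$---is precisely the content of Theorems~\ref{mainthmclassical} and~\ref{maintheoremCY3} (obtained there from the Siegel properties of arithmetic groups), and the constructibility of $\Gamma\backslash D_{\hat\Sigma,\Phi}$ is Theorem~\ref{kumainthmA'}, proved in the appendix; so what remains here is bookkeeping. The step I expect to require the most care is the middle one: identifying the abstract logarithmic modification of \cite[Sec.~3.6]{KU08} with concrete local toric blow-ups in the quasi-projective setting, and checking that the local monodromy data of $\varphi_M$ really is $\hat\Sigma$ with limiting mixed Hodge structures inherited (up to centralizer action) from those of $\varphi$---this is exactly what licenses transferring the ``type $\Phi$'' hypothesis from $\varphi$ to $\varphi_M$.
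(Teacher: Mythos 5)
Your proposal is correct and follows essentially the same route as the paper: take the finitely $\mathrm{Ad}_{\Gamma}$-generated subdivision $\hat\Sigma$ from Theorem \ref{mainthmclassical} or \ref{maintheoremCY3}, realize it as a finite sequence of local toric blow-ups (logarithmic modification) along the boundary so that $S_M\cong S$ stays quasi-projective and $\varphi$ lifts to $\varphi_M$, and then apply Theorems \ref{kumainthmA'} and \ref{kumainthmB'} (or the original Theorems A/B in the weight $1$ case). Your explicit check that the LMHS types along the new boundary strata remain in $\Phi$ (via constancy of the weight filtration on the relative interior of each subdivided cone) is a point the paper leaves implicit, but it is consistent with, not a departure from, the paper's argument.
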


\subsection{An example on the $2$-dimensional case}
In this subsection we work out a $2$-dimensional example to show how logarithmic modification can make changes on the period map. 

Consider the local period map:
\begin{equation}
    \varphi: (\Delta^{*})^2\rightarrow \Gamma\backslash D,
\end{equation}
where under a fixed coordinate system $(x,y)\in \Delta^2$, the (unipotent) monodromy operators around coordinate divisors $D_x:=\{x=0\}, D_y:=\{y=0\}$ are $T_x, T_y$, and $\Gamma=\langle T_x, T_y\rangle$ is abelian.

Consider the blow-up of $(\Delta^{*})^2$ at the point $(0,0)$, the blow-up map is read as:
\begin{equation}
    \pi: \widehat{\Delta^2}\rightarrow {\Delta^2}.
\end{equation}
Denote the exception divisor $E$. The same method in \cite[Sec. 6.1]{Den22} shows for the lifted period map:
\begin{equation}
    \hat{\varphi}: \widehat{\Delta^2}\rightarrow \Gamma\backslash D,
\end{equation}
the monodromy operator around $E$ is $T_xT_y$. Passing to the monodromy logarithms, we have $N_E=N_x+N_y$, which implies lifting the period map is derived from subdividing the monodromy nilpotent cone $\langle N_x,N_y\rangle$ to $\langle N_x, N_x+N_y\rangle\cup \langle N_x+N_y, N_y\rangle$. 

Under the coordinates $(u:=\frac{\log(x)}{2\pi i},v:=\frac{\log(y)}{2\pi i})$, the period map $\varphi$ can be written as:
\begin{equation}\label{localperiodmap2dim}
    \varphi(u,v)=\exp(uN_x+vN_y)\psi(x,y),
\end{equation}
where $\psi(x,y): \Delta^2\rightarrow \check{D}$ is holomorphic. Switch to $\hat{\varphi}$, around the point $o_y:=E\cap D_y$, the local coordinate expression of $\pi$ is:
\begin{equation}
    (x,t)\rightarrow (x,xt,[1:t])\in \widehat{\Delta^2}\xrightarrow[]{\pi}(x,xt)\in \Delta^2.
\end{equation}
This implies around $o_y$, the local lift of the period map \eqref{localperiodmap2dim} is lifted to:
\begin{equation}
    \hat{\varphi}(x,t)=\varphi(x,xt)=\exp(\frac{\log(x)}{2\pi i}(N_x+N_y)+\frac{\log(t)}{2\pi i}N_y)\psi(x,xt).
\end{equation}
Notice that on $E:\{x=0\}$, we have:
\begin{equation}
    \exp(-\frac{\log(x)}{2\pi i}(N_x+N_y)-\frac{\log(t)}{2\pi i}N_y)\hat{\varphi}(0,t)=\psi(0,0)
\end{equation}
is constant. By Schmid's nilpotent orbit theorem, locally for any point $x\in E\backslash (D_x\cup D_y)$, the nilpotent orbit associated to $x$ is $(N_x+N_y, \psi(0,0))$. This implies if we construct the Kato-Usui extension of $\hat{\varphi}$ as Theorem \ref{formalmainthm}, the extended map $\overline{\hat{\varphi}}$ is locally constant on exceptional divisors.
\begin{remark}
    Another observation is since in \eqref{localperiodmap2dim}, $\psi(0,0)$ is well-defined only up to the action of $\mathbb{C}\langle N_x, N_y\rangle$, the value $(N_x+N_y, \psi(0,0))$ is well-defined only after fixing a local coordinate system. In general, the Kato-Usui type extended map in Theorem \ref{formalmainthm} is well defined only after a coordinate system on $S$ is fixed.
\end{remark}

\subsection{Summary of the example in \cite{Den22}}
In this subsection we recall the construction in \cite{Den22}. The period map used here is the one introduced by \cite{HT14} and \cite{HT18}, which is a period map of type $(1,2,2,1)$ as follows: 
\begin{equation}
    \hat{\varphi}: \mathbb{P}^2\backslash \mathrm{Dis}\rightarrow \Gamma\backslash D.
\end{equation}
The discriminant locus is given by:
\begin{equation}
    \mathrm{Dis}:= D_0\cup D_1\cup D_2\cup C
\end{equation}
where $D_i$ are coordinate divisors in $\mathbb{P}^2$, and $C$ is an irreducible quintic with $6$ self-intersection nodes and $1$ tangent point of order $5$ with each coordinate divisor. The picture of this base can be found at \cite[Fig. 6.1]{HT14}. The precise monodromy group $\Gamma$ is not known, but from \cite[Sec. 7]{Den22} we know the algebraic monodromy group $\overline{\Gamma}^{\mathbb{Q}}$ is $\mathrm{Sp}(6,\mathbb{Q})$.

By blowing-up each tangent point for $5$ times, we get a lifted period map:
\begin{equation}
    \varphi: \widetilde{\mathbb{P}^2\backslash \mathrm{Dis}}\rightarrow \Gamma\backslash D.
\end{equation}
where the base $\widetilde{\mathbb{P}^2\backslash \mathrm{Dis}}=:S$ is quasi-projective with normal-crossing boundary divisors. 

We mark all local monodromy nilpotent cones as follows:
\begin{description}\label{alllocalmonodromynilporbits}
    \item[(i)] Let $\sigma_x, \sigma_y, \sigma_z$ be the ($2$-dimensional) local monodromy nilpotent cones around the intersection of each pair of coordinate divisors;
    \item[(ii)] Let $\sigma_0, \sigma_1, \sigma_2$ be the local monodromy nilpotent cones obtained by blowing-up each of the fifth tangent point. As \cite[Sec. 6.1]{Den22} shows, the blow-up process annihilates the order-$5$ semisimple part of the monodromy operators around $C$ and leave the unipotent part invariant;
    \item[(iii)] Let $\tau_j, \ j=1,...,6$ be the local monodromy nilpotent cones obtained from all self-intersection nodes of $C$.
\end{description}
By computational results in \cite{HT14} and \cite{HT18}, all $2$-dimensional local monodromy nilpotent orbits and their Hodge degeneration types are listed as follows (Again we use the type defined by \cite[Example 5.8]{KPR19}).
\begin{description}\label{typeofalllocalmonodromy}
    \item[(i)] $\sigma_x, \sigma_y, \sigma_z$ are all of the type $\langle \mathrm{IV}_2|\mathrm{IV}_2|\mathrm{IV}_2\rangle$;
    \item[(ii)] $\sigma_0, \sigma_1, \sigma_2$ are all of the type
    $\langle\mathrm{IV}_2|\mathrm{IV}_2|\mathrm{I}_1\rangle$;
    \item[(iii)] $\tau_j, \ 1\leq j\leq 6$ are all of the type
    $\langle\mathrm{I}_1|\mathrm{I}_2|\mathrm{I}_1\rangle$.
\end{description}
Where for a $2$-dimensional cone $\sigma=\mathbb{Q}_{\geq 0}\langle N_1,N_2\rangle$ and its associated nilpotent orbit $(\sigma, F^{\bullet})$, type $\langle A|B|C\rangle$ means the type of LMHS given by $(N_1, F^{\bullet})$, $(\sigma, F^{\bullet})$ and $(N_2, F^{\bullet})$ respectively.

Let $\mathcal{S}$ be the set of all monodromy nilpotent cones listed above and all of their proper faces. Clearly $\mathcal{S}$ is a finite set. By our definition, the collection $\Sigma$ with $|\Sigma|\subset \mathfrak{g}_{\mathbb{Q}}$ is the union of $\mathrm{Ad}_{\Gamma}$-orbits of $\mathcal{S}$. One of the main results in \cite{Den22} is showing that after a finite base change which leads to changing $\Gamma$ to a neat subgroup of finite index $\Gamma_0$ as well as a replacing $\Sigma$ by one of its finitely $\mathrm{Ad}_{\Gamma}$-generated subdivisions $\Sigma_0$,  $\Sigma_0$ is a $\Gamma_0$-strongly compatible fan which provides a Kato-Usui style extension. Clearly, Theorem \ref{formalmainthm} provides an alternative proof on the existence of a Kato-Usui type extension.

\section{Combinatorics of weak fan}

The purpose of this section is to investigate the collection $\Sigma$ of nilpotent cones generated by local monodromy nilpotent cones coming from a period map $\varphi: S\rightarrow D$. As a conseuqnce, we prove Claim \ref{basicclaim} and Claim \ref{basicclaim2}.

We continue using definitions and notations in Section 4-5. Let $\Sigma$ be a collection of nilpotent cones with $\mathrm{Ad}_{\Gamma}$-generating set:
\begin{equation}
    \mathcal{S}:=\{\sigma_0,\sigma_1,...,\sigma_K\},
\end{equation}
here we can just take $\sigma_i$ to be all local monodromy nilpotent cones coming from $\varphi$ (with a chosen order).

For any ordered pair 
\begin{equation}\label{orderedpairofcones}
    (\sigma_i, \sigma_j)\in \mathcal{S}\times \mathcal{S}, \ 0\leq i,j\leq K, 
\end{equation}
let $\{\gamma^{ij}_l\}_{l\geq 0}\subset \Gamma$ be the countable sequence such that 
\begin{equation}
    \sigma_i\cap \mathrm{Ad}_{\gamma^{ij}_k}\sigma_j \neq \emptyset, \ \tilde{B}(\sigma_i)\cap \tilde{B}(\mathrm{Ad}_{\gamma^{ij}_k}\sigma_j)\neq \emptyset.
\end{equation}
If the sequence is indeed finite (or even empty), we make it an infinite one by adding infinitely many $\mathrm{Id}$. By Proposition \ref{gammafiniteclass}, the image of $\{\gamma^{ij}_l\}_{l\geq 0}$ under the projection
\begin{equation}
     G_{\mathbb{C}}\rightarrow Z(\sigma_i)\backslash G_{\mathbb{C}}/Z(\sigma_j) 
\end{equation}
is finite. Let $L>0$ big enough and for any $(i,j)$, we choose a set 
\begin{equation}
    \{\gamma^{ij}_0,...,\gamma^{ij}_{L}\}\subset \Gamma, \ 0\leq i,j\leq K
\end{equation}
covers all of these classes. By this procedure, for each ordered pair $(i,j)$ and the corresponding ordered pair in \eqref{orderedpairofcones}, we obtain a set 
of the following $(K+1)^2(L+1)$ cones:
\begin{equation}\label{allpossiblebusters}
    \{\sigma_{i}\}\cup \{\mathrm{Ad}_{\gamma^{ij}_l}\sigma_j\}, \ 0\leq i,j\leq K, \ 0\leq l\leq L.
\end{equation}
Next we prove a lemma regarding the combinatorics of polyhedral cones.
\begin{lemma}\label{simplicialfanconstruction}
    For a finite-dimensional $\mathbb{Q}$-vector space $V$ and a finite collection of sharp polyhedral cones $\{\sigma_i\}_{0\leq i\leq N}$ in $V$ which is closed under taking faces, there exists a finite subdivision on $\cup_{0\leq i\leq N}|\sigma_i|$ such that the resulting new collection of cones $\{\tau_j\}_{0\leq j\leq M}$ satisfy:
    \begin{description}
         \item[(i)] $\cup_{0\leq i\leq N}|\sigma_i|=\cup_{0\leq j\leq M}|\tau_j|$,
         \item[(ii)] The collection  $\{\tau_j\}_{0\leq j\leq M}$ is a fan.
    \end{description}
\end{lemma}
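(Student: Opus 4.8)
The plan is to obtain $\{\tau_j\}$ as the restriction to $\bigcup_i|\sigma_i|$ of the fan cut out by one finite rational hyperplane arrangement. First I would fix, for each $i$, a representation $\overline{\sigma_i}=\{x\in V\mid \ell^{(i)}_1(x)\ge 0,\dots,\ell^{(i)}_{m_i}(x)\ge 0\}$ of the closed cone as a finite intersection of rational closed half-spaces (when $\sigma_i$ is not full-dimensional this means adjoining, for each linear form cutting out $\mathrm{span}(\overline{\sigma_i})$, its two opposite inequalities). Let $\mathcal{A}$ be the finite set of rational hyperplanes $\{\ell^{(i)}_k=0\}$, over all $i,k$, and let $\Sigma_{\mathcal{A}}$ be the collection of (relatively open) cells of this central arrangement, i.e.\ the cones on which the sign vector $\big(\mathrm{sgn}\,\ell^{(i)}_k(x)\big)_{i,k}$ is constant. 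Each such cell is a rational polyhedral cone, there are finitely many, and in the convention of this paper (cones relatively open) $\Sigma_{\mathcal{A}}$ is a fan: distinct cells carry distinct sign vectors hence are disjoint, and a face of a cell is again a cell (turn some strict signs into equalities), so $\Sigma_{\mathcal{A}}$ is closed under faces.

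Next I would record two elementary facts. First, sharpness: since each $\sigma_i$ is sharp, $\bigcap_k\ker\ell^{(i)}_k=\overline{\sigma_i}\cap(-\overline{\sigma_i})=\{0\}$, so already the forms $\ell^{(i)}_k$ span $V^{*}$; a fortiori $\bigcap_{H\in\mathcal{A}}H=\{0\}$, hence every cell of $\Sigma_{\mathcal{A}}$ is sharp. Second, compatibility: each $\overline{\sigma_i}$ is, by construction, the union of the closures of those cells whose sign vector satisfies $\ell^{(i)}_k\ge 0$ for all $k$; in particular $\sigma_i$ itself is a union of cells of $\Sigma_{\mathcal{A}}$, and $\bigcup_i|\sigma_i|=\bigcup_i\overline{\sigma_i}$ is a union of closed cells.

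Finally I would restrict: set $\{\tau_j\}:=\{\tau\in\Sigma_{\mathcal{A}}\mid \overline{\tau}\subset\textstyle\bigcup_i|\sigma_i|\}$. By the second fact this is a subdivision of $\{\sigma_i\}$ with $\bigcup_j|\tau_j|=\bigcup_i|\sigma_i|$, which gives (i); it is finite; it consists of sharp rational polyhedral cones by the first fact; and it is a fan, because pairwise disjointness is inherited from $\Sigma_{\mathcal{A}}$ while closedness under faces is preserved by the restriction, a face of $\tau_j$ having closure inside $\overline{\tau_j}\subset\bigcup_i|\sigma_i|$. This establishes (ii).

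I do not expect a serious obstacle: the content is the standard fact that a hyperplane arrangement refines any finite family of cones into a fan, and the only care needed is the bookkeeping between relatively open cones and their closures, the treatment of cones that are not full-dimensional (handled by the $\pm$ equations above), and the observation used in the last step that the restriction of a fan to the union of the supports of some of its cones is again a fan precisely because that union is a union of closed cells. One could alternatively argue by induction on $N$ via iterated star subdivisions as in \cite[Chap.\ 11]{CLS11}, but the arrangement picture makes finiteness and the fan axioms transparent in a single step.
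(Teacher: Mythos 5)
Your proposal is correct and follows essentially the same route as the paper: both take the finite rational hyperplane arrangement coming from defining linear forms of the cones, observe that its relatively open cells refine each $\sigma_i$ and form a fan, and then keep exactly the cells lying in $\bigcup_i|\sigma_i|$. Your additional bookkeeping (sharpness of the cells and closure under faces of the restricted collection) only makes explicit points the paper leaves implicit.
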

\begin{proof}
    Fix a coordinate system $(x_1,..,x_n)$ on $V$. For each cone $\sigma_i$, there exists finitely many linear forms 
    \begin{equation}
        l^i_j:=\sum_{\lambda=1}^{n_i} a^{ij}_{\lambda}x_{\lambda}, \ 0\leq j\leq n_i,
    \end{equation}
    each of which is defined over $\mathbb{Q}$, such that
    \begin{equation}
        |\sigma_i|=\cap_{0\leq j\leq n_i}l^i_j\circ 0
    \end{equation}
    where each $\circ$ should be replaced by one of $>, \ <, \ =$. We shall call this a (rational) hyperplane representation of $\sigma_i$. For each $\sigma_i$ we fix a hyperplane representation. Running over all of $0\leq i\leq N$, these linear forms give finitely many hyperplanes in $V$:
    \begin{equation}
        \{l^i_j=0 \ | \ 0\leq i\leq N, \ 0\leq j\leq n_i\}.
    \end{equation}
    Which seperate the whole space $V$ into finitely many relatively open chambers $\{\tau_j\ | \ j \in \mathcal{J}\}$ (with different dimensions). A direct observation is the support of $\{\tau_j\ | \ j \ \in \mathcal{J}\}$ is $V$, and $\{\tau_j\ | \ j \in \mathcal{J}\}$ gives a refinement of $\{\sigma_i\}$ in the sense that each $\sigma_i$ is the disjoint union of finitely many elements in $\{\tau_j\ | \ j \in \mathcal{J}\}$.
    
    We take the chambers $\{\tau_j \ | \ 1\leq j\leq M\}$ such that the condition (i) in the lemma is satisfied. The condition (ii) is immediately justified by the following observation: Each $\tau_j$ is a relatively open convex polyhedral cone, and $\tau_j\cap \tau_k$ is either $\emptyset$ or $\tau_j=\tau_k$, in which the latter case happens if and only if $\tau_j$ and $\tau_k$ have the same hyperplane representation.
\end{proof}
To apply the lemma, we shall denote 
\begin{equation}
   \mathcal{T}:=\{\tau_j \ | \ 1\leq j\leq M\}
\end{equation}
as the simplicial cone complex constructed from the set of cones in \eqref{allpossiblebusters}, and $\hat{\Sigma}$ be the union of $\mathrm{Ad}_{\Gamma}$-orbits of $\mathcal{T}$. Clearly $|\hat{\Sigma}|=|\Sigma|$ and $\hat{\Sigma}$ is a refinement of $\Sigma$, and the subdivision is finitely generated up to $\mathrm{Ad}_{\Gamma}$-action.

Therefore, to prove Claim \ref{basicclaim} and Claim \ref{basicclaim2}, it suffices to prove $\hat{\Sigma}$ is a $\Gamma$-strongly compatible weak fan in each case. 

\subsection{Proof of Claim \ref{basicclaim}}

A critical observation is the notions of fan and weak fan are equivalent in the classical cases, as a consequence of the following Lemma \ref{opensubconesamebn} and Proposition \ref{fanweakfanequivalent}.

\begin{lemma}\label{opensubconesamebn}
    Suppose $\hat{\sigma}\subset \sigma$ are nilpotent cones and of same dimension. If $\tilde{B}(\sigma)\neq \emptyset$, then $\tilde{B}(\sigma)=\tilde{B}(\hat{\sigma})$.
\end{lemma}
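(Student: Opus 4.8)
The plan is to establish the two inclusions $\tilde{B}(\sigma)\subseteq\tilde{B}(\hat{\sigma})$ and $\tilde{B}(\hat{\sigma})\subseteq\tilde{B}(\sigma)$ separately; the first is elementary, while the second relies on the local structure of $\tilde{B}$ recorded in \eqref{tangentspaceofbn} and \eqref{structureofbn}. First I record what the hypotheses give. With the convention that cones are relatively open, $\hat{\sigma}\subseteq\sigma$ together with $\dim\hat{\sigma}=\dim\sigma$ forces $\overline{\hat{\sigma}}\subseteq\overline{\sigma}$ with both spanning one and the same $\mathbb{Q}$-subspace $V\subseteq\mathfrak{g}_{\mathbb{Q}}$; in particular every $N\in\hat{\sigma}$ lies in the relative interior of $\sigma$, so Theorem \ref{nilpconeweightfiltration} (applicable because $\tilde{B}(\sigma)\neq\emptyset$) gives $W(\hat{\sigma})=W(\sigma)$. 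Since $\mathfrak{z}(\sigma)$ and $Z(\sigma)$ depend only on the linear span of $\sigma$, we also get $\mathfrak{z}(\sigma)=\mathfrak{z}(\hat{\sigma})$ and $Z(\sigma)=Z(\hat{\sigma})$.

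For $\tilde{B}(\sigma)\subseteq\tilde{B}(\hat{\sigma})$, let $F^{\bullet}\in\tilde{B}(\sigma)$. Condition (i) of Definition \ref{definitionnilporbit} for $(\hat{\sigma},F^{\bullet})$ is inherited from $\sigma$. For the positivity condition (ii), fix generators $N_1,\dots,N_d$ of $\sigma$ and $M_1,\dots,M_e$ of $\hat{\sigma}$ and write $M_k=\sum_j a_{kj}N_j$ with $a_{kj}\geq 0$. For each $j$ at least one $a_{kj}>0$: otherwise every positive combination of the $M_k$ would have vanishing $N_j$-coordinate, contradicting that such combinations lie in $\hat{\sigma}\subseteq\sigma$, hence in the relative interior of $\overline{\sigma}$, where all $N_j$-coordinates are strictly positive. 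Therefore, whenever $\mathrm{Im}(z_k)\gg 0$ for all $k$, the $N_j$-coefficient of $\sum_k z_k M_k$ has imaginary part at least $(\min_k\mathrm{Im}(z_k))\sum_k a_{kj}\gg 0$, so condition (ii) for $(\sigma,F^{\bullet})$ gives $\exp(\sum_k z_k M_k)F^{\bullet}\in D$ and hence $F^{\bullet}\in\tilde{B}(\hat{\sigma})$.

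For $\tilde{B}(\hat{\sigma})\subseteq\tilde{B}(\sigma)$, note that by \eqref{tangentspaceofbn} and $\mathfrak{z}(\sigma)=\mathfrak{z}(\hat{\sigma})$ the two sets have the same tangent space at every point of $\tilde{B}(\sigma)$, so by the first inclusion $\tilde{B}(\sigma)$ is open in $\tilde{B}(\hat{\sigma})$; and by \eqref{structureofbn}, using $Z(\sigma)=Z(\hat{\sigma})$ and $W(\sigma)=W(\hat{\sigma})$, for $F^{\bullet}\in\tilde{B}(\sigma)$ its $Z(\sigma)$-orbit inside $\tilde{B}(\sigma)$ coincides with its $Z(\hat{\sigma})$-orbit inside $\tilde{B}(\hat{\sigma})$. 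Since each of $\tilde{B}(\sigma)$, $\tilde{B}(\hat{\sigma})$ is a finite disjoint union of such $Z$-orbits, which are open (hence also closed) by \eqref{tangentspaceofbn}, the subset $\tilde{B}(\sigma)$ is a union of connected components of $\tilde{B}(\hat{\sigma})$, and it remains only to see it meets every component. Given $F^{\bullet}$ in an arbitrary component of $\tilde{B}(\hat{\sigma})$ and $N_0\in\hat{\sigma}\subseteq\sigma$, one has that $(\mathbb{R}_{\geq 0}N_0,F^{\bullet})$ is a nilpotent orbit with $W(N_0)=W(\hat{\sigma})=W(\sigma)$, and condition (i) of Definition \ref{definitionnilporbit} holds for every $N\in\sigma$ because it is a linear condition holding on the $\mathbb{Q}$-span $V$ of $\hat{\sigma}$; then Theorem \ref{nilporbittolmhs}, together with the fact that polarization of a mixed Hodge structure by one interior element of a cone with the right weight filtration propagates across the cone, gives $F^{\bullet}\in\tilde{B}(\sigma)$.

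I expect the crux to be exactly this last propagation statement --- equivalently, the claim that $\tilde{B}(\hat{\sigma})$ carries no limiting-mixed-Hodge-structure types beyond those already present in $\tilde{B}(\sigma)$. I plan to obtain it from the Cattani--Kaplan theory of polarized mixed Hodge structures underpinning Theorems \ref{nilpconeweightfiltration} and \ref{nilporbittolmhs}; I also note that in the cases actually used in this paper --- the classical weight $1$ case and the weight $3$ Calabi--Yau type I and type IV cases --- the weight filtration $W(\sigma)$ already determines the LMHS type uniquely, so $\tilde{B}(\sigma)$ and $\tilde{B}(\hat{\sigma})$ are each connected and the component-counting step drops out entirely.
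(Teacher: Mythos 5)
Your proposal is correct in substance, but it takes a genuinely different (and more general) route than the paper. The paper's own proof is two lines: it takes the inclusion $\tilde{B}(\sigma)\subset\tilde{B}(\hat{\sigma})$ for granted, notes via \eqref{structureofbn} that $\sigma_{\mathbb{C}}=\hat{\sigma}_{\mathbb{C}}$ forces the $Z(\sigma)$-orbit of a common point to be the same in both sets, and then invokes the fact that in the classical weight $1$ case $Z(\sigma)$ acts transitively on $\tilde{B}(\sigma)$ (so both sets are a single orbit) --- i.e.\ the lemma, as proved there, is really a weight-$1$ statement, used in the proof of Claim \ref{basicclaim} and extended to the CY type I setting only by the ``complete analog'' remark in Proposition \ref{fanweakfanequivalent2}. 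Your closing fallback paragraph (unique LMHS type for the fixed $W(\sigma)$, hence a single $Z$-orbit, hence connectedness) is essentially a reconstruction of that argument. Your main route is different: you prove the easy inclusion honestly (the paper asserts it; your positivity estimate is fine given the paper's standing assumption, in the footnote to Definition \ref{definitionnilporbit}, that generators are linearly independent), and for the hard inclusion you reduce to the statement that Griffiths transversality on all of $\sigma$ plus polarization of $(W(\sigma)[-l],F^{\bullet})$ by a single interior element implies $(\sigma,F^{\bullet})$ is a nilpotent orbit. That propagation statement is exactly the several-variable Cattani--Kaplan--Schmid criterion (CKS 1986, Thm.\ 4.66), and since you have already verified transversality for the whole span and constancy of $W$ on the interior of $\sigma$ (from $\tilde{B}(\sigma)\neq\emptyset$ via Theorem \ref{nilpconeweightfiltration}), it applies and closes the argument; note that it is this CKS form you need, not the paper's Theorem \ref{nilporbittolmhs} under the ``polarized by every $N$'' reading, so cite it explicitly. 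What each approach buys: the paper's proof is shorter but leans on transitivity of $Z(\sigma)$ on $\tilde{B}(\sigma)$, which is special to the classical/Hodge--Tate-like situations where the lemma is used; your CKS route proves the lemma as literally stated, in arbitrary weight, with no connectedness input --- and once the pointwise propagation step is in place, your orbit/component bookkeeping in the second inclusion becomes redundant and could be dropped.
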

\begin{proof}
    As long as there exists $F^{\bullet}\in \tilde{B}(\sigma)\subset \tilde{B}(\hat{\sigma})$, by \eqref{structureofbn} we have $\tilde{B}(\sigma)= \tilde{B}(\hat{\sigma})$ as $\sigma_{\mathbb{C}}=\hat{\sigma}_{\mathbb{C}}$ by the assumption, where the superscript $\circ$ means the connected components containing the $Z(\sigma)$-orbit of $F^{\bullet}$. However, in the classical weight $1$ case, $Z(\sigma)$ acts transitively on $\tilde{B}(\sigma)$, hence the lemma follows.
\end{proof}

\begin{prop}\label{fanweakfanequivalent}
    Suppose $\sigma, \tau$ are two nilpotent cones polarize LMHS of classical weight $1$ type. If $\sigma\cap \tau\neq \emptyset$, then $\tilde{B}(\sigma)=\tilde{B}(\tau)\neq \emptyset$.
\end{prop}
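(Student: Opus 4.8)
The plan is to reduce the statement to Lemma~\ref{opensubconesamebn}. Let $\sigma,\tau$ be nilpotent cones that polarize classical weight $1$ (type $\mathrm{I}_a$) LMHS, and suppose $\rho:=\sigma\cap\tau\neq\emptyset$. By Convention, cones are taken relatively open, so $\rho$ is a nonempty relatively open convex polyhedral cone contained in both $\sigma$ and $\tau$. First I would pick any $N\in\rho$ and recall Theorem~\ref{nilpconeweightfiltration}: the weight filtration is constant on the relative interior of a cone, so $W(\sigma)=W(N)=W(\tau)$ (up to the shift $[-l]$), since $N$ lies in the relative interior of both $\sigma$ and $\tau$. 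Hence $\sigma$ and $\tau$ share the same Jacobson--Morozov weight filtration $W:=W(\sigma)=W(\tau)$.

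Next I would use the structure result recalled in \eqref{tangentspaceofbn}--\eqref{structureofbn}, specialized to the classical weight $1$ situation exactly as in the proof of Proposition~\ref{uniformizationofbasepointHT}: for a type $\mathrm{I}_a$ LMHS the adjoint Deligne splitting is $\mathfrak{g}_{\mathbb{C}}=\mathfrak{g}^{1,1}\oplus\mathfrak{g}^{1,0}\oplus\mathfrak{g}^{0,1}\oplus\mathfrak{g}^{0,0}\oplus\mathfrak{g}^{-1,0}\oplus\mathfrak{g}^{0,-1}\oplus\mathfrak{g}^{-1,-1}$, and for any cone $\eta$ with $W(\eta)=W$ one has $\eta\subset\mathfrak{g}^{-1,-1}$ and $T_{F^\bullet}\tilde B(\eta)\cong\mathfrak{g}^{-1,-1}$, a vector space depending only on $W$ (equivalently on $F^\bullet$), \emph{not} on $\eta$. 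Combined with the fact, recalled in Lemma~\ref{opensubconesamebn}, that in the classical weight $1$ case $Z(\eta)$ acts transitively on $\tilde B(\eta)$, this gives that $\tilde B(\eta)$ depends only on $W$, not on the particular cone $\eta$ with $W(\eta)=W$. In particular $\tilde B(\sigma)=\tilde B(\tau)$.

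It remains to see that this common set is nonempty. Here I would invoke the hypothesis that $\sigma$ and $\tau$ \emph{do} polarize a classical weight $1$ LMHS: by definition of $\Sigma$ (or of $\Sigma_{\mathrm I}$, $\Sigma_{\mathrm{HT}}$ in the relevant subsection) these are cones arising --- possibly after subdivision --- from local monodromy of the period map, so by Schmid's nilpotent orbit theorem each of them admits some $F^\bullet\in\check D$ with $(\sigma,F^\bullet)$ (resp.\ $(\tau,F^\bullet)$) a nilpotent orbit, i.e.\ $\tilde B(\sigma)\neq\emptyset$ and $\tilde B(\tau)\neq\emptyset$. Together with the previous paragraph this yields $\tilde B(\sigma)=\tilde B(\tau)\neq\emptyset$.

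The only subtle point --- and the step I expect to be the main obstacle --- is justifying that $\tilde B(\eta)$ genuinely depends only on $W$ in the classical case, i.e.\ that there is a \emph{single} $Z(\eta)$-orbit rather than several. This is where the classical/non-classical dichotomy bites: the discussion around \eqref{structureofbn} and \cite[Sec.~1.4]{KP16} warns that $\tilde B$ may be disconnected with finitely many orbits classified by LMHS type, but for the weight $1$ period domain $D_{(g,g)}$ all degenerations are of type $\mathrm{I}_a$ and, once the Hodge numbers $h^{p,q}$ of the LMHS (equivalently $a$) are pinned down by $W$, there is a unique orbit; I would make this precise by citing transitivity of $Z(\eta)$ on $\tilde B(\eta)$ as already used in Lemma~\ref{opensubconesamebn}, so that nonemptiness of $\tilde B(\eta)$ immediately upgrades the local statement $T_{F^\bullet}\tilde B(\sigma)=T_{F^\bullet}\tilde B(\tau)$ to the global equality $\tilde B(\sigma)=\tilde B(\tau)$.
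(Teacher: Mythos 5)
There is a genuine gap. Your entire reduction hinges on the assertion that, for cones $\eta$ with the same weight filtration $W$, the set $\tilde B(\eta)$ ``depends only on $W$'', and you justify this by comparing tangent spaces $T_{F^\bullet}\tilde B(\sigma)=T_{F^\bullet}\tilde B(\tau)$ and then invoking transitivity of $Z(\eta)$ on $\tilde B(\eta)$. But the tangent space $T_{F^\bullet}\tilde B(\tau)$ is only defined once $F^\bullet\in\tilde B(\tau)$, and producing such a common point of $\tilde B(\sigma)$ and $\tilde B(\tau)$ is precisely the substance of the proposition: a priori you only know $F^\bullet\in\tilde B(\sigma)$ and that $\tilde B(\tau)$ is nonempty at some possibly unrelated filtration. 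Transitivity of $Z(\sigma)$ on $\tilde B(\sigma)$ and of $Z(\tau)$ on $\tilde B(\tau)$ says nothing about how these two orbits of two \emph{different} groups sit relative to each other, and infinitesimal agreement at a hypothetical common point would not by itself force the orbits to coincide. In effect you use the hypothesis $\sigma\cap\tau\neq\emptyset$ only to get $W(\sigma)=W(\tau)$, whereas the paper's proof uses it a second time, and crucially: fixing $F^\bullet\in\tilde B(\sigma)$, every $N\in\sigma\cap\tau$ polarizes $(W(\sigma)[-1],F^\bullet)$ (Theorem \ref{nilporbittolmhs}), so $\mathcal{C}_{F^\bullet}\cap\tau_{\mathbb Q}$ is a \emph{nonempty} open subset of $\tau_{\mathbb Q}$; this yields a full-dimensional subcone $\tau'\subset\tau$ with $(\tau',F^\bullet)$ a nilpotent orbit, and only then does Lemma \ref{opensubconesamebn} (where the weight-$1$ transitivity of $Z$ genuinely enters) upgrade this to $F^\bullet\in\tilde B(\tau)$, giving $\tilde B(\sigma)\subset\tilde B(\tau)$ and, by symmetry, equality. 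That openness-plus-Lemma step is the missing bridge in your argument.

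Two secondary inaccuracies: the identification $T_{F^\bullet}\tilde B(\eta)\cong\mathfrak g^{-1,-1}$ in \eqref{localtangentspaceareequal} is special to the Hodge--Tate case; for type $\mathrm I_a$ with $a<g$ one only has $T_{F^\bullet}\tilde B(\eta)=\mathfrak z(\eta)\cap T_{F^\bullet}\check D$, which is strictly larger (it also accounts for deforming the weight-$1$ Hodge structure on $\mathrm{Gr}^W_1$), and your displayed adjoint Deligne splitting omits the components $\mathfrak g^{-1,1}$ and $\mathfrak g^{1,-1}$. Neither of these is fatal by itself, but they feed the same over-optimistic step, namely treating the local (tangent-space) statement as if it already implied the global equality of $\tilde B(\sigma)$ and $\tilde B(\tau)$.
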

\begin{proof}
    Fix an $F^{\bullet}\in \tilde{B}(\sigma)$, and denote $\mathfrak{g}=\oplus \mathfrak{g}^{p,q}$ as the adjoint Hodge decomposition of $(\sigma, F^{\bullet})$. We also denote
    \begin{equation}
        \mathcal{C}_{F^{\bullet}}:=\{N\in \mathfrak{g}^{-1,-1}\cap \mathfrak{g}_{\mathbb{Q}} \ | \ (N, F^{\bullet}) \ \hbox{is a nilpotent orbit} \ \}.
    \end{equation}
     Since $\sigma\cap \tau\neq \emptyset$, we must have $W(\sigma)=W(\tau)$, therefore $\tau\subset W_{\mathfrak{g}}(\sigma)_{-2}$, in the classical weight $1$ case this means $\tau\subset \mathfrak{g}^{-1,-1}$ and thus $\mathcal{C}_{F^{\bullet}}\cap \tau_{\mathbb{Q}}$ is open in $\tau_{\mathbb{Q}}$. In other words, there exists a $\tau^{'}\subset \tau$ such that $\sigma\cap \tau\subset \tau^{'}\subset \tau$, $\tau^{'}_{\mathbb{Q}}=\tau_{\mathbb{Q}}$ and $(\tau^{'}, F^{\bullet})$ is a nilpotent orbit. Combine Lemma \ref{opensubconesamebn} and $\tilde{B}(\tau)\neq \emptyset$ we know $(\tau, F^{\bullet})$ is a nilpotent orbit, hence $\tilde{B}(\sigma)\subset \tilde{B}(\tau)$. Similarly we have $\tilde{B}(\sigma)\supset \tilde{B}(\tau)$ and thus $\tilde{B}(\sigma)= \tilde{B}(\tau)$.
\end{proof}

Suppose for some $\hat{\sigma}, \hat{\tau}\in \hat{\Sigma}$ and $\gamma\in \Gamma$, we have $\hat{\sigma} \cap \hat{\tau}\neq \emptyset$. After conjugating by $\Gamma$ we can assume there exists $\sigma\in \mathcal{S}$ such that $\hat{\sigma}\subset \sigma$. 

According to Proposition \ref{gammafiniteclass} and Proposition \ref{fanweakfanequivalent}, there exists some $\tau_0\in \mathcal{S}$ and a finite set $\{\gamma_1,...,\gamma_L\}\in \Gamma$ such that $\hat{\tau}\subset \tau=\mathrm{Ad}_{\gamma_0}\tau_0$ for some $\gamma_0\in \Gamma$, and $\gamma_0$ induce the same class as $\gamma_l$ in $Z(\sigma)\backslash G_{\mathbb{C}}/Z(\tau_0)$ for some $1\leq l\leq L$. Since replacing $\gamma_0$ by anything in its own $(Z(\sigma), Z(\tau_0))$-coset does not change $\sigma\cap \mathrm{Ad}_{\gamma_0}\tau_0$, it is enough to consider when $(\sigma, \mathrm{Ad}_{\gamma_0}\tau_0)$ is one of the cases in \eqref{allpossiblebusters}.

By the construction of Lemma \ref{simplicialfanconstruction}, support of the (finitely many) cones in \eqref{allpossiblebusters} also supports the simplicial cone complex $\{\tau_j \ | \ 1\leq j\leq M\}$ which is a fan, and $\hat{\Sigma}$ is taken to be the union of $\{\tau_j \ | \ 1\leq j\leq M\}$ and their $\mathrm{Ad}_{\Gamma}$-translations. As a consequence $\hat{\sigma},  \hat{\tau}\in \{\tau_j \ | \ 1\leq j\leq M\}$, but $\{\tau_j \ | \ 1\leq j\leq M\}$ by construction is a fan, therefore we must have $\hat{\sigma}=\hat{\tau}$ and the claim follows.

\subsection{Proof of Claim \ref{basicclaim2}}

Suppose now $\hat{\sigma}, \hat{\tau}\in \mathcal{T}$, and for some $\gamma_0\in \Gamma$ we have $\hat{\sigma}\cap \mathrm{Ad}_{\gamma_0}\hat{\tau}\neq \emptyset$ and $F^{\bullet}\in \tilde{B}(\hat{\sigma}, \Phi)\cap \tilde{B}(\mathrm{Ad}_{\gamma_0}\hat{\tau}, \Phi)\neq \emptyset$. We need to prove if this case shows up, we must have $\hat{\sigma}= \mathrm{Ad}_{\gamma_0}\hat{\tau}$.

Let $\sigma$ and $\tau_0$ be the unique cones in $\mathcal{S}$ containing $\hat{\sigma}$ and some $\hat{\tau}$ respectively, then $\sigma\cap \mathrm{Ad}_{\gamma_0}\tau_0\neq \emptyset$. We split into $2$ cases. 

\subsubsection{Case 1} Both $(\hat{\sigma}, F^{\bullet})$ and $(\mathrm{Ad}_{\gamma_0}\hat{\tau}, F^{\bullet})$ give type I degenerations. In this case we have an analog of Proposition \ref{fanweakfanequivalent} as follows. 

\begin{prop}\label{fanweakfanequivalent2}
    Suppose $\sigma, \tau$ are two nilpotent cones which could polarize type I LMHS of weight $3$ Calabi-Yau type. If $\sigma\cap \tau\neq \emptyset$, then $\tilde{B}(\sigma, \Phi)=\tilde{B}(\tau, \Phi)\neq \emptyset$.
\end{prop}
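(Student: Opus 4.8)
\emph{Proof plan.} The plan is to mimic the proof of Proposition \ref{fanweakfanequivalent} almost verbatim, replacing the weight-$1$ transitivity of $Z(\sigma)$ on $\tilde B(\sigma)$ by its type $\mathrm I$ counterpart recorded at the start of Section \ref{type1}. First, since $\sigma$ and $\tau$ are relatively open and $\sigma\cap\tau\neq\emptyset$, any $N_0$ in the intersection lies in the relative interior of both, so $W(\sigma)=W(N_0)=W(\tau)$ by Theorem \ref{nilpconeweightfiltration}. Fix $F^{\bullet}\in\tilde B(\sigma,\Phi)$ — nonempty because $\sigma$ polarizes type $\mathrm I$, and since $N^{2}=0$ for $N\in\sigma$ and $\Phi$ excludes type $\mathrm{II}$, the orbit $(\sigma,F^{\bullet})$ is necessarily of type $\mathrm I$ — and write $\mathfrak g_{\mathbb C}=\bigoplus\mathfrak g^{p,q}$ for the adjoint Deligne bigrading of $(W(\sigma)[-3],F^{\bullet},\sigma)$.

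The one genuinely new ingredient over the classical case is the identity $W_{\mathfrak g}(\sigma)_{-2}=\mathfrak g^{-1,-1}$ for type $\mathrm I$ LMHS of $(1,h,h,1)$. I would verify it by reading off Figure \ref{typeonelmhs}: the only pair of nonzero Hodge--Deligne pieces whose difference has total degree $-2$ is $(I^{2,2},I^{1,1})$, whence $\mathfrak g^{0,-2}=\mathfrak g^{-2,0}=\mathfrak g^{1,-3}=\mathfrak g^{-3,1}=0$, and moreover $\mathrm{ad}_{N}^{3}=0$ (as $N^2=0$) so that $\mathfrak g$ has Hodge degrees in $\{-2,\dots,2\}$; in particular $\mathfrak g^{-1,-1}$ is abelian. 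Consequently $\tau\subset W_{\mathfrak g}(\sigma)_{-2}=\mathfrak g^{-1,-1}$. Via the restriction--and--quotient construction of Section \ref{type1}, which realizes $(W(\sigma)[-3]|_{H_2},\tilde F^{\bullet}_0,\sigma|_{H_2})$ as a weight $1$ Hodge--Tate LMHS on a symplectic lattice, the cone $\mathcal C_{F^{\bullet}}:=\{N\in\mathfrak g^{-1,-1}\cap\mathfrak g_{\mathbb Q}\mid (N,F^{\bullet})\text{ is a nilpotent orbit}\}$ becomes the homogeneous self-adjoint cone of \cite[Chap.~2]{AMRT10}; in particular it is open and convex, and a subcone $C\subset\mathfrak g^{-1,-1}_{\mathbb Q}$ gives a nilpotent orbit $(C,F^{\bullet})$ iff $C\subset\mathcal C_{F^{\bullet}}$. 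Since every $N\in\sigma\cap\tau$ gives a type $\mathrm I$ nilpotent orbit $(N,F^{\bullet})$, we get $\emptyset\neq\sigma\cap\tau\subset\mathcal C_{F^{\bullet}}\cap\tau$, so $\tau':=\tau\cap\mathcal C_{F^{\bullet}}$ is a relatively open subcone of $\tau$ with $\tau'_{\mathbb Q}=\tau_{\mathbb Q}$ for which $(\tau',F^{\bullet})$ is a type $\mathrm I$ nilpotent orbit.

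It then remains to promote $\tau'$ to $\tau$, i.e. to prove the type $\mathrm I$ analogue of Lemma \ref{opensubconesamebn}: if $\hat\sigma\subset\sigma$ have the same dimension, both polarize type $\mathrm I$, and $\tilde B(\sigma,\Phi)\neq\emptyset$, then $\tilde B(\sigma,\Phi)=\tilde B(\hat\sigma,\Phi)$. The inclusion $\subseteq$ is formal; for $\supseteq$ one uses $Z(\hat\sigma)=Z(\sigma)$ (since $\hat\sigma_{\mathbb C}=\sigma_{\mathbb C}$), the description \eqref{structureofbn} of the $Z(\sigma)$-orbits inside $\tilde B$, and the isomorphism $(P_N)_{\mathbb C}\backslash\mathrm{Stab}_{F^{\bullet}}\cong Z(N)\backslash\mathrm{Stab}_{F^{\bullet}}$ from Section \ref{type1}, which forces the type $\mathrm I$ locus $\tilde B(\sigma,\Phi)$ (resp. $\tilde B(\hat\sigma,\Phi)$) to be a single $Z(\sigma)$-orbit; comparing the two via $\sigma_{\mathbb C}=\hat\sigma_{\mathbb C}$ then gives the equality. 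Applying this with $\hat\sigma=\tau'$ gives $F^{\bullet}\in\tilde B(\tau,\Phi)$, so $\tilde B(\sigma,\Phi)\subset\tilde B(\tau,\Phi)$; symmetry gives the reverse inclusion, and nonemptiness is clear. (Equivalently, having produced one common $F^{\bullet}$ and knowing $W(\sigma)=W(\tau)$, one could finish immediately by the observation in Section \ref{type1}.) I expect the main obstacle to be exactly this last step: in contrast with weight $1$, $\tilde B(\sigma)$ generally splits into several $Z(\sigma)$-orbits, so everything hinges on the non-classical fact that the type $\mathrm I$ piece is a single orbit; the degree count $W_{\mathfrak g}(\sigma)_{-2}=\mathfrak g^{-1,-1}$ that makes the abelian AMRT machinery available is routine by comparison.
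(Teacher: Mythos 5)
Your proof is correct and is exactly the ``complete analog'' of Proposition \ref{fanweakfanequivalent} that the paper intends: you correctly identify the type I inputs that replace the weight-$1$ ones, namely the degree count $W_{\mathfrak g}(\sigma)_{-2}=\mathfrak g^{-1,-1}$, the openness of the polarizing cone via the Hodge--Tate reduction of Section \ref{type1}, and the single-$Z(\sigma)$-orbit property of the type I locus standing in for the transitivity used in Lemma \ref{opensubconesamebn}. Your parenthetical shortcut --- once a common $F^{\bullet}$ with $W(\sigma)=W(\tau)$ is produced, conclude directly from the observation recorded at the start of Section \ref{type1} --- is also valid and matches how that observation is meant to be used.
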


\begin{proof}
    A complete analog.
\end{proof}

Therefore, $\tilde{B}(\sigma, \Phi)\cap \tilde{B}(\mathrm{Ad}_{\gamma_0}(\tau_0), \Phi)\neq \emptyset$ hence by the construction of $\mathcal{S}$, there exists $\mathrm{Ad}_{\gamma_1}(\tau_0)\in \mathcal{S}$ such that $\bar{\gamma}_0=\bar{\gamma}_1$ in $Z(\sigma)\backslash G_{\mathbb{C}}/Z(\tau_0)$. This means 
\begin{equation}
   \hat{\sigma}\cap \mathrm{Ad}_{\gamma_0}\hat{\tau} = \hat{\sigma}\cap \mathrm{Ad}_{\gamma_1}\hat{\tau},
\end{equation}
and by the construction of $\mathcal{T}$, we must have $\hat{\sigma}=\mathrm{Ad}_{\gamma_0}\hat{\tau}$.

\subsubsection{Case 2} Both $(\hat{\sigma}, F^{\bullet})$ and $(\mathrm{Ad}_{\gamma_0}\hat{\tau}, F^{\bullet})$ give type IV degenerations. In this case, since $W_{\mathfrak{g}}(\sigma)_{-2}$ is not a subset of $F_{\mathfrak{g}}^{-1}$, an analog of Proposition \ref{fanweakfanequivalent2} does not hold anymore. 

Consider the $4L+4$ cones given by setting $(\sigma_i,\sigma_j)$ be any ordered pairs from $\sigma$ and $\tau_0$ in \eqref{allpossiblebusters}. Proceeding as Lemma \ref{simplicialfanconstruction} with the subset $\{\sigma, \tau_0\}\subset \mathcal{S}$ instead, we obtained a polyhedral complex $\mathcal{T}(\sigma, \tau_0)$ satisfies the following conditions:
\begin{description}
    \item[(i)] $|\mathcal{T}(\sigma, \tau_0)|\subset |\mathcal{T}|$.
    \item[(ii)] Any $\tau\in \mathcal{T}(\sigma, \tau_0)$ is a disjoint union of cones in $\mathcal{T}$.
\end{description}
Let $\tilde{\mathcal{T}}$ and $\tilde{\mathcal{T}}(\tilde{\sigma}, \tilde{\tau_0})$ be the projections of $\mathcal{T}$ and $\mathcal{T}(\sigma, \tau_0)$ on $
\tilde{H}$ defined by \eqref{quotientvectorspaceH} via \eqref{projectionoftypeIVcones}, with $\hat{\sigma}_-$ and $\hat{\tau}_-$ being the images of $\hat{\sigma}$ and $\hat{\tau}$ respectively. By condition (ii) above there exist $\hat{\sigma}_1$ and $\hat{\tau}_1$ in $\tilde{\mathcal{T}}(\tilde{\sigma}, \tilde{\tau_0})$ containing $\hat{\sigma}_-$ and $\hat{\tau}_-$ respectively. 

Since $\tilde{\mathcal{T}}(\tilde{\sigma}, \tilde{\tau_0})$ is now a collection of nilpotent cones polarizing LMHS of classical weight $1$ type on $\tilde{H}$, by the previous proof of Claim \ref{basicclaim} we know $\mathrm{Ad}(\tilde{\Gamma})\tilde{\mathcal{T}}(\tilde{\sigma}, \tilde{\tau_0})$ is a fan. Therefore, $\hat{\sigma}_-\cap \mathrm{Ad}_{\tilde{\gamma}}\hat{\tau}_-\neq \emptyset$ implies $\hat{\sigma}_1=\mathrm{Ad}_{\tilde{\gamma}}\hat{\tau}_1$, and again by the condition (ii) above, $\hat{\sigma}_-=\mathrm{Ad}_{\tilde{\gamma}}\hat{\tau}_-$. Together with the assumption $\tilde{B}(\hat{\sigma}, \Phi)\cap \tilde{B}(\mathrm{Ad}_{\gamma_0}\hat{\tau}, \Phi)\neq \emptyset$, Lemma \ref{typeIVtricklemma} implies $\hat{\sigma}= \mathrm{Ad}_{\gamma_0}\hat{\tau}$.

\section{Appendix: Kato-Usui's theory: Overview and modification}

The purpose of this appendix is briefly reviewing Kato-Usui's theory and sketch the proof of Theorems \ref{katousuimainthmA} and \ref{katousuimainthmB}. 
\subsection{Kato-Usui's theory revisit}
\subsubsection{Basic constructions}
We begin with the local constructions following \cite[Chap. 3]{KU08}. Given a rational nilpotent cone $\sigma\subset \mathfrak{g}_{\mathbb{Q}}$, let
\begin{align}
&\mathrm{toric}_{\sigma}:=\mathrm{Spec}(\mathbb{C}[\Gamma(\sigma)^{\vee}])_{\mathrm{an}}\\
\nonumber &\mathrm{torus}_{\sigma}:=\mathrm{Spec}(\mathbb{C}[(\Gamma(\sigma)^{\mathrm{gp}})^{\vee}])_{\mathrm{an}}
\end{align}
be the toric variety and torus embedding associated to $\sigma_{\mathbb{R}}$. For any $q\in \mathrm{toric}_{\sigma}$ we associate the cone $\sigma(q)$ which is the face of $\sigma$ corresponds to the torus orbit $q$ lies in via the orbit-cone correspondence\footnote{See for example, \cite[Chap. 3]{CLS11}}. Regarding (evaluating at) $q$ as a regular function over $\mathbb{C}(\Gamma(\sigma)^{\vee})$, it also defines a class $[q]\in \frac{\sigma_{\mathbb{C}}}{\sigma(q)_{\mathbb{C}}+\mathrm{log}\Gamma(\sigma)^{\mathrm{gp}}}$, of which we denote any lift as $\mathrm{log}_{\sigma}q$.\\

Following \cite[Chap. 3]{KU08}, define
\begin{align}
\check{E}_{\sigma}&:=\mathrm{toric}_{\sigma}\times D\\
\nonumber \tilde{E}_{\sigma}&:=\{(q,F^{\bullet})\in \check{E}_{\sigma} \ | \ NF^{\bullet}\subset F^{\bullet-1} \ \forall  N\in \sigma(q)\}\\
\nonumber E_{\sigma}&:=\{(q,F^{\bullet})\in \tilde{E}_{\sigma} \ | \ (\sigma(q), \mathrm{exp}(\mathrm{log}_{\Gamma_{\sigma}}q)F^{\bullet}) \hbox{ is a } \sigma(q)-\hbox{nilpotent orbit } \}
\end{align}
Consider the map:
\begin{equation}\label{sigmatorsormap}
    \Theta_{\sigma}: E_{\sigma}\rightarrow \Gamma(\sigma)^{\mathrm{gp}}\backslash D_{\sigma},
\end{equation}
which is the quotient of the action:
\begin{equation}
    a\in \sigma_{\mathbb{C}}: E_{\sigma}\rightarrow E_{\sigma}, \ (a, (q,F^{\bullet}))\rightarrow (e(a)q, e^{-a}F^{\bullet}).
\end{equation}
This map realizes $E_{\sigma}$ as a $\sigma_{\mathbb{C}}$-torsor over $\Gamma(\sigma)^{\mathrm{gp}}\backslash D_{\sigma}$. 

Let $|\mathrm{toric}|_{\sigma}$ be the analytic closure of $\mathbb{R}$-points in $\mathrm{toric}_{\sigma}$, then we can similarly define:
\begin{align}
    \check{E}^{\#}_{\sigma}&:= |\mathrm{toric}|_{\sigma},\\
    \nonumber E^{\#}_{\sigma}&:=E_{\sigma}\cap \check{E}^{\#}_{\sigma},\\
    \nonumber \Theta^{\#}_{\sigma}&: E^{\#}_{\sigma}\rightarrow D^{\#}_{\sigma}.
\end{align}
if we replace $\sigma_{\mathbb{C}}$ by $i\sigma_{\mathbb{R}}$, and nilpotent orbits by nilpotent $i$-orbits. There is a commutative diagram
\begin{equation}\label{diagramoftorsors}
\begin{tikzcd}
  E^{\#}_{\sigma} \arrow[d] \arrow[d] \arrow[r] & E_{\sigma} \arrow[d]\\
D^{\#}_{\sigma} \arrow[r] & \Gamma(\sigma)^{\mathrm{gp}}\backslash D_{\sigma}
\end{tikzcd}
\end{equation}
which realizes $E^{\#}_{\sigma}$ as an $i\sigma_{\mathbb{R}}$-torsor over $D^{\#}_{\sigma}$.

\begin{definition}[\cite{KU08}, Chap. 2]
    For a subset $X\subset S$ where $S$ is a complex analytic space, define the strong topology on $X$ as follows: It is the finest topology such that for any morphism of complex analytic spaces $f: T\rightarrow S$ with $f(T)\subset X$, the (set-theoretic) map $f: T\rightarrow X$ is continuous.
\end{definition}
\begin{remark}
    Clearly the strong topology is equivalent to or finer than the subspace topology for $X\subset S$, and they are not equivalent in general. For such an example, see \cite[Sec. 3.1.3]{KU08}.
\end{remark}

Following \cite[Chap. 2]{KU08}, we put the strong topology on $E_{\sigma}\subset \check{E}_{\sigma}$, and its induced subspace topology on $E^{\#}_{\sigma}$, then the quotient topology on $D^{\#}_{\Sigma}$ (resp. $\Gamma(\sigma)^{\mathrm{gp}}\backslash D_{\sigma}$) regarding the $i\sigma_{\mathbb{R}}$ (resp. $\sigma_{\mathbb{C}}$) torsor map, and then the strongest topology on $D^{\#}_{\Sigma}$ (resp. $\Gamma\backslash D_{\Sigma}$) such that the inclusion maps:
\begin{align}\label{localembeddingtoglobal}
    &D^{\#}_{\sigma} \hookrightarrow D^{\#}_{\Sigma}\\
   \nonumber &\Gamma(\sigma)^{\mathrm{gp}}\backslash D_{\sigma} \hookrightarrow \Gamma\backslash D_{\Sigma}
\end{align}
are continuous for every $\sigma\in \Sigma$. 
We also need the definition of valuative spaces as follows:
\begin{definition}[Valuative submonoids]
\
\begin{description}
\item[(i)]$\mathcal{V}:=$ $\{(A, V) \ |$  A is a $\mathbb{Q}$-linear subspace of $\mathfrak{g}_\mathbb{Q}$ consisting of mutually commutative nilpotent elements, $V$ is a submonoid of $A^*:=\mathrm{Hom}_\mathbb{Q}(A, \mathbb{Q})$ such that $V\cap (-V)=\{0\}$ and $V\cup (-V)=A^*$\}.
\item[(ii)]Given $(A,V)\in \mathcal{V}$, let $\mathcal{F}(A,V)$ be the set of rational nilpotent cones $\sigma \subset \mathfrak{g}_{\mathbb{R}}$ such that $\sigma_\mathbb{R}=A_\mathbb{R}$ and $(\sigma \cap A)^{\vee}:= \{h \in A^* \ | \ h(\sigma \cap A) \subset \mathbb{Q}_{\geq 0}\} \subset V$.
\end{description}
\end{definition}
\begin{definition}[Valuative spaces]
\
\begin{description}
\item[(i)] $\check{D}_{\mathrm{val}}:=\{(A,V,Z) \ | \ (A,V)\in \mathcal{V}$ and $Z$ is an $\mathrm{exp}(A_{\mathbb{C}})$-orbit in $\check{D}$\}
\item[(ii)] ${D}_{\mathrm{val}}:=\{(A,V,Z) \ | \ (A,V,Z)\in \check{D}_{\mathrm{val}}$  and there exists $\sigma \in \mathcal{F}(A,V)$ such that $Z$ is a $\sigma$-nilpotent orbit\}
\end{description}
\end{definition}

\begin{definition}[Relative valuative spaces]
\
\begin{description}
\item[(i)] Suppose $\Sigma$ is a fan in $\mathfrak{g}_{\mathbb{Q}}$. For $(A,V) \in \mathcal{V}$, let 
\begin{center}
    $X_{A,V,\Sigma}:=\{\sigma \in \Sigma \ | \ \sigma \cap A_{\mathbb{R}} \in \mathcal{F}(A,V)\}$
\end{center}
By \cite{KU08}, whenever $X_{A,V,\Sigma}$ is not empty, there is a minimal element $\sigma_0 \in X_{A,V,\Sigma}$.  
\item[(ii)] Suppose $\Sigma$ is a fan in $\mathfrak{g}_{\mathbb{Q}}$, define:
\begin{center}
    $D_{\Sigma, \mathrm{val}}:=\{(A,V,Z) \in \check{D}_{\mathrm{val}} \ | \ X_{A,V,\Sigma}$ is non-empty, and $\mathrm{exp}(\sigma_{0,\mathbb{C}})Z$ is a $\sigma_0$-nilpotent orbit\}
\end{center}

\end{description}

\end{definition}

We note that there are similar definitions for the valuative space of (nilpotent) $i$-orbits $D^{\#}_{\mathrm{val}}, D^{\#}_{\Sigma, \mathrm{val}}$. The topology on valuative spaces are defined similarly to their normal counterparts, the details can be found at \cite[Sec. 5.3]{KU08}.

\subsubsection{Main results}
With all notations introduced before, suppose now $\Sigma$ is a fan. We are now ready to state Kato-Usui's main results.

\begin{theorem}[\cite{KU08}, Sec. 7.2]\label{localhausdorff}
    The action of $\sigma_{\mathbb{C}}$ ($i\sigma_{\mathbb{R}}$) on $E_{\sigma}$ ($E_{\sigma}^{\#}$) is proper and free, and as corollary, $D_{\sigma}^{\#}$ and $\Gamma(\sigma)^{\mathrm{gp}}\backslash D_{\sigma}$ are Hausdorff.
\end{theorem}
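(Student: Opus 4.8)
The plan is to establish freeness and properness of the two actions, and then derive Hausdorffness of the quotients as a formal consequence. Throughout, write $e(a)\in\mathrm{torus}_{\sigma}$ for the image of $a\in\sigma_{\mathbb{C}}$, and recall that $\ker(e)=\mathrm{log}\,\Gamma(\sigma)^{\mathrm{gp}}$ is a full lattice in $\mathrm{span}_{\mathbb{R}}(\sigma)$, on which the $\sigma_{\mathbb{C}}$-action restricts to the monodromy action $(q,F^{\bullet})\mapsto(q,\gamma^{-1}F^{\bullet})$, $\gamma\in\Gamma(\sigma)^{\mathrm{gp}}$.

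\emph{Freeness.} Suppose $a\in\sigma_{\mathbb{C}}$ fixes $(q,F^{\bullet})\in E_{\sigma}$. From $e(a)q=q$ and the orbit-cone correspondence on $\mathrm{toric}_{\sigma}$, the class of $a$ in $\sigma_{\mathbb{C}}/(\sigma(q)_{\mathbb{C}}+\mathrm{log}\,\Gamma(\sigma)^{\mathrm{gp}})$ vanishes, so $a\in\sigma(q)_{\mathbb{C}}+\mathrm{log}\,\Gamma(\sigma)^{\mathrm{gp}}$. Passing to the nilpotent orbit $(\sigma(q),F_{0}^{\bullet})$ attached to $(q,F^{\bullet})$, with $F_{0}^{\bullet}:=\exp(\mathrm{log}_{\sigma}q)F^{\bullet}$ (which underlies a polarized limiting mixed Hodge structure by Theorem \ref{nilporbittolmhs}), and using that $a$ commutes with $\mathrm{log}_{\sigma}q$ since $\mathrm{span}_{\mathbb{C}}(\sigma)$ is abelian, the relation $\exp(-a)F^{\bullet}=F^{\bullet}$ becomes $\exp(-a)F_{0}^{\bullet}=F_{0}^{\bullet}$. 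The positivity condition of Definition \ref{definitionnilporbit}(ii), applied along an interior ray of $\sigma(q)$, forces the $\sigma(q)_{\mathbb{C}}$-part of $a$ to be zero (the relevant nilpotent-orbit map is faithful, because $\sigma(q)_{\mathbb{C}}$ has Hodge type $(-1,-1)$ for this LMHS while the isotropy subalgebra of $F_{0}^{\bullet}$ has non-negative Hodge type); the remaining lattice part is then a unipotent element of $G_{\mathbb{R}}$ fixing a point of $D$, hence conjugate into a compact group, hence trivial. Thus $a=0$. The same reasoning handles the $i\sigma_{\mathbb{R}}$-action on $E_{\sigma}^{\#}$.

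\emph{Properness.} This is the core of the argument. We must show that $(a,x)\mapsto(a\cdot x,x)$ is a proper map $\sigma_{\mathbb{C}}\times E_{\sigma}\to E_{\sigma}\times E_{\sigma}$, i.e. that if $x_{n}=(q_{n},F_{n}^{\bullet})\to x_{\infty}$ and $a_{n}\cdot x_{n}\to x_{\infty}'$ in $E_{\sigma}$, then $\{a_{n}\}$ is relatively compact. After a subsequence all $q_{n}$ lie in one torus orbit $O_{\tau}$, $\tau=\sigma(q_{n})$; fix a rational splitting $\sigma_{\mathbb{C}}=\tau_{\mathbb{C}}\oplus W_{\mathbb{C}}$ and decompose $a_{n}$ accordingly. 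The $W_{\mathbb{C}}$-component is controlled by toric geometry: $e(a_{n})$ acts on $O_{\tau}$ through $\mathrm{torus}_{\sigma}/\exp(\tau_{\mathbb{C}})$, and since $q_{n}$ and $e(a_{n})q_{n}$ converge in $\mathrm{toric}_{\sigma}$, this component is bounded modulo $\Gamma(\sigma)^{\mathrm{gp}}$; the residual lattice ambiguity is a monodromy element $\gamma_{n}\in\Gamma(\sigma)^{\mathrm{gp}}\leq G_{\mathbb{Z}}$, which stays bounded because $G_{\mathbb{Z}}$ acts properly discontinuously on $D$ while $F_{n}^{\bullet}$ and $\exp(-a_{n})F_{n}^{\bullet}$ stay in a compact subset of $D$. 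The $\tau_{\mathbb{C}}$-component $b_{n}=u_{n}+iv_{n}$ is where Hodge theory is indispensable: after absorbing the bounded pieces, $\exp(-b_{n})$ carries a sequence in $D$ approaching the nilpotent orbit $(\tau,\widehat{F}_{\infty}^{\bullet})$, $\widehat{F}_{\infty}^{\bullet}:=\exp(\mathrm{log}_{\sigma}q_{\infty})F_{\infty}^{\bullet}$, to a sequence in $D$ approaching another $\tau$-nilpotent orbit. Here one invokes the several-variable $\mathrm{SL}_{2}$-orbit theorem and the Cattani-Kaplan-Schmid norm estimates for $(\tau,\widehat{F}_{\infty}^{\bullet})$: they give sharp control of the Hodge metric along $\exp(zN)\widehat{F}_{\infty}^{\bullet}$ and force $v_{n}$ to be bounded whenever $\exp(-b_{n})$ keeps a neighbourhood of $\widehat{F}_{\infty}^{\bullet}$ in a compact part of $D$; then $u_{n}$ is bounded modulo $\Gamma(\sigma)^{\mathrm{gp}}$, and the lattice part is handled as before. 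Hence $\{a_{n}\}$ is relatively compact. Since the $i\sigma_{\mathbb{R}}$-action on $E_{\sigma}^{\#}=E_{\sigma}\cap\check{E}_{\sigma}^{\#}$ is the restriction of the $\sigma_{\mathbb{C}}$-action, it too is proper.

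\emph{Hausdorffness and the main obstacle.} As $\check{E}_{\sigma}=\mathrm{toric}_{\sigma}\times D$ is Hausdorff, $E_{\sigma}$ is Hausdorff in the subspace topology, hence also in the (finer) strong topology, and likewise $E_{\sigma}^{\#}$. A Hausdorff space modulo a proper free action of a topological group is Hausdorff, since properness makes the orbit equivalence relation closed; therefore $\Gamma(\sigma)^{\mathrm{gp}}\backslash D_{\sigma}=E_{\sigma}/\sigma_{\mathbb{C}}$ and $D_{\sigma}^{\#}=E_{\sigma}^{\#}/i\sigma_{\mathbb{R}}$ are Hausdorff. The decisive step is the bound on $v_{n}$ in the properness argument: it is the one place where the asymptotic theory of degenerating variations of Hodge structure — the several-variable $\mathrm{SL}_{2}$-orbit theorem and the attendant norm estimates — cannot be avoided, everything else being toric bookkeeping together with proper discontinuity of an arithmetic group and freeness of unipotent elements on $D$.
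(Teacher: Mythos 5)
You should first note that the paper contains no proof of this statement: Theorem \ref{localhausdorff} is quoted verbatim from \cite[Sec.~7.1--7.2]{KU08}, where the proof runs through the space of $\mathrm{SL}_2$-orbits, the valuative spaces and the Cattani--Kaplan--Schmid estimates. Your overall architecture (freeness, then properness, then Hausdorffness of $D_{\sigma}^{\#}$ and $\Gamma(\sigma)^{\mathrm{gp}}\backslash D_{\sigma}$ as the formal quotient consequence) and your identification of the several-variable $\mathrm{SL}_2$-orbit theorem as the unavoidable input are consistent with that proof, but two steps of your properness argument fail as stated. First, the claim that convergence of $q_{n}$ and $e(a_{n})q_{n}$ in $\mathrm{toric}_{\sigma}$ bounds the $W_{\mathbb{C}}$-component of $a_{n}$ modulo $\Gamma(\sigma)^{\mathrm{gp}}$ is false whenever the limit $q_{\infty}$ lies in a deeper torus stratum than the $q_{n}$: already for $\sigma=\mathbb{Q}_{\geq 0}N$, with $q_{n}$ in the open orbit, $q_{n}\to 0$ and $a_{n}=inN$, both $q_{n}$ and $e(a_{n})q_{n}=e^{-2\pi n}q_{n}$ converge, while $a_{n}$ is unbounded modulo the (real) lattice $\log\Gamma(\sigma)^{\mathrm{gp}}$. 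The dangerous directions are all of $\sigma(q_{\infty})_{\mathbb{C}}$, not just $\sigma(q_{n})_{\mathbb{C}}$, so the Hodge-theoretic control you reserve for the $\tau_{\mathbb{C}}$-component is needed exactly where you assert toric bookkeeping suffices. Second, your bound on the lattice ambiguity $\gamma_{n}$ invokes proper discontinuity of $G_{\mathbb{Z}}$ on $D$ together with ``$F_{n}^{\bullet}$ and $\exp(-a_{n})F_{n}^{\bullet}$ stay in a compact subset of $D$''; but the second components of points of $E_{\sigma}$ lie only in $\check{D}$ and their limits are boundary filtrations outside $D$, so no such compact subset of $D$ exists, and the breakdown of naive proper discontinuity near such limits is precisely the difficulty the Kato--Usui machinery is built to overcome. (A smaller point: properness must be proved for the strong topology on $E_{\sigma}$, so the reduction to convergent sequences in $\mathrm{toric}_{\sigma}\times\check{D}$ needs justification.)

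The freeness argument also has a gap. From $e(a)q=q$ you only know that the single element $\exp(-a)=\exp(-b)\gamma^{-1}$, with $b\in\sigma(q)_{\mathbb{C}}$ and $\gamma\in\Gamma(\sigma)^{\mathrm{gp}}$, fixes $F^{\bullet}$; injectivity of $c\mapsto\exp(c)F_{0}^{\bullet}$ on $\sigma(q)_{\mathbb{C}}$ says nothing about this mixed element, and the lattice factor is not known to fix anything until the $b$-part has been removed, so the two parts cannot be disposed of separately as you do. Moreover $F_{0}^{\bullet}$ is in general only in $\check{D}$, so ``a unipotent element of $G_{\mathbb{R}}$ fixing a point of $D$'' requires the additional (easy but necessary) step of translating by $\exp(iyN)$ with $N$ in the interior of $\sigma(q)$ and $y\gg 0$, which commutes with everything in play; and the component of $\log\gamma$ transverse to $\mathrm{span}_{\mathbb{Q}}\sigma(q)$ has to be killed first, e.g.\ by the induced unipotent action on $\mathrm{Gr}^{W(\sigma(q))}$, where the graded-polarized structure gives compact real stabilizers --- this is exactly the kind of reasoning behind Proposition \ref{nilporbitstabilizer}. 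In short, you have named the right external inputs, but the reduction of properness (and, to a lesser extent, freeness) to those inputs is where the real work lies, and as structured it does not go through; for the actual argument see \cite[Sec.~5--7]{KU08}.
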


\begin{theorem}[\cite{KU08}, Sec. 7.3]\label{locallogmanifold}
    $\Gamma(\sigma)^{\mathrm{gp}}\backslash D_{\sigma}$ admits a structure of logarithmic manifold under which $E_{\sigma}\rightarrow \Gamma(\sigma)^{\mathrm{gp}}\backslash D_{\sigma}$ is a $\sigma_{\mathbb{C}}$-torsor of logarithmic manifolds. Moreover, 
    \begin{equation}
        (\Gamma(\sigma)^{\mathrm{gp}}\backslash D_{\sigma})^{\mathrm{log}}\cong \Gamma(\sigma)^{\mathrm{gp}}\backslash D_{\sigma}^{\#}.
    \end{equation}
\end{theorem}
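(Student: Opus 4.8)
The plan is to establish the statement first for $E_{\sigma}$ and then to transport it across the $\sigma_{\mathbb{C}}$-torsor \eqref{sigmatorsormap}. Thus I would split the argument into three parts: (a) $E_{\sigma}$, with the strong topology inherited from $\check{E}_{\sigma}=\mathrm{toric}_{\sigma}\times D$, carries a natural structure of logarithmic manifold; (b) the quotient of a logarithmic manifold by a free and proper action of a finite-dimensional complex vector group which is compatible with the log structure is again a logarithmic manifold, the projection being a torsor for that group; (c) the Kato--Nakayama space of the resulting base is $\Gamma(\sigma)^{\mathrm{gp}}\backslash D_{\sigma}^{\#}$. Part (b) will then apply directly, because the $\sigma_{\mathbb{C}}$-action $(a,(q,F^{\bullet}))\mapsto (e(a)q,e^{-a}F^{\bullet})$ covers the translation action of $\mathrm{torus}_{\sigma}$ on $\mathrm{toric}_{\sigma}$, hence respects the log structure, and is free and proper by Theorem \ref{localhausdorff}; this already yields Hausdorffness of $\Gamma(\sigma)^{\mathrm{gp}}\backslash D_{\sigma}$ and the torsor assertion.

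For part (a) I would first put on $\check{E}_{\sigma}$ the product of the canonical log smooth structure of the toric variety $\mathrm{toric}_{\sigma}$ with the trivial one on $D$, which makes $\check{E}_{\sigma}$ a logarithmic manifold in the sense of \cite[Sec.~3.5.7]{KU08}. The subspace $\tilde{E}_{\sigma}$ is the locus where $NF^{p}\subset F^{p-1}$ for all $N$ in the face $\sigma(q)$ associated to $q$ by the orbit--cone correspondence; choosing local toric coordinates $t_{1},\dots,t_{r}$ at a point so that the relevant face is cut out by the vanishing of a subset of the $t_{j}$, I would rewrite this Griffiths transversality condition as the vanishing of finitely many holomorphic functions assembled from the matrix entries of the $\mathfrak{g}_{\mathbb{C}}$-action on $F^{\bullet}$ and from the $t_{j}$, with the $t_{j}$ entering through the logarithmic forms $d\log t_{j}$. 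This is exactly the local shape that defines a logarithmic manifold in \cite[Sec.~3.5.7]{KU08}, so $\tilde{E}_{\sigma}$ is one. Then $E_{\sigma}\subset\tilde{E}_{\sigma}$ is the sublocus where $(\sigma(q),\exp(\log_{\sigma}q)F^{\bullet})$ is a nilpotent orbit, i.e. where the positivity condition of Definition \ref{definitionnilporbit} holds; since $D$ is open in $\check{D}$ this is an open condition in $F^{\bullet}$ for fixed $q$, and I would invoke Schmid's nilpotent orbit theorem together with the several-variable $\mathrm{SL}_{2}$-orbit theorem (see e.g. \cite{CK89}) to propagate this openness uniformly as $q$ varies, concluding that $E_{\sigma}$ is open in $\tilde{E}_{\sigma}$ and therefore itself a logarithmic manifold.

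Part (b) I would prove by a slice argument: given $p\in E_{\sigma}$, freeness and properness make $\sigma_{\mathbb{C}}\cdot p$ a closed submanifold, one selects a transversal slice adapted to the chart for the log structure built in (a), and checks that such slices are identified on overlaps up to the group action; the quotient is then locally modelled on these slices, giving the logarithmic manifold structure on $\Gamma(\sigma)^{\mathrm{gp}}\backslash D_{\sigma}$ together with the torsor statement. For part (c) I would apply the functor $(-)^{\mathrm{log}}$ to \eqref{sigmatorsormap}: it is compatible with torsors, and $\mathrm{toric}_{\sigma}^{\mathrm{log}}$ can be computed explicitly as the product of $|\mathrm{toric}|_{\sigma}$ with a compact real torus, so that the inclusion $E_{\sigma}^{\#}\hookrightarrow E_{\sigma}^{\mathrm{log}}$ and the diagram \eqref{diagramoftorsors} together identify $(\Gamma(\sigma)^{\mathrm{gp}}\backslash D_{\sigma})^{\mathrm{log}}$ with $\Gamma(\sigma)^{\mathrm{gp}}\backslash D_{\sigma}^{\#}$ after passing to $\Gamma(\sigma)^{\mathrm{gp}}$-quotients.

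The hard part will be the last step of (a): showing that the nilpotent orbit condition cutting $E_{\sigma}$ out of $\tilde{E}_{\sigma}$ remains open \emph{uniformly} as $q$ approaches the deeper torus orbits of $\mathrm{toric}_{\sigma}$. This is precisely where the analytic machinery --- Schmid's estimates and the multivariable $\mathrm{SL}_{2}$-orbit theorem --- is unavoidable, and it is also the reason the resulting object is genuinely logarithmic: the stratification of $\tilde{E}_{\sigma}$ by the face $\sigma(q)$ is the one recorded by the log structure, and the charts in (a) must be arranged to see it.
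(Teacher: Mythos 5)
Your outline is correct and follows essentially the same route as the proof this paper relies on: the paper gives no independent argument but cites \cite[Sec.~7.1--7.3]{KU08}, where the theorem is proved exactly by (a) realizing $\tilde{E}_{\sigma}$ (hence its open subset $E_{\sigma}$) as the zero locus of logarithmic $1$-forms on $\mathrm{toric}_{\sigma}\times \check{D}$ (\cite[Thm.~3.5.10]{KU08}), (b) descending the structure along the $\sigma_{\mathbb{C}}$-torsor using the properness and freeness of Theorem \ref{localhausdorff} together with explicitly constructed local sections (\cite[Lemma~7.3.3, Sec.~7.3.4--7.3.7]{KU08}), and (c) identifying the Kato--Nakayama space with $\Gamma(\sigma)^{\mathrm{gp}}\backslash D_{\sigma}^{\#}$ via the torsor diagram \eqref{diagramoftorsors}, which is also the template the appendix reuses for the $\Phi$-restricted variant. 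The one point to phrase carefully is your step (b): properness and freeness alone do not yield slices on a non--locally compact logarithmic manifold, so the quotient step should be run through the explicit local sections built from the product structure of $\check{E}_{\sigma}$, i.e.\ through \cite[Lemma~7.3.3]{KU08} rather than a general quotient principle.
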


These are results regarding the local structures, or say when the fan $\Sigma$ is just a cone $\sigma$ and its faces. Passing to the global structure, we have the following:

\begin{theorem}[\cite{KU08}, Sec. 7.3-7.4]\label{local2globalku}
Assuming $\Gamma$ is neat, then $\Gamma\backslash D_{\Sigma}$ admits a structure of logarithmic manifolds as well as Hausdorff topological space, for which the map:
\begin{equation}
    \Gamma(\sigma)^{\mathrm{gp}}\backslash D_{\sigma}\rightarrow \Gamma\backslash D_{\Sigma}
\end{equation}
is a local homeomorphism for any $\sigma\in \Sigma$.
\end{theorem}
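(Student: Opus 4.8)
The plan is to manufacture the global structure on $\Gamma\backslash D_{\Sigma}$ by gluing the local charts $\Gamma(\sigma)^{\mathrm{gp}}\backslash D_{\sigma}$, whose logarithmic manifold structure and Hausdorffness are already supplied by Theorems \ref{localhausdorff} and \ref{locallogmanifold}, and to isolate the one genuinely analytic input — separatedness of the glued space — as the place where the $\mathrm{SL}_2$-orbit theorem is unavoidable. Throughout, the target $\Gamma\backslash D_{\Sigma}$ is given the strong topology characterized by \eqref{localembeddingtoglobal}, so that continuity and openness of the maps $\lambda_{\sigma}\colon\Gamma(\sigma)^{\mathrm{gp}}\backslash D_{\sigma}\to\Gamma\backslash D_{\Sigma}$ are formal; all the content lies in \emph{local injectivity} of $\lambda_{\sigma}$ and in Hausdorffness.

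First I would prove that $\lambda_{\sigma}$ is locally a homeomorphism. Working on the $\sigma_{\mathbb{C}}$-torsor $\Theta_{\sigma}\colon E_{\sigma}\to\Gamma(\sigma)^{\mathrm{gp}}\backslash D_{\sigma}$ of \eqref{sigmatorsormap}, it suffices to show that the $\Gamma$-action on the family $\{E_{\sigma}\}_{\sigma\in\Sigma}$ is, modulo the $\sigma_{\mathbb{C}}$-factors, properly discontinuous near any point: concretely, that for a given $(q,F^{\bullet})\in E_{\sigma}$ only finitely many $\gamma\in\Gamma/\Gamma(\sigma)^{\mathrm{gp}}$ can carry a small neighbourhood back into $E_{\sigma}$, and that such a $\gamma$ must in fact lie in $\Gamma(\sigma)^{\mathrm{gp}}$. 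Given two points $(\sigma(q),Z)$ and $\gamma\cdot(\sigma(q'),Z')$ of $D_{\Sigma}$ that become equal in $\Gamma\backslash D_{\Sigma}$ and are both close to the reference point, Theorem \ref{nilpconeweightfiltration} forces the associated weight filtrations to agree, hence $\gamma\in\Gamma\cap P_{W(\sigma)}$, and the fan axiom (Definition \ref{deffanweakfan}(ii) or (ii')) then forces $\sigma(q')=\sigma(q)$ since both cones polarize a common $F^{\bullet}$; finally neatness of $\Gamma$ — so that $\Gamma$ is torsion-free and $\Gamma\cap\exp(\sigma_{\mathbb{R}})=\Gamma(\sigma)$ — upgrades this to $\gamma\in\Gamma(\sigma)^{\mathrm{gp}}$. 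This is the argument of \cite[Sec. 7.3]{KU08}, and the finiteness used is exactly Theorem \ref{localhausdorff}.

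Second, granting that the $\lambda_{\sigma}$ are open and locally injective, the logarithmic manifold structures of Theorem \ref{locallogmanifold} on the charts $\Gamma(\sigma)^{\mathrm{gp}}\backslash D_{\sigma}$ are compatible: on an overlap the two charts restrict to $\Gamma(\tau)^{\mathrm{gp}}\backslash D_{\tau}$ for a common face $\tau$, and the face inclusion $\Gamma(\tau)^{\mathrm{gp}}\backslash D_{\tau}\hookrightarrow\Gamma(\sigma)^{\mathrm{gp}}\backslash D_{\sigma}$ is an open immersion of logarithmic manifolds by the orbit–cone correspondence on $\mathrm{toric}_{\sigma}$. Hence they glue to a logarithmic manifold structure on $\Gamma\backslash D_{\Sigma}$ for which each $\lambda_{\sigma}$ is a local isomorphism and $E_{\sigma}\to\Gamma(\sigma)^{\mathrm{gp}}\backslash D_{\sigma}$ descends to the stated local model; the identification $(\Gamma\backslash D_{\Sigma})^{\log}\cong\Gamma\backslash D_{\Sigma}^{\#}$ is inherited chartwise from Theorem \ref{locallogmanifold}.

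The main obstacle is Hausdorffness of the glued space, since local Hausdorffness alone does not give it. Separating two points over the \emph{same} cone reduces to Theorem \ref{localhausdorff}; the hard case is two points carried by different cones $\sigma,\tau$, approached by a sequence of honest Hodge structures. Here I would pass to the valuative space $D_{\Sigma,\mathrm{val}}$, where each point has a well-defined minimal cone, lift the sequence to $E_{\sigma}$ and $E_{\tau}$, and apply the $\mathrm{SL}_2$-orbit theorem (Cattani–Kaplan–Schmid) together with the Schmid distance-to-the-boundary estimates: after passing to a subsequence and adjusting by $\Gamma$ one extracts a single compatible pair $(A,V)\in\mathcal{V}$, hence a single limiting cone, and the fan axiom then identifies the two limits. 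Rather than reproving this, I would quote the properness of the map from $D_{\Sigma,\mathrm{val}}$ to the spaces of $\mathrm{SL}_2$-orbits established in \cite[Sec. 7.2--7.4]{KU08}, which is the technical heart of the theorem; Hausdorffness of $\Gamma\backslash D_{\Sigma}$ then follows, completing the proof.
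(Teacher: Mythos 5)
Your overall ingredient list matches the paper's sketch (local charts from Theorems \ref{localhausdorff} and \ref{locallogmanifold}, the fan axiom, neatness for stabilizers, and the valuative/$\mathrm{SL}_2$-orbit properness quoted from \cite{KU08}), but there is a genuine gap in how you order the logic in your first step. You claim that local injectivity of $\lambda_{\sigma}\colon \Gamma(\sigma)^{\mathrm{gp}}\backslash D_{\sigma}\to\Gamma\backslash D_{\Sigma}$ follows from a properly-discontinuous-type finiteness statement whose ``finiteness used is exactly Theorem \ref{localhausdorff}.'' That attribution is wrong: Theorem \ref{localhausdorff} only concerns the action of the continuous groups $\sigma_{\mathbb{C}}$ and $i\sigma_{\mathbb{R}}$ on $E_{\sigma}$ and $E_{\sigma}^{\#}$, and says nothing about the discrete group $\Gamma$ acting across different cones. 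The statement you actually need --- that only finitely many classes in $\Gamma/\Gamma(\sigma)^{\mathrm{gp}}$ can return a small neighbourhood to $E_{\sigma}$, i.e.\ properness of the $\Gamma$-action on $D^{\#}_{\Sigma}$ --- is precisely Theorem \ref{fundamentaltheoremoniorbits}(i), and its proof is the place where the valuative space, the continuity of $\psi\colon D^{\#}_{\Sigma,\mathrm{val}}\to D_{\mathrm{SL}_2}$ (Theorem \ref{continuousmapsl2space}) and the properness of $\Gamma$ on $D_{\mathrm{SL}_2}$ enter. In the paper (following \cite[Lemma 7.4.7]{KU08}, restated as Proposition \ref{extensionlemma}) the local homeomorphism property of $\lambda_{\sigma}$ is \emph{deduced from} this hard input together with conditions (ii) and (iii); it cannot be decoupled from the Hausdorffness question and established first with only local data, as your proposal does.

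Relatedly, your pointwise argument (weight filtrations agree by Theorem \ref{nilpconeweightfiltration}, hence $\gamma\in\Gamma\cap P_{W(\sigma)}$, then the fan axiom and neatness force $\gamma\in\Gamma(\sigma)^{\mathrm{gp}}$) is essentially condition (iii) of Proposition \ref{extensionlemma}, i.e.\ Proposition \ref{nilporbitstabilizer}: it controls which $\gamma$ can identify a \emph{single} point with itself (and note that two nearby points of $D_{\sigma}$ may lie over different faces of $\sigma$ with different weight filtrations, so the first inference needs more care). The passage from this pointwise stabilizer statement to a statement about a whole neighbourhood is exactly the properness you deferred to the Hausdorffness step. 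If you rewrite the argument so that the $\mathrm{SL}_2$/valuative properness (your last paragraph) is established first, and then feed it, together with the properness of $\Gamma(\sigma)^{\mathrm{gp}}\backslash D^{\#}_{\sigma}\to\Gamma(\sigma)^{\mathrm{gp}}\backslash D_{\sigma}$ and the stabilizer statement, into Proposition \ref{extensionlemma}, you recover the paper's proof; as currently structured, the first step is unsupported.
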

\begin{remark}
 The precise defition of logarithmic manifolds can be found in \cite[Sec. 3.5]{KU08}. Roughly speaking, a logarithmic manifold is locally isomorphic to an analytic subset of some fs logarithmic analytic space (defined in \cite[Chap. 2]{KU08}) given by the vanishing locus of a set of logarithmic $1$-forms.   
\end{remark}

Theorems \ref{localhausdorff} and \ref{locallogmanifold} are about the local model $\Gamma(\sigma)^{\mathrm{gp}}\backslash D_{\sigma}$, for which the proof can be found at \cite[Sec. 7.1-7.2]{KU08}. In this paper we will briefly outline the idea of proving Theorem \ref{local2globalku}, i.e. how we patch the local logarithmic manifold structures given by $\Gamma(\sigma)^{\mathrm{gp}}\backslash D_{\sigma}$ to a global one on $\Gamma\backslash D_{\Sigma}$ via a (weak) fan $\Sigma$.

\subsubsection{Proving main results: First step} \cite[Thm. 7.3.2]{KU08} asserts that the inclusion maps $D_{\sigma}^{\#}\hookrightarrow D_{\Sigma}^{\#}$ as well as $D_{\sigma, \mathrm{val}}^{\#}\hookrightarrow D_{\Sigma, \mathrm{val}}^{\#}$ are open maps, and the canonical map:
\begin{equation}
    D_{\Sigma, \mathrm{val}}^{\#}\rightarrow D_{\Sigma}^{\#}
\end{equation} 
is a proper map between Hausdorff topological spaces. Particularly, the first argument uses the fact that if $\Sigma$ is a fan, for any $\sigma, \tau\in \Sigma$ we have $D_{\sigma}^{\#}\cap D_{\tau}^{\#}=D_{\sigma\cap \tau}^{\#}$. Hence by the definition of the topology on $D_{\sigma}^{\#}$, it suffices to prove the case when $\Sigma=\{\tau \ \hbox{and its faces}\}$ and $\sigma\leq \tau$. The second argument comes from the commutative diagram:
\begin{equation}
\begin{tikzcd}
E^{\#}_{\sigma} \arrow[d] \arrow[r] & E^{\#}_{\sigma, \mathrm{val}} \arrow[d] \\
D^{\#}_{\sigma} \arrow[r] & D^{\#}_{\sigma, \mathrm{val}}  
\end{tikzcd}
\end{equation}
where the vertical arrows are $i\sigma_{\mathbb{R}}$-torsor maps and the horizontal arrows are proper maps by \cite[Thm. 5.3.8]{KU08}.

\begin{remark}\label{fanweakfanreplacement}
    If we assume $\Sigma$ is a weak fan (instead of a fan), by \cite[Thm. 4.3.1]{KNU10}, the condition $D_{\sigma}^{\#}\cap D_{\tau}^{\#}=D_{\sigma\cap \tau}^{\#}$ should be modified by:
    \begin{equation}
        D_{\sigma}^{\#}\cap D_{\tau}^{\#}=\cup_{\alpha}D_{\alpha}^{\#}
    \end{equation}
    where $\alpha$ ranges among all common faces of $\sigma$ and $\tau$. The rest of proof is identical to \cite[Thm 7.3.2]{KU08}.
\end{remark} 

\subsubsection{Proving main results: Second step} 
Once we prove the natural map 
\begin{equation}\label{localchartsofku}
    \Gamma(\sigma)^{\mathrm{gp}}\backslash D_{\sigma}\rightarrow \Gamma\backslash D_{\Sigma}
\end{equation}
is a local homeomorphism, the logarithmic manifold structure on $\Gamma\backslash D_{\Sigma}$ will be verified by regarding every $\Gamma(\sigma)^{\mathrm{gp}}\backslash D_{\sigma}$ as a local chart.

The central idea to prove the map \eqref{localchartsofku} is a local homeomorphism is applying \cite[Lemma 7.4.7]{KU08}. For convenience we re-state it in terms of spaces we are considering.

\begin{prop}\label{extensionlemma}
    The following conditions hold:
    \begin{description}
        \item[(i)] The map $D_{\Sigma}^{\#}\rightarrow \Gamma\backslash D_{\Sigma}^{\#}$ is a local homeomorphism between Hausdorff topological spaces.
        \item[(ii)] The map $\Gamma(\sigma)^{\mathrm{gp}}\backslash D_{\sigma}^{\#}\rightarrow \Gamma(\sigma)^{\mathrm{gp}}\backslash D_{\sigma}$ is proper.
        \item[(iii)] For any $(\tau, Z)\in D_{\sigma}$ and $\gamma\in \Gamma$, if $\gamma(\tau, Z)$ and $(\tau, Z)$ lie in the same $\Gamma(\sigma)$-orbit, then $\gamma\in \Gamma(\sigma)$.
    \end{description}
    Moreover, these properties imply \eqref{localchartsofku} is a local homeomorphism.
\end{prop}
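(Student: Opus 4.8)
The plan is to read Proposition~\ref{extensionlemma} as the restatement of \cite[Lemma~7.4.7]{KU08} that it is: conditions (i)--(iii) will be imported from \cite{KU08} (with only the cosmetic change forced by passing from a fan to a weak fan of restricted type), and the substance is the implication that (i)--(iii) force the chart map \eqref{localchartsofku} to be a local homeomorphism. Concretely, for (i) I would invoke the first step (\cite[Thm.~7.3.2]{KU08}) together with Remark~\ref{fanweakfanreplacement}: the only modification needed for a weak fan of restricted type is that $D_{\sigma}^{\#}\cap D_{\tau}^{\#}$ is the union of the $D_{\alpha}^{\#}$ over the common faces $\alpha$ of $\sigma$ and $\tau$, and this intervenes neither in the openness of the inclusions $D_{\sigma}^{\#}\hookrightarrow D_{\Sigma}^{\#}$ nor in the Hausdorffness of $\Gamma\backslash D_{\Sigma}^{\#}$. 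Condition (ii) and condition (iii) are the properness and ``weight-filtration rigidity'' statements of \cite[Sec.~7]{KU08}; the proof of (iii) only uses that $W(\sigma)$ determines $\sigma$ inside the $(-1,-1)$-piece of the adjoint LMHS together with neatness of $\Gamma$, so nothing there is affected by the restricted-type formalism. It then remains to run the proof of \cite[Lemma~7.4.7]{KU08}, which I describe now. Since a continuous map is a local homeomorphism once it is open and locally injective, I would prove these two properties of $f\colon\Gamma(\sigma)^{\mathrm{gp}}\backslash D_{\sigma}\to\Gamma\backslash D_{\Sigma,\Phi}$ separately.

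Openness is essentially formal. The topology on $\Gamma\backslash D_{\Sigma,\Phi}$ is by construction the finest one for which all chart inclusions $\Gamma(\tau)^{\mathrm{gp}}\backslash D_{\tau}\hookrightarrow\Gamma\backslash D_{\Sigma,\Phi}$ are continuous, and by (the non-$\#$ analogue of) the first step $D_{\sigma}\hookrightarrow D_{\Sigma}$ is an open embedding. The projection $D_{\sigma}\to\Gamma(\sigma)^{\mathrm{gp}}\backslash D_{\sigma}$ is a covering map---the action of $\Gamma(\sigma)^{\mathrm{gp}}$ on $D_{\sigma}$ is free and properly discontinuous because $\Gamma$ is neat (Theorem~\ref{localhausdorff})---hence open; since the composite $D_{\sigma}\hookrightarrow D_{\Sigma}\to\Gamma\backslash D_{\Sigma,\Phi}$ is open and factors through the quotient map $D_{\sigma}\to\Gamma(\sigma)^{\mathrm{gp}}\backslash D_{\sigma}$, the map $f$ is open.

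For local injectivity I would argue by contradiction. A failure of local injectivity near $\bar{x}\in\Gamma(\sigma)^{\mathrm{gp}}\backslash D_{\sigma}$, after lifting $\bar{x}$ to $x\in D_{\sigma}$ along the covering $D_{\sigma}\to\Gamma(\sigma)^{\mathrm{gp}}\backslash D_{\sigma}$, produces nets $u_{n}\to x$, $v_{n}\to x$ in $D_{\sigma}$ with $u_{n}$ and $v_{n}$ in distinct $\Gamma(\sigma)^{\mathrm{gp}}$-orbits but in the same $\Gamma$-orbit, say $\gamma_{n}u_{n}=v_{n}$ with $\gamma_{n}\in\Gamma$. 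Using the properness in (ii)---so that the fibres of $D_{\sigma}^{\#}\to D_{\sigma}$ and $D_{\Sigma}^{\#}\to D_{\Sigma}$ are compact---I lift $u_{n}$ to $\tilde{u}_{n}\in D_{\sigma}^{\#}$ and pass to a subnet so that $\tilde{u}_{n}\to\tilde{x}\in D_{\sigma}^{\#}$ and $\gamma_{n}\tilde{u}_{n}\to\tilde{y}\in D_{\Sigma}^{\#}$, both lying over $x$. In $\Gamma\backslash D_{\Sigma}^{\#}$ the classes of $\tilde{u}_{n}$ and of $\gamma_{n}\tilde{u}_{n}$ agree, so Hausdorffness in (i) gives $[\tilde{x}]=[\tilde{y}]$, i.e.\ $\tilde{y}=\delta\tilde{x}$ for some $\delta\in\Gamma$; and since $D_{\Sigma}^{\#}\to\Gamma\backslash D_{\Sigma}^{\#}$ is a local homeomorphism with $\Gamma$ acting freely (neatness), a neighbourhood $W$ of $\tilde{x}$ with $\gamma W\cap W=\emptyset$ for $\gamma\neq 1$ forces $\gamma_{n}=\delta$ for all large $n$. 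Finally $\delta u_{n}=v_{n}$ with both sides in $D_{\sigma}$, and passing to the limit yields $\delta x=x$, so $\delta x$ and $x$ lie in the same $\Gamma(\sigma)$-orbit; condition (iii) then gives $\delta\in\Gamma(\sigma)\subseteq\Gamma(\sigma)^{\mathrm{gp}}$, whence $v_{n}=\delta u_{n}$ is in the same $\Gamma(\sigma)^{\mathrm{gp}}$-orbit as $u_{n}$, a contradiction.

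The main obstacle I anticipate is precisely that last passage to the limit: the ambient spaces $D_{\Sigma}$ and $D_{\Sigma,\Phi}$ are not Hausdorff, so the identity $\delta x=x$ cannot be read off naively and must be routed through the $\#$-spaces and the valuative spaces $D_{\Sigma,\mathrm{val}}^{\#}$, where the needed separation and properness hold, exactly as in \cite[Lemma~7.4.7]{KU08}; here one also uses the weak-fan intersection formula of Remark~\ref{fanweakfanreplacement} to control the stratum $D_{\sigma}\cap\delta^{-1}D_{\sigma}$ in which the $v_{n}$ lie. A secondary point to check carefully is that condition (i) itself survives our weakening: the Hausdorffness and local-homeomorphism property of $D_{\Sigma,\Phi}^{\#}\to\Gamma\backslash D_{\Sigma,\Phi}^{\#}$ must be re-established for a space that is only locally analytically constructible (Definition~\ref{deflocallyanalyticallyconstr}) rather than a logarithmic manifold. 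This is fine because the arguments of \cite[Sec.~3,~7]{KU08} producing (i) take place at the level of the torsors $E_{\sigma}^{\#}$ over $D_{\sigma}^{\#}$ and of the valuative spaces, and are insensitive to whether the quotient is a log manifold or merely locally analytically constructible---again the only genuinely new ingredient is Remark~\ref{fanweakfanreplacement}.
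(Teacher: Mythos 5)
Your reading of (ii) and (iii) matches the paper: (ii) is deduced from the local chart theorem (Theorem \ref{locallogmanifold}) and (iii) from the stabilizer computation, Proposition \ref{nilporbitstabilizer}. The genuine gap is in your treatment of (i). You source it to the first step (\cite[Thm.~7.3.2]{KU08}) plus Remark \ref{fanweakfanreplacement}, but that theorem only gives openness of $D_{\sigma}^{\#}\hookrightarrow D_{\Sigma}^{\#}$ and properness of $D_{\Sigma,\mathrm{val}}^{\#}\rightarrow D_{\Sigma}^{\#}$; it says nothing about the $\Gamma$-quotient. What (i) requires is that $\Gamma$ acts properly (and, by neatness, freely) on $D_{\Sigma}^{\#}$, so that $\Gamma\backslash D_{\Sigma}^{\#}$ is Hausdorff and the quotient map is a local homeomorphism; this is Theorem \ref{fundamentaltheoremoniorbits} (\cite[Thm.~7.4.2]{KU08}), and in both the paper and \cite{KU08} its proof is the heart of the matter: properness is pulled back through the continuous map $\psi\colon D_{\Sigma,\mathrm{val}}^{\#}\rightarrow D_{\mathrm{SL}_2}$ of Theorem \ref{continuousmapsl2space}, using the properness of the $\Gamma$-action on the space of $\mathrm{SL}_2$-orbits (\cite[Thm.~5.2.15]{KU08}) and the properness of $D_{\Sigma,\mathrm{val}}^{\#}\rightarrow D_{\Sigma}^{\#}$. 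Your proposal never mentions $D_{\mathrm{SL}_2}$, so the most difficult ingredient of the proposition — the paper's entire third step — is missing or misattributed.

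Your attempted proof of the implication also contains a false step. The map $D_{\sigma}\rightarrow\Gamma(\sigma)^{\mathrm{gp}}\backslash D_{\sigma}$ is not a covering map: by Proposition \ref{nilporbitstabilizer} the stabilizer in $\Gamma$ of a boundary point $(\sigma,Z)$ is all of $\Gamma(\sigma)^{\mathrm{gp}}$ (indeed $\exp(\log\Gamma(\sigma)^{\mathrm{gp}})\subset\exp(\sigma_{\mathbb{C}})$ fixes $Z$), so the $\Gamma(\sigma)^{\mathrm{gp}}$-action is far from free on the boundary; neatness gives freeness only on $D$, and this non-freeness is exactly why (iii) is phrased in terms of $\Gamma(\sigma)$-orbits. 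Relatedly, $D_{\sigma}$ carries no independent topology in this framework — the topology lives on $E_{\sigma}$, $D_{\sigma}^{\#}$ and on $\Gamma(\sigma)^{\mathrm{gp}}\backslash D_{\sigma}$ via the $\sigma_{\mathbb{C}}$-torsor $\Theta_{\sigma}$ — so lifting convergent nets from the quotient to $D_{\sigma}$ and concluding $\delta x=x$ in the limit is not available as stated; likewise openness of $f$ is not formal, since it must be tested against the preimages under every chart $\Gamma(\tau)^{\mathrm{gp}}\backslash D_{\tau}$. You acknowledge these points and defer to \cite[Lemma~7.4.7]{KU08}, which is in effect what the paper does: it does not reprove the implication but cites that lemma verbatim, devoting its effort to establishing (i)–(iii). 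Finally, note the proposition is stated in the unrestricted fan setting, so the target of \eqref{localchartsofku} is $\Gamma\backslash D_{\Sigma}$, not $\Gamma\backslash D_{\Sigma,\Phi}$; the restricted-type modifications are handled separately later in the appendix.
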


We first sketch the proof of conditions (ii) and (iii). For (ii), this comes from the theorem on local charts \eqref{locallogmanifold} and basic properties of logarithmic analytic spaces. For (iii), it easily follows from the following proposition:

\begin{prop}[\cite{KU08}, Prop. 7.4.3]\label{nilporbitstabilizer}
    Assume $\Gamma$ is neat and $(\sigma, Z)\in D_{\Sigma}^{\#}$ $(\mathrm{resp. } \  D_{\Sigma})$, then $\mathrm{Stab}_{\Gamma}(\sigma, Z)=\{\mathrm{Id}\}$ $(\mathrm{resp. } \ \Gamma(\sigma)^{\mathrm{gp}})$.
\end{prop}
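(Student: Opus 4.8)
The reverse inclusion in the $D_\Sigma$ case is immediate: for $v\in\sigma_{\mathbb R}$ we have $\mathrm{Ad}_{\exp(v)}\sigma=\sigma$ (as $\sigma$ is abelian) and $\exp(v)Z=\exp(\sigma_{\mathbb C})F^\bullet=Z$, so $\Gamma(\sigma)^{\mathrm{gp}}\subset\mathrm{Stab}_\Gamma(\sigma,Z)$; similarly one checks $\Gamma(\sigma)^{\mathrm{gp}}$ fixes $(\sigma,Z^\#)$ only through $\{\mathrm{Id}\}$ once the hard inclusion is known. So the plan is to prove $\subset$: show any $\gamma\in\mathrm{Stab}_\Gamma(\sigma,Z)$ is unipotent, then pin it down inside $\exp(\sigma_{\mathbb R})$ (resp. force $\gamma=\mathrm{Id}$) using the limiting mixed Hodge structure attached to $(\sigma,Z)$.

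\textbf{Step 1 (the weight filtration and the graded Hodge structures are preserved).} From $\gamma(\sigma,Z)=(\sigma,Z)$ we get $\mathrm{Ad}_\gamma\sigma=\sigma$, so $\mathrm{Ad}_\gamma$ fixes the relative interior of $\sigma$; since $W(\mathrm{Ad}_\gamma N)=\gamma W(N)\gamma^{-1}$ and, by Theorem~\ref{nilpconeweightfiltration}, $W(N)=W(\sigma)$ for every interior $N$, we get $\gamma W(\sigma)=W(\sigma)$, i.e. $\gamma\in\Gamma\cap P_{W(\sigma)}$. Choose a representative $F^\bullet$ with $Z=\exp(\sigma_{\mathbb C})F^\bullet$ (resp. the $i$-orbit). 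Since $\gamma F^\bullet$ again lies in $Z$ we have $\gamma F^\bullet=\exp(Y)F^\bullet$ with $Y\in\sigma_{\mathbb C}\subset W(\sigma)_{-2}\mathfrak g$, hence $Y$ acts trivially on $\mathrm{Gr}^{W(\sigma)}$ and $\gamma$ induces on each $\mathrm{Gr}^{W(\sigma)}_k$ the same Hodge filtration as $F^\bullet$.

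\textbf{Step 2 ($\gamma$ is unipotent, hence centralizes $\sigma$).} Conjugation by $\gamma$ preserves $\Gamma$ and (by $\mathrm{Ad}_\gamma\sigma=\sigma$) preserves $\exp(\sigma_{\mathbb R})$, so it preserves $\Gamma(\sigma)^{\mathrm{gp}}$ and therefore the lattice $\Lambda:=\log\Gamma(\sigma)^{\mathrm{gp}}$, which by strong compatibility spans $\sigma_{\mathbb R}$. Thus $\mathrm{Ad}_\gamma|_{\sigma_{\mathbb R}}\in\mathrm{GL}(\Lambda)$ preserves the sharp rational polyhedral cone $\sigma$; such an integral automorphism permutes the finitely many rays of $\sigma$ and, on the power fixing all of them, fixes each primitive generator, so (the rays spanning $\sigma_{\mathbb R}$) $\mathrm{Ad}_\gamma|_{\sigma_{\mathbb R}}$ has finite order $m$, and $\delta:=\gamma^m$ centralizes $\sigma$. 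Fixing an interior $N_0\in\sigma$, on each $\mathrm{Gr}^{W(\sigma)}_{l+k}$ the element $\delta$ preserves the primitive part $P_{l+k}=\ker N_0^{k+1}$, the pure Hodge structure induced by $F^\bullet$ (Step 1), and the polarization $Q_k(\cdot,\cdot)=Q(\cdot,N_0^k\cdot)$ (it preserves $Q$ and $N_0^k$); so $\delta$ acts on $P_{l+k}$ through the compact group $\mathrm{Aut}(P_{l+k},\mathrm{HS},Q_k)$, and via the Lefschetz decomposition all eigenvalues of $\delta$ on $\mathrm{Gr}^{W(\sigma)}$, hence all eigenvalues of $\delta$ on $H$, have absolute value $1$. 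As $\delta\in G_{\mathbb Z}$, Kronecker's theorem makes them roots of unity, hence so are the eigenvalues of $\gamma$; neatness of $\Gamma$ forces them to be $1$, so $\gamma$ is unipotent. Then $\mathrm{Ad}_\gamma|_{\sigma_{\mathbb R}}$ is a finite-order unipotent element of $\mathrm{GL}(\Lambda)$, hence trivial: $\gamma$ centralizes $\sigma$. Write $\gamma=\exp(X)$ with $X\in\mathfrak g_{\mathbb Q}$ nilpotent; then $X\in\mathfrak z(\sigma)$, and being unipotent in the compact groups $\mathrm{Aut}(P_{l+k},\dots)$ it acts trivially on $\mathrm{Gr}^{W(\sigma)}$, i.e. $XW(\sigma)_m\subset W(\sigma)_{m-1}$.

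\textbf{Step 3 (locating $X$) and conclusion.} Since $X\in\mathfrak z(\sigma)$ it commutes with $Y$, so $\exp(X-Y)$ fixes $F^\bullet$ and hence $X-Y\in F^0\mathfrak g$ (the isotropy algebra of $F^\bullet$); combined with Step 2, $X-Y\in F^0\mathfrak g\cap W(\sigma)_{-1}\mathfrak g$. Working in the Deligne bigrading $\mathfrak g_{\mathbb C}=\bigoplus I^{p,q}$ of the adjoint LMHS --- cleanest after passing to the canonical $\mathbb R$-split model, whose grading shares the weight filtration and differs from the given one by an inner automorphism supported in strictly lower bidegrees --- one has $F^0\mathfrak g=\bigoplus_{p\ge0}I^{p,q}$, $W(\sigma)_{-1}\mathfrak g=\bigoplus_{p+q\le-1}I^{p,q}$, and $\sigma_{\mathbb C}\subset I^{-1,-1}\mathfrak g$. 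Comparing bidegrees in $X-Y\in\bigoplus_{p\ge0}I^{p,q}$ kills all components of $X$ of first degree $<0$ except $X^{-1,-1}$ and forces $X^{-1,-1}=Y$; since $X$ is real and $I^{-1,-1}$ is conjugation-stable, $X^{-1,-1}$ is real, so $Y\in\sigma_{\mathbb R}$, and the remaining part $X-Y$ lies in $\bigoplus_{p\ge0,\,q\le-1}I^{p,q}$, a subspace meeting its complex conjugate in $0$; being real it vanishes, so $X=Y\in\sigma_{\mathbb R}$ and $\gamma\in\Gamma\cap\exp(\sigma_{\mathbb R})=\Gamma(\sigma)\subset\Gamma(\sigma)^{\mathrm{gp}}$. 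In the $i$-orbit case $Y\in i\sigma_{\mathbb R}$ is purely imaginary, so $Y=X^{-1,-1}$, being both real and imaginary, is $0$; then $X\in\bigoplus_{p\ge0,\,q\le-1}I^{p,q}$ is a real element of a subspace meeting its conjugate in $0$, so $X=0$ and $\gamma=\mathrm{Id}$.

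The soft ingredients (Kronecker's theorem, compactness of $\mathrm{Aut}$ of a polarized Hodge structure, finiteness of the integral automorphism group of a sharp rational cone) are routine; the real work is Step~3 --- making the bidegree comparison rigorous for a general, non-$\mathbb R$-split LMHS by tracking the correction terms between the Deligne bigrading and its $\mathbb R$-split modification carefully enough that the ``first degree $\ge0$ versus $\le-1$'' dichotomy survives, and, in the $D_\Sigma$ case, checking that the conclusion is the genuinely arithmetic statement $\gamma\in\Gamma(\sigma)^{\mathrm{gp}}$ rather than merely $\gamma\in\exp(\sigma_{\mathbb R})$. I expect this to be the main obstacle.
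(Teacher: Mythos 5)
The paper itself offers no proof of this proposition --- it is imported from \cite{KU08} (Prop.\ 7.4.3) and used as a black box --- so your argument has to stand on its own, and most of it does. Step 1, and all of Step 2 (unipotence of $\gamma$ via compactness of the automorphism groups of the polarized primitive pieces of $\mathrm{Gr}^{W(\sigma)}$, Kronecker, neatness; finite order of $\mathrm{Ad}_\gamma$ on $\mathrm{span}_{\mathbb R}\sigma$ via the lattice/extreme-ray argument; hence $\gamma$ centralizes $\sigma$ and acts trivially on $\mathrm{Gr}^{W(\sigma)}$) are correct as written. The genuine gaps are exactly the two you flag, and neither is resolved in the proposal. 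First, Step 3 as stated uses facts that are false for a non-$\mathbb R$-split LMHS ($\overline{I^{-1,-1}}=I^{-1,-1}$, and that $\bigoplus_{p\ge0,\,q\le-1}I^{p,q}$ meets its conjugate trivially); moreover the repair you propose --- passing to the canonical $\mathbb R$-split model --- is subtler than you suggest, because the CKS element $\delta$ is only guaranteed to commute with morphisms of the LMHS, and at this stage $X$ is not known to be one, so $\mathrm{Ad}_{e^{-i\delta}}X$ need not be real and the ``corrections in strictly lower bidegree'' do not obviously preserve your reality argument. Second, your endgame yields only $\gamma\in\Gamma\cap\exp(\mathrm{span}_{\mathbb R}\sigma)$, and the identification of this group with $\Gamma(\sigma)^{\mathrm{gp}}$ is not automatic; as you note, this is the genuinely arithmetic point.

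Both gaps close with standard tools, and more easily than you anticipate. For Step 3 no splitting is needed: each $N\in\overline{\sigma}$ is a real $(-1,-1)$-morphism of the LMHS, so $\sigma_{\mathbb C}\subset I^{-1,-1}\mathfrak g$ holds for the honest Deligne bigrading; from $X-Y\in F^0\mathfrak g$ (valid because $X-Y$ is nilpotent, so the full one-parameter group $\exp(t(X-Y))$ lies in the Zariski-closed stabilizer of $F^\bullet$) together with $X\in W_{-1}\mathfrak g$ you get $X=Y+R$ with $R\in\bigoplus_{p\ge0,\,q\le-1-p}I^{p,q}$; now use only the congruence $\overline{I^{p,q}}\subset I^{q,p}\oplus\bigoplus_{r<q,\,s<p}I^{r,s}$, which shows $\overline Y$ and $\overline R$ lie entirely in $\bigoplus_{p\le-1}I^{p,q}$, so reality of $X$ forces $R=0$, hence $X=Y$, hence $Y$ real (and $Y=0$ in the $i$-orbit case, being both real and purely imaginary, giving $\gamma=\mathrm{Id}$). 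For the arithmetic point, strong compatibility --- a standing hypothesis here --- does the work: choose $u\in\log\Gamma(\sigma)$ in the relative interior of $\sigma_{\mathbb R}$ (a sum of cone generators taken from $\log\Gamma(\sigma)$); for any $\exp(v)\in\Gamma$ with $v$ in the rational span of $\sigma$ one has $v+nu\in\sigma_{\mathbb R}$ for $n\gg0$, whence $\exp(v)=\exp(v+nu)\exp(u)^{-n}\in\Gamma(\sigma)^{\mathrm{gp}}$, so indeed $\Gamma\cap\exp(\mathrm{span}_{\mathbb R}\sigma)=\Gamma(\sigma)^{\mathrm{gp}}$. With these two insertions your outline becomes a complete proof of the cited statement.
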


The most difficult part is proving the condition (i) in Proposition \ref{extensionlemma}. This follows from the following theorem.

\begin{theorem}[\cite{KU08}, Thm. 7.4.2]\label{fundamentaltheoremoniorbits}
\
\begin{description}
    \item[(i)] The action of $\Gamma$ on $D_{\Sigma}^{\#}$ and $D_{\Sigma, \mathrm{val}}^{\#}$ are proper, and the quotient spaces $\Gamma\backslash D_{\Sigma}^{\#}$ and $\Gamma\backslash D_{\Sigma, \mathrm{val}}^{\#}$ are Hausdorff;
    \item[(ii)] Assume $\Gamma$ is neat, the canonical maps $D_{\Sigma}^{\#}\rightarrow \Gamma\backslash D_{\Sigma}^{\#}$ and $D_{\Sigma, \mathrm{val}}^{\#}\rightarrow \Gamma\backslash D_{\Sigma, \mathrm{val}}^{\#}$
    are local homeomorphisms.
\end{description}
    
\end{theorem}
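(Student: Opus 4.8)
The plan is to deduce the theorem from the three ingredients already assembled above: the local Hausdorffness of Theorem~\ref{localhausdorff}, the properness of the projection $D^{\#}_{\Sigma,\mathrm{val}}\to D^{\#}_{\Sigma}$ obtained in the first step, and the triviality of stabilizers in Proposition~\ref{nilporbitstabilizer}. I would organize the argument in four moves. First, the Hausdorffness of the two ambient spaces. The space $D^{\#}_{\Sigma}$ is covered by the open subsets $D^{\#}_{\sigma}$ (openness of the inclusions was established in the first step), each Hausdorff by Theorem~\ref{localhausdorff}; to glue these into global Hausdorffness one uses the commutative square relating $E^{\#}_\sigma$, its valuative version and the torsor projections, together with the properness of $D^{\#}_{\Sigma,\mathrm{val}}\to D^{\#}_{\Sigma}$: a sequence in $D^{\#}_{\Sigma}$ with two limits lifts, after passing to a subsequence, to a sequence in the valuative space with two limits, contradicting Hausdorffness there. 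The valuative space $D^{\#}_{\Sigma,\mathrm{val}}$ is Hausdorff because it embeds into $D^{\#}_{\mathrm{val}}$, whose points are separated by the limit data $(A,V,Z)$ and the associated continuous position functions.

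Second --- and this is the analytic heart --- is the properness of the $\Gamma$-action. By the properness of $D^{\#}_{\Sigma,\mathrm{val}}\to D^{\#}_{\Sigma}$ and the $\Gamma$-equivariance of this map, it suffices to prove properness on $D^{\#}_{\Sigma,\mathrm{val}}$. Concretely, given $x_j\to x$ in $D^{\#}_{\Sigma,\mathrm{val}}$ and $\gamma_j\in\Gamma$ with $\gamma_j x_j\to y$, one must show $\{\gamma_j\}$ is finite. In valuative coordinates the limit point $x$ determines a distinguished cone $\sigma_0$, the $x_j$ eventually lie in the chart attached to $\sigma_0$, and there one expresses $x_j$ through a splitting of the associated limiting mixed Hodge structure with imaginary parameters tending to the boundary. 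The Cattani--Kaplan--Schmid several-variable $\mathrm{SL}_2$-orbit theorem supplies precise asymptotics for the Hodge norm along these directions, and feeding these estimates into a Siegel-set argument of exactly the flavour used in Section~\ref{sectionHTdegenerationclassical} forces the $\gamma_j$ to lie in a finite set. I expect this to be the main obstacle: all the hard asymptotic analysis of degenerating Hodge structures is concentrated here, and a faithful account requires importing the CKS machinery essentially verbatim.

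Third, the Hausdorffness of the quotients is then immediate, since a discrete group acting properly on a Hausdorff space has Hausdorff quotient. Fourth, for the local homeomorphism statement under the neatness hypothesis, Proposition~\ref{nilporbitstabilizer} gives $\mathrm{Stab}_\Gamma(\sigma,Z)=\{\mathrm{Id}\}$ on $D^{\#}_{\Sigma}$ and likewise on the valuative space, so the action is free; a free and proper action of a discrete group on a Hausdorff space is a covering space action, whence $D^{\#}_{\Sigma}\to\Gamma\backslash D^{\#}_{\Sigma}$ and $D^{\#}_{\Sigma,\mathrm{val}}\to\Gamma\backslash D^{\#}_{\Sigma,\mathrm{val}}$ are local homeomorphisms.

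Finally I would observe that in the weak-fan, and more generally the weak-fan-of-restricted-type, setting the only structural change needed is to replace the identity $D^{\#}_\sigma\cap D^{\#}_\tau=D^{\#}_{\sigma\cap\tau}$ by its union-over-common-faces version, as recorded in Remark~\ref{fanweakfanreplacement}; this affects neither the properness argument (which is local along $\sigma_0$-charts and only uses that the ambient spaces are Hausdorff) nor the freeness argument (Proposition~\ref{nilporbitstabilizer} is insensitive to the fan hypothesis), so the statement and its proof carry over unchanged.
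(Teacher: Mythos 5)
Your overall reduction matches the paper's: ambient Hausdorffness of $D^{\#}_{\Sigma}$, $D^{\#}_{\Sigma,\mathrm{val}}$ and the properness of $D^{\#}_{\Sigma,\mathrm{val}}\rightarrow D^{\#}_{\Sigma}$ are already supplied by the first step ([\cite{KU08}, Thm.~7.3.2]); pushing properness of the $\Gamma$-action down this proper equivariant surjection is the right move; quotient Hausdorffness is the standard consequence of a proper action of a discrete group; and part (ii) follows from part (i) plus the freeness given by Proposition~\ref{nilporbitstabilizer} under neatness, exactly as in the appendix. Where you genuinely diverge is the core properness statement. The paper (following \cite{KU08}) does not run a Siegel-set argument directly on $D^{\#}_{\Sigma,\mathrm{val}}$: it factors through the space of $\mathrm{SL}_2$-orbits, using the continuity of the map $\psi\colon D^{\#}_{\Sigma,\mathrm{val}}\rightarrow D_{\mathrm{SL}_2}$ (Theorem~\ref{continuousmapsl2space}, i.e.\ [\cite{KU08}, Thm.~5.4.4]) together with the properness of the $\Gamma$-action on $D_{\mathrm{SL}_2}$ ([\cite{KU08}, Thm.~5.2.15]), and then pulls properness back along $\psi$. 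Your direct ``CKS asymptotics plus Siegel sets'' argument is the same idea unpacked, and it buys a statement that avoids constructing $D_{\mathrm{SL}_2}$ explicitly; but as written it hides the genuinely hard point: for a sequence $x_j\to x$ with $x$ on the boundary, the cones, weight filtrations and associated $\mathrm{SL}_2$-data of the $x_j$ vary, and the Siegel-property argument of the flavour of Section~\ref{sectionHTdegenerationclassical} (which treats a \emph{fixed} pair of cones and a fixed base filtration) only controls the $\gamma_j$ once one knows the assignment of $\mathrm{SL}_2$-data to degenerating nilpotent ($i$-)orbits is continuous, i.e.\ uniform along the sequence. That uniformity is precisely what the topology of $D_{\mathrm{SL}_2}$ and Theorem~\ref{continuousmapsl2space} encode, so fleshing out your second move would in effect reconstruct that machinery rather than bypass it. Two smaller points: your separation argument for $D^{\#}_{\Sigma,\mathrm{val}}$ (``limit data and position functions'') is vaguer than needed, though that Hausdorffness is in any case already part of the first step; and your closing remark on weak fans (of restricted type) is consistent with Remark~\ref{fanweakfanreplacement} and with how the appendix transports the argument to the restricted setting.
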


Suppose (i) of Theorem \ref{fundamentaltheoremoniorbits} holds. Since $\Gamma$ is a discrete group acting properly and freely on the Hausdorff space $D_{\Sigma}^{\#}$, it is well-known the quotient space $\Gamma\backslash D_{\Sigma}^{\#}$ is Hausdorff and the quotient map is a local homeomorphism. The same argument holds for the valuative space, therefore (ii) of Theorem \ref{fundamentaltheoremoniorbits} holds.
\subsubsection{Proving main results: Third step}
To prove (i) of Theorem \ref{fundamentaltheoremoniorbits}, we need to formulate theory regarding the space of $\mathrm{SL}_2$-orbits $D_{\mathrm{SL}_2}$. Regarding the general theory of $\mathrm{SL}_2$-orbits, see for example \cite{KU02} or \cite[Chap. 5-6]{KU08}.

We shall begin with the classical $\mathrm{SL}_2$-orbit theorem, which was proved in \cite{Sch73} for $1$-variable case and \cite{CKS86} for general case. We sketch these classical results in a form convenient for our use.

For the field $\mathbb{F}= \mathbb{Q}, \mathbb{R}$ or $\mathbb{C}$, denote
\begin{equation}
    \mathbf{n}_-:=\begin{pmatrix}
        0 & 1\\
        0 & 0
    \end{pmatrix}, \ 
    \mathbf{y}:=\begin{pmatrix}
        -1 & 0\\
        0 & 1
    \end{pmatrix}, \ 
    \mathbf{n}_+:=\begin{pmatrix}
        0 & 0\\
        1 & 0
    \end{pmatrix}.
\end{equation}
be the standard generator of $\mathfrak{sl}_2(\mathbb{F})$ with the relations:
\begin{equation}
    [\mathbf{y},\mathbf{n}_{\pm}]=\pm 2\mathbf{n}_{\pm}, \ [\mathbf{n}_+, \mathbf{n}_-]=\mathbf{y}.
\end{equation}
Using Cayley transform we can associate a natural $\mathbb{F}$-Hodge structure of weight $0$ on $\mathfrak{sl}_2(\mathbb{F})$ as follows:
\begin{align}
    (\mathfrak{sl}_2(\mathbb{C}))^{-1,1}&=\overline{(\mathfrak{sl}_2(\mathbb{C}))^{1,-1}}=\mathbb{C}(i\mathbf{y}+\mathbf{n}_++\mathbf{n}_-),\\
   \nonumber (\mathfrak{sl}_2(\mathbb{C}))^{0,0}&=\mathbb{C}(\mathbf{n}_+-\mathbf{n}_-).
\end{align}

Consider an $\mathfrak{sl}_2(\mathbb{F})$-representation:
\begin{equation}
    \rho: \mathfrak{sl}_2(\mathbb{F})\rightarrow \mathfrak{g}_{\mathbb{F}}
\end{equation}
which has a lift to the algebraic group level:
\begin{equation}
    \tilde{\rho}: \mathrm{SL}_2(\mathbb{F})\rightarrow G_{\mathbb{F}}.
\end{equation}
We denote
\begin{equation}
    [N, Y, N^+]:=[\rho(n_-), \rho(y), \rho(n_+)]
\end{equation}
as the associated $\mathfrak{sl}_2$-triple. We also say a pair of $\mathfrak{sl}_2$-triples, namely 
\begin{equation}
    [N_i, Y_i, N^+_i], \ i=1,2
\end{equation}
\textbf{commute}, if the subalgebras generated by two triples commute.

$\mathfrak{sl}_2$-representations arise in Hodge theory in a natural way. Suppose $(W(N), F^{\bullet}, N)$ is an $\mathbb{R}$-split mixed Hodge structure polarized by $N\in \mathfrak{g}_{\mathbb{Q}}$, let $Y\in \mathfrak{g}_{\mathbb{R}}$ be the semisimple element grading the LMHS, then there exists a unique $\mathfrak{sl}_2$-representation $\rho$ such that
\begin{equation}\label{sl2morphism}
    \rho(n_-)=N, \ \rho(y)=Y.
\end{equation}
We complete it into an $\mathfrak{sl}_2$-triple $[N, Y, N^+]$. There is a very special class of $\mathfrak{sl}_2$-representations which induce horizontal $\mathrm{SL}_2$-orbits, see for example \cite{Rob15}.
\begin{definition}
 Fix a point $F^{\bullet}\in D$, it induces a weight-$0$ Hodge structure $F_{\mathfrak{g}}^{\bullet}$ on $\mathfrak{g}=\mathrm{End}(V)$. We say $\rho$ gives a horizontal $\mathrm{SL}_2$-orbit at $F^{\bullet}$ if the following holds:
\begin{description}
    \item[(i)] $\tilde{\rho}$ is defined on $\mathbb{R}$;
    \item[(ii)] $\rho$ is a morphism of Hodge structures with respect to $F_{\mathfrak{g}}^{\bullet}$. Equivalently, 
    \begin{equation}
         \rho(\mathfrak{sl}_2(\mathbb{C}))^{-k,k})\subset F_{\mathfrak{g}}^{-k,k.}
    \end{equation} 
\end{description}
We also say $\rho$ gives a horizontal $\mathrm{SL}_2$-orbit if it gives one at some point $F^{\bullet}\in D$.
\end{definition}
\begin{prop}
    Let $(W(N)[-l], F^{\bullet}, N)$ be an $\mathbb{R}$-split mixed Hodge structure polarized by $N$ and $\rho$ be an $\mathfrak{sl}_2$-representation as \eqref{sl2morphism}, then:
    \begin{description}
    \item[(i)] $\rho$ is a morphism of Hodge structures at $\exp(iN)F^{\bullet}$;
    \item[(ii)] $\rho$ gives $\tilde{\rho}$ which is defined over $\mathbb{R}$, and 
        \begin{equation}
            \hat{\rho}: \mathbb{P}^1\rightarrow \check{D}
        \end{equation}
        an equivariant $\mathrm{SL}_2$-embedding by $\hat{\rho}(gi)=\tilde{\rho}(g)\exp(iN)F^{\bullet}$.
    \end{description}
\end{prop}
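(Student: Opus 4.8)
This is a classical statement essentially due to Schmid \cite{Sch73}; the plan is to first record the structure of the underlying $\mathfrak{sl}_2$-triple, then prove (i) by reducing to the primitive summands of the Lefschetz decomposition, and finally deduce (ii) formally from (i).

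\emph{Step 1.} Since $(W(N)[-l],F^{\bullet},N)$ is $\mathbb{R}$-split it carries a unique real grading element $Y\in\mathfrak{g}_{\mathbb{R}}$ for the weight filtration, and by Jacobson--Morozov the pair $(N,Y)$ completes uniquely to an $\mathfrak{sl}_2$-triple $\{N,Y,N^+\}$ with $N^+\in\mathfrak{g}_{\mathbb{R}}$; this is the triple underlying $\rho$ in \eqref{sl2morphism}, so $\tilde\rho$ is automatically defined over $\mathbb{R}$, which proves the first assertion of (ii). Because $Q$ is infinitesimally $N$-invariant and $Y$ is compatible with $Q$ in the standard way, the decomposition of $H_{\mathbb{C}}$ into $\mathfrak{sl}_2$-isotypic blocks refines the weight filtration; on each primitive part $P_{l+k}$ the induced weight-$(l+k)$ Hodge structure is polarized by $Q_k$, and the data $\rho$, $Y$, $N$, $N^+$, $F^{\bullet}$, $\exp(iN)F^{\bullet}$ are all compatible with this decomposition.

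\emph{Step 2 (proof of (i)).} Put $F_1:=\exp(iN)F^{\bullet}$. One first observes that $\mathbb{R}$-splitness together with the polarization conditions forces $F_1\in D$; this is the standard fact that for an $\mathbb{R}$-split polarized mixed Hodge structure one has $\exp(zN)F^{\bullet}\in D$ for every $z$ with $\mathfrak{Im}(z)>0$, not merely for $\mathfrak{Im}(z)\gg 0$. The content of (i) is then that, with respect to the induced weight-$0$ Hodge structure $F_{\mathfrak{g}}^{\bullet}$ on $\mathfrak{g}$ at $F_1$, the images $\rho(i\mathbf{y}+\mathbf{n}_++\mathbf{n}_-)=iY+N^++N$ and $\rho(\mathbf{n}_+-\mathbf{n}_-)=N^+-N$ of the generators of $(\mathfrak{sl}_2(\mathbb{C}))^{-1,1}$ and $(\mathfrak{sl}_2(\mathbb{C}))^{0,0}$ have Hodge types $(-1,1)$ and $(0,0)$ respectively; since these elements together with their conjugates span $\mathfrak{sl}_2(\mathbb{C})$ and $\rho$ is a Lie algebra homomorphism, this suffices. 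By Step 1 we may verify this one primitive block at a time, where the $\mathfrak{sl}_2$-module is irreducible and the mixed Hodge structure is of Hodge--Tate type up to an overall Tate twist; the claim then reduces to the model computation for the rational normal curve $\mathbb{P}^1\hookrightarrow\check{D}$ associated to $\mathrm{Sym}^m$ of the standard $\mathrm{SL}_2$-representation, which is the one-variable computation of \cite{Sch73} (see also \cite[Sec. 2]{Rob15}): one compares the Deligne bigrading of $\exp(iN)F^{\bullet}$ with the $\mathfrak{sl}_2$-weight grading through the Cayley element. \emph{This comparison is the only genuine obstacle}: one must track the weight shift $[-l]$, the choice of Borel, and the normalization of the Cayley transform carefully, but after restriction to a Hodge--Tate block it is the familiar $\mathrm{SL}_2$-orbit computation of \cite{Sch73}, \cite{CKS86}.

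\emph{Step 3 (proof of (ii)).} Granting (i), the maximal compact $\mathrm{SO}(2)\subset\mathrm{SL}_2(\mathbb{R})$ is generated by $\exp(\theta(\mathbf{n}_+-\mathbf{n}_-))$, and the statement that $N^+-N=\rho(\mathbf{n}_+-\mathbf{n}_-)$ is of Hodge type $(0,0)$ at $F_1$ means precisely that $\tilde\rho(\mathrm{SO}(2))$ fixes $F_1\in D$; hence $g\cdot i\mapsto\tilde\rho(g)\exp(iN)F^{\bullet}$ is a well-defined $\mathrm{SL}_2(\mathbb{R})$-equivariant map $\mathfrak{H}\to D$. Writing $\mathbb{P}^1\cong\mathrm{SL}_2(\mathbb{C})/B$ with $B$ the stabilizer of $i$, whose Lie algebra is the non-negative Hodge part $F^0\mathfrak{sl}_2(\mathbb{C})$ of the weight-$0$ structure, part (i) gives $\rho(F^0\mathfrak{sl}_2(\mathbb{C}))\subset F_{\mathfrak{g}}^{0}$ at $F_1$, i.e. $\tilde\rho(B)$ stabilizes the filtration $F_1^{\bullet}$ in $\check{D}$; therefore the map extends to an $\mathrm{SL}_2$-equivariant holomorphic map $\hat\rho\colon\mathbb{P}^1\to\check{D}$ with $\hat\rho(gi)=\tilde\rho(g)\exp(iN)F^{\bullet}$. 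Finally, $\hat\rho$ is an embedding: by equivariance it is enough to check that its differential at $i$ is injective and that $\tilde\rho^{-1}(\mathrm{Stab}_{G_{\mathbb{C}}}(F_1^{\bullet}))\cap\mathrm{SL}_2(\mathbb{C})=B$, and both follow from $\rho$ being injective and a morphism of Hodge structures, since then $\rho(\mathfrak{sl}_2(\mathbb{C}))\cap F_{\mathfrak{g}}^{0}=\rho(F^0\mathfrak{sl}_2(\mathbb{C}))$ and a subgroup of $\mathrm{SL}_2(\mathbb{C})$ with Lie algebra $F^0\mathfrak{sl}_2(\mathbb{C})$ must equal $B$.
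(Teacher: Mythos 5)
Your argument is correct and is the standard one; the paper itself states this proposition without proof, as a recollection of the classical $\mathrm{SL}_2$-orbit facts of \cite{Sch73}, \cite{CKS86} (see also \cite{Rob15}), which is exactly where your Step 2 defers the Cayley-transform computation, so your write-up is consistent with (indeed more detailed than) the paper's treatment. The only loose phrase is calling each primitive block ``Hodge--Tate up to Tate twist'': each isotypic block is the tensor product of the pure polarized Hodge structure on $P_{l+k}$ with a Hodge--Tate factor coming from $\mathrm{Sym}^k$ of the standard representation, but since $\rho$ acts only through the $\mathrm{Sym}^k$ factor this does not affect your reduction or the conclusions in Steps 2 and 3.
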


Now we are ready to state the $\mathrm{SL}_2$-orbit theorem of \cite{Sch73} and \cite{CKS86}. Consider a nilpotent orbit 
\begin{equation}
    (z_1,..,z_n)\xrightarrow{\theta} \exp(\sum_{j=1}^nz_jN_j)F^{\bullet}.
\end{equation}
\cite[Thm. 4.20]{CKS86}, i.e. the multivariable $\mathrm{SL}_2$-orbit theorem can be summarized as follows.
\begin{theorem}[\cite{CKS86}, Thm. 4.20]\label{multivarsl2orbit}
    There exists a unique $\mathbb{R}$-representation:
    \begin{equation}\label{sl2representation}
        \rho: (\mathfrak{sl}_2(\mathbb{C}))^n\rightarrow \mathfrak{g}_{\mathbb{C}}
    \end{equation}
    and its lift:
    \begin{equation}
        \rho: (\mathrm{SL}_2(\mathbb{C}))^n\rightarrow G_{\mathbb{C}}
    \end{equation}
    with the following properties:
    \begin{description}
        \item[(i)] Let 
        \begin{equation}
            [\hat{N}_j, \hat{Y}_j, \hat{N}^+_j], \ 1\leq j\leq n
        \end{equation}
        be the $\mathfrak{sl}_2$-triple associated to the $j$-factor of $\rho$, then these $n$-triples commute;
        \item[(ii)] $W(\sum_{k=1}^{j}N_k)=W(\sum_{k=1}^j\hat{N}_k)$;
    \end{description}
    Moreover, there exists a unique $F_0^{\bullet}\in \check{D}$ such that:
    \begin{description}
       \item[(iii)] $\rho$ gives an $n$-dimensional horizontal $\mathrm{SL}_2$-orbit at $F_0^{\bullet}$;
        \item[(iv)] Denote $\rho_r, \ 1\leq r\leq n $ as the restriction of $\rho$ on the first $r$ factors, then $\rho_r$ is a morphism of Hodge structures at the point $\exp(i\sum_{k=r+1}^{n}\hat{N}_k)F_0^{\bullet}$;
        \item[(v)]For $y_1,...,y_n\in \mathbb{R}$, there exists a real analytic function $g(y_1,...,y_n): \mathbb{R}^n\rightarrow G_{\mathbb{R}}$ such that
        \begin{equation}\label{sl2approximation}
        \exp(i\sum_{j=1}^ny_jN_j)F^{\bullet}=g(y_1,...,y_n)\exp(i\sum_{j=1}^ny_j\hat{N}_j)F_0^{\bullet};
        \end{equation}
        \item[(vi)] Use the convention $y_{n+1}=1$, $g(y_1,...,y_n)$ can be written as a Laurent series of $\frac{y_{j}}{y_{j+1}}, \ 1\leq j\leq n$ with non-positive powers and constant term $1$. Therefore, the two sides of \eqref{sl2approximation} has the same limit as $\frac{y_{j}}{y_{j+1}}\rightarrow \infty, \ 1\leq j\leq n$.
        \item[(vii)] Again use the convention $y_{n+1}=1$, we have:
        \begin{equation}
            \exp(i\sum_{j=1}^ny_j\hat{N}_j)F^{\bullet}=\exp(-\frac{1}{2}\sum_{j=1}^{n}(\log(\frac{y_j}{y_{j+1}})\sum_{k=1}^{j}\hat{Y}_k))\exp(i\sum_{j=1}^n\hat{N_{j}})F_0^{\bullet}.
        \end{equation}
    \end{description}
\end{theorem}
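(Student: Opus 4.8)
The plan is to prove Theorem~\ref{multivarsl2orbit} by induction on the number of variables $n$, reducing it to repeated applications of the one-variable $\mathrm{SL}_2$-orbit theorem together with the Cattani--Kaplan theory of the commuting weight filtrations $W(N_1+\cdots+N_j)$ (Theorem~\ref{nilpconeweightfiltration} and its refinements).

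First I would settle the base case $n=1$, which is Schmid's $\mathrm{SL}_2$-orbit theorem. Starting from the nilpotent orbit $(N,F^{\bullet})$, Theorem~\ref{nilporbittolmhs} gives the limiting mixed Hodge structure $(W(N)[-l],F^{\bullet},N)$; Deligne's canonical splitting produces a real element $\delta$ in the sum of the $\mathfrak g^{p,q}$ with $p,q<0$ such that $\widehat F^{\bullet}:=e^{-i\delta}F^{\bullet}$ is $\mathbb R$-split and still polarized by $N$. Let $Y$ be the grading element of this split structure. Since $N$ is a $(-1,-1)$-morphism, $\mathfrak{sl}_2$-representation theory applied to the $N$-string decomposition of $\mathrm{Gr}^W_{\bullet}$, together with the forms $Q_k(v,w)=Q(v,N^kw)$, extends $(N,Y)$ to a unique $\mathfrak{sl}_2$-triple $[N,Y,N^+]$ with $N^+$ a $(1,1)$-morphism; the associated representation $\rho$ is then a morphism of Hodge structures at $F_0^{\bullet}:=e^{iN}\widehat F^{\bullet}$, i.e. gives a horizontal $\mathrm{SL}_2$-orbit, exactly as in the Proposition on $\mathbb R$-split mixed Hodge structures recorded above. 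For the estimate~\eqref{sl2approximation} one writes $e^{iyN}F^{\bullet}=g(y)\,e^{iyN}\widehat F^{\bullet}$ with $g(y):=e^{iyN}e^{i\delta}e^{-iyN}$; because $\mathrm{Ad}(e^{iyN})$ contracts the negative-weight part of $\mathfrak g$ where $\delta$ lives, after the normalization by $y^{-Y/2}$ the function $g(y)$ is a power series in $y^{-1}$ with constant term $1$, yielding (v)--(vii) when $n=1$.

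For the inductive step I would pass to the limit in the last variable: freezing $z_1,\dots,z_{n-1}$ inside a fixed Siegel domain and letting the imaginary part of $z_n$ tend to infinity, the one-variable theorem applied to $N_n$ produces an $\mathfrak{sl}_2$-triple $[\widehat N_n,\widehat Y_n,\widehat N^+_n]$ with $W(N_1+\cdots+N_n)$ the relative monodromy filtration controlled by $\widehat Y_n$, and a new filtration on $H$ which, together with $\{N_1,\dots,N_{n-1}\}$ and a polarization twisted by the primitive decomposition for $\widehat N_n^+$, forms a nilpotent orbit in $(n-1)$ variables of strictly smaller ``length''. Applying the inductive hypothesis to this datum yields mutually commuting triples $[\widehat N_j,\widehat Y_j,\widehat N^+_j]$ for $1\le j\le n-1$, a point $F_0^{\bullet}$, and real-analytic error terms in the first $n-1$ variables. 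One then verifies, using the Cattani--Kaplan relative-weight-filtration formalism, that $[\widehat N_n,\widehat Y_n,\widehat N^+_n]$ commutes with all of the earlier triples and that (ii) holds at every level; the representation $\rho$ is the product of the $n$ resulting $\mathfrak{sl}_2$-factors, so (i) is the commutativity just established, and (iv) for each $\rho_r$ holds because horizontality is produced at the appropriate partially-degenerated base point at each stage. The error $g(y_1,\dots,y_n)$ is obtained by composing the one-variable errors of the successive stages; tracking the contraction of $\mathrm{Ad}(e^{i\sum_j y_j\widehat N_j})$ on the iterated Deligne splittings gives the Laurent expansion in the ratios $y_j/y_{j+1}$ with non-positive powers and constant term $1$ of (vi), and the identity (vii) is then a direct computation using the commuting triples and the partial sums $\sum_{k\le j}\widehat Y_k$.

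The main obstacle is the inductive step, and within it two points: proving that the $\mathfrak{sl}_2$-triples produced at successive levels genuinely commute --- this rests on the Cattani--Kaplan theorem that $W(N_1+\cdots+N_j)$ depends only on the open face of the cone and varies compatibly as $j$ increases, and requires careful bookkeeping of the Deligne-splitting corrections so that those introduced in passing from level $j$ to level $j-1$ do not destroy commutativity --- and the uniform estimate on $g(y_1,\dots,y_n)$, i.e. upgrading an asymptotic expansion to a genuinely \emph{convergent} Laurent series in the region $y_1/y_2,\dots,y_{n-1}/y_n,y_n\to\infty$. Both are the technical core of \cite{CKS86}, and I would follow that argument; the remaining assertions (iii) and the uniqueness of $\rho$ and $F_0^{\bullet}$ are then formal consequences of the construction, uniqueness of $F_0^{\bullet}$ coming from the constant term $1$ in (vi).
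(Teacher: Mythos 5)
The paper does not prove this statement at all: it is quoted verbatim (in summarized form) from \cite{CKS86}, Thm.~4.20, so there is no internal argument to compare against, and a proposal that outlines the standard induction and explicitly defers the analytic core to \cite{CKS86} is at the right level — in that sense you and the paper rely on exactly the same source.

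That said, two points in your sketch are misstated and would need repair if you ever expanded it. In the base case, the distinguished point is not $e^{iN}\widehat F^{\bullet}$ with $\widehat F^{\bullet}$ the Deligne $\delta$-split filtration: the splitting relevant to Schmid's theorem is the $\mathrm{SL}_2$-splitting, which differs from the $\delta$-splitting by the universal correction $\zeta$ of \cite{CKS86} (Prop.~3.28); moreover, with your choices the identity \eqref{sl2approximation} does not even hold as written, since $g(y)e^{iyN}F_0^{\bullet}=e^{iyN}e^{i\delta}e^{-iyN}\,e^{i(y+1)N}\widehat F^{\bullet}$, which differs from $e^{iyN}F^{\bullet}$ unless $[\delta,N]=0$; the expansion statements (v)--(vii) are not a two-line consequence of the existence of a splitting, even for a pure nilpotent orbit. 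In the inductive step, the $n$-th triple is not obtained by applying the one-variable theorem ``to $N_n$'' with $z_1,\dots,z_{n-1}$ frozen (note also that freezing those variables is not the asymptotic regime $y_1/y_2,\dots,y_{n-1}/y_n,y_n\to\infty$ of the theorem): what enters is the weight filtration of the partial sums, $W(N_1+\cdots+N_j)$, whose grading is the \emph{total} $\hat Y_1+\cdots+\hat Y_j$ coming from the splitting of the limiting mixed Hodge structure of the full subcone, with $\hat Y_j$ recovered as a difference of consecutive partial gradings; attributing $W(N_1+\cdots+N_n)$ to $\hat Y_n$ alone conflates the $j$-th factor with the $j$-th partial sum, which is precisely the bookkeeping that condition (ii) encodes. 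Since the commutativity of the triples and the convergent Laurent expansion are, as you say, the technical core of \cite{CKS86} and you defer to it, these slips do not sink the plan, but as written the sketch misdescribes how the commuting $\mathfrak{sl}_2$-triples are actually produced.
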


We call $(\rho, \hat{\rho})$ given by Theorem \ref{multivarsl2orbit} as the $\mathrm{SL}_2$-orbit associated to the nilpotent orbit $\theta: (z_1,...,z_n)\rightarrow\exp(\sum_{j=1}^nz_jN_j)F^{\bullet}$. The idea of patching all of these "distinguished $\mathrm{SL}_2$-orbits" to a space leads to the following definition.

\begin{definition}[\cite{KU02}, Definition 3.6]\label{spaceofsl2orbits}
    The space of $\mathrm{SL}_2$-orbits, denoted as $D_{\mathrm{SL}_2}$, be the collection of all $(\rho, \hat{\rho})$ modulo the equivalence relation $\sim$, such that:
\begin{description}
    \item[(i)] $(\rho, \hat{\rho})$ is an $\mathrm{SL}_2$-orbit;
    \item[(ii)] The weight filtrations $W(\sum_{k=1}^j\hat{N}_k)$ are rational for $1\leq j\leq n$;
    \item[(iii)] $(\rho, \hat{\rho})\sim (\rho^{'}, \hat{\rho}^{'})$ if there exists $(t_1,...,t_n)\in \mathbb{R}_{>0}^n$ such that $\hat{Y}_j=\hat{Y}_j^{'}$ and $\hat{N}_j=t_j\hat{N}_j^{'}$.
\end{description}
\end{definition}

\begin{remark}
    In \cite{KU02}, the rank of an $\mathrm{SL}_2$-orbit is defined. In this paper we always assume the rank of an $\mathrm{SL}_2$-orbit equals to the number of independent variables $n$, or equivalently \eqref{sl2representation} is injective.
\end{remark}

The topology of the space $D_{\mathrm{SL}_2}$ is constructed in \cite[Sec. 5.2-5.4]{KU08} and we will omit it here. The point we would like to highlight is the existence of a natural map:
\begin{equation}\label{valuativetosl2}
    \psi: D_{\Sigma, \mathrm{val}}^{\#}\rightarrow D_{\mathrm{SL}_2}
\end{equation}
which sends a (valuative) nilpotent orbit to its associated $\mathrm{SL}_2$-orbit('s equivalent class) by Theorem \ref{multivarsl2orbit}. Well-defineness of this map can be seen from \cite[Thm. 5.4.3]{KU08}. The critical result is the following:
\begin{theorem}[\cite{KU08}, Thm 5.4.4]\label{continuousmapsl2space}
    The map $\psi$ is continuous, and is the unique continuous extension of the identity map on $D$.
\end{theorem}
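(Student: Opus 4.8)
The plan is to establish continuity locally and then deduce the uniqueness clause from density. Since continuity is local and the topology on $D_{\Sigma,\mathrm{val}}^{\#}$ is assembled from the pieces $D_{\sigma,\mathrm{val}}^{\#}$ — each carrying the quotient of the strong topology on the torsor $E_{\sigma,\mathrm{val}}^{\#}$ — I would first reduce to the case where $\Sigma$ consists of a single cone $\sigma$ and its faces. A point of $D_{\sigma,\mathrm{val}}^{\#}$ is then a $\Gamma(\sigma)$-class of a triple $(A,V,Z)$; choosing the minimal $\sigma_0\in\mathcal{F}(A,V)$, it is represented on $E_{\sigma}^{\#}$ by a pair $(q,F^{\bullet})$ (toric point plus filtration) together with the valuation $V$ refining the face structure of $\sigma$. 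Because the torsor map $E_{\sigma,\mathrm{val}}^{\#}\to D_{\sigma,\mathrm{val}}^{\#}$ is a topological quotient, it suffices to check continuity of the composite $E_{\sigma,\mathrm{val}}^{\#}\to D_{\sigma,\mathrm{val}}^{\#}\xrightarrow{\psi}D_{\mathrm{SL}_2}$; and on $E_{\sigma,\mathrm{val}}^{\#}$ everything is expressed through honest convergence in $\check D$ and through the toric coordinates $q_j\to 0$.

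The core of the argument is the multivariable $\mathrm{SL}_2$-orbit theorem, Theorem \ref{multivarsl2orbit}. Given a nilpotent orbit $(z_1,\dots,z_n)\mapsto\exp(\sum_j z_jN_j)F^{\bullet}$, together with the ordering of the degenerating directions dictated by the valuation $V$, the theorem produces the associated $\mathrm{SL}_2$-data $(\rho,\hat\rho)$: the commuting triples $[\hat N_j,\hat Y_j,\hat N_j^+]$ and the limit filtration $F_0^{\bullet}$, whose class in $D_{\mathrm{SL}_2}$ (modulo rescaling the $\hat N_j$) is by definition $\psi(A,V,Z)$. To verify continuity at such a point I would invoke the approximation formula \eqref{sl2approximation} together with property (vi) of that theorem: the correction $g(y_1,\dots,y_n)$ is a Laurent series in the ratios $y_j/y_{j+1}$ with non-positive powers and constant term $1$, hence $g\to 1$ as all $y_j/y_{j+1}\to\infty$. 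This is precisely what makes the genuine Hodge filtrations $\exp(i\sum_j y_jN_j)F^{\bullet}$ and the $\mathrm{SL}_2$-model filtrations $\exp(i\sum_j y_j\hat N_j)F_0^{\bullet}$ asymptotically indistinguishable along any sequence converging valuatively to the given orbit; since the topology on $D_{\mathrm{SL}_2}$ is designed so that the model filtrations, together with their grading operators $\hat Y_k$ as in property (vii), converge to $\psi(A,V,Z)$, continuity follows. Two compatibilities must also be recorded: the a priori dependence of $(\rho,\hat\rho)$ on the chosen $F^{\bullet}$ within its orbit is removed by the uniqueness clause of Theorem \ref{multivarsl2orbit}, and the construction is equivariant for the $\Gamma$-action, so $\psi$ indeed descends from $E_{\sigma,\mathrm{val}}^{\#}$ to $\Gamma(\sigma)$-classes.

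For uniqueness, $D$ embeds in $D_{\Sigma,\mathrm{val}}^{\#}$ as the locus of trivial orbits $(\{0\},\{0\},\{F^{\bullet}\})$, and this subspace is dense: a boundary point with $Z$ a $\sigma_0$-nilpotent orbit is the limit of the period-domain points $\exp(i\sum_j y_jN_j^{(0)})F^{\bullet}$ as $y_j/y_{j+1}\to\infty$, where $N_1^{(0)},\dots$ generate $\sigma_0$ along the flag determined by $V$. Since $D_{\mathrm{SL}_2}$ is Hausdorff (as established in the construction of its topology), any two continuous maps $D_{\Sigma,\mathrm{val}}^{\#}\to D_{\mathrm{SL}_2}$ that agree on the dense subset $D$ coincide; because $\psi$ restricts to the identity on $D$, it is the unique continuous extension of that identity.

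I expect the genuinely hard step to be the continuity estimate near the boundary: one needs the error term $g(y_1,\dots,y_n)$ to be controlled not merely in the regime where all ratios $y_j/y_{j+1}$ tend to infinity, but uniformly throughout a neighborhood of the boundary point — including the mixed regime where some ratios are large while others remain bounded. This is exactly where the fine CKS estimates on the $g$-function (and the accompanying norm estimate on the period map) are indispensable, and where one must check that the strong topology on the valuative space is matched by the topology on $D_{\mathrm{SL}_2}$ rather than merely dominated by it.
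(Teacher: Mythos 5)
First, note that the paper itself does not prove this statement: Theorem \ref{continuousmapsl2space} is imported verbatim from \cite[Thm.\ 5.4.4]{KU08}, and the appendix only records it as an input to the proof of Theorem \ref{fundamentaltheoremoniorbits}. So the standard your attempt must meet is Kato--Usui's own argument, which occupies a substantial part of \cite[Sec.\ 5.2--5.4 and 6.4]{KU08} and rests on the full strength of the CKS estimates together with the explicit construction of the topology on $D_{\mathrm{SL}_2}$.

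Measured against that, your proposal is a strategy outline rather than a proof, and the gap is exactly where the content of the theorem lives. Your continuity argument only treats convergence to a boundary point from the interior $D$, along directions where all ratios $y_j/y_{j+1}\to\infty$, and even there the step ``the topology on $D_{\mathrm{SL}_2}$ is designed so that the model filtrations \dots converge to $\psi(A,V,Z)$, continuity follows'' is essentially circular: the topology on $D_{\mathrm{SL}_2}$ is defined independently (via grading operators, torus actions and real-analytic charts in \cite[Sec.\ 5.2]{KU08}), and proving that valuative convergence implies $\mathrm{SL}_2$-convergence is precisely the theorem. Moreover, continuity of $\psi$ must be checked for arbitrary convergent families in $D^{\#}_{\Sigma,\mathrm{val}}$, including boundary points converging to boundary points, where one compares the associated $\mathrm{SL}_2$-orbits of \emph{varying} nilpotent orbits (different cones, different valuations, different limit filtrations); your sketch never addresses this regime, and the uniform control of $g(y_1,\dots,y_n)$ in mixed regimes that you yourself flag as ``the genuinely hard step'' is left entirely undone --- so the proposal defers rather than supplies the proof. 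Two smaller inaccuracies: points of $D^{\#}_{\sigma,\mathrm{val}}$ are triples $(A,V,Z^{\#})$, not $\Gamma(\sigma)$-classes (no $\Gamma$-quotient is taken at the $\#$ or valuative level; compare \eqref{diagramoftorsors}), so no descent to $\Gamma(\sigma)$-classes is involved in defining $\psi$; and well-definedness of $\psi$ is the separate statement \cite[Thm.\ 5.4.3]{KU08}, not a consequence of the uniqueness clause of Theorem \ref{multivarsl2orbit} alone, since one must check independence of the chosen generators of the cone compatible with the valuation. The uniqueness part of your argument (density of $D$ in $D^{\#}_{\Sigma,\mathrm{val}}$ plus Hausdorffness of $D_{\mathrm{SL}_2}$) is fine once continuity is in hand.
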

Now we return to Theorem \ref{fundamentaltheoremoniorbits} (i). By Theorem \ref{continuousmapsl2space}, Proposition \ref{extensionlemma} and the fact that $\Gamma$ acts properly on $D_{\mathrm{SL}_2}$ (see \cite[Thm. 5.2.15]{KU08}), the result follows and hence Proposition \ref{extensionlemma} is proved.
\subsubsection{Final step: Conclusion}
Proposition \ref{extensionlemma} says the local (Hausdorff) topological and logarithmic analytic structure of $\Gamma\backslash D_{\Sigma}$ are completely inherited from those for $\Gamma(\sigma)^{\mathrm{gp}}\backslash D_{\sigma}$, which are proved in Theorems \ref{localhausdorff} and \ref{locallogmanifold}. This certifies Theorem \ref{local2globalku}.

A remark is that the main Theorems \ref{katousuimainthmA} and \ref{katousuimainthmB} remain valid if we only require $\Sigma$ to be a weak fan. This is proved in \cite{KNU10} for the polarized mixed Hodge structure case. 
\subsection{Weak fan of restricted types}

In this subsection we shall go through the same process as the last section, but for a weak fan of restricted type (Definition \ref{typeweakfan}). In particular, we shall show Theorems \ref{kumainthmA'} and \ref{kumainthmB'}. 

Continue using notations from Section 3 and the last section, and let $\Phi$ be an index set containing a certain selection of LMHS types. In this section we shall always assume $\Sigma$ is a weak fan of type $\Phi$. Define:
\small
\begin{equation}
    E_{\sigma, \Phi}:=\{(q,F^{\bullet})\in \tilde{E}_{\sigma} \ | \ (\sigma(q), \mathrm{exp}(\mathrm{log}_{\Gamma_{\sigma}}q)F^{\bullet}) \hbox{ is a } \sigma(q)-\hbox{nilpotent orbit of type $\Phi$}\}.
\end{equation}
\normalsize
Clearly $E_{\sigma, \Phi}\subset E_{\sigma}$ and we endow it with the subspace topology. The map \eqref{sigmatorsormap} has the restriction
\begin{equation}
    \Theta_{\sigma}|_{ E_{\sigma, \Phi}}: E_{\sigma, \Phi}\rightarrow \Gamma(\sigma)^{\mathrm{gp}}\backslash D_{\sigma, \Phi},
\end{equation}
which is clearly a $\sigma_{\mathbb{C}}$-torsor if we endow $\Gamma(\sigma)^{\mathrm{gp}}\backslash D_{\sigma, \Phi}$ the quotient topology. Similarly we can define $E^{\#}_{\sigma, \Phi}$ with the subspace topology and the $i\sigma_{\mathbb{R}}$-torsor
\begin{equation}
    \Theta_{\sigma}^{\#}|_{ E^{\#}_{\sigma, \Phi}}: E^{\#}_{\sigma, \Phi}\rightarrow D^{\#}_{\sigma, \Phi}.
\end{equation}
and hence the diagram \eqref{diagramoftorsors} can be enlarged to
\begin{equation}\label{enlargeddiagramoftorsors}
\begin{tikzpicture}[scale=1.2]

    \node at (0,3) {$E_{\sigma}^{\#}$};
    \node at (0,0) {$D_{\sigma}^{\#}$};
    \node at (6,0) {$\Gamma(\sigma)^{\mathrm{gp}}\backslash D_{\sigma}$};
    \node at (6,3) {$E_{\sigma}$};

    \node at (2,1) {$D^{\#}_{\sigma, \Phi}$};
    \node at (4,2) {$E_{\sigma, \Phi}$};
    \node at (4.5,0.95) {\small $\Gamma(\sigma)^{\mathrm{gp}}\backslash D_{\sigma, \Phi}$\normalsize};
    \node at (2,2) {$E^{\#}_{\sigma, \Phi}$};
    
    \draw [->](0.5,0) -- (5,0) [line width = 0.3pt];
    \draw [->](0.5,3) -- (5.5,3) [line width = 0.3pt];
    \draw [->](0,2.5) -- (0,0.5) [line width = 0.3pt] node[midway,left] {$\Theta^{\#}_{\sigma}$};
    \draw [->](6,2.5) -- (6,0.5) [line width = 0.3pt] node[midway, right] {$\Theta_{\sigma}$};

    \draw [->](2.5,1) -- (3.5,1) [line width = 0.3pt];
    \draw [->](2.5,2) -- (3.5,2) [line width = 0.3pt];
    \draw [->](2,1.7) -- (2,1.3) [line width = 0.3pt] node[midway, left] {\small{$\Theta^{\#}_{\sigma}|_{ E^{\#}_{\sigma, \Phi}}$}};
    \draw [->](4,1.7) -- (4,1.3) [line width = 0.3pt] node[midway, right] {\small{$\Theta_{\sigma}|_{ E_{\sigma, \Phi}}$}};

    \draw [->](1.5,2.3) -- (0.5,2.7) [line width = 0.3pt];
    \draw [->](4.5,2.3) -- (5.5,2.7) [line width = 0.3pt];
    \draw [->](1.5,0.7) -- (0.5,0.3) [line width = 0.3pt];
    \draw [->](4.5,0.7) -- (5.5,0.3) [line width = 0.3pt];

    \end{tikzpicture}
\end{equation}
Now we prove Theorem \ref{kumainthmA'} for the local case where $\Sigma=\{\sigma \ \hbox{and its faces}\}$. Following \cite[Sec. 7.1-7.3]{KU08}, we have to show:
\begin{description}
    \item[(i)] $E_{\sigma, \Phi}$ admits a structure of locally analytically constructible space.
    \item[(ii)] The action of $\sigma_{\mathbb{C}}$ (resp. $i\sigma_{\mathbb{R}}$) on $E_{\sigma, \Phi}$ (resp. $E^{\#}_{\sigma, \Phi}$) is proper and free.
\end{description}
Indeed, we already know (ii) holds as the restricted maps $\Theta_{\sigma}|_{ E^{\#}_{\sigma, \Phi}}$ and $\Theta^{\#}_{\sigma}|_{ E^{\#}_{\sigma, \Phi}}$ clearly give torsors in the category of Hausdorff topological spaces. We shall now show (i).

Notice that for any $(q,F^{\bullet})\in E_{\sigma}$ and some open neighborhood $(q,F^{\bullet})\in U\subset \tilde{E}_{\sigma}$, the proof of \cite[Theorem 3.5.10]{KU08} shows as a locally analytically constructible space (Definition \ref{deflocallyanalyticallyconstr}), if for any face $\tau\leq \sigma$ we denote $T_{\tau}\subset $ as the associated torus orbit in $\mathrm{toric}_{\sigma}$, then the analytically constructible strata for $U\subset \tilde{E}_{\sigma}$ are given as follows:
\begin{align}
    A_{\tau}&:=(T_{\tau}\times\check{D})\cap U,\\
    \nonumber B_{\tau}&:= \{(q,F^{\bullet})\in A_{\tau} \ | \ NF^{\bullet}\subset F^{\bullet-1}, \ \forall N\in \tau\}.
\end{align}
By shrink $U$ if necessary (to fit the positivity condition), we obtain a description of the locally analytically constructible structure on $E_{\sigma}$. 

Since for any fixed weight filtration $W=W(\tau)$, the restriction of MHS type $\Phi$ on $(W, F^{\bullet})$ is given by the numerical conditions on (graded) Hodge numbers:
\begin{equation}
    \mathrm{Gr}_{W_k}f^p:=\mathrm{dim}_{\mathbb{C}}(W^{k}\cap F^p)-\mathrm{dim}_{\mathbb{C}}(W^{k-1}\cap F^p),
\end{equation}
which are clearly complex analytic conditions on $A_{\tau}$. Therefore, if we replace the strata $B_{\tau}$ by:
\small
\begin{equation}
    B_{\tau, \Phi}:= \{(q,F^{\bullet})\in A_{\tau} \ | \ NF^{\bullet}\subset F^{\bullet-1}, \ \forall N\in \tau, \ \mathrm{Gr}_{W(\tau)_k}f^p \hbox{ are prescribed by} \ \Phi\},
\end{equation}
\normalsize
we complete the description of $E_{\sigma, \Phi}$ as a locally analytically constructible space. The condition (i) is verified.

With (i) and (ii) established, by applying \cite[Lemma 7.3.3]{KU08}\footnote{Though it is stated with the category of either Hausdorff topological spaces or logarithmic manifolds, it also holds for locally analytically constructible spaces.} and \cite[Sec. 7.3.4-7.3.7]{KU08} verbatim we shall get the desired result, i.e. the space $\Gamma(\sigma)^{\mathrm{gp}}\backslash D_{\sigma, \Phi}$ admits the structure of locally analytically constructible space. Moreover, this structure has two interpretations: Either the one from $E_{\sigma, \Phi}$ using \cite[Lemma 7.3.3]{KU08} or the one obtained from $\Gamma(\sigma)^{\mathrm{gp}}\backslash D_{\sigma, \Phi}\subset \Gamma(\sigma)^{\mathrm{gp}}\backslash D_{\sigma}$.

We turn to the proof of Theorem \ref{kumainthmA'} for a general $\Sigma$. For $D^{\#}_{\sigma, \Phi}\subset D^{\#}_{\sigma}$ and $\Gamma(\sigma)^{\mathrm{gp}}\backslash D_{\sigma, \Phi}\subset \Gamma(\sigma)^{\mathrm{gp}}\backslash D_{\sigma}$ we endow the subspace topology, then endow $D^{\#}_{\Sigma, \Phi}$ and $\Gamma\backslash D_{\Sigma, \Phi}$ the finest topology such that for any $\sigma\in \Sigma$, the maps:
\begin{align}\label{restrictedlocalembeddingtoglobal}
    &D^{\#}_{\sigma, \Phi} \hookrightarrow D^{\#}_{\Sigma, \Phi}\\
    \nonumber &\Gamma(\sigma)^{\mathrm{gp}}\backslash D_{\sigma, \Phi} \hookrightarrow \Gamma\backslash D_{\Sigma, \Phi}
\end{align}
are continuous. We also need to define the restricted valuative space $D_{\Sigma, \Phi, \mathrm{val}}$ and $D^{\#}_{\Sigma, \Phi, \mathrm{val}}$:
\begin{align}
    D_{\Sigma, \Phi, \mathrm{val}}&:=\{(A,V,Z) \in D_{\Sigma, \mathrm{val}} \ | \ Z \  \hbox{is a nilpotent orbit of type} \  \Phi\},\\
   \nonumber D^{\#}_{\Sigma, \Phi, \mathrm{val}}&:=\{(A,V,Z^{\#}) \in D^{\#}_{\Sigma, \mathrm{val}} \ | \ Z \  \hbox{is a nilpotent} \ i\hbox{-orbit of type} \  \Phi\}.
\end{align}
As before, the first step is to obtain an analog of \cite[Thm. 7.3.2]{KU08}. More precisely, $D^{\#}_{\sigma, \Phi}\hookrightarrow D^{\#}_{\Sigma, \Phi}$ and $D^{\#}_{\sigma, \Phi, \mathrm{val}}\hookrightarrow D^{\#}_{\Sigma, \Phi, \mathrm{val}}$ are open maps. We also have the map
\begin{equation}
    D^{\#}_{\sigma, \Phi, \mathrm{val}}\rightarrow D^{\#}_{\sigma, \Phi}
\end{equation}
which is a proper map between Hausdorff topological spaces. We note since $\Sigma$ is a weak fan of type $\Phi$, the statement in Remark \ref{fanweakfanreplacement} should be replaced by:
\begin{equation}
        D_{\sigma, \Phi}^{\#}\cap D_{\tau, \Phi}^{\#}=\cup_{\alpha}D_{\alpha, \Phi}^{\#}
\end{equation}
where $\sigma, \tau\in \Sigma$ and $\alpha$ ranges among all common faces of $\sigma$ and $\tau$. Moreover, the fact $\Sigma$ is a weak fan of type $\Phi$ implies there are well-defined canonical maps:
\begin{align}
    D^{\#}_{\Sigma, \Phi, \mathrm{val}}&\rightarrow D^{\#}_{\Sigma, \Phi},\\
    \nonumber D_{\Sigma, \Phi, \mathrm{val}}&\rightarrow D_{\Sigma, \Phi}.
\end{align}
Therefore, by applying the proof of \cite[Thm. 7.3.2]{KU08} verbatim we get the desired results.

Now Theorem \ref{local2globalku}, Proposition \ref{extensionlemma}, \ref{nilporbitstabilizer} and Theorem \ref{fundamentaltheoremoniorbits} can be directly generalized to the restricted case, as long as we replace logarithmic manifold by locally analytically constructible space everywhere. In particular, Theorem \ref{continuousmapsl2space} holds for the restricted case as long as $\{\hbox{Pure LMHS}\}\subset \Phi$. If we consider the map \eqref{valuativetosl2} in the restricted case:
\begin{equation}\label{restrictedvaluativetosl2}
    \psi|_{D_{\Sigma, \Phi, \mathrm{val}}}: D_{\Sigma, \Phi, \mathrm{val}}^{\#}\rightarrow D_{\mathrm{SL}_2},
\end{equation}
since $\Sigma$ is a weak fan of type $\Phi$, it is enough to prove for any $\sigma$ the map
\begin{equation}\label{restrictedvaluativetosl2local}
    \psi|_{D_{\sigma, \Phi, \mathrm{val}}}: D_{\sigma, \Phi, \mathrm{val}}^{\#}\rightarrow D_{\mathrm{SL}_2, \Phi},
\end{equation}
is continuous, here $D_{\mathrm{SL}_2, \Phi}$ is defined to be the subspace of $D_{\mathrm{SL}_2}$ containing all $\mathrm{SL}_2$ orbits (indeed their equivalent classes) whose underlying nilpotent orbits are of type $\Phi$.  The rest of the proof can be obtained by applying \cite[Sec. 6.4.12]{KU08} verbatim.

Finally, Theorem \ref{kumainthmA'} is obtained by patching up every results we have for the restricted case. Theorem \ref{kumainthmB'} can be seen as follows: Suppose for a period map $\varphi: S\rightarrow \Gamma\backslash D$, $\Phi$ is the index set containing all LMHS types accessed by $\varphi$. It is enough to check on local charts, i.e.,
\begin{equation}
    (\Delta^{*})^k\times \Delta^l\rightarrow \Gamma(\sigma)\backslash D_{\sigma, \Phi}
\end{equation}
is a morphism of locally analytically constructible spaces for any $\sigma\in \Sigma$, but this is immediate as 
\begin{equation}
    (\Delta^{*})^k\times \Delta^l\rightarrow \Gamma(\sigma)\backslash D_{\sigma}
\end{equation}
is a morphism of logarithmic manifolds with image in $\Gamma(\sigma)\backslash D_{\sigma, \Phi}$, and 
\begin{equation}
    \Gamma(\sigma)\backslash D_{\sigma, \Phi}\hookrightarrow \Gamma(\sigma)\backslash D_{\sigma}
\end{equation}
is clearly a morphism of locally analytically constructible spaces. This concludes the proof.

%
%

\printbibliography
\end{document}